\newcommand{\dx}{~\mathrm{d}x}
\newcommand{\sym}{\mathrm{sym}}
\newcommand{\curl}{\mathrm{curl}}
\newcommand{\Id}{\mathrm{Id}}
\newcommand{\R}{\mathbb{R}}
\def\endproof{\hspace*{\fill}\mbox{\ \rule{.1in}{.1in}}\medskip }
\newcommand*{\dbar}[1]{\bar{\bar{#1}}}
\numberwithin{equation}{section}
\theoremstyle{plain}
\newtheorem*{theorem*}{Theorem}
\newtheorem{theorem}{Theorem}[section]
\newtheorem{lemma}[theorem]{Lemma}
\theoremstyle{definition}
\newtheorem{remark}[theorem]{Remark}
\begin{document}
\title [Full flexibility of Poznyak's theorem]
{Full flexibility of isometric immersions \\ of metrics with low H\"older
  regularity \\  in Poznyak theorem's dimension}
\author{Marta Lewicka}
\address{M.L.: University of Pittsburgh, Department of Mathematics, 
139 University Place, Pittsburgh, PA 15260}
\email{lewicka@pitt.edu} 

\date{\today}
\thanks{AMS classification: 53C42, 53A35}

\begin{abstract}
A classical result by Poznyak asserts that any smooth $2$-dimensional
Riemannian metric $g$, posed on the closure of a
simply connected domain $\omega\subset\R^2$, has
a smooth isometric immersion into $\R^4$. Using techniques of convex
integration, we prove that for any $2$-dimensional
$g\in\mathcal{C}^{r,\beta}$, an isometric immersion of regularity 
$\mathcal{C}^{1,\alpha}(\bar\omega,\R^4)$ for any
$\alpha<\min\{\frac{r+\beta}{2},1\}$, may be found arbitrarily close
to any short immersion.
The fact that this result's regularity reaches $\mathcal{C}^{1,1-}$ for $g\in
\mathcal{C}^2$, which is referred to as ``full flexibility'',
should be contrasted with: (i)  the regularity 
$\mathcal{C}^{1,1/3-}$ achieved in \cite{CHI2} for isometric
immersions into $\R^{3}$ and the lack of flexibility (rigidity)
of such isometric immersions with regularity
$\mathcal{C}^{1, 2/3+}$ proved in \cite{Bor1}-\cite{Bor5},
\cite{CDS}; (ii) the regularity $\mathcal{C}^{1,1-}$
obtained in \cite{Kallen} for isometric immersions into higher
codimensional space $\R^{8}$; and (iii) the regularity $\mathcal{C}^{1,\frac{1}{1+d(d+1)/k}-}$ achieved in
\cite{lew_conv} in the general case of $d$-dimensional metrics and
$(d+k)$-dimensional immersions for the closely related Monge-Amp\`ere system.
\end{abstract}

\maketitle
\tableofcontents

\section{Introduction}

As mentioned in Nash's fundamental work ``The imbedding problem for
Riemannian manifolds'' \cite{Nash1}:
``apparently rigidity disappears completely when the imbedding space
has enough dimensions''. The purpose of
this paper is to show that for $2$-dimensional metrics, 
rigidity indeed disappears completely, already in $\R^4$. More precisely, we prove:

\begin{theorem}\label{thm_main}
Let $g\in \mathcal{C}^{r,\beta}(\bar\omega, \R^{2\times 2}_{\sym,>})$  be
defined on the closure of an open set $\omega\subset\R^2$
diffeomorphic to $B_1$, for some $r+\beta>0.$ 
Then, for every $u\in \mathcal{C}^1(\bar\omega,\R^4)$ satisfying:
$$(\nabla u)^T\nabla u>0 \quad \mbox{ and } \quad 
g- (\nabla u)^T\nabla u > 0 \quad \mbox{ in }\bar\omega,$$
for every $\epsilon>0$, and for every regularity exponent $\alpha$ in:
\begin{equation*}
0<\alpha<\min\Big\{\frac{r+\beta}{2}, 1\Big\},
\end{equation*}
there exists $\tilde u\in \mathcal{C}^{1,\alpha}(\bar\omega,\R^4)$ such that:
$$ \|\tilde u - u\|_0\leq \epsilon \quad\mbox{ and } \quad 
(\nabla \tilde u)^T\nabla \tilde u = g \quad \mbox{ in }\bar\omega.$$
\end{theorem}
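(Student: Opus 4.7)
My strategy is a Nash--Kuiper convex integration built on K\"all\'en-type spiral corrugations, exploiting the extra codimension beyond the tangent plane. The scheme is a double induction: an outer \emph{stage} loop, producing smooth immersions $u_j\in\mathcal{C}^\infty(\bar\omega,\R^4)$ with deficit $D_j = g_j - (\nabla u_j)^T\nabla u_j$ against a mollification $g_j$ of $g$ at scale $\sigma_j\searrow 0$, and an inner \emph{step} loop realising primitive reductions. The central step is as follows: given a smooth immersion $v$ and smooth orthonormal normals $n_1,n_2\in\mathcal{C}^\infty(\bar\omega,\R^4)$ to $\nabla v$ (which exist globally since $\bar\omega$ is diffeomorphic to $B_1$ and the rank-two normal bundle of a disk in $\R^4$ is trivial), together with $a\in\mathcal{C}^\infty(\bar\omega)$ and $\xi\in S^1$, the spiral
\begin{equation*}
\tilde v = v + \frac{a}{\lambda}\bigl(\sin(\lambda\xi\cdot x)\,n_1 + \cos(\lambda\xi\cdot x)\,n_2\bigr)
\end{equation*}
satisfies, by $(\nabla v)^T n_i = 0$, $n_1\perp n_2$ and $\sin^2+\cos^2=1$,
\begin{equation*}
(\nabla\tilde v)^T\nabla\tilde v = (\nabla v)^T\nabla v + a^2\,\xi\otimes\xi + E_\lambda, \qquad \|E_\lambda\|_0\leq \frac{C}{\lambda},
\end{equation*}
with $C$ controlled by lower norms of $a,v,n_1,n_2$. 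This \emph{exact} reproduction of a primitive $a^2\xi\otimes\xi$, unavailable in codimension one, is what allows the H\"older ceiling $\alpha<1$.

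Within each stage I decompose $D_j-\epsilon_j\Id = \sum_{k=1}^3 a_k^2\,\xi_k\otimes\xi_k$ for a fixed triple $\{\xi_k\}\subset S^1$ spanning $\R^{2\times 2}_{\sym}$, and apply three consecutive steps at increasing frequencies $\lambda_{j,1}<\lambda_{j,2}<\lambda_{j,3}=\lambda_j$. The resulting $u_{j+1}$ obeys
\begin{equation*}
\|u_{j+1}-u_j\|_0\lesssim\lambda_j^{-1}\|D_j\|_0^{1/2},\qquad \|\nabla(u_{j+1}-u_j)\|_0\lesssim\|D_j\|_0^{1/2},\qquad \|\nabla^2 u_{j+1}\|_0\lesssim\lambda_j\|D_j\|_0^{1/2},
\end{equation*}
together with $\|D_{j+1}\|_0\lesssim\lambda_j^{-1}+\sigma_{j+1}^{r+\beta}$, the last term being the mollification error $\|g-g_{j+1}\|_0$. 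Choosing $\lambda_j = \lambda_0^{b^j}$ with $b>1$ and $\sigma_{j+1}^{r+\beta}\sim\lambda_j^{-1}$, the interpolation $\|w\|_{\mathcal{C}^{1,\alpha}}\lesssim\|w\|_{\mathcal{C}^1}^{1-\alpha}\|w\|_{\mathcal{C}^2}^{\alpha}$ applied to $w=u_{j+1}-u_j$ makes $\sum_j\|u_{j+1}-u_j\|_{\mathcal{C}^{1,\alpha}}<\infty$ precisely when $\alpha<\min\{(r+\beta)/2,1\}$: the upper cutoff $1$ is intrinsic to the spiral scheme (the cost of interpolating between $\mathcal{C}^1$ and $\mathcal{C}^2$ alone), while $(r+\beta)/2$ quantifies the mollification trade-off against the metric regularity.

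The main difficulty will be to maintain, through the entire iteration, both the strict positive-definiteness of $D_j$---needed for the decomposition into primitives with smooth amplitudes $a_k$ bounded below---and the nondegeneracy of $(\nabla u_j)^T\nabla u_j$---needed so that smooth orthonormal normals $n_1^{(j)},n_2^{(j)}$ to $\nabla u_j$ exist. I will initialise from a $\mathcal{C}^\infty$ mollification of the short immersion $u$ at a scale preserving strict shortness, subtract a small $\epsilon_j\Id$ before each decomposition to keep $D_j-\epsilon_j\Id$ inside a fixed compact subcone of positive matrices (with $\epsilon_j\searrow 0$ chosen consistently with the deficit schedule), and rely on the triviality of the normal bundle of $\bar\omega\simeq B_1$ in $\R^4$ to extract smooth orthonormal frames at every stage. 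Finally, the $\mathcal{C}^0$-closeness $\|\tilde u-u\|_0\leq\epsilon$ follows from the telescoping bound $\sum_j\lambda_j^{-1}\|D_j\|_0^{1/2}$, which can be made arbitrarily small by taking $\lambda_0$ large enough.
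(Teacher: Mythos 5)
Your high-level framework --- mollify $g$, decompose the deficit into rank-one primitives, add spiral corrugations at escalating frequencies, and iterate \`a la Nash--Kuiper --- is the right starting point, and your remarks about global normal frames and maintaining positive-definiteness are on target. However, there is a fatal gap in the error bookkeeping. The claim that $\|E_\lambda\|_0 \leq C/\lambda$ with $C$ ``controlled by lower norms of $a,v,n_1,n_2$'' conceals the crucial fact that $C$ is proportional to $\|\nabla^2 v\|_0$, which grows along the iteration: the dominant term in the error of Lemma \ref{lem_step1} is $2\frac{\Gamma(\lambda t)}{\lambda}a\,\sym\big((\nabla v)^T\nabla E^1_v\big)$, of size $\sim\|a\|\|\nabla^2 v\|_0/\lambda$. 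If $u_j$ carries frequency content $\mu_j$ (so $\|\nabla^2 u_j\|_0\sim\mu_j\|D_j\|^{1/2}$), the error from one spiral is $\sim\|D_j\|\mu_j/\lambda$, \emph{not} $\sim 1/\lambda$, and a three-step stage with frequencies $\mu_j\sigma,\mu_j\sigma^2,\mu_j\sigma^3$ delivers defect decay $S=1$ at the cost of frequency blowup $J=3$; by (\ref{ran_NK}) this caps the exponent at $\alpha<\frac{1}{1+2\cdot3}=\frac{1}{7}$, the Borisov/CDS threshold, which holds already in codimension one. Even taking your claimed bound $\|D_{j+1}\|\lesssim\lambda_j^{-1}$ at face value, your own arithmetic fails: with $\lambda_j=\lambda_{j-1}^b$, the interpolation gives $\|u_{j+1}-u_j\|_{1,\alpha}\lesssim\lambda_{j-1}^{-1/2}\lambda_j^\alpha=\lambda_{j-1}^{\alpha b-1/2}$, summable only for $\alpha<\tfrac{1}{2b}<\tfrac12$, far from the claimed ceiling $1$.

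Reaching $\alpha<1$ requires driving the blowup/decay quotient $J/S$ to zero, which your scheme cannot do because two obstructions are never addressed. First, the non-oscillatory term $\frac{1}{\lambda^2}\nabla a\otimes\nabla a$ cannot be averaged away and accumulates; the paper absorbs it via K\"all\'en's iteration (Theorem \ref{prop1}), which builds the amplitudes $a_i$ so that the primitive decomposition already accounts for this future error. Second, the oscillatory errors of size $\|D_j\|\mu_j/\lambda$ must be pushed to order $(\mu_j/\lambda)^{N+1}$ by repeated integration by parts (Lemmas \ref{lem_IBP1}--\ref{lem_IBP2}), and this is only possible with a matching tangential correction $W$ whose $\sym\nabla W$ cancels the principal oscillations --- which is precisely why the main Stage uses Kuiper's corrugation (Lemma \ref{lem_step2}) rather than Nash's spiral (the latter is reserved for the Initial Stage, Theorem \ref{lem_stage0}, where such precision is unnecessary). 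Incidentally, the ``exact reproduction of $a^2\xi\otimes\xi$'' you highlight also holds for Kuiper's codimension-one corrugation; it is not the source of the higher exponent. The resulting Stage (Theorem \ref{thm_STA}) stacks $K$ triple-corrugation blocks with $N$-fold integration by parts, achieving $S=KN$ and $J=2K+N$, so $J/S\to 0$ --- and only then does the Nash--Kuiper scheme (Theorem \ref{thm_NK}) produce $\alpha<\min\{\tfrac{r+\beta}{2},1\}$. K\"all\'en's original argument does get $\alpha<\frac{r+\beta}{2}$ with Nash spirals alone, but only by superposing all $s_d$ spirals in a single step across $\geq 2s_d$ codimensions; in your codimension-$2$ setup, sequential spirals do not suffice.
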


\medskip

To position our result in regards to other known works, we
make the following observations:
\begin{itemize}
\item[-] The metric regularity $\mathcal{C}^2$ is critical \cite{Nash1, Jaco, Gro_book, Gun}; 
since the assumed shortness condition can be easily achieved by taking
an arbitrary immersion $u$ and scaling it by a small constant, our result
implies that $2$-dimensional $\mathcal{C}^2$ metrics
always have an isometric immersion of any regularity up to
$\mathcal{C}^{1,1}$, i.e. just below
$\mathcal{C}^2$, already in dimension $n=4$. 
\item[-] As is known in various contexts \cite{CV, Her, HarNir, Pogo2, Bor5, CDS}, rigidity
does not disappear for isometric immersions of $2$-dimensional metrics
into $\R^3$; we show that no rigidity statement is possible for isometric immersions into $\R^4$.  
\item[-] It is known \cite{CHI2} that any short immersion of a $2$-dimensional
$\mathcal{C}^2$ metric $g$ can be approximated by $\mathcal{C}^{1,\alpha}$ isometric
immersions into $\R^3$, for any $\alpha<1/3$; we assert the same statement up
to regularity $\mathcal{C}^{1,1}$ already at the next target
dimension $n=4$. 
\item[-] Dimension $n=4$ is the target dimension in
Poznyak's theorem \cite{Poz}: the disk $B_1\subset\R^2$ with arbitrary
$\mathcal{C}^\infty$ metric can be isometrically
$\mathcal{C}^\infty$-immersed into $\R^4$. 
On the other hand \cite{Kallen}, for $n$ large enough there
exists a $\mathcal{C}^{1,\alpha}$ isometric immersion of any
$g\in\mathcal{C}^{r,\beta}$ where $r+\beta<2$, with any
$\alpha<\frac{r+\beta}{2}$. We reach the same statements in 
dimension $4$, in the context of flexibility, rather than themere
existence of a single isometric immersion. 
\item[-] The same result as ours has been as proved
\cite{in_lew2} for the Monge-Amp\`ere system; now we treat the fully nonlinear
case of (\ref{II_in}).
\end{itemize}
These observations are made precise in the historical
account of the existence, flexibility and rigidity of isometric
immersions in subsections below.

\medskip

\subsection{Classical existence and flexibility results} 
The question whether a Riemannian manifold of dimension $d$ may be
isometrically immersed in a Euclidean space of 
dimension $n$, is, in local coordinates, equivalent to solving
the first order system of partial differential equations:
\begin{equation}\label{II_in}
\begin{split}
& (\nabla u)^T\nabla u = g \quad \text{ in } \; \omega \subset \R^d,\\
& \, \mbox{for }\; u:\omega\to\R^n,
\end{split}
\end{equation}
where $g:\omega\to\R^{d\times d}_{\sym,>}$ is a given
first fundamental form i.e. the Riemannian metric of the manifold
written in these coordinates. The exact answer to this question and to the
question on the minimal dimension $n$, depend on $g$ and on how smooth $u$
is required to be. In the analytic case, the Janet-Cartan-Burstin theorem
\cite{Janet, Cartan, Bur} gives the affirmative
resolution of Schlaefli's conjecture \cite{Scha}, stating that any
$d$-dimensional analytic $g$ has a local analytic
iso\-met\-ric immersion $u$ into $\R^{s_d}$. Here, $s_d$ is the  Janet
dimension, corresponding to the number of equations in (\ref{II_in})
and the number of independent entries in $d\times d$ symmetric matrices:
\begin{equation}\label{SD}
s_d \doteq \frac{d(d+1)}{2}.
\end{equation}
In this theorem $n=s_d$ cannot be replaced by
$s_d-1$. For $d=2$, we have $s_d=3$. The crucial part of the
Janet-Burstin proof consists in showing that a local analytic
isometric immersion that
is ``free'', always exists, in the minimal dimension 
$d+s_d$, the freeness of an
immersion meaning that $d$ vectors of its first derivatives and $s_d$ of second
derivatives are linearly independent at each $x\in \omega$. Invoking this 
result at $d-1$ is then supplemented by extending the hence produced
analytic immersion by one extra
dimension that is missing from $(d-1)+s_{d-1} = s_d-1$ to $s_d$, via an
application of the Cauchy-Kowalewsky theorem.

\medskip

In the non-analytic case and for general metrics of regularity at least
$\mathcal{C}^2$, an isometric immersion into any
$\R^n$ cannot be of regularity better than the regularity of the
metric. This fact was first noticed by Hartman and Wintner
\cite{HarW}: given a $2$-dimensional Riemannian metric
$g\in\mathcal{C}^{r,\beta}$ with $r\geq 2$
and $\beta\in [0,1]$, its Gaussian curvature $\kappa$ can be computed from the second
fundamental form of an immersion $u\in\mathcal{C}^{l}$, which gives
$\kappa\in\mathcal{C}^{l-2}$ (for $l$ not necessarily an integer). 
On the other hand, $\kappa$ is an intrinsic quantity and as such, can be
computed directly from $g$, whereas $\kappa\in\mathcal{C}^{r-2,\beta}$. One
can explicitly construct a metric whose $\kappa$ is no smoother than
$\mathcal{C}^{r-2, \beta}$, hence in general there must be: $l\leq r+\beta$. In
dimension $d>2$, the same argument applies to sectional
curvatures and can be found in \cite{Jaco}.
The fact that the regularity exponent $l = r+\beta$ of an isometric
immersion can be achieved, is based on the following ideas of Nash.

\medskip

For the integer regularity $r>2$, the Nash theorem \cite{Nash1} states
that every $\mathcal{C}^r$ metric has a $\mathcal{C}^r$ isometric
immersion into some $\R^n$. A partition of unity argument reduces this theorem to 
the local case in (\ref{II_in}), in which $n$ can be taken as
$3s_d+4d$, the dimension later decreased by Gromov \cite{Gro_book} to
$s_d+2d+3$, and by G\"unter (with a different proof) \cite{Gun} to
$\max\{s_d+2d, s_d+d+5\}$.
For $d=2$, both these numbers equal $10$.  It is worth noting that the discussed dimensions
do not depend on the differentiability exponent $r$. 
The H\"older regular case has been studied by Jacobowitz, who proved
\cite{Jaco} that every $\mathcal{C}^{r,\beta}$ Riemannian metric with
$r\geq 2$ and $\beta\in (0,1]$ has a $\mathcal{C}^{r,\beta}$ isometric
immersion into some $\R^n$. Combining Nash's and Jacobowitz's results
yields immersability of any $\mathcal{C}^l$ metric 
where $l>2$ is not necessarily an integer, with the immersion of the
same regularity. The key argument of their proofs relies on an
implicit function theorem (later generalized to what
is called the Nash-Moser implicit function theorem)
showing that for a large class of analytic
metrics, any $\mathcal{C}^l$ metric in their vicinity (depending on
$l$), possesses a $\mathcal{C}^l$ isometric immersion. Such analytic
metrics are shown to be dense in $\mathcal{C}^0$, due to the
earlier construction by Nash producing $\mathcal{C}^1$ isometries,
that we now describe. We note in passing that for $r\geq 2$, a $\mathcal{C}^r$ Riemannian
manifold of dimension $d$ has a local $\mathcal{C}^r$ isometric
immersion into $\R^{d}$ if an only if its Riemann curvature tensor
vanishes at each $x$. 

\medskip

In case of low immersion regularity
$\mathcal{C}^1$, the Nash theorem in \cite{Nash2} states that 
if a $\mathcal{C}^0$ Riemannian manifold of dimension $d$ has a short immersion into
$\R^{d+2}$, then it has a nearby $\mathcal{C}^1$ isometric immersion (into $\R^{d+2}$). In
coordinates, the shortness of $u\in\mathcal{C}^1(\bar
\omega, \R^n)$ is expressed by:
$$g - (\nabla u)^T \nabla u > 0 \quad \text{ in } \; \bar \omega,$$ 
i.e. by requiring that the matrix in left hand side be strictly positive definite for
every $x\in\bar \omega$. This condition can be achieved by scaling an
arbitrary immersion by  a small positive
number. Nash's theorem is proved via iterative modifications of an initial short immersion
$u$ by a cascade of highly oscillatory perturbations, with sum of
their amplitudes small, and with the property that each of them
replaces the specific (rank-one) portion of 
the defect:
\begin{equation}\label{def_def}
\mathcal{D}(g,u)\doteq g - (\nabla u)^T \nabla u, 
\end{equation}
by a much smaller (in the $\mathcal{C}^0$ norm) contribution. After
decreasing $\mathcal D$ in this manner, the new defect is
computed and the construction is repeated in, what is nowadays
referred to as, the Nash-Kuiper iteration scheme.
The nearby (to the given short immersion $u$) immersion $\tilde u$ is
obtained in the limit of such modifications.
The perturbations, called Nash's spirals, each of them
constituting a single Step in this convex integration algorithm,
utilize $2$ codimensions, and we exhibit a version of this
construction in Lemma \ref{lem_step1}, later used in the proof of
Theorem \ref{lem_stage0}. At the end of \cite{Nash2}, Nash conjectured
that the condition $n\geq d+2$ needed to achieve the modifications, can be weakened
to $n\geq d+1$. A  proof of this assertion was subsequently provided by Kuiper \cite{Kuiper}
via another perturbation definition, which indeed utilizes only one
codimension. We exhibit a version of such Kuiper's
corrugation  in Lemma \ref{lem_step2}, constituting a convex integration Step
in the proof of Theorem \ref{thm_main}. To reassume, the
Nash-Kuiper theorem demonstrates that any short immersion
$u\in\mathcal{C}^{1}(\bar\omega, \R^{d+1})$ can be uniformly
approximated, with arbitrary precision, 
by an isometric immersion $\tilde u\in \mathcal{C}^{1}(\bar\omega, \R^{d+1})$, for
any continuous, $d$-dimensional $g$, yielding the 
abundance, or flexibility, of $\mathcal{C}^1$ solutions to (\ref{II_in}) with $n=d+1$.

\medskip

\subsection{The dimension $d=2$ and rigidity results} 

For $d=2$ we have $s_d=3$, so an analytic $2$-dimensional Riemannian
metric always has a local analytic isometric immersion into $\R^3$. Further, the Gromov
and G\"unter exponents equal $10$, so any $\mathcal{C}^r$ metric possesses a
$\mathcal{C}^r$ isometric immersion into $\R^{10}$, for $r>2$. One way
of reading our Theorem \ref{thm_main} is that, at the critical
regularity $r=2$, a $\mathcal{C}^2$ metric
always has an isometric immersion of any regularity just below
$\mathcal{C}^2$ (i.e., $\mathcal{C}^{1,\alpha}$ for any $\alpha<1$),
already in dimension $n=4$. This is precisely the target dimension in
Poznyak's theorem: as shown in \cite{Poz}, the disk $B_1\subset\R^2$ with arbitrary
$\mathcal{C}^\infty$ metric can be isometrically
$\mathcal{C}^\infty$-immersed in $\R^4$. For completeness, we present
a proof of this result in the Appendix section \ref{sec_appendix}.
In fact, the dimension $n=4$ cannot be lowered for this general
statement, because of an example in \cite{Poz} of a smooth metric
on a disk, for which there is no $\mathcal{C}^2$ isometric immersion
into $\R^3$. In \cite{GroRokh} a more striking example has been put forward, of an
analytic metric of positive Gaussian curvature that is not induced by any
$\mathcal{C}^2$ immersion into $\R^3$ and keeps this property under
any $\mathcal{C}^2$-small perturbation of the metric. We also note that 
$s_2+2 =5$, so below this dimension, a free immersion of any
$2$-dimensional Riemannian metric cannot exist. 

\medskip

The target dimension $n=3$ is different. Firstly, according to the classical
rigidity theorems due to Cohn-Vossen \cite{CV} and Herglotz \cite{Her},  in the resolution of
the Weyl problem \cite{Weyl}, a $\mathcal{C}^2$ isometric immersion of
$\mathbb{S}^2$ into $\R^3$ must be a rigid motion, i.e. a composition
of a rotation and a translation. Second, any $\mathcal{C}^2$ surface
with positive Gauss's curvature must be locally convex; for
generalizations of this statement we refer to \cite{HarNir}, and 
to \cite[Chapter II]{Pogo1}, \cite[Chapter IX]{Pogo2}
where the requirement of the $\mathcal{C}^2$ regularity is
replaced by the requirement that the immersion is $\mathcal{C}^1$ 
and that the measure on $\mathbb{S}^2$ induced by the Gauss map has
bounded variation.
Third, using geometric arguments, the same rigidity statement has been
proved by Borisov in a series of papers \cite{Bor1, Bor2, Bor3, Bor4, Bor5}, for
$\mathcal{C}^{1,\alpha}$ isometric immersions at $\alpha>2/3$, of
$\mathcal{C}^2$ metrics, with a simpler
analytic proof of this result obtained in \cite{CDS}.
In particular, we see that $2/3$ is an upper bound
on the range of H\"older exponents that can be reached using convex
integration for isometric immersions of $2$-dimensional metrics into
$\R^3$. To the contrary, our Theorem \ref{thm_main} shows that no such
rigidity statement is possible for the isometric immersions already into $\R^4$.

\medskip

\subsection{Recent results on flexibility} 
For the codimension-one case, i.e. when $n=d+1$ in (\ref{II_in}), it
already transpires from the Nash-Kuiper construction, that flexibility
should hold not only in $\mathcal{C}^1$, but also in
$\mathcal{C}^{1,\alpha}$ provided that $\alpha>0$
is sufficiently small. A first precise study of this type is due to
 Borisov, who announced in \cite{Bor6} and provided a
proof in case of $d=2$ in \cite{Bor7}, that for any analytic $g$,
the Nash-Kuiper theorem extends to local isometric immersions with regularity
$\mathcal{C}^{1,\alpha}$ and $\alpha<\frac{1}{1+2s_d}$. For arbitrary
dimension $d$, this statement
was proved by Conti, De Lellis and Szekelyhidi
in \cite{CDS}. For the special case $d=2$, the hence derived flexibility
exponent $\frac{1}{1+2s_2} = 1/7$ was improved to $1/5$ in \cite{DIS1/5},
capitalizing on the conformal equivalence of $2$-dimensional Riemannian
metrics with the Euclidean metric, thus reducing the number of
primitive (rank-one) defects in the decomposition of
$\mathcal{D}$ in  (\ref{def_def}) from $s_2=3$ to $2$. 

\medskip

The reasoning behind all the so-far H\"older exponent
improvements in flexibility results, follows precisely this line: if one can construct a modification
of a short immersion $u$ to another immersion, as done in \cite{Nash2,
Kuiper}, to the effect that $\mathcal{D}$ decreases by a factor of $\sigma^S$, for any large
$\sigma$ and some power $S$, at the expense of having 
$\nabla^2u$ increase by a factor of $\sigma^J$, for some
power $J$, then the Nash-Kuiper iteration scheme (see our
Theorem \ref{thm_NK}) is capable of producing, in the limit, an isometric immersion 
$\tilde u\in\mathcal{C}^{1,\alpha}$ with any prescribed regularity in
the range:
\begin{equation}\label{SJbasic}
0< \alpha < \frac{1}{1+2J/S},
\end{equation}
further restricted only by the regularity of $g$ when below
$\mathcal{C}^2$ (see formula (\ref{ran_NK})). Now, a rule of thumb is
that $\nabla^2u$ increases 
by one power of $\sigma$ already at the single application of
Kuiper's corrugation Step, so in case of a single codimension
available, and since in general one requires $s_d$ of such
Steps to decrease the defect by one power of $\sigma$, corresponding
to $s_d$ rank-one components in $\mathcal{D}$, formula (\ref{SJbasic}) with
$J=s_d$ and $S=1$ implies the Borisov-Conti-De Lellis-Szekelyhidi
exponent $\frac{1}{1+2s_d}$. Similarly, for $d=2$ we get after a
conformal change of variable that $J=2$ and $S=1$,
yielding the exponent $1/5$ as in \cite{DIS1/5}. 
In a recent paper \cite{CHI2}, Cao, Hirch and Inauen utilized
an auxiliary construction which transfers exactly $d$ rank-one primitive
defects onto the remaining
$s_d-d$ ones, hence allowing for $J=s_d-d$ and $S=1$, and resulting in
flexibility up to $\frac{1}{1+2(s_d-d)} =
\frac{1}{1+d^2-d}$ H\"older regularity in the first derivatives. 
For $d=2$ this implies that any short immersion of a $2$-dimensional
$\mathcal{C}^2$ metric $g$ can be approximated by $\mathcal{C}^{1,\alpha}$ isometric
immersions into $\R^3$, for any $\alpha<1/3$. In
this context, our Theorem \ref{thm_main} asserts the same statement up
to regularity $\mathcal{C}^{1,1}$ already at the next target
dimension $n=4$.

\medskip

As it is clear from the proofs, allowing for a higher codimension
$n-d>1$ only increases the regularity of the approximating
isometric immersions. From our technical viewpoint, this is linked to 
the increase of $\nabla^2u$ by only one power of $\sigma$ despite
several applications of Kuiper's corrugations in multiple Steps, as long as these
are done in different codimensions. This implies the increase of $S$
relative to $J$ or, the decrease of the ratio $J/S$ and therefore the increase
of $\alpha$. Without paying attention to such detailed considerations,
but applying the convex integration construction with Nash's spirals
as Steps, K\"allen proved in \cite{Kallen} that for $n\geq 6(d+1)(s_d+1)+2d$, 
there exists a $\mathcal{C}^{1,\alpha}$ isometric immersion of any
$g\in\mathcal{C}^{r,\beta}$ where $r+\beta<2$, for any
$\alpha<\frac{r+\beta}{2}$. Our proof of Theorem \ref{thm_main}
combines the insights from \cite{Kallen, CHI2, CDS}
with some new tricks, to reach exactly the K\"allen exponent already
in dimension $4$, in the context of flexibility rather than the mere
existence of a single isometric immersion. Some aspects of the analysis leading to our
result, was developed in the parallel context of the Monge-Amp\`ere system,
discussed below.

\medskip

\subsection{The Monge-Amp\`ere system} 
The Monge-Amp\`ere system was introduced in \cite{lew_conv} as a
higher-dimensional version of the classical Monge-Amp\`ere equation:
\begin{equation}\label{e:MA}
\begin{split}
& \det \nabla^2 v =f \quad \text{ in } \; \omega \subset \R^2,\\
& \, \mbox{for }\; v:\omega\to\R,
\end{split}
\end{equation}
in relation to its interpretation as the prescription, to the leading order terms, of the Gaussian
curvature of a shallow surface given as the graph of $v$.
In the same vein, prescription of the full Riemann curvature tensor
$[R_{ij, st}]_{i,j,s,t:1\ldots d}$ of a $d$-dimensional shallow
manifold, reads: 
\begin{equation}\label{e:MAdk}
\begin{split}
& \mathfrak{Det}\, \nabla^2 v \doteq \big[\langle\partial^2_{is}v,\partial^2_{jt}v\rangle - 
\langle\partial^2_{it}v,\partial^2_{js}v\rangle\big]_{i,j,s,t:1\ldots d}
= F \quad \text{ in } \; \omega \subset \R^d,\\
& \mbox{for }\; v:\omega\to\R^k,
\end{split}
\end{equation}
where $F:\omega\to\R^{d^4}$ is a given field. 
Indeed, the Riemann curvatures of the family of immersions $u_\epsilon
:\omega\to\R^{d+k}$ parametrized by $\epsilon\to 0$ in
$u_\epsilon(x) = (x, \epsilon v(x))$, are calculated as: 
$$R_{ij,st}\big((\nabla u_\epsilon)^T\nabla u_\epsilon)\big) =
R_{ij,st}\big(\mbox{Id}_d + \epsilon^2(\nabla v)^T \nabla v\big)=
\epsilon^2(\mathfrak{Det}\,\nabla^2v)_{ij,st} + o(\epsilon^2),$$ 
similarly to the formula for Gauss's curvature $\kappa$ in case $d=2$, $k=1$:
$$\kappa((\nabla u_\epsilon)^T\nabla
u_\epsilon)=\kappa(\mbox{Id}_2+\epsilon^2 \nabla v\otimes\nabla v)=
\frac{\epsilon^2\det \nabla^2v}{(1+\epsilon^2|\nabla v|^2)^2}
=\epsilon^2\det\nabla^2 v + o(\epsilon^2).$$ 
Relying on the special structure of $\mathfrak{Det}\,\nabla^2$, we
call $v\in H^1(\omega,\R^k)$ a {weak solution} to (\ref{e:MAdk}),
if the following identity holds in the sense of distributions: 
\begin{equation}\label{e:MA_weak}
-\frac{1}{2}\mathfrak{C}^2((\nabla v)^T\nabla v) = F \quad \text{ in } \; \omega.
\end{equation}
Here, $\mathfrak{C}^2$ is a second-order differential operator, 
which in dimension $d=2$ reduces, up to symmetries,
to taking $\mbox{curl}\,\mbox{curl}$ of a $\R^{2\times
  2}_{\sym}$- valued matrix field, whereas for $k=1$, the identity
(\ref{e:MA_weak}) becomes exactly the weak formulation of (\ref{e:MA})
as studied in \cite{lewpak_MA}: 
$$-\frac12\,\curl\,\curl\left( \nabla v\otimes \nabla v\right) = f.$$
The expression in the left hand side appeared in \cite{Iwaniec},
called therein the {very weak Hessian}. It can be observed that
the system (\ref{e:MA_weak}), when posed on a
contractible $\omega\subset\R^d$, reduces to:
\begin{equation}\label{e:VK}
\begin{split}
& \frac12 (\nabla v)^T \nabla v + \sym\, \nabla w = A
\\ & \mbox{for }\; v:\omega\to\R^k, \quad w:\omega\to\R^d,
\end{split}
\end{equation} 
where $A:\omega\to \R^{d\times d}_\sym$ satisfies the
compatibility condition: $-\mathfrak{C}^2(A) = F$, viable 
under appropriate symmetry, algebraic and
differential identities assumptions on $F$. The system (\ref{e:VK}) is called the {
  von K\'arman system}, in relation to the von K\'arman stretching
content in the theory of elasticity, where for $d=2, k=1$ the fields
$v,w$ are interpreted, respectively, as the out-of-plane and in-plane
displacements of the midsurface $\omega$ of a thin elastic plate
\cite{lew_book}. 

\medskip

\subsection{Flexibility of the Monge-Amp\`ere system}

The system (\ref{e:VK}) encodes the agreement, to the leading order,
between the family of Riemannian metrics $g_\epsilon = \mathrm{Id}_d + 2\epsilon^2
A $ and the induced metrics of the augmented immersions $\bar
u_\epsilon(x) = (x + \epsilon^2 w(x) , \epsilon v(x))$, thus revealing a
close connection between (\ref{e:MAdk}) and (\ref{II_in}) at $n=d+k$. For this reason,
one expects flexibility results for (\ref{e:VK}) of the same type as
for (\ref{II_in}). Indeed, given any short displacement pair
$(v,w)\in\mathcal{C}^1(\bar\omega,\R^{d+k})$, where the shortness 
means positive definiteness of the  defect $\mathcal{D}$ in:
$$\mathcal{D}(A, v,w) \doteq A- \big(\frac{1}{2}(\nabla v)^T\nabla v +
\sym\nabla w\big) > 0 \quad \mbox{ in } \bar \omega,$$ 
and given any $\epsilon>0$, there exists a
displacement pair $(\tilde v,\tilde 
w)\in\mathcal{C}^{1,\alpha}(\bar\omega,\R^{d+k})$ satisfying:
$$\|\tilde v - v\|_0 + \|\tilde w - w\|_0 \leq \epsilon\quad\mbox{
  and } \quad \mathcal{D}(A, \tilde v, \tilde w) = 0 \quad\mbox{ in }
\; \bar\omega.$$
The fields $(\tilde v,\tilde w)$ are obtained in the limit of a version
of the Nash-Kuiper iteration scheme with $\alpha$ in the range
specified in (\ref{SJbasic}), where $S$ is the decay rate of
$\mathcal{D}$ and $J$ is the blow-up
rate of $\nabla^2v$ resulting from a single convex integration
Stage construction. In \cite{lew_conv}, we proposed that a Stage consist of
the least common multiple $lcm(s_d, k)$ Steps, each of them applying
a version of Kuiper's corrugation in $v$ and a matching perturbation
in $w$, to the effect of replacing, as before, one rank-one primitive component of
the defect by higher order error terms. Since $\mathcal {D}$
contains $s_d$ such components, each time the counter of the
number of Steps goes over a multiple of $s_d$, the defect goes down
by one factor of $\sigma$, while each time this Step counter goes over a multiple of
$k$, the  $\nabla^2v$ goes up, also by one
factor of $\sigma$. Hence, after $lcm(s_d, k)$ of Steps, one reads the relative blow-up/decay
ratio $J/S = s_d/k$, and therefore the flexibility up to the exponent 
$\frac{1}{1+2s_d/k}$. This exponent coincides with $\frac{1}{1+2s_d}$
from \cite{CDS} for $k=1$, and for $d=2, k=1$  with the exponent
$1/7$ that has been previously obtained in \cite{lewpak_MA} for the
Monge-Amp\`ere equation. 

In \cite{lew_conv} we also exhibited an
alternative convex integration construction for (\ref{e:MAdk}), based
on superposing $s_d$ Nash's spirals in a single Step, applicable for $k\geq 2s_d$ codimensions.
We showed how the K\"allen iteration technique from \cite{Kallen} allows for the
cancellation of arbitrarily high order defects all at once, 
thus leading to $J=1$ with $S$ arbitrarily large, and ultimately yielding the flexibility exponent
$1$, always for $k\geq 2s_d$. 

\medskip

In the special case of $d=2$, the insight from
\cite{DIS1/5} was used to improve the exponent $1/7$ to $1/5$ for the equation (\ref{e:MA}),
and in \cite{lew_improved} to $\frac{1}{1+4/k}$ for the system (\ref{e:MAdk}) and
a general codimension $k$. In \cite{CHI}, the exponent $1/5$ 
was further increased to $1/3$ by introducing
another version of the basic $2$-dimensional defect decomposition with
sharper bounds that distinguish between differentiation in the slow and fast variables. That
idea preceded the construction in \cite{CHI2} in
which certain components of $\mathcal{D}$ are initially transferred on other
components, thus effectively making the spacial directions that they
correspond to in the entries of $\mathcal{D}$, fast- or slow-like. In 
\cite{lew_improved2}, the construction from \cite{CHI} and the
K\"allen-Nash spirals approach from \cite{lew_conv} led to the 
flexibility exponent $3/7$ at $k=2$, the exponent $7/15$ at $k=3$
(from the general formula $\frac{2^{k}-1}{2^{k+1}-1}$),
and the full flexibility with regularity of the
solutions to (\ref{e:VK}), up to $\mathcal{C}^{1,1}$, for $k\geq 4$. 

\medskip

In \cite{in_lew} we studied the Monge-Amp\`ere system at $d=2, k=3$
and proved its flexibility up to regularity $\mathcal{C}^{1,1-1/\sqrt{5}}$. 
That was the first result in which the obtained
H\"older exponent $1-\frac{1}{\sqrt{5}}$ was larger than $1/2$,
but not contained in a result of full flexibility up to
$\mathcal{C}^{1,1}$. The new technique in \cite{in_lew} was based on the following
observation. In all previous works, a Stage consisted of consecutive
modifications of $(v, w)$ by the oscillatory perturbations in
different codimension directions, while keeping track
of the magnitudes of the defect $\mathcal{D}$ and the partial
derivatives $\partial_{ij}v^q$. These Steps were carried out until all
of the components $\partial_{ij}v^q$ had the same order,
at which point we read the blowup rate $J$ of $\nabla^2v$ and the
corresponding decay rate $S$ of $\mathcal{D}$. Presently, we were able
to construct a perturbation cascade of Kuiper's
corrugations, relative to the progression of frequencies which never
allows for the same order in all $\partial_{ij}v^q$ at once. Since new
corrugations are built on the previous ones in a staggered manner,
then including larger and larger number of Steps in a single Stage (later
iterated by the Nash-Kuiper algorithm) keeps decreasing $J/S$. A
particular progression of frequencies, based on the Fibonacci
sequence, yielded the regularity exponent $1-\frac{1}{\sqrt{5}}$ in
\cite{in_lew}. The same construction, combined with the transfer of
the defect portions as in \cite{CHI2} and another progression of
frequencies, allowed in \cite{in_lew2} for concluding the full
flexibility up to $\mathcal{C}^{1,1}$ for the
Monge-Amp\`ere system in dimension $d=2$ and codimension $k=2$.
Our present paper achieves the same result as \cite{in_lew2}, now for the fully nonlinear
system (\ref{II_in}), at the expense of a more complex
construction that we describe below. 

\medskip

\subsection{The main technical contributions of this paper and an outline of proofs}
\label{stage_sketch}

Our main contribution is given in terms of estimates
gathered in the Stage theorem, whose iteration via
the Nash-Kuiper algorithm ultimately provides the proof of Theorem \ref{thm_main}: 

\begin{theorem}\label{thm_STA} {\textup{[STAGE]}}
Let $g\in \mathcal{C}^{r,\beta}(\bar\omega, \R^{2\times 2}_{\sym,>})$  be
defined on the closure of an open set $\omega\subset\R^2$
diffeomorphic to $B_1$, for some regularity exponents:
\begin{equation}\label{regu}
0<r+\beta\leq 2.
\end{equation}
Fix $\underline\gamma>0$ and integers $N,K\geq 4$. Then, there exists
$\underline \delta\in (0,1)$ and $\underline\sigma>1$, depending only
on $\underline\gamma,\omega, g, N, K$, such that the following holds.
Given any $u\in\mathcal{C}^2(\bar\omega,\R^4)$ and any $\delta, \mu, \sigma$ such that:
\begin{align*}
& \delta\leq \underline\delta,\qquad \mu\delta^{1/2}\geq 1,
\qquad\sigma\geq \underline\sigma,\qquad \sigma^{3N+3}\delta\leq 1,
\tag*{(\theequation)$_1$}\refstepcounter{equation} \label{Ass1} \\
& \frac{1}{2\underline\gamma}\Id_2\leq (\nabla u)^T\nabla u\leq
2\underline\gamma\Id_2\quad\mbox{ in } \;\bar\omega,
\tag*{(\theequation)$_2$} \label{Ass2}\\
& \|\mathcal{D}(g-\delta H_0, u)\|_0\leq\frac{r_0}{4}\delta \quad\mbox{
and }\quad \|u\|_2\leq \delta^{1/2}\mu,
\tag*{(\theequation)$_3$} \label{Ass3}
\end{align*}
where $r_0, H_0$ are independent quantities in Lemma \ref{lem_dec_def}, 
there exists $\tilde u\in\mathcal{C}^2(\bar\omega,\R^4)$ satisfying:
\begin{align*}
& \|\tilde u-u\|_1\leq C\delta^{1/2},\qquad \|\tilde u\|_2\leq C\mu\delta^{1/2}\sigma^{2K+N},
\tag*{(\theequation)$_1$}\refstepcounter{equation} \label{Res1} \\
&\Big\|\mathcal{D}\Big(g-\frac{\delta}{\sigma^{NK}}H_0,\tilde u\Big)\Big\|_0\leq
  \frac{r_0}{5}\frac{\delta}{\sigma^{NK}} + \frac{\|g\|_{r,\beta}}{\mu^{r+\beta}},
\tag*{(\theequation)$_2$} \label{Res2}
\end{align*}
with constants $C$ depending only on $\underline\gamma, \omega, g,N,K$.
\end{theorem}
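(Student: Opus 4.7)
The plan is to realize $\tilde u$ as the endpoint of a cascade of carefully ordered perturbations of $u$, each of them a Kuiper-type corrugation applied via Lemma \ref{lem_step2} in one of the two available codimensions $e_3,e_4$ of $\R^4$. The cascade is organized into $K$ blocks of $N$ Steps each, with a staggered progression of frequencies alternating between the two codimensions in the spirit of \cite{in_lew2,CHI2}. At the end of the cascade the defect should have decayed by the full factor $\sigma^{NK}$, matching the rescaled metric $g-(\delta/\sigma^{NK})H_0$ as required by \ref{Res2}, while the Hessian $\nabla^2\tilde u$ is allowed to grow only by the factor $\sigma^{2K+N}$ rather than the naive $\sigma^{NK}$, precisely because the two codimensions absorb the frequency increments in an alternating fashion.

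\textbf{Mollification, decomposition and single Step.} First I would mollify $g$ at scale $\mu^{-1}$ to obtain a smooth $g_\mu$ with $\|g-g_\mu\|_0\le C\|g\|_{r,\beta}\mu^{-(r+\beta)}$ and higher-derivative bounds $\|g_\mu\|_k\le C\|g\|_{r,\beta}\mu^{k-(r+\beta)}$ for $k>r+\beta$; the $C^0$ error produces the second term of \ref{Res2}, and the choice of scale is compatible with the smoothness of $u$ thanks to $\mu\delta^{1/2}\ge 1$ from \ref{Ass1}. Working with $g_\mu$ throughout the cascade, at the beginning of each block I apply Lemma \ref{lem_dec_def} to decompose the current defect $\mathcal{D}(g_\mu-\delta_nH_0,u_n)$ into a small number of rank-one summands $a_i^2\,\xi_i\otimes\xi_i$ with smooth coefficients; assumption \ref{Ass2} guarantees that this decomposition remains uniformly valid along the cascade, while assumption \ref{Ass3} supplies the starting point. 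Each Step then cancels one rank-one piece by adding to the current immersion a corrugation of the form $\lambda^{-1}\bigl(\gamma_1(\lambda x\cdot\xi_i)\zeta_1+\gamma_2(\lambda x\cdot\xi_i)\zeta_2\bigr)$ with $\zeta_1,\zeta_2$ spanning one codimension, with a quadratic error of order $\lambda^{-1}\|u_n\|_2$ in $\mathcal{D}$ and a $C^1$ contribution of order $\delta_n^{1/2}$.

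\textbf{Frequency schedule.} The heart of the construction is the choice of frequencies across the $NK$ Steps, which I would take from a staggered progression inspired by \cite{in_lew,in_lew2}. The key observation is that $\partial_{ij}\tilde u^{3}$ and $\partial_{ij}\tilde u^{4}$ each respond only to the most recently imposed top frequency in their own codimension, so a fresh corrugation along $e_3$ overwrites earlier $\partial_{ij}\tilde u^3$ data while leaving $\partial_{ij}\tilde u^4$ essentially unchanged, and vice versa. By alternating codimensions and placing the frequency of each new Step just above the currently dominant frequency in the other codimension, the within-block growth of $\|\tilde u\|_2$ per block amounts only to a factor $\sigma^N$, while the between-block transitions cost a factor $\sigma^2$ each, which summed over the $K$ blocks produces the $C^2$ estimate $\|\tilde u\|_2\le C\mu\delta^{1/2}\sigma^{2K+N}$ of \ref{Res1}. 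Summing $C^1$ drifts along the cascade gives the geometric series $\|\tilde u-u\|_1\le C\sum_n\delta_n^{1/2}\le C\delta^{1/2}$; the extra condition $\sigma^{3N+3}\delta\le 1$ ensures that amplitudes remain within the admissible regime of Lemmas \ref{lem_step2} and \ref{lem_dec_def}.

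\textbf{Main obstacle.} I expect the main difficulty to lie in propagating the defect control through the cascade in the fully nonlinear setting of \eqref{II_in}. Unlike the Monge-Amp\`ere analogue treated in \cite{in_lew2}, where the defect depends quadratically on the perturbation and cross-terms between distinct Steps decouple cleanly, here the pull-back metric $(\nabla u_n)^T\nabla u_n$ transforms multiplicatively under composition of corrugations, so cross-products between a new high-frequency corrugation and the previously accumulated lower-frequency ones produce genuine additional error terms that must be bounded, via stationary-phase or commutator-type estimates, below the next targeted defect level $\delta_{n+1}$. Verifying that the staggered-frequency arrangement preserves both the two-sided bound \ref{Ass2} at every intermediate Step and the decomposability of the current defect by Lemma \ref{lem_dec_def}, while simultaneously achieving the delicate $C^2$ bookkeeping $\sigma^{2K+N}$ rather than the naive $\sigma^{NK}$, is the central technical task of the Stage theorem.
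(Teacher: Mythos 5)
Your high-level picture — mollify $g$, decompose the defect via Lemma \ref{lem_dec_def}, run a cascade of Kuiper corrugations split between the two normal directions, and exploit the staggered frequency schedule to keep $\|\nabla^2\tilde u\|_0$ from compounding at the naive rate — is the right shape, but the sketch has a genuine gap that makes the central bookkeeping fail.

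The gap is in the mechanism that turns the defect decay $\sigma^{NK}$ into the Hessian growth $\sigma^{2K+N}$. You organize the cascade into $K$ blocks of $N$ Steps, each Step cancelling one rank-one piece "with a quadratic error of order $\lambda^{-1}\|u_n\|_2$" and raising the frequency by a factor $\sigma$. That is the naive Nash--Kuiper step: it yields a defect decay of $\sigma$ per corrugation and a frequency increase of $\sigma$ per corrugation. After $NK$ such Steps the top frequency is $\mu\sigma^{NK}$ and the amplitude of the final corrugation is $\delta_{NK}^{1/2}\sim\delta^{1/2}\sigma^{-NK/2}$, so $\|\nabla^2 \tilde u\|_0\gtrsim\delta^{1/2}\mu\,\sigma^{NK/2}$, which is $\sigma^{NK/2}$ rather than $\sigma^{2K+N}$. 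Alternating between $e_3$ and $e_4$ does not rescue this: a later corrugation in the same codimension does not "overwrite" the Hessian contribution of an earlier one, it adds a strictly larger one (higher frequency), and a corrugation in the other codimension does not touch the first at all — there is no cancellation, only a one-factor delay. So the claimed per-block cost of $\sigma^2$ with a single overall $\sigma^N$ does not follow from the Step construction you describe; the arithmetic for \ref{Res1} does not close.

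What actually achieves the exponents is absent from your sketch. In the paper a "block" is a \emph{triple} of Kuiper corrugations $u_k\to U\to\bar U\to u_{k+1}$ whose frequencies grow only by $\sigma$, $\sigma$ and $\sigma^{N/2}$ respectively (so $\mu_{k+1}/\mu_k=\sigma^{N/2+2}$), yet the defect drops by the full $\sigma^{N}$ per triple. That large decay-to-frequency ratio is manufactured by two tools you do not mention: (i) the tangential correction $W$ in Lemma \ref{lem_step2} chosen, via the $N$-fold iterated integration-by-parts identities of Lemmas \ref{lem_IBP1}--\ref{lem_IBP3}, so that the leading oscillatory error terms are converted into $\sym\nabla W$ plus a residue of order $(\mu_k/\lambda)^{N}=\sigma^{-N}$, and into a lower-rank contribution that is passed to the next corrugation direction; and (ii) the K\"all\'en iteration of Lemma \ref{prop1}, which pre-absorbs the non-oscillatory parts $\lambda^{-2}\nabla a\otimes\nabla a$, $\kappa^{-2}\nabla a\otimes\nabla a$ into the rank-one decomposition so they do not obstruct the cancellation. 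Finally, the tracking of the four quantities $\langle\partial_{ij}u_k,E^s_{u_k}\rangle$, $s=1,2$, at separate orders (the bounds \ref{Q1}--\ref{Q4}) is what makes the distinction between the two normal directions do real work; it enters through the residues $P_i^{\eta}$ in (\ref{coef_ibp1}), (\ref{coef_ibp2}), (\ref{coef_ibp3}) and is essential for closing the induction. Without these ingredients the relative blow-up/decay ratio is $J/S\sim 1/2$, far from the $\tfrac{2K+N}{NK}\to 0$ that the theorem requires. As a smaller point, it is the first inequality in \ref{Ass3} (defect close to $\delta H_0$), not \ref{Ass2}, that keeps the decomposition of Lemma \ref{lem_dec_def} valid along the cascade; \ref{Ass2} only controls the immersion constant.
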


The above yields the decay rate $S_{K,N} = KN$ of 
$\mathcal{D}$ and the blow-up rate $J_{K,N} = 2K+N$  of
$\nabla^2 u$, where the quotient of these rates becomes
arbitrarily small for large $K, N$:
\begin{equation*}
\lim_{K\to\infty}\lim_{N\to\infty} \frac{J_{K,N}}{S_{K,N}}=
\lim_{K\to\infty}\lim_{N\to\infty} \frac{2K+N}{KN} = 0.
\end{equation*}
The H\"older regularity of the limiting immersion deduced from iterating the
Stage, depends only on the aforementioned quotient and the regularity
of $g$, through the formula in (\ref{ran_NK}).
Theorem \ref{thm_main} results hence in view of the
following: 

\begin{theorem}\label{thm_NK} {\textup{[NASH-KUIPER'S ITERATION]}}
Let $\underbar u\in\mathcal{C}^\infty(\bar\omega, \R^4)$ be an
immersion, defined on the closure of an open set $\omega\subset\R^2$ diffeomorphic to $B_1$,
together with $g\in \mathcal{C}^{r,\beta}(\bar\omega, \R^{2\times 2}_{\sym,>})$ 
for some regularity exponents in (\ref{regu}).
Assume that:
$$\mathcal{D}(g,\underline u) = g - (\nabla \underline u)^T\nabla\underline u>0
\quad\mbox{ on }\bar\omega.$$
Assume further that for some $S,J,p>0$ and for some parameters:
$$\underline \delta\in (0,1), \quad \underline\gamma >1,\quad \underline\sigma>1$$ 
depending only on $\omega, \underline u,g,S,J,p$
and with $\underline\gamma$ possibly larger than $\underline \gamma$  in Theorem \ref{lem_stage0}
and $\underline\delta$ possibly smaller than $\underline\delta$ in
Theorem \ref{lem_stage0}, the following holds:
\begin{equation}\label{STA}
\left[\qquad  \begin{minipage}{12cm}
Given any $u\in\mathcal{C}^2(\bar\omega,\R^4)$ and any $\delta, \mu, \sigma$ such that:
\begin{equation*}
\begin{split}
& \delta\leq \underline\delta,\qquad \mu\delta^{1/2}\geq 1,
\qquad\sigma\geq \underline\sigma,\qquad \sigma^p\delta^{1/2}\leq 1,\\ 
& \frac{1}{2\underline\gamma}\leq (\nabla u)^T\nabla u\leq
2\underline\gamma\Id_2\quad\mbox{ in } \;\bar\omega,\\
& \|\mathcal{D}(g-\delta H_0, u)\|_0\leq\frac{r_0}{4}\delta \quad\mbox{
and }\quad \|u\|_2\leq \delta^{1/2}\mu,
\end{split}
\end{equation*}
with $r_0, H_0$ as in Lemma \ref{lem_dec_def},
there exists $\tilde u\in\mathcal{C}^2(\bar\omega,\R^4)$ satisfying:
\begin{equation*}
\begin{split}
& \|\tilde u-u\|_1\leq C\delta^{1/2},\qquad \|\tilde u\|_2\leq C\mu\delta^{1/2}\sigma^J,\\
&\Big\|\mathcal{D}\Big(g-\frac{\delta}{\sigma^S}H_0,\tilde u\Big)\Big\|_0\leq
  \frac{r_0}{5}\frac{\delta}{\sigma^S} + \frac{\|g\|_{r,\beta}}{\mu^{r+\beta}},
\end{split}
\end{equation*}
with constants $C$ depending only on $\omega, \underline u, g,S,J,p$.
\end{minipage}\right]
\end{equation}
Then, for every $\epsilon>0$ and every exponent $\alpha$ in the range:
\begin{equation}\label{ran_NK}
0<\alpha<\min\Big\{\frac{r+\beta}{2},\frac{1}{1+ 2J/S}\Big\},
\end{equation}
there exists an immersion $\bar u\in\mathcal{C}^{1,\alpha}(\bar\omega, \R^4)$ such that:
$$\|\bar u - \underline u\|_0\leq \epsilon\quad\mbox{ and }\quad
\mathcal{D}(g,\bar u)=0\quad\mbox{in }\; \bar\omega.$$
\end{theorem}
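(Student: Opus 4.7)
The plan is a Nash-Kuiper iteration using the hypothesis (STA) as the single-stage building block, with geometric parameter sequences chosen so that (i) the output of each stage serves as admissible input to the next, and (ii) the cumulative $\mathcal{C}^{1,\alpha}$ increments form a summable series. Fix $\alpha$ in the range (\ref{ran_NK}) and choose an exponent $a>1/2$ satisfying
\[
a > \frac{1}{r+\beta}, \qquad a > \frac{1}{2} + \frac{J}{S}, \qquad \alpha a < \frac{1}{2};
\]
the range of $\alpha$ is precisely what makes these three inequalities simultaneously achievable. Fix $b>1$ large (to be enlarged below) and set $\sigma \doteq b^{1/S}$, $\delta_q \doteq \delta_0 b^{-q}$, $\mu_q \doteq \delta_q^{-a}$ for $q\geq 0$, where $\delta_0\in (0,1)$ is small and will be fixed at the end.

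Starting from $\underline u$, I first produce $u_0\in\mathcal{C}^2(\bar\omega,\R^4)$ satisfying the input hypotheses of (STA) at $(\delta_0,\mu_0,\sigma)$; this preliminary step is handled by invoking Theorem \ref{lem_stage0}, which aligns the defect $\mathcal{D}(g,\underline u)$ with the reference form $\delta_0 H_0$. Then I iteratively apply (STA) to construct $u_{q+1}$ from $u_q$ at parameters $(\delta_q,\mu_q,\sigma)$. Each output of (STA) must satisfy the input hypotheses for the next stage: the $\mathcal{C}^2$ bound propagates since $\|u_{q+1}\|_2\leq C\mu_q\delta_q^{1/2}\sigma^J = Cb^{J/S+a-1/2}\delta_{q+1}^{1/2}\mu_{q+1}\leq \delta_{q+1}^{1/2}\mu_{q+1}$ provided $b$ is large, using $a>1/2+J/S$; the defect bound propagates since $(r_0/5)\delta_{q+1}+\|g\|_{r,\beta}\delta_{q+1}^{a(r+\beta)}\leq (r_0/4)\delta_{q+1}$ for $\delta_0$ small, using $a(r+\beta)>1$; the metric pinching survives because $\|\mathcal{D}(g,u_q)\|_0\leq \|\mathcal{D}(g-\delta_q H_0,u_q)\|_0+\delta_q\|H_0\|_0\leq C\delta_q\to 0$ forces $(\nabla u_q)^T\nabla u_q\to g$ uniformly, so $\underline\gamma$ can be enlarged once and for all to accommodate every $q$. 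The remaining technical constraint $\sigma^p\delta_q^{1/2}\leq 1$ reduces to $\delta_0\leq b^{-2p/S}$.

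Convergence in $\mathcal{C}^{1,\alpha}$ follows by interpolation: from $\|u_{q+1}-u_q\|_1\leq C\delta_q^{1/2}$ and $\|u_{q+1}-u_q\|_2\leq C\delta_q^{1/2-a}b^{J/S}$ one obtains
\[
\|u_{q+1}-u_q\|_{1,\alpha}\leq C b^{\alpha J/S}\,\delta_q^{1/2-\alpha a},
\]
summable in $q$ since $\alpha a<1/2$. The limit $\bar u\in\mathcal{C}^{1,\alpha}(\bar\omega,\R^4)$ is isometric because $\|\mathcal{D}(g,u_q)\|_0\leq C\delta_q\to 0$, and the $\mathcal{C}^0$ estimate $\|\bar u-\underline u\|_0\leq C\delta_0^{1/2}\leq \epsilon$ is secured by taking $\delta_0$ small. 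The only nonroutine step is the initial preparation of $u_0$: (STA) demands that the defect already have the specific reference form $\delta_0 H_0$ up to an error of order $r_0\delta_0$, which does not hold for an arbitrary $\underline u$; absorbing this mismatch is the content of the preliminary stage invoked above. Once $u_0$ is in place, what remains is merely bookkeeping on the geometric sequences $\delta_q,\mu_q,\sigma$.
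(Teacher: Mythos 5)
Your proposal follows the standard Nash--Kuiper strategy but replaces the paper's double-exponential ansatz with a single-exponential (geometric) one: $\delta_q = \delta_0 b^{-q}$, $\mu_q = \delta_q^{-a}$, constant $\sigma = b^{1/S}$. This leads to a genuine gap at exactly the spot you flag as a ``preliminary step.'' Theorem \ref{lem_stage0} delivers $u_0$ with $\|u_0\|_2 \leq C\delta_0^{-\underline\tau}$, where $\underline\tau \geq 1 + \frac{1}{r+\beta}$ depends on $\omega,\underline u,g$ and may be large. For $u_0$ to satisfy the hypothesis $\|u_0\|_2 \leq \delta_0^{1/2}\mu_0 = \delta_0^{1/2-a}$ of (\ref{STA}), you need $C \leq \delta_0^{\underline\tau + 1/2 - a}$; and since $\delta_0\to 0$ is forced (to secure $\|\bar u - \underline u\|_0 \leq \epsilon$), this requires $a \geq \underline\tau + \frac{1}{2}$. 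Combined with your summability condition $\alpha a < \frac{1}{2}$, this restricts $\alpha < \frac{1}{1+2\underline\tau}$, which is generally strictly smaller than the claimed range. For example, with $r+\beta=2$ and $J/S$ small the claimed range reaches $\alpha$ near $1$, while $\underline\tau\geq\frac{3}{2}$ caps your argument at $\alpha<\frac{1}{4}$. The single exponent $a$ cannot simultaneously absorb the initial transient (demanding $a$ large) and produce the asymptotic H\"older rate (demanding $a < \frac{1}{2\alpha}$).

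The double-exponential ansatz of the paper, $\delta_n = a^{-b^n}$, $\mu_n = a^{\tau + (b^n-1)/(2\theta)}$ with $\sigma_{n+1} = (\delta_n/\delta_{n+1})^{1/S}$, is precisely what decouples these two demands: the offset $\tau = \underline\tau + 1$ absorbs the initial blow-up \emph{additively} in the exponent, showing up only as a fixed multiplicative constant $a^{(\tau - 1/(2\theta))\alpha}$ in the interpolation estimate $\|u_{n+1}-u_n\|_{1,\alpha} \leq Ca^{(\tau - 1/(2\theta))\alpha}a^{-b^n q}$, while the decay rate $q>0$ depends only on $\theta$ and $\alpha$. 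There is no coupling between $\tau$ and $\theta$, so the full range of $\alpha$ is recovered. Apart from this, your stage-to-stage bookkeeping is correct in spirit, and the feasibility of your three exponent inequalities is exactly the constraint defining (\ref{ran_NK}); two transcription slips are worth flagging but not fatal: the $\mathcal{C}^2$-propagation factor should read $b^{J/S - (a - 1/2)}$ rather than $b^{J/S + a - 1/2}$ (the correct exponent is negative, which is what closes the bound), and the $\|g\|_{r,\beta}$ contribution is $\delta_q^{a(r+\beta)}$, not $\delta_{q+1}^{a(r+\beta)}$.
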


\bigskip

\noindent A proof of Theorem \ref{thm_NK} will be
given in section \ref{section_NK}. It relies on iterating Theorem
\ref{thm_STA} with a particular progression 
of $\delta$, $\mu$ and $\sigma$. For the base of
the induction, one necessitates the initial Stage which decreases the
large but positive definite initial defect $\mathcal{D}(g, \underline
u)$ to a much smaller
defect that satisfies the first condition in \ref{Ass3}, tied with
specific scaling laws on the initial increase of $\nabla^2u$. A
self-contained proof of that initial Stage
is presented in Theorem \ref{lem_stage0}, via a construction based on 
Nash's spirals Step in Lemma \ref{lem_step1}.
 
\bigskip

\noindent We now sketch the proof of Theorem \ref{thm_STA}. 

\medskip

{\bf 1.} The new field $\tilde u=u_K$ is constructed from $u$ as the
final, $K$-th immersion in the $K$-tuple $\{u_k\}_{k=1}^K$. 
The initial immersion $u_0$ is a mollified version of $u$ at the
given length scale of the order of $\mu_0=\mu$, and we set $\delta_0 = \delta$. This immersion
induces two unit normal vector fields $(E_{u_0}^1, E_{u_0}^2)$, perpendicular to
$\nabla u_0$ and to each other. Only $(E_{u_0}^1, E_{u_0}^2)$ are
defined independently, while each consequtive normal frame
$(E_{u_{k+1}}^1, E_{u_{k+1}}^2)$ is constructed from the previous
$(E_{u_{k}}^1, E_{u_{k}}^2)$, see Section \ref{sec_normals}. 
The intermediate defect bounds and frequencies
$\{\delta_k,\mu_k\}_{k=1}^K$ are set to satisfy:
\begin{equation}\label{intro0}
\mu_1 = \mu_0\sigma^{N+2}, \quad \mu_{k+1}=\mu_k\sigma^{N/2+2},\qquad
\delta_{k+1}=\frac{\delta_k}{\sigma^N}, 
\end{equation}
(see Figure \ref{fig_freq}) and the following inductive estimates are proved to hold, for
all $k=0\ldots K-1$ and all $m$ up to a sufficiently large (but finite) number:
\begin{equation}\label{intro1}
\begin{split}
& \|u_{k+1} - u_k\|_1\leq C\delta_{k}^{1/2},\\
& \|\nabla^{(m+1)} (u_{k+1} - u_k)\|_0 + \sum_{i=1}^2\|\nabla^{(m)} (E^i_{u_{k+1}} - E^i_{u_k})\|_0\leq 
C\delta_k^{1/2}\mu_{k+1}^m,\\
&\|\nabla^{(m)}\mathcal{D}(g_0-\delta_{k+1}H_0,
u_{k+1})\|_0\leq \frac{\delta_k}{\sigma^N}\mu_{k+1}^m = \delta_{k+1}\mu_{k+1}^m.
\end{split}
\end{equation}
Hence, at the counter $k=K$, the recursion (\ref{intro1}) yields: 
$$\delta_K=\frac{\delta}{\sigma^{KN}} \quad\mbox{ and }\quad 
\delta_{K-1}^{1/2}\mu_K = \delta^{1/2}\mu\sigma^{2K+N}$$
implying the claimed bounds \ref{Res1}, \ref{Res2} in view of (\ref{intro1}). The additional term in
the right hand side of \ref{Res2} is due to the fact that in
(\ref{intro1}) we calculate the defect with respect to the mollified version $g_0$ of $g$ rather than
$g$ itself, while the prefactor $r_0/5$ is secured by obtaining a
slightly larger decay $\delta_{k}/\sigma^{N+1/2}$ rather than
$\delta_k/\sigma^N$, and taking $\sigma$ large. The necessity of this
prefactor as well as the need for estimating the defect relative to a
fixed matrix $H_0$, follow from the validity of a decomposition of
a symmetric matrix
$H\in\R^{2\times 2}_\sym$ as a linear combination of three rank-one matrices
with nonnegative and uniformly bounded coefficients $\{a_i\in
(\frac{1}{2},\frac{3}{2})\}_{i=1}^3$:
\begin{equation}\label{intro2}
H=\sum_{i=1}^3 a_i^2 \eta_i\otimes \eta_i,
\end{equation}
not for all $H$, but in the vicinity of,
say, $H_0=\sum_{i=1}^3 \eta_i\otimes \eta_i$ (see Lemma \ref{lem_dec_def}). 

\medskip

{\bf 2.} The core of proof of (\ref{intro1})  relies on obtaining specific bounds on the
components of $\nabla^2u_k$: 
\begin{equation}\label{intro3}
\begin{split}
& \|\nabla^{(m)} \langle \partial_{ij}u_k,E^1_{u_k}\rangle\|_0\leq
C\frac{\delta_{k-1}^{1/2}}{\sigma^{N/2}}\mu_{k}^{m+1}, \; \qquad 
\|\nabla^{(m)} \langle \partial_{11}u_k,E^2_{u_k}\rangle\|_0\leq
C\frac{\delta_{k-1}^{1/2}}{\sigma^{N}}\mu_{k}^{m+1},\\
& \|\nabla^{(m)} \langle \partial_{12}u_k,E^2_{u_k}\rangle\|_0\leq
C \frac{\delta_{k-1}^{1/2}}{\sigma^{N/2}}\mu_{k}^{m+1},\qquad
\|\nabla^{(m)} \langle \partial_{22}u_k,E^2_{u_k}\rangle\|_0\leq
C \delta_{k-1}^{1/2}\mu_{k}^{m+1}.
\end{split}
\end{equation}
The above bounds can be anticipated from the next definition. Namely, $u_{k+1}$ is
constructed from $u_k$ by passing through two extra intermediate immersion fields in:
\begin{equation}\label{intro4}
\begin{split}
& {U} = u_k + \frac{\Gamma(\lambda \langle x, \eta_1\rangle)}{\lambda} a_1 {E^1_{u_k}}
+ T_{u_k}\Big(\frac{\bar\Gamma(\lambda \langle x, \eta_1\rangle)}{\lambda}a_1^2\eta_1 + W\Big),\\
& {\bar U} = U + \frac{\Gamma(\kappa \langle x, \eta_2\rangle)}{\kappa} a_2{E^1_{U}}
+ T_{U}\Big(\frac{\bar\Gamma(\kappa \langle x,
  \eta_2\rangle)}{\kappa}a_2^2\eta_2 + \bar W\Big), \\
& {u_{k+1}} = \bar U + \frac{\Gamma(\mu_{k+1} \langle x,
  \eta_3\rangle)}{\mu_{k+1}} b {E^2_{\bar U}} + T_{\bar
  U}\Big(\frac{\bar\Gamma(\mu_{k+1} \langle x, \eta_3\rangle)}{\mu_{k+1}}b^2\eta_3 + \dbar W\Big),
\end{split}
\end{equation}
where the frequencies (see Figure \ref{fig_freq2}) are: 
$$\lambda = \mu_k\sigma, \quad \kappa=\mu_k\sigma^2.$$
The coefficients $\{a_i\}_{i=1}^3$, $b$ are obtained from decomposing the
defect via (\ref{gaza}), where $b$ contains $a_3$ and other components,
specified later. As a first attempt, think of applying (\ref{intro2}) to:
$$H=\mathcal{D}(g_0-\delta_{k+1}H_0, u_{k}).$$ 
Since that decomposition is linear, the induction
assumption implies $a_i, b\sim \delta_k^{1/2}$.
The definition (\ref{intro4}) is consistent with the
single Kuiper's corrugation construction (see Lemma \ref{lem_step2}),
in which the first term is always aligned with the chosen normal direction
(here, $E^1$ in $U$ and $\bar U$, and $E^2$ in $u_{k+1}$) to the current
immersion (here $u_k$, then $U$, then $\bar U$), 
while the second term carries a tangential component via
the basis of the tangent vectors to $u$ (the tangent frame) given in 
$T_u\doteq (\nabla u) ((\nabla u)^T\nabla u)^{-1}$ and a chosen
$2$-dimensional perturbation field $W$.

\medskip

To deduce (\ref{intro3}), one uses (\ref{intro4}) and the
induction assumption. Neglecting the tangential components and gathering
only the highest order powers in frequencies, we have:
\begin{equation*}
\begin{split}
& \|\nabla^{(m)} \langle \partial_{ij}u_{k+1},E^1_{u_{k+1}}\rangle\|_0\sim \|\nabla^{(m)}
\langle \partial_{ij}u_{k},E^1_{u_{k}}\rangle\|_0
+ \|a_1 \|_0\lambda^{m+1} + \|a_2 \|_0\kappa^{m+1} \\ & 
\qquad\qquad \qquad \qquad \qquad \;\, \leq C \frac{\delta_{k-1}^{1/2}}{\sigma^{N/2}}\mu_{k}^{m+1}
+ C \delta_k^{1/2}\frac{\mu_{k+1}^{m+1}}{\sigma^{N/2}}\mu_{k+1}^{m+1}
= C \frac{\delta_{k}^{1/2}}{\sigma^{N/2}}\mu_{k+1}^{m+1},\\
& \|\nabla^{(m)} \langle \partial_{22}u_{k+1},E^2_{u_{k+1}}\rangle \|_0\sim \|\nabla^{(m)}
\langle \partial_{22}u_{k},E^2_{u_{k}}\rangle \|_0
+ \|b \|_0\mu_{k+1}^{m+1}  \\ & 
\qquad\qquad \qquad \qquad \qquad \;\; \;\leq C \delta_{k-1}^{1/2}\mu_{k}^{m+1}
+ C{\delta_k^{1/2}}\mu_{k+1}^{m+1} \leq C {\delta_k^{1/2}}\mu_{k+1}^{m+1},
\end{split}
\end{equation*}
in view of the assumed increase rates: from $\delta_k$ to $\delta_{k+1}$, and from
$\mu_k$ to $\lambda$ to $\kappa$ to $\mu_{k+1}$. For the remaining
components we use that $\eta_2=e_2$, so that the bounds (\ref{intro3}) are closed in:
\begin{equation*}
\begin{split}
&\|\nabla^{(m)} \langle \partial_{11}u_{k+1},E^2_{u_{k+1}}\rangle\|_0\sim \|\nabla^{(m)}
\langle \partial_{11}u_{k},E^2_{u_{k}}\rangle\|_0 + \|b\|_0\mu_{k+1}^{m-1} \\ & 
\qquad\qquad \qquad \qquad \qquad \;\;
\leq C \frac{\delta_{k-1}^{1/2}}{\sigma^N}\mu_{k}^{m+1}
+ C {\delta_k^{1/2}}\mu_{k+1}^{m-1}
\sim C\frac{\delta_k^{1/2}}{\sigma^N}\mu_{k+1}^{m+1},\\
& \|\nabla^{(m)} \langle \partial_{12}u_{k+1},E^2_{u_{k+1}}\rangle\|_0\sim \|\nabla^{(m)}
\langle \partial_{12}u_{k},E^2_{u_{k}}\rangle\|_0 + \|b \|_0\mu_{k+1}^{m} 
\leq C\frac{\delta_k^{1/2}}{\sigma^{N/2}}\mu_{k+1}^{m+1}.
\end{split}
\end{equation*}

\medskip

{\bf 3.} We now trace the changes of the defect: from that corresponding to $u_k$,
to $U$ then to $\bar U$. The departing point in our
construction is the decomposition 
(\ref{intro2}), which needs to be administered under the K\"allen iteration technique (see
Theorem \ref{prop1}). This technique allows to decompose the defect
together with a specific portion of the future error, namely the non-oscillatory
portion which cannot be handled otherwise. A new error 
$\mathcal{F}$ is hence introduced, however it can be made arbitrarily
small. This is the key outcome of K\"allen's iteration:
\begin{equation}\label{gaza}
\begin{split}
& \mathcal{D}(g_0-\delta_{k+1}H_0, u_k) - \frac{1}{\lambda^2}\nabla a_{1}\otimes \nabla a_{1}
- \frac{1}{\kappa^2}\nabla a_{2} \otimes \nabla a_2 = \sum_{i=1}^3a_{i}^2\eta_i\otimes\eta_i 
+\mathcal{F}, \\
& \mbox{where } \quad \|\nabla^{m}\mathcal{F}\|_0\leq C\frac{\delta_k}{\sigma^{N}} \mu^m.
\end{split}
\end{equation}
Now, Kuiper's corrugation in $U$ in (\ref{intro4}), has the purpose
of cancelling both $a_1^2\eta_1\otimes \eta_1$ and $\frac{1}{\lambda^2}\nabla a_{1}\otimes \nabla a_{1}$
from $\mathcal{D}(g_0-\delta_{k+1}H_0, u_k)$ above, and it does so,
because (see Lemma \ref{lem_step2}):
\begin{equation}\label{intro5}
\begin{split}
& (\nabla U)^T\nabla U - (\nabla u_k)^T\nabla u_k = a_1^2\eta_1\otimes
\eta_1 + \frac{1}{\lambda^2}\nabla a_{1}\otimes \nabla a_{1}
\\ & +\frac{\Gamma(\lambda \langle x, \eta_1\rangle)^2-1}{\lambda^2} S_1 + 
  \frac{\Gamma\Gamma'(\lambda \langle x, \eta_1\rangle)}{\lambda}S_2 +
\frac{\Gamma(\lambda \langle x, \eta_1\rangle)}{\lambda}S_3 + \frac{\bar\Gamma(\lambda
  \langle x, \eta_1\rangle)}{\lambda}S_4 \\ & +\sym\nabla W + \mathcal{R},
\end{split}
\end{equation}
where $\mathcal{R}$ is a residual error term, arbitrarily
small in the sense given below, while the quantities $S_1, S_2, S_3, S_4$
define the leading order errors. The field $W$ is such
that it cancels all entries of these
principal four errors 
apart from their $e_2\otimes e_2$ components, at the expense of a much
smaller new error $\mathcal {G}$. This can be done precisely because 
the oscillation profiles $\Gamma^2-1$,
$\Gamma\Gamma'$, $\Gamma$ and $\bar\Gamma$ have mean zero on the
period (see Lemma \ref{lem_IBP1}). The main idea
is based on the decomposition: 
\begin{equation}\label{intro55}
\begin{split}
& \Gamma (\lambda x_1) H  = \Gamma(\lambda x_1) \sym ((H_{11},
2H_{12})\otimes e_1) + \Gamma(\lambda x_1) H_{22}e_2\otimes e_2
\\ & = \sym\nabla \Big(\frac{\Gamma_1(\lambda x_1)}{\lambda} (H_{11},
2H_{12})\Big) - \frac{\Gamma_1(\lambda x_1)}{\lambda}\sym\nabla
(H_{11}, 2H_{12})  + \Gamma(\lambda x_1) H_{22}e_2\otimes e_2,
\\ & \mbox{where } \Gamma_1'=\Gamma,
\end{split}
\end{equation}
that is iterated $N$ times, decreasing the second term in its right hand side to
the order $1/\lambda^N$. Of course, the primitives
$\Gamma_N$ given iteratively by the formula $(\frac{d}{dt})^{(N)}\Gamma_N=\Gamma$, remain
bounded provided that their initial oscillatory profile $\Gamma$ has mean zero on its period.
In conclusion, our first intermediate defect is composed of:
\begin{equation}\label{intro6}
\begin{split}
& \mathcal{D}(g-\delta_{k+1}H_0, U)  =\sum_{i=2}^3
  a_{i}^2\eta_i\otimes \eta_i + \frac{1}{\kappa^2}\nabla
  a_{2}\otimes\nabla a_2 + \mathcal{F} +   \mathcal{G} + \mathcal{R}- Ge_2 \otimes e_2,
\\ & \mbox{where:} \quad \|\nabla^{(m)}\mathcal{F}\|_0\sim
C \frac{\delta_k}{\sigma^N} \lambda^m \quad \mbox{ from K\"allen's iteration}\\
& \qquad\quad \;\; \; \|\nabla^{(m)}\mathcal{G}\|_0\sim
C\frac{\delta_k}{(\lambda/\mu_k)^{N}}\lambda^m 
\leq C\frac{\delta_k}{\sigma^{N}}\lambda^m \quad \mbox{ from oscillatory
  decomposition}\\
& \qquad\quad \;\; \, \|\nabla^{(m)}\mathcal{R}\|_0\leq C
\delta_k^{3/2}\sigma^{N/2}\lambda^m\leq C \frac{\delta_k}{\sigma^{N}}\lambda^m
\quad \mbox{ from the last assumption in \ref{Ass1}}.
\end{split}
\end{equation}
The bound on the derivatives of $\mathcal{G}$ is due to the fact that 
$S_1$, $S_2$, $S_3$, $S_4$ oscillate with frequency $\mu_k$ and $\lambda/\mu_k=\sigma$.
A similar reasoning applies to the second
intermediate immersion $\bar U$ in (\ref{intro4}), where the second
Kuiper corrugation is used to cancel $a_2^2\eta_2\otimes \eta_2$ 
and $\frac{1}{\kappa^2}\nabla a_{2}\otimes \nabla a_{2}$
from $\mathcal{D}(g_0-\delta_{k+1}H_0, U)$, 
while $\bar W$ is defined to cancel (see Lemma \ref{lem_IBP2}) all but the $e_2\otimes e_2$ entries of:
$$ \frac{\Gamma(\kappa \langle x,
    \eta_2\rangle)^2-1}{\kappa^2} \bar S_1 +
  \frac{\Gamma\Gamma'(\kappa \langle x, \eta_2\rangle)}{\kappa}\bar S_2 +
\frac{\Gamma(\kappa \langle x, \eta_2\rangle)}{\kappa}\bar S_3 + \frac{\bar\Gamma(\kappa
 \langle x, \eta_2\rangle)}{\kappa}\bar S_4,$$
derived as the four principal errors terms (see Lemma
\ref{lem_step2}) in $(\nabla \bar U)^T\nabla \bar U
- (\nabla U)^T\nabla U$, similarly as in (\ref{intro5}).
Hence, the second intermediate defect is composed of:
\begin{equation}\label{intro7}
\begin{split}
& \mathcal{D}(g-\delta_{k+1}H_0, \bar U)  =
 \big( a_3^2 - G-\bar G) \eta_3\otimes \eta_3 + \mathcal{E} +
 \bar{\mathcal{G}} + \bar{\mathcal{R}}
\\ & \mbox{where:} \quad \|\nabla^{(m)}\mathcal{E}\|_0\sim
C \frac{\delta_k}{\sigma^N} \lambda^m \quad \mbox{ from (\ref{intro6})}\\
& \qquad\quad \;\; \; \|\nabla^{(m)}\bar{\mathcal{G}}\|_0\sim
C\frac{\delta_k}{(\kappa/\lambda)^{N}}\kappa^m  \leq 
C\frac{\delta_k}{\sigma^{N}}\kappa^m \quad \mbox{ from oscillatory
  decomposition}\\
& \qquad\quad \;\; \, \|\nabla^{(m)}\bar{\mathcal{R}}\|_0\leq C
\delta_k^{3/2}\sigma^{N/2}\kappa^m\leq C \frac{\delta_k}{\sigma^{N}}\kappa^m 
\quad \mbox{ from the last assumption in \ref{Ass1}}.
\end{split}
\end{equation}
The bound on the derivatives of $\bar{\mathcal{G}}$ is due to the fact that 
$\bar S_1$, $\bar S_2$, $\bar S_3$, $\bar S_4$ oscillate with
frequency $\lambda$ and $\kappa/\lambda=\sigma$.
From the bounds on $G, \bar G$ one can correctly define the augmented amplitude:
$$b^2=a_{3}^2 - G - \bar  G$$ 
that obeys: $ b^2\sim \delta_k$ and $\|\nabla^{(m)} b^2\|_0\leq
C \frac{\delta_k}{\sigma}\kappa^m$.

\medskip

{\bf 4.} We now analyze the change of the defect from that
corresponding to the immersion $\bar U$ to the final $u_{k+1}$. 
The third Kuiper's corrugation in (\ref{intro4}) is used  to cancel
the term $b^2e_2\otimes e_2$ in $\mathcal{D}(g-\delta_{k+1}H_0, \bar U)$
and note that it is the first time that we
use the second codimension normal field $E^2$.
The field  $\dbar W$ is defined by applying the oscillatory
decomposition in Lemma \ref{lem_IBP2}, to cancel all but the $e_1\otimes e_1$ entries of
the following principal errors in Lemma \ref{lem_step2}:
$$ \frac{\Gamma\Gamma'(\mu_{k+1}\langle x, \eta_3\rangle)}{\mu_{k+1}}\dbar S_2 +
\frac{\Gamma(\mu_{k+1} \langle x, \eta_3\rangle)}{\mu_{k+1}}\dbar S_3 + \frac{\bar\Gamma(\mu_{k+1}
\langle x, \eta_3\rangle)}{\mu_{k+1}}\dbar S_4,$$
with only two iterations of  (\ref{intro55}), rather than $N$ as
before. This is precisely why we necessitate $\mu_{k+1}/\kappa\sim
\sigma^{N/2}$  large, whereas $\lambda/\mu_k =\sigma$ and
$\kappa/\lambda=\sigma$ has sufficed. The contribution corresponding
to $\dbar S_1=\nabla b\otimes \nabla b$ is estimated directly in the
second bound below, due to $b$ and hence $\dbar S_1$
oscillating with frequency $\kappa$. It follows that:
\begin{equation}\label{intro8}
\begin{split}
& \mathcal{D}(g-\delta_{k+1}H_0, u_{k+1})  =
 \dbar G e_1\otimes e_1 + \mathcal{E} + \bar{\mathcal{E}} 
 + \frac{\Gamma(\mu_{k+1} \langle x, \eta_3\rangle)^2}{\mu_{k+1}^2}\nabla b\otimes\nabla b +
\dbar{\mathcal{R}}+ \dbar{\mathcal{G}}
\\ & \mbox{where:} \quad \|\nabla^{(m)}(\mathcal{E} + \bar{\mathcal{E}})\|_0\sim
C \frac{\delta_k}{\sigma^N} \kappa^m \quad \mbox{ from (\ref{intro6}), (\ref{intro7})}\\
& \qquad\quad \;\; \; \big\| \nabla^{(m)}\Big(
\frac{\Gamma(\mu_{k+1} \langle x, \eta_3\rangle)^2}{\mu_{k+1}^2}\nabla
b\otimes\nabla b\Big)\big\|_0\leq C
\frac{\delta_k}{(\mu_{k+1}/\kappa)^2}\mu_{k+1}^m \leq C\frac{\delta_k}{\sigma^{N}}\mu_{k+1}^m 
\\ & \qquad\quad \;\; \; \|\nabla^{(m)}\dbar{\mathcal{G}}\|_0\sim 
\frac{\delta_{k-1}}{(\mu_{k+1}/\kappa)^{3}} \mu_{k+1}^m =
\frac{\delta_k \sigma^{N/2}}{(\mu_{k+1}/\kappa)^{3}} \mu_{k+1}^m \leq 
C \frac{\delta_k}{\sigma^{N}}\mu_{k+1}^m
\\ & \qquad\qquad\qquad \qquad\qquad \qquad\qquad \qquad\quad 
\qquad\qquad \qquad
\mbox{from oscillatory decomposition}\\
& \qquad\quad \;\; \, \|\nabla^{(m)}\dbar{\mathcal{R}}\|_0\leq C
\delta_k^{3/2}\sigma^{N/2}\mu_{k+1}^m\leq C \frac{\delta_k}{\sigma^{N}}\mu_{k+1}^m 
\quad \, \mbox{by last assumption in \ref{Ass1}}.
\end{split}
\end{equation}
The third bound above follows in virtue of (\ref{intro3}), since the worst term in
$\dbar{\mathcal{G}}$ is:
$$\dbar{\mathcal{G}} \sim \frac{\Gamma(\mu_{k+1}\langle x,
  \eta_3\rangle)}{\mu_{k+1}^3}\nabla^2\dbar S
\quad \mbox{ where } \quad \dbar S \sim  b
(\nabla\bar U)^T\nabla E^2_{\bar U}. $$
In conclusion, the final assertion of the inductive bound (\ref{intro1}), namely:
$$\|\nabla^{(m)}\mathcal{D}(g_0-\delta_{k+1}H_0, u_{k+1})\|_0
\sim \frac{\delta_k}{\sigma^{N}}\mu_{k+1}^m,$$
follows in view of (\ref{intro8}), if we are able to check that: 
\begin{equation}\label{intro9}
\|\nabla^{(m)}\dbar G\|_0\sim
  \frac{\delta_k}{\sigma^{N}}\mu_{k+1}^m.
\end{equation} 
This indeed holds, and can be seen from the exact formulas on the $e_1\otimes
  e_1$ residual components in the decomposition parallel to that in
  (\ref{intro55}), given in Lemma \ref{lem_IBP3}:
\begin{equation*}
\begin{split}
& \dbar G \sim 
\sum_{i=0}^1 \frac{\Gamma_{i}(\mu_{k+1} \langle x, \eta_3\rangle
  )}{\mu_{k+1}^{i+1}} P_i(\dbar S_2 + \dbar S_3 + \dbar S_4)
\qquad \mbox{where} \quad \dbar S_2\sim b\,\sym(\nabla b\otimes \eta_3), 
\\ & \mbox{and } \quad
\dbar S_3\sim b\,\sym\big((\nabla \bar U)^T\nabla E^2_{\bar  U})\big),
\quad\dbar S_4 \sim \sym\big((\nabla\bar U)^T\nabla (b^2T_{\bar U}\eta_3)\big),\\
& \mbox{with} \quad P_0(H)=H_{11},\quad P_1(H)= \partial_1H_{12}.
\end{split}
\end{equation*}
In particular, only components $(\dbar{S}_i)_{11}$ and
$\partial_1(\dbar{S}_i)_{12}$ enter in the formula on $\dbar G$.
We prove (\ref{intro9}) using all the information in (\ref{intro3}) as
well as the relation between $\kappa$ and $\mu_{k+1}$.

\medskip

This ends the sketch of proof of Theorem \ref{thm_STA}. We close by
a few extra points:
\begin{itemize} 
\item[-] The distinction between definitions of $\mu_1$ and
$\mu_{k}$ at $k>1$ in (\ref{intro0}) is needed for the proof of the inductive bounds on
$\langle \partial_{11}u_k, E^2_{u_k}\rangle$ in (\ref{intro3}), separately at the
induction base $k=1$ and then at the induction step. 
\item[-] Along the proof, we need
to keep track of the evolution of the orthonormal frame $(E^1,
E^2)$ and the tangent frame $T$ to the surfaces given by immersions $u_k, U, \bar U$, and also
to have all these immersions satisfy
the uniform condition \ref{Ass2}. 
\item[-] We provide bounds on any order of derivatives $\nabla^{(m)}$ of various fields, however
only a finite number of them is relevant (in the proofs we specify how
many, for completeness). Hence all constants
$C$ in our estimates are uniform, though depending on $N$ (which
refers to the decay rate of the defect passing from $u_k$ to
$u_{k+1}$), and $K$ (which stands for the number of triple Steps in a Stage). 
\item[-] Several error quantities are estimated by means
of higher powers of $\delta$ (like $\delta^{3/2}$ or $\delta^2$). All such
quantities are bounded by a quotient of $\delta$ and
the relative frequency $\sigma$ raised to a convenient power at hand, because of the last
assumption in \ref{Ass1}. 
\end{itemize}

\medskip

\begin{remark}
The same result as in Theorem \ref{thm_main}, remains valid in any target
space $\R^n$ replacing $\R^4$, for $n\geq 4$. Our
proofs only require existence of and the propagation bounds on two
normal vector fields, which are guaranteed by
having $n-2\geq 2$. The same statement as in Theorem
\ref{thm_STA} also holds and one concludes the final result by a
version of Theorem \ref{thm_NK}.
\end{remark}


\medskip

\subsection{Notation} By $\mathbb{R}^{k\times k}_{\sym}$ and
we denote the space of symmetric $k\times k$ matrices, and
$\mathbb{R}^{k\times k}_{\sym,>}$ is the cone of such matrices that
are additionally positive definite. The space of H\"older continuous vector fields
$\mathcal{C}^{m,\beta}(\bar\omega,\R^k)$ where $m\geq 0$, $\beta\in
[0,1]$ consists of restrictions of
all $v\in \mathcal{C}^{m,\beta}(\mathbb{R}^2,\R^k)$ to the closure
$\bar\omega$ of an open, bounded set $\omega\subset\R^2$. The
$\mathcal{C}^m(\bar\omega,\R^k)$ norm of such restriction is
denoted by $\|v\|_m$, while its H\"older norm in $\mathcal{C}^{m,
  \beta}(\bar\omega,\R^k)$ is $\|v\|_{m,\beta}$. Below we gather
other  notation that is recurring in our paper, specifying the first time
when it appears.

\smallskip

\begin{description}[leftmargin=15mm]
\item[$\quad\mathcal{D}(g,u)=g-(\nabla u)^T\nabla u$] the defect,
  i.e. the difference between the given metric and the immersion metric,
  see (\ref{def_def}),
\item[$\quad r_0$]  the radius assuring positivity of
  coefficients in the metric decomposition, see Lemma \ref{lem_dec_def}.
\item[$\quad\eta_1=e_1, \eta_2=\frac{e_1+e_2}{\sqrt2}, \eta_3=e_2$] the
  unit vectors yielding the primitive defect decomposition, see Lemma \ref{lem_dec_def}.
\item[$\quad H_0=\sum_{i=1}^3\eta_i\otimes \eta_i$] the referential
  (constant) metric whose all decomposition coefficients equal $1$,
  see Lemma \ref{lem_dec_def}.
\item[$\quad\{\bar a_i\}_{i=1}^3$] the linear projections in the
  decomposition of the defect, see Lemma \ref{lem_dec_def}. 
\item[$\quad\Gamma, \bar\Gamma$] the leading oscillatory profiles in the
  given convex integration Step constructions, see Lemmas \ref{lem_step1}
  and \ref{lem_step2}.
\item[$\quad E^1_u, E^2_u$] the orthonormal pair of normal vectors
  fields (the normal frame) to
  the surface given by immersion $u$, see Lemma \ref{lem_step1}.
\item[$\quad T_u=(\nabla u)((\nabla u)^T\nabla u)^{-1}$] the tangent frame
  to the surface $u(\bar\omega)$ given by the immersion $u$, see Lemma \ref{lem_Tu}. 
\item[$\quad\mathcal{R}, \mathcal{R}_1, \mathcal{R}_2$] the secondary error
  terms in the single Step constructions, see Lemmas \ref{lem_step1}, \ref{lem_step2}. 
\item[$\quad\underline u$] the referential smooth immersion with
  positive definite defect, see Theorem \ref{thm_NK}.
\item[$\quad\underline \gamma$] the referential immersion constant after
an application of  the initial Stage; all immersions in the inductive stages obey
  $1/{(2\underline\gamma)}\Id_2\leq (\nabla u)^T\nabla u\leq
  2\underline\gamma \Id_2$, see Lemma \ref{lem_stage0} and Theorem \ref{thm_STA}.
\end{description}

\smallskip

\noindent By $C$ we denote a universal constant that may change from line to
line of the proof, where it depends on the specified parameters while being
independent from all the induction counters.

\subsection{Acknowledgement} The author was partially supported by the
NSF grant DMS-2407293. The author gratefully acknowledges support from the
Simons Center for Geometry and Physics, Stony Brook University as well
as the Simons-CRM program at McGill University, at which some of the
research for this paper was performed. 

\section{The defect decomposition lemmas and two Step
  constructions}\label{sec_prepara1} 

This section contains some preliminary technical lemmas that serve as
the building blocks of the following proofs.
We first gather the convolution and commutator estimates from \cite{CDS}:

\begin{lemma}\label{lem_stima} 
Let $\phi\in\mathcal{C}_c^\infty(\R^2,\mathbb{R})$ be a standard
mollifier that is nonnegative, radially symmetric, supported on the
unit ball $B(0,1)\subset\R^2$ and such that $\int_{\mathbb{R}^2} \phi \dx = 1$. Denote: 
$$\phi_l (x) = \frac{1}{l^2}\phi(\frac{x}{l})\quad\mbox{ for all
}\; l\in (0,1], \;  x\in\R^2.$$
Then, for every $f,g\in\mathcal{C}^0(\mathbb{R}^2,\R)$, every
$m\geq 0$ and $\beta\in (0,1)$, there holds:
\begin{align*}
& \|\nabla^{(m)}(f\ast\phi_l)\|_{0} \leq
\frac{C}{l^m}\|f\|_0,\tag*{(\theequation)$_1$}\vspace{1mm} \refstepcounter{equation} \label{stima1}\\
& \|f - f\ast\phi_l\|_0 \leq \min\big\{l^2\|\nabla^{2}f\|_0,
l^{1+\beta}\|\nabla f\|_{0,\beta}, l\|\nabla f\|_0,
{l^\beta}\|f\|_{0,\beta}\big\}, \tag*{(\theequation)$_2$} \vspace{1mm} \label{stima2}\\
& \|\nabla^{(m)}\big((fg)\ast\phi_l - (f\ast\phi_l)
(g\ast\phi_l)\big)\|_0\leq {C}{l^{2- m}}\|\nabla f\|_{0}
\|\nabla g\|_{0}, \tag*{(\theequation)$_3$} \label{stima4}
\end{align*}
with constants $C>0$ depending only on $m$.
\end{lemma}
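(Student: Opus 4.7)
The plan is to prove the three estimates in sequence, each via a short argument of a standard flavor for mollifier calculus.

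For \ref{stima1}, I would observe that $\nabla^{(m)}(f\ast\phi_l) = f\ast\nabla^{(m)}\phi_l$, so by Young's convolution inequality the norm is bounded by $\|f\|_0\cdot\|\nabla^{(m)}\phi_l\|_{L^1}$. A change of variables $y=lz$ in the definition $\phi_l(y)=l^{-2}\phi(y/l)$ gives $\|\nabla^{(m)}\phi_l\|_{L^1}=l^{-m}\|\nabla^{(m)}\phi\|_{L^1}$, which is where the factor $l^{-m}$ comes from.

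For \ref{stima2}, the starting identity is $(f-f\ast\phi_l)(x)=\int\phi_l(y)(f(x)-f(x-y))\,dy$, valid because $\int\phi_l=1$. The bounds $l\|\nabla f\|_0$ and $l^\beta\|f\|_{0,\beta}$ follow by inserting the pointwise estimates $|f(x)-f(x-y)|\leq\|\nabla f\|_0|y|$ and $|f(x)-f(x-y)|\leq\|f\|_{0,\beta}|y|^\beta$ and using $\int|y|^\alpha\phi_l\,dy\leq Cl^\alpha$. The sharper bounds $l^2\|\nabla^2 f\|_0$ and $l^{1+\beta}\|\nabla f\|_{0,\beta}$ require cancelling the linear Taylor term: write $f(x)-f(x-y)=\nabla f(x)\cdot y-R(x,y)$ and use that $\int y\,\phi_l(y)\,dy=0$ by radial symmetry of $\phi$, so only the remainder, satisfying $|R|\leq\tfrac12\|\nabla^2 f\|_0|y|^2$ or $|R|\leq\|\nabla f\|_{0,\beta}|y|^{1+\beta}$, contributes.

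For \ref{stima4}, the key is the commutator identity
\begin{equation*}
\mathcal{C}(x):=(fg)\ast\phi_l(x)-(f\ast\phi_l)(x)(g\ast\phi_l)(x)
=A(x)-B(x),
\end{equation*}
where $A(x)=\int\phi_l(y)(f(x-y)-f(x))(g(x-y)-g(x))\,dy$ and $B(x)=(f\ast\phi_l-f)(x)\,(g\ast\phi_l-g)(x)$. This decomposition, obtained by expanding both sides in the difference quotients $f(x-y)-f(x)$, $g(x-y)-g(x)$, makes the case $m=0$ immediate: for $A$ one uses $|(f(x-y)-f(x))(g(x-y)-g(x))|\leq\|\nabla f\|_0\|\nabla g\|_0|y|^2$ together with $\int|y|^2\phi_l\,dy\leq Cl^2$, and $B$ is bounded by the $l\|\nabla f\|_0$ and $l\|\nabla g\|_0$ cases of \ref{stima2}.

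The step I expect to be the main obstacle is the case $m\geq 1$ of \ref{stima4}, because derivatives of $f\ast\phi_l$ and of $g$ individually need not be bounded only in terms of $\|\nabla f\|_0$, $\|\nabla g\|_0$: plain term-by-term differentiation of $B$ loses the cancellation. The remedy is to differentiate $A$ using $\partial_{x_i}[f(x-y)]=-\partial_{y_i}[f(x-y)]$ and then integrate by parts in $y$: after one such pass, $\partial_{x_i}A(x)$ is a sum of an integral against $\partial_{y_i}\phi_l$ (whose $L^1$ norm is $Cl^{-1}$, applied to the $O(|y|^2)$ integrand, yielding $Cl\|\nabla f\|_0\|\nabla g\|_0$) plus terms of the form $\partial_i f(x)\cdot(g\ast\phi_l-g)$ and $\partial_i g(x)\cdot(f\ast\phi_l-f)$, each controlled by \ref{stima2}. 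Iterating this integration-by-parts argument $m$ times transfers each additional $x$-derivative either onto $\phi_l$, costing exactly one factor $l^{-1}$, or onto one of $f,g$ producing a factor that is already absorbed by the $\|\nabla f\|_0$, $\|\nabla g\|_0$ available; the same reorganisation handles $B$ by noting that $\partial^\alpha(f\ast\phi_l-f)$ can be rewritten, via the identity applied to $f\ast\phi_l-f$ itself, as $\nabla f\ast\nabla^{\alpha-e_j}\phi_l-\chi_{|\alpha|=0}\,f$, producing the correct scaling. Collecting the contributions yields $\|\nabla^{(m)}\mathcal{C}\|_0\leq Cl^{2-m}\|\nabla f\|_0\|\nabla g\|_0$, with $C$ depending only on $m$ through a finite sum of seminorms of $\phi$.
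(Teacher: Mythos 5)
The paper does not prove this lemma: it is quoted verbatim from \cite{CDS} at the beginning of Section~\ref{sec_prepara1}, so there is no in-paper argument to compare against. Your proofs of \ref{stima1} and \ref{stima2} are correct and standard, and your treatment of \ref{stima4} at $m=0$, via the decomposition of the commutator $\mathcal{C}=(fg)\ast\phi_l-(f\ast\phi_l)(g\ast\phi_l)$ as $A-B$, is also correct.

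The gap is in \ref{stima4} for $m\geq 1$, and it is real, not cosmetic. First, the identity you invoke for $B$, namely that $\partial^\alpha(f\ast\phi_l-f)$ ``can be rewritten as $\nabla f\ast\nabla^{\alpha-e_j}\phi_l-\chi_{|\alpha|=0}\,f$'', is false for $|\alpha|\geq 1$: one has $\partial^\alpha(f\ast\phi_l-f)=\nabla f\ast\partial^{\alpha-e_j}\phi_l-\partial^\alpha f$, and the uncontrolled term $-\partial^\alpha f$ does not go away. Second, iterating the integration by parts on $A$ alone produces, after the first pass, correction terms of the type $\partial_i f(x)\,(g\ast\phi_l-g)(x)$; differentiating these again in $x$ generates $\nabla^2 f$, which is not bounded by $\|\nabla f\|_0$. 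Your description ``transfers each additional $x$-derivative either onto $\phi_l$ or onto one of $f,g$'' does not address this: two derivatives \emph{can} land on $f$ this way, and then the claimed bound fails. What actually saves the computation is an exact cancellation between $A$ and $B$: the problematic pieces $\partial^\beta f(x)(g\ast\phi_l-g)(x)$ from $\partial^\alpha A$ are cancelled by the matching pieces of $\partial^\alpha B$, and what survives are only terms of the form $(\partial^\beta f\ast\phi_l)(g\ast\phi_l-g)$ and their symmetric counterparts, which scale as $l^{-|\beta|+1}\|\nabla f\|_0\cdot l\|\nabla g\|_0$. Your write-up does not identify this cancellation, and as written the argument breaks at $m=2$. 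If you want a route that avoids the bookkeeping entirely, use the symmetrised representation
\begin{equation*}
\mathcal{C}(x)=\frac12\iint\big(f(x-y)-f(x-z)\big)\big(g(x-y)-g(x-z)\big)\phi_l(y)\phi_l(z)\,\mathrm{d}y\,\mathrm{d}z,
\end{equation*}
for which $\partial_{x_i}$ of the integrand equals $-(\partial_{y_i}+\partial_{z_i})$ of the integrand; integrating by parts in both $y$ and $z$ then places all $m$ derivatives on $\phi_l(y)\phi_l(z)$ with no correction terms, and the elementary bounds $|f(x-y)-f(x-z)|\leq\|\nabla f\|_0|y-z|$ and $\iint|y-z|^2\,|\partial^\beta\phi_l(y)|\,|\partial^{\alpha-\beta}\phi_l(z)|\,\mathrm{d}y\,\mathrm{d}z\leq C\,l^{2-m}$ give \ref{stima4} directly for every $m\geq 0$.
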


\medskip

The next two lemmas provide the decomposition of symmetric
matrices into linear combinations of rank-one ``primitive matrices''.
The first result is self-evident, proved for the general dimensionality $d\geq 2$ in
\cite[Lemma 5.2]{CDS}, while the second one is a combination of the
local decomposition with a partition of unity - type statement from \cite[Lemma 3.3]{Laszlo}:

\begin{lemma}\label{lem_dec_def}
There exist $r_0\in (0,1)$ and linear maps $\{\bar a_i:\R^{2\times
  2}_{\sym}\to\R\}_{i=1}^{3}$ such that, denoting:
$$ \eta_1 = e_1,\quad \eta_2 = \frac{e_1+e_2}{\sqrt{2}},\quad \eta_3=
e_2 \quad \mbox{and} \quad H_0=\sum_{i=1}^3 \eta_i \otimes \eta_i, $$
for all $H\in B(H_0,r_0)\subset \R^{2\times 2}_{\sym}$ there holds:
$$H = \sum_{i=1}^{3} \bar a_i(H) \eta_i\otimes\eta_i \quad \mbox{ and } \quad
|\bar a_i (H)-1|\leq\frac{1}{2} \; \mbox{ for all }\;  i =1\ldots 3.$$
\end{lemma}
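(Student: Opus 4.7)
The plan is straightforward linear algebra: show that $\{\eta_i\otimes\eta_i\}_{i=1}^3$ is a basis of the three-dimensional space $\R^{2\times 2}_{\sym}$, produce the coefficients $\bar a_i(H)$ by inverting the resulting $3\times 3$ linear system, and then use continuity of these linear forms to localize around $H_0$.

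First I would compute the three primitive matrices explicitly:
\begin{equation*}
\eta_1\otimes\eta_1 = \begin{pmatrix} 1 & 0 \\ 0 & 0\end{pmatrix},\qquad
\eta_2\otimes\eta_2 = \frac{1}{2}\begin{pmatrix} 1 & 1 \\ 1 & 1\end{pmatrix},\qquad
\eta_3\otimes\eta_3 = \begin{pmatrix} 0 & 0 \\ 0 & 1\end{pmatrix}.
\end{equation*}
For a general $H = \left(\begin{smallmatrix} H_{11} & H_{12} \\ H_{12} & H_{22}\end{smallmatrix}\right)$, requiring $H = \sum_{i=1}^3 a_i\,\eta_i\otimes\eta_i$ reduces to the three scalar equations $a_1 + \tfrac{1}{2}a_2 = H_{11}$, $\tfrac{1}{2}a_2 = H_{12}$, $\tfrac{1}{2}a_2 + a_3 = H_{22}$, which have the unique solution
\begin{equation*}
\bar a_1(H) = H_{11} - H_{12},\qquad \bar a_2(H) = 2H_{12},\qquad \bar a_3(H) = H_{22} - H_{12}.
\end{equation*}
These are manifestly linear maps $\R^{2\times 2}_{\sym}\to\R$, and their existence confirms that $\{\eta_i\otimes\eta_i\}_{i=1}^3$ is a basis. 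Substituting $H_0$, whose entries are $H_{0,11}=H_{0,22}=\tfrac{3}{2}$ and $H_{0,12}=\tfrac{1}{2}$, gives $\bar a_i(H_0) = 1$ for $i=1,2,3$, as required.

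For the openness statement, I would note that each $\bar a_i$ is a fixed linear functional on a finite-dimensional normed space, hence continuous. Thus there exists $r_0\in(0,1)$ such that $\|H-H_0\|\leq r_0$ implies $|\bar a_i(H) - \bar a_i(H_0)| = |\bar a_i(H-H_0)| \leq \tfrac{1}{2}$ for all three indices simultaneously (one may take $r_0 = \tfrac{1}{2}(\max_i \|\bar a_i\|_{\mathrm{op}})^{-1}$, bounded below by a universal constant). This yields both the decomposition identity and the bound $|\bar a_i(H) - 1| \leq \tfrac{1}{2}$ on $B(H_0, r_0)$.

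There is no real obstacle here; the only point that requires any care is verifying linear independence of the three primitive matrices, which amounts to checking that the explicit $3\times 3$ system above is invertible, as it visibly is. The concrete formulas for $\bar a_i$ may be useful later in the paper when one needs quantitative control on the decomposition, so I would record them in the proof rather than relying on an abstract existence argument.
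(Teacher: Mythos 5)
Your proof is correct. The paper does not prove this lemma itself—it calls the statement ``self-evident'' and refers to \cite[Lemma 5.2]{CDS} for the general-dimensional version—and the argument you give (explicit verification that $\{\eta_i\otimes\eta_i\}_{i=1}^3$ is a basis of $\R^{2\times 2}_{\sym}$, inversion of the $3\times 3$ system, evaluation at $H_0$, continuity of the resulting linear functionals to get $r_0$) is exactly the elementary computation the paper is implicitly invoking.
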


\begin{lemma}\label{lem_met_deco}
There exists an integer $N_0$ and a
sequence of unit vectors $\{\eta_i\in\R^2\}_{i=1}^\infty$ 
together with a sequence of nonnegative functions
$\{\varphi_i\in\mathcal{C}^\infty_c(\R^{2\times 2}_{\sym, >},\R)\}_{i=1}^\infty$, such that:
$$ H = \sum_{i=1}^{\infty} \varphi_i(H)^2 \eta_i\otimes\eta_i \quad
\mbox{ for all } \; H\in \R^{2\times 2}_{\sym, >},$$
and that:
\begin{itemize} 
\item[(i)] at most $N_0$ terms in the above sum are nonzero, for each $H$,
\item[(ii)] every compact set $\mathcal{K}\subset  \R^{2\times 2}_{\sym, >}$ 
induces a finite set of indices $J(\mathcal{K})\subset\mathbb{N}$, such that
$\varphi_i(H)=0$ for all $H\in \mathcal{K}$ and all $i\not\in J(\mathcal{K})$.
\end{itemize}
\end{lemma}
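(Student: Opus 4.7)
\textbf{Proof plan for Lemma \ref{lem_met_deco}.} The idea is to globalize Lemma \ref{lem_dec_def} by gluing local decompositions through a smooth partition of unity written as a sum of squares. The structure of the proof is as follows.

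\textit{Step 1: Local decomposition around any base matrix.} I first upgrade Lemma \ref{lem_dec_def} to an arbitrary base point $\bar H \in \R^{2\times 2}_{\sym,>}$. Given $\bar H$, I choose three unit vectors $\{\bar\eta_j\}_{j=1}^3\subset\R^2$ such that $\{\bar\eta_j\otimes\bar\eta_j\}_{j=1}^3$ is a basis of $\R^{2\times 2}_{\sym}$ and the (unique) coefficients in the expansion $\bar H=\sum_{j=1}^3 c_j\,\bar\eta_j\otimes\bar\eta_j$ are strictly positive. Such a triple exists: for instance, by an affine change of coordinates diagonalizing $\bar H$, one reduces to the case $\bar H=\mathrm{Id}_2$, covered by Lemma \ref{lem_dec_def} with $\bar\eta_j=\eta_j$ and $c_j=\bar a_j(\mathrm{Id}_2)$, which are positive by continuity since $\bar a_j(H_0)=1$ and $\mathrm{Id}_2$ is close enough to $H_0$ in the right coordinates (or one simply picks the three directions by hand). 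Expanding a general $H$ in the same rank-one basis yields linear coefficients $\bar a_j^{\bar H}(H)$, and by continuity there is an open ball $U_{\bar H}\subset \R^{2\times 2}_{\sym,>}$ containing $\bar H$ on which all three coefficients remain strictly positive.

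\textit{Step 2: Locally finite cover with bounded multiplicity.} Since $\R^{2\times 2}_{\sym,>}$ is an open subset of $\R^{2\times 2}_{\sym}\cong \R^3$, any open cover admits a countable, locally finite open refinement $\{U_i\}_{i\in\mathbb{N}}$ of bounded multiplicity, i.e.\ there exists $M_0\in\mathbb{N}$ such that each $H$ lies in at most $M_0$ of the sets $U_i$ (this is the standard fact that $\R^n$ has covering dimension $n$; one may take $M_0=4$). I apply this to the cover $\{U_{\bar H}\}_{\bar H\in \R^{2\times 2}_{\sym,>}}$ from Step 1, obtaining associated unit vector triples $\{\bar\eta_{i,j}\}_{j=1}^3$ and positive linear coefficients $\bar a_{i,j}(H)>0$ for $H\in U_i$.

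\textit{Step 3: Squared partition of unity and smooth coefficients.} I choose a smooth partition of unity $\{\psi_i\}_{i\in\mathbb{N}}\subset\mathcal{C}^\infty_c(\R^{2\times 2}_{\sym,>},\R)$ subordinate to $\{U_i\}$, with $\mathrm{supp}\,\psi_i\subset U_i$ and $\sum_{i}\psi_i(H)^2=1$ for every $H\in\R^{2\times 2}_{\sym,>}$ (such a squared partition is obtained by dividing the squares of a standard partition by their sum and taking square roots, and is a classical device to secure smoothness after a later square-root extraction). I then define, for each pair $(i,j)$ with $i\in\mathbb{N}$ and $j\in\{1,2,3\}$:
\begin{equation*}
\varphi_{i,j}(H) \doteq \psi_i(H)\sqrt{\bar a_{i,j}(H)} \quad \text{on } U_i,\qquad \varphi_{i,j}(H)\doteq 0\quad\text{elsewhere}.
\end{equation*}
Since $\bar a_{i,j}$ is strictly positive and linear (hence smooth) on $U_i$, the function $\sqrt{\bar a_{i,j}}$ is smooth on $U_i$; multiplied by $\psi_i$ which is compactly supported inside $U_i$, the product extends smoothly by zero to all of $\R^{2\times 2}_{\sym,>}$.

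\textit{Step 4: Verification and enumeration.} Re-indexing the countable family $\{(\varphi_{i,j},\bar\eta_{i,j})\}$ as $\{(\varphi_i,\eta_i)\}_{i=1}^\infty$, I compute for every $H\in \R^{2\times 2}_{\sym,>}$:
\begin{equation*}
\sum_{i,j}\varphi_{i,j}(H)^2\,\bar\eta_{i,j}\otimes\bar\eta_{i,j}
=\sum_{i}\psi_i(H)^2\sum_{j=1}^3 \bar a_{i,j}(H)\,\bar\eta_{i,j}\otimes\bar\eta_{i,j}
=\sum_{i}\psi_i(H)^2\,H=H,
\end{equation*}
using the local decomposition on each $U_i$ and $\sum_i\psi_i^2=1$. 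Bounded multiplicity of $\{U_i\}$ ensures at most $3M_0$ terms are nonzero at any $H$, yielding (i) with $N_0\doteq 3M_0$. Similarly, any compact $\mathcal{K}\subset\R^{2\times 2}_{\sym,>}$ meets only finitely many $U_i$ by local finiteness, so $\varphi_i\equiv 0$ on $\mathcal{K}$ outside a finite index set $J(\mathcal{K})$, giving (ii).

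The only delicate point is the smoothness of $\varphi_i$, which is exactly the reason for introducing the squared partition of unity and ensuring the linear coefficients $\bar a_{i,j}$ are strictly positive throughout $U_i$; the remaining arguments are straightforward applications of standard covering and partition-of-unity tools in $\R^3$.
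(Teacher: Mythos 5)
Your proof is essentially correct and follows the standard partition-of-unity construction that the paper implicitly invokes by citing Sz\'ekelyhidi's lecture notes; the paper itself does not give a proof of this lemma. One factual slip in Step 1 deserves flagging: your claim that the coefficients $\bar a_j(\mathrm{Id}_2)$ are positive ``by continuity since $\bar a_j(H_0)=1$'' is false. Indeed with $\eta_1=e_1$, $\eta_2=\tfrac{e_1+e_2}{\sqrt2}$, $\eta_3=e_2$, expanding $\mathrm{Id}_2=\sum_j c_j\,\eta_j\otimes\eta_j$ forces $c_2/2$ to equal the off-diagonal entry $0$, so $\bar a_2(\mathrm{Id}_2)=0$, not positive; moreover $\mathrm{Id}_2$ lies outside $B(H_0,r_0)$ since $|H_0-\mathrm{Id}_2|=1>r_0$. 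Your parenthetical fallback (``one simply picks the three directions by hand'') is what actually saves the step: for instance three unit vectors at $60^\circ$ apart, $\eta_j=(\cos(j\pi/3),\sin(j\pi/3))$, $j=0,1,2$, give $\sum_j\eta_j\otimes\eta_j=\tfrac32\,\mathrm{Id}_2$ with the rank-one matrices forming a basis of $\R^{2\times2}_{\mathrm{sym}}$, so $\mathrm{Id}_2=\sum_j\tfrac23\,\eta_j\otimes\eta_j$ with strictly positive coefficients, and the reduction to $\mathrm{Id}_2$ via $\bar H=A^TA$ preserves positivity of the coefficients while rotating and renormalizing the directions. With this correction the rest of your argument — the bounded-multiplicity locally finite refinement ($M_0=4$ suffices as the covering dimension of $\R^{2\times2}_{\mathrm{sym}}\cong\R^3$ is $3$), the squared partition of unity to keep the functions smooth after taking $\psi_i\sqrt{\bar a_{i,j}}$, and the verification of (i), (ii) — is correct.
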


\medskip

The remaining two lemmas in this section provide two types of Step construction in the convex
integration algorithm. The given immersion is
modified by adding oscillatory perturbations, with the scope of
cancelling a single term in the pull back of the
Euclidean metric via the immersion, of the rank-one type featured in the
decomposition Lemma \ref{lem_dec_def}. The first construction, referred to as Nash's spiral,
necessitates two normal vector fields:

\begin{lemma}\label{lem_step1}\textup{[STEP: NASH'S SPIRAL]}
Let $u\in\mathcal{C}^2(\R^2,\R^4)$ be an immersion and let $E_u^1,
E_u^2\in\mathcal{C}^{1}(\R^2, \R^4)$ be a given orthonormal pair of its normal
vector fields, so that:
$$(\nabla u)^T E_u^1 = (\nabla u)^T E_u^2 = 0,\quad |E^1_u|=|E^2_u|=1,
\quad \langle E^1_u, E^2_u\rangle = 0 \quad\mbox{ in } \R^2.$$
For a given unit vector $\eta\in\R^2$ we set $t=\langle x, \eta\rangle$ and denote:
$$\Gamma(t) = \sin t, \quad \bar\Gamma(t) = \cos t.$$
Then, for every $\lambda>0$ and $a\in\mathcal{C}^1(\R^2, \R)$, the
vector field $\tilde u\in \mathcal{C}^1(\R^2, \R^4)$ given by the formula:
$$\tilde u(x) = u(x)+ \frac{\Gamma(\lambda t )}{\lambda} a(x) E^1_u(x) + 
\frac{\bar\Gamma(\lambda t )}{\lambda}a(x)E^2_u(x),$$ 
satisfies the following identity:
\begin{equation*}
(\nabla \tilde u)^T\nabla \tilde u - (\nabla u)^T\nabla u =
a^2\eta\otimes \eta + \mathcal{R},
\end{equation*}
with the error term $\mathcal{R}$ in:
\begin{equation*}
\begin{split}
\mathcal{R} & = \mathcal{R}(\lambda, a, \eta, \nabla u,  E^1_u, E^2_u)
\\ & = \Big\{ 2\frac{\Gamma(\lambda t)}{\lambda} a \;\sym((\nabla u)^T\nabla E_u^1)
+ 2\frac{\bar\Gamma(\lambda t)}{\lambda} a \;\sym((\nabla u)^T\nabla E_u^2)
+\frac{2}{\lambda} a^2\sym\big((\nabla E^2_u)^TE_u^1\otimes\eta\big)\Big\}
\\ & \quad + \Big\{\frac{\Gamma(\lambda t)^2}{\lambda^2} a^2 (\nabla E^1_u)^T\nabla E_u^1
+ \frac{\bar\Gamma(\lambda t)^2}{\lambda^2} a^2 (\nabla E^2_u)^T\nabla E_u^2
\\ & \qquad\quad
+ 2 \frac{\Gamma(\lambda t)\bar\Gamma(\lambda t)}{\lambda^2}
a^2\; \sym((\nabla E^1_u)^T\nabla E_u^2)
+ \frac{1}{\lambda^2}\nabla a\otimes \nabla a\Big\}.
\end{split}
\end{equation*}
\end{lemma}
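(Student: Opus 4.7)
The plan is a direct calculation relying only on the orthonormality of the frame $(E_u^1, E_u^2)$, the relations $(\nabla u)^T E_u^i = 0$, and the two trigonometric identities $\Gamma'=\bar\Gamma$, $\bar\Gamma'=-\Gamma$, $\Gamma^2+\bar\Gamma^2=1$. I would begin by expanding $\nabla\tilde u$ via the product rule. Writing $\tilde u = u + \tfrac{\Gamma(\lambda t)}{\lambda}aE_u^1 + \tfrac{\bar\Gamma(\lambda t)}{\lambda}aE_u^2$, the columns decompose as
$$\partial_j \tilde u = \partial_j u + a\eta_j V + R_j, \qquad V \doteq \bar\Gamma(\lambda t)E_u^1 - \Gamma(\lambda t)E_u^2,$$
where $R_j$ gathers the four $O(1/\lambda)$ terms $\tfrac{\Gamma \partial_j a}{\lambda}E_u^1 + \tfrac{\bar\Gamma \partial_j a}{\lambda}E_u^2 + \tfrac{\Gamma a}{\lambda}\partial_j E_u^1 + \tfrac{\bar\Gamma a}{\lambda}\partial_j E_u^2$. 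Observe that $|V|^2 = \Gamma^2+\bar\Gamma^2 = 1$ and $\langle \partial_j u, V\rangle = 0$ by the normality of the frame.

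Next I would compute $\langle \partial_i \tilde u, \partial_j \tilde u\rangle - \langle \partial_i u, \partial_j u\rangle$ by expanding the nine resulting inner-product blocks. The block $\langle a\eta_i V, a\eta_j V\rangle = a^2\eta_i\eta_j$ contributes the target rank-one term via the Pythagorean identity; the cross blocks $\langle\partial_i u, a\eta_j V\rangle + \langle a\eta_i V, \partial_j u\rangle$ vanish by the normality of $V$ to $\nabla u$. The blocks $\langle\partial_i u, R_j\rangle + \langle R_i, \partial_j u\rangle$ lose their $E_u^i$-parts for the same reason, leaving exactly the first two summands of the first bracket of $\mathcal{R}$, namely $2\tfrac{\Gamma}{\lambda}a\,\sym((\nabla u)^T\nabla E_u^1) + 2\tfrac{\bar\Gamma}{\lambda}a\,\sym((\nabla u)^T\nabla E_u^2)$.

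The more delicate ``mixed'' blocks $\langle a\eta_i V, R_j\rangle + \langle R_i, a\eta_j V\rangle$ require two cancellations: the $\partial a$-contributions collapse via $\bar\Gamma\cdot\Gamma - \Gamma\cdot\bar\Gamma = 0$, and the $\tfrac{a^2}{\lambda}$-contributions reduce via the unit-norm identity $\langle E_u^i, \partial_j E_u^i\rangle = 0$ together with $\langle\partial_j E_u^1, E_u^2\rangle = -\langle E_u^1, \partial_j E_u^2\rangle$, whereupon $\bar\Gamma^2+\Gamma^2=1$ recovers exactly $\tfrac{2}{\lambda}a^2\,\sym((\nabla E_u^2)^T E_u^1 \otimes \eta)$, the third summand of the first bracket. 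Finally the $O(1/\lambda^2)$ block $\langle R_i, R_j\rangle$ expands into sixteen pieces: the four ``diagonal'' ones produce the three frame-derivative quadratic terms of the second bracket and, using $\Gamma^2+\bar\Gamma^2=1$ once more, the term $\tfrac{1}{\lambda^2}\nabla a\otimes\nabla a$; the off-diagonal pieces either vanish by $\langle E_u^1, E_u^2\rangle=0$ or cancel pairwise by the same orthogonality-differentiation identity used above.

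The main obstacle is not analytical but organizational: careful bookkeeping of the sixteen blocks in $\langle R_i, R_j\rangle$ and verifying that every term not listed in the claimed $\mathcal{R}$ is killed by one of the three orthogonality relations. Collecting the pieces as described yields the identity in the statement.
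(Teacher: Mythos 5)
Your proposal is correct and takes essentially the same route as the paper: both are direct expansions of the pullback metric $(\nabla\tilde u)^T\nabla\tilde u$, grouped by powers of $\lambda$ and simplified using the orthonormal-frame relations $\langle E_u^i,\partial_j E_u^i\rangle=0$, $\langle\partial_j E_u^1,E_u^2\rangle=-\langle E_u^1,\partial_j E_u^2\rangle$ and the trigonometric identities $\Gamma'=\bar\Gamma$, $\bar\Gamma'=-\Gamma$, $\Gamma^2+\bar\Gamma^2=1$. Your introduction of the auxiliary unit vector $V=\bar\Gamma E_u^1-\Gamma E_u^2$ is a modest organizational refinement of the paper's term-by-term cancellation, but the underlying computation is identical.
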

\begin{proof}
We first calculate the gradient of the modified immersion:
\begin{equation*}
\begin{split}
\nabla \tilde u = & \; \nabla u + \Gamma'(\lambda t) a E_u^1\otimes\eta +  
\frac{\Gamma(\lambda t)}{\lambda}\big( a \nabla E_u^1 + E^1_u\otimes\nabla a\big)
\\ & + \bar\Gamma'(\lambda t) a E_u^2\otimes\eta +  
\frac{\bar\Gamma(\lambda t)}{\lambda}\big( a \nabla E_u^2 +
E^2_u\otimes\nabla a\big).
\end{split}
\end{equation*}
This leads to the following formula, where we suppress the argument
$\lambda t$ in $\Gamma$ and $\bar\Gamma$ and order the terms according to powers of $\lambda$:
\begin{equation*}
\begin{split}
& (\nabla \tilde u)^T\nabla \tilde u - (\nabla u)^T\nabla u  \\ & =
\Big\{(\Gamma')^2a^2\eta\otimes\eta + (\bar\Gamma')^2a^2\eta\otimes\eta \Big\}
\\ & \quad + \Big\{ 2\frac{\Gamma}{\lambda} a\; \sym\big((\nabla u)^T \nabla E^1_u\big) 
+ 2\frac{\bar \Gamma}{\lambda} a \;\sym\big((\nabla u)^T \nabla E^2_u\big) 
+ 2\frac{\Gamma\Gamma'}{\lambda} a \; \sym(\nabla a \otimes\eta)  \\ & 
\qquad \; \; 
+ 2\frac{\Gamma'\bar\Gamma}{\lambda} a^2 \sym\big((\nabla E_u^2)^TE^1_u \otimes\eta\big) 
+ 2\frac{\Gamma\bar\Gamma'}{\lambda} a^2 \sym\big((\nabla E_u^1)^TE^2_u \otimes\eta\big) 
\\ & \qquad\;\; + 2\frac{\bar\Gamma\bar\Gamma'}{\lambda} a \;\sym(\nabla a \otimes\eta) \Big\}
\\ & \quad + \Big\{\frac{\Gamma^2}{\lambda^2} a^2 (\nabla E_u^1)^TE^1_u 
+ \frac{\Gamma^2}{\lambda^2} \nabla a\otimes\nabla a 
+ 2 \frac{\Gamma\bar\Gamma}{\lambda^2} a^2 \sym\big((\nabla E_u^1)^TE^2_u \big)
\\ & \qquad \; \; + \frac{\bar\Gamma^2}{\lambda^2} a^2 (\nabla E_u^2)^TE^2_u + 
\frac{\bar\Gamma^2}{\lambda^2} \nabla a\otimes\nabla a\Big\}.
\end{split}
\end{equation*}
We now observe that the sum of the two terms in the first parentheses
equals $a^2\eta\otimes\eta$ because: 
$$(\Gamma')^2+(\bar\Gamma')^2=1,$$
whereas the remaining terms are exactly
$\mathcal R$. This is because the third and the sixth term in the
second parentheses cancel out due to
$\Gamma\Gamma' + \bar\Gamma\bar\Gamma'=0$, and the fourth and the fifth
term in there combine due to
$\Gamma'\bar\Gamma - \Gamma\bar\Gamma'=1$. Also, the second and the
fifth term in the third parentheses combine, due to
$\Gamma^2+\bar\Gamma^2=1$. The proof is done.
\end{proof}

\medskip

\noindent In the second Step construction, referred to as Kuiper's
corrugation, the oscillatory modification is
added only in one normal direction and it is
augmented by a matching tangential perturbation, a construction
inspired by \cite{Kuiper}. A similar formula 
appeared in \cite[Lemma 3.1]{CHI2} and for the Monge-Amp\`ere system
in \cite{lew_conv, lewpak_MA}. Note the additional tangential
perturbation component $w$, that is crucial for our proofs and that will be chosen based on the
oscillatory defect decomposition in section \ref{sec_IBP}, as was done
in \cite{CHI2, in_lew2}. Of course, the error terms in
(\ref{sisi}) are now more complicated, and we 
split them into four groups: the non-oscillatory term
$\frac{1}{\lambda^2}\nabla a\otimes\nabla a$, then
the four principal oscillatory terms,
the term $\sym\nabla w$, and the residual terms $\mathcal{R}_1, \mathcal{R}_2$:

\begin{lemma}\label{lem_step2} \textup{[STEP: KUIPER'S CORRUGATION]}
Let $u\in\mathcal{C}^2(\R^2,\R^4)$ be an immersion and let $E_u
\in\mathcal{C}^{1}(\R^2, \R^4)$ be a given unit normal vector field to $u$, so that:
$$(\nabla u)^T E_u = 0,\quad |E_u|=1 \quad\mbox{ in } \R^2.$$
For a given unit vector $\eta\in\R^2$ we set $t=\langle x, \eta\rangle$ and denote:
$$\Gamma(t) = \sqrt{2}\sin t, \quad \bar\Gamma(t) = -\frac{1}{4}\sin(2 t).$$
Then, for every $\lambda>0$, $a\in\mathcal{C}^1(\R^2, \R)$ and
$w\in\mathcal{C}^1(\R^2, \R^2)$, the
vector field $\tilde u\in \mathcal{C}^1(\R^2, \R^4)$ in:
\begin{equation*}
\begin{split}
& \tilde u(x) = u(x)+ \frac{\Gamma(\lambda t )}{\lambda} a(x) E_u(x) + 
T_u(x)\Big(\frac{\bar\Gamma(\lambda t )}{\lambda}a(x)^2\eta+w(x)\Big),
\\ & \mbox{where: } T_u = (\nabla u)\big((\nabla u)^T\nabla
u\big)^{-1}\in\mathcal{C}^1(\R^2, \R^{4\times 2}),
\end{split}
\end{equation*}
satisfies the following identity:
\begin{equation}\label{sisi}
\begin{split}
& (\nabla \tilde u)^T\nabla \tilde u - (\nabla u)^T\nabla u = 
a^2\eta\otimes \eta +  \frac{1}{\lambda^2}\nabla a\otimes \nabla a 
\\ & +\frac{\Gamma(\lambda t)^2-1}{\lambda^2}S_1 
+\frac{\Gamma(\lambda t)\Gamma'(\lambda t)}{\lambda}S_2
+ \frac{\Gamma(\lambda t)}{\lambda}S_3
+ \frac{\bar\Gamma(\lambda t)}{\lambda}S_4 \\ & + 2\sym\nabla w +
\mathcal{R}_1 + \mathcal{R}_2,
\end{split}
\end{equation}
with the leading order error terms given via:
\begin{equation*}
\begin{split}
& S_1= \nabla a\otimes \nabla a, 
\qquad \qquad \qquad \;\; \, S_2= 2a\,\sym(\nabla a\otimes\eta), \\ 
& S_3 = 2a\,\sym\big((\nabla u)^T\nabla E_u\big),\qquad 
S_4 = 2\,\sym\big((\nabla u)^T\nabla(a^2 T_u\eta)\big), 
\end{split}
\end{equation*}
the first residual error term given in:
\begin{equation*}
\begin{split}
& \mathcal{R}_1 = \mathcal{R}_1(\lambda, a, \eta, \nabla u, E_u) 
= (\bar\Gamma')^2a^4 |T_u\eta|^2\eta\otimes\eta 
\\ & + \Big\{ 2\frac{\Gamma'(\lambda t)\bar\Gamma(\lambda
  t)}{\lambda} a \;\sym\big((\eta\otimes E_u)\nabla(a^2T_u\eta)\big)
+ 2\frac{\Gamma(\lambda t)\bar\Gamma'(\lambda t)}{\lambda} a^3 
\;\sym\big((\eta\otimes\eta) (T_u)^T\nabla E_u)\big)
\\ & \qquad  + 2\frac{\bar\Gamma(\lambda t)\bar\Gamma'(\lambda t)}{\lambda}
a^2\sym\big((\eta\otimes \eta) (T_u)^T\nabla(a^2 T_u\eta)\big)\Big\}
\\ & +\Big\{ \frac{\Gamma(\lambda t)^2}{\lambda^2}a^2 (\nabla E_u)^T\nabla E_u
+ 2 \frac{\Gamma(\lambda t)\bar\Gamma(\lambda t)}{\lambda^2} \sym\big(
\nabla(aE_u)^T\nabla(a^2 T_u\eta)\big) \\ & \qquad +
\frac{\bar\Gamma(\lambda t)^2}{\lambda^2} \nabla(a^2
T_u\eta)^T\nabla(a^2 T_u\eta) \Big\}, 
\end{split}
\end{equation*}
and the second residual error term involving $w$:
\begin{equation*}
\begin{split}
& \mathcal{R}_2= \mathcal{R}_2(\lambda, a, \eta, \nabla u, E_u, w) \\ &
= \Big\{2\,\sym\big((\nabla u)^T\big[(\partial_1 T_u)w, (\partial_2 T_u)w\big]\big)+ 
2\Gamma'(\lambda t) a\,\sym\big((\eta\otimes E_u)\nabla(T_u w)\big)
\\ & \qquad + 2\bar\Gamma'(\lambda t)\, a^2\sym\big((\eta\otimes\eta) (T_u)^T\nabla
(T_u w)\big) + \nabla (T_u w)^T\nabla (T_uw) \Big\} \\ & \quad + \Big\{ 2\frac{\Gamma(\lambda
  t)}{\lambda} \sym\big(\nabla (a E_u)^T\nabla (T_u w)\big)
+2\frac{\bar\Gamma(\lambda t)}{\lambda}\sym\big(\nabla (a^2 T_u
\eta)^T\nabla (T_u w)\big)\Big\}.
\end{split}
\end{equation*}
\end{lemma}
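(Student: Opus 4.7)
The plan is to verify identity (\ref{sisi}) by direct computation, organizing every term in $(\nabla \tilde u)^T \nabla \tilde u - (\nabla u)^T\nabla u$ by its power of $\lambda$ and by which oscillation profile it carries. First I would differentiate $\tilde u$ via the product rule, writing $\nabla \tilde u = \nabla u + P_0 + \frac{1}{\lambda}P_1$, where the principal block $P_0$ collects the order-$\lambda^0$ contributions from differentiating the fast oscillations and $w$, namely $\Gamma'(\lambda t)\, aE_u\otimes \eta$, $\bar\Gamma'(\lambda t)\, a^2 T_u\eta\otimes\eta$, $T_u\nabla w$, and $[(\partial_1T_u)w,(\partial_2T_u)w]$; while $P_1/\lambda$ collects the subleading pieces $\frac{\Gamma(\lambda t)}{\lambda}\nabla(aE_u)$, $\frac{\bar\Gamma(\lambda t)}{\lambda}\nabla(a^2T_u\eta)$, plus an analogous $\partial T_u$-correction with $\bar\Gamma/\lambda$ prefactor. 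Forming $(\nabla\tilde u)^T\nabla\tilde u$ then produces a bilinear expansion that I would sort into three buckets: the $\lambda^0$ self- and cross-pairings inside $P_0$ (together with $\nabla u$), the $\lambda^{-1}$ cross-terms between $\nabla u$ or $P_0$ and $P_1$, and the $\lambda^{-2}$ self-pairings inside $P_1$.

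The computation hinges on four structural identities: $(\nabla u)^T E_u=0$, $(\nabla u)^T T_u=\Id_2$, $(E_u)^T T_u=0$, and $(\nabla E_u)^T E_u=0$ (the last from $|E_u|^2\equiv 1$). The first kills the cross-term $2\Gamma' a\,\sym((\nabla u)^TE_u\otimes \eta)$ and reduces the $\frac{\Gamma}{\lambda}\nabla(aE_u)$--$\nabla u$ pairing to exactly $\frac{\Gamma}{\lambda}S_3$. The second turns the principal cross-term $2\bar\Gamma'a^2\,\sym((\nabla u)^TT_u\eta\otimes\eta)$ into $2\bar\Gamma'a^2\eta\otimes \eta$ and converts the $(\nabla u)^T(T_u\nabla w)$ pairing into the clean $2\,\sym\nabla w$ term, placing the complementary $[(\partial_iT_u)w]_{i=1,2}$ correction into the first brace of $\mathcal{R}_2$; paired with $\frac{\bar\Gamma}{\lambda}\nabla(a^2T_u\eta)$ it delivers $\frac{\bar\Gamma}{\lambda}S_4$. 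The third eliminates the $\Gamma'\bar\Gamma'$ cross-product between the two principal oscillatory normals $\Gamma'aE_u\otimes\eta$ and $\bar\Gamma'a^2T_u\eta\otimes\eta$ at order $\lambda^0$. The fourth makes the $\Gamma\Gamma'/\lambda$ cross-term between $\Gamma'aE_u\otimes \eta$ and the $E_u\otimes\nabla a$ portion of $\frac{\Gamma}{\lambda}\nabla(aE_u)$ collapse to $\frac{\Gamma\Gamma'}{\lambda}S_2$, with the companion $a\nabla E_u$ portion contributing nothing because $E_u^T\nabla E_u=0$.

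The algebraic cancellation that delivers the prescribed right-hand side is the trigonometric identity
$$
(\Gamma'(t))^2 + 2\bar\Gamma'(t) \;=\; 2\cos^2 t - \cos 2t \;\equiv\; 1,
$$
built into the very choice $\Gamma=\sqrt 2 \sin$, $\bar\Gamma=-\tfrac{1}{4}\sin(2\,\cdot)$; combined with the three vanishings above it reduces the total principal $\eta\otimes\eta$ output to $a^2\eta\otimes\eta$ plus the leftover $(\bar\Gamma')^2 a^4|T_u\eta|^2\eta\otimes\eta$ absorbed into the first line of $\mathcal{R}_1$. The non-oscillatory summand $\frac{1}{\lambda^2}\nabla a\otimes \nabla a$ appears by splitting $\frac{\Gamma^2}{\lambda^2}\nabla a\otimes\nabla a = \frac{1}{\lambda^2}\nabla a\otimes \nabla a + \frac{\Gamma^2-1}{\lambda^2}S_1$ inside the $\lambda^{-2}$ bucket. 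All remaining bilinear contributions — the $\lambda^{-2}$ self-pairings (including the $\nabla(a^2T_u\eta)$ self-product), the subleading $w$-interactions, and the off-diagonal $\bar\Gamma'$/$E_u$ products at order $\lambda^{-1}$ — are then collected verbatim into the second and third braces of $\mathcal{R}_1$ and $\mathcal{R}_2$. The main obstacle is purely bookkeeping: there are on the order of a dozen bilinear terms to track, and the delicate point is verifying $(\Gamma')^2+2\bar\Gamma'\equiv 1$ as an equality of functions (not of averages), so the cancellation is exact and no residual $\cos 2t$ pollutes the principal term — everything else is a mechanical application of the four algebraic identities together with the product rule.
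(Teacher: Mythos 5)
Your proposal is correct and follows essentially the same route as the paper: direct differentiation of $\tilde u$, organization of the resulting cross-products in $(\nabla\tilde u)^T\nabla\tilde u - (\nabla u)^T\nabla u$ by powers of $\lambda$ and $w$-dependence, the structural identities $(\nabla u)^T E_u=0$, $(\nabla u)^T T_u=\Id_2$, $E_u^T T_u=0$, $E_u^T\nabla E_u=0$, and the key trigonometric identity $(\Gamma')^2+2\bar\Gamma'\equiv 1$. The only blemish is the phrase ``plus an analogous $\partial T_u$-correction with $\bar\Gamma/\lambda$ prefactor'' in your description of $P_1$, which is redundant since $\nabla(a^2T_u\eta)$ already contains all $\partial T_u$ terms; this does not affect the computation.
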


\begin{proof}
We start by calculating the gradient of the modified immersion:
\begin{equation*}
\begin{split}
\nabla \tilde u = & \;\nabla u + a\Gamma'(\lambda t) E_u\otimes \eta +
a^2 \bar\Gamma'(\lambda t) T_u \eta\otimes\eta + \nabla(T_u w)
\\ & + \frac{\Gamma(\lambda t)}{\lambda}\big(a\nabla E_u + E_u\otimes \nabla
a\big) +\frac{\bar\Gamma(\lambda t)}{\lambda}\nabla(a^2 T_u\eta).
\end{split}
\end{equation*}
We obtain the following formula, where we suppress the argument
$\lambda t$ in $\Gamma$ and $\bar\Gamma$, and order the terms
according to powers of $\lambda$ and their independence / dependence on $w$:
\begin{equation*}
\begin{split}
& (\nabla \tilde u)^T\nabla \tilde u - (\nabla u)^T\nabla u \\ & = 
\Big\{a^2 (\Gamma')^2\eta\otimes\eta + 2a^2 \bar\Gamma' (\nabla u)^T
T_u \eta\otimes\eta + (\bar\Gamma')^2a^4 (\eta\otimes\eta) (T_u)^TT_u(\eta\otimes\eta)\Big\}
\\ & \quad + \Big\{ 2 \frac{\Gamma}{\lambda} a\;\sym\big((\nabla u)^T\nabla E_u\big)
+ 2 \frac{\bar\Gamma}{\lambda} \sym\big((\nabla u)^T\nabla (a^2 T_u \eta)\big)
+ 2 \frac{\Gamma\Gamma'}{\lambda} a\;\sym\big(\nabla a\otimes\eta\big)
\\ & \qquad \; \; + 2 \frac{\Gamma'\bar\Gamma}{\lambda}
a\;\sym\big((\eta\otimes E_u)\nabla (a^2 T_u \eta)\big)  
+ 2 \frac{\Gamma\bar\Gamma'}{\lambda} a^3\;\sym\big((\eta\otimes\eta)(T_u)^T\nabla E_u)\big)
\\ & \qquad \; \;  + 2 \frac{\bar\Gamma\bar\Gamma'}{\lambda}
a^2\;\sym\big((\eta\otimes\eta)(T_u)^T\nabla( a^2T_u\eta)\big) \Big\}
\\ & \quad + \Big\{ \frac{\Gamma^2}{\lambda^2}a^2 (\nabla E_u)^T\nabla E_u
+ \frac{\Gamma^2}{\lambda^2}\nabla a\otimes\nabla a 
+ 2\frac{\Gamma\bar\Gamma}{\lambda^2}\sym\big(\nabla(aE_u)^T \nabla(a^2T_u\eta) \big)
\\ & \qquad \; \; + \frac{\bar\Gamma^2}{\lambda^2}\nabla(a^2T_u\eta)^T \nabla(a^2T_u\eta) \Big\}
\\ & \quad + \Big\{ 2 \,\sym\big((\nabla u)^T\nabla (T_uw)\big) + 2\Gamma' a\,
\sym\big((\eta\otimes E_u) \nabla(T_uw)\big) 
\\ & \qquad \; \; + 2 \bar\Gamma' a^2 \sym\big((\eta\otimes\eta)(T_u)^T\nabla(T_uw)\big)
+ \nabla(T_uw)^T\nabla(T_uw)\Big\}
\\ & \quad + \Big\{2\frac{\Gamma}{\lambda} \sym\big(\nabla(a E_u)^T \nabla(T_uw)\big) 
+ 2\frac{\bar\Gamma}{\lambda} \sym\big(\nabla(a^2 T_u\eta)^T \nabla(T_uw)\big)\}. 
\end{split}
\end{equation*}
This precisely yields the claim, upon noticing that first two terms in
the first parentheses sum to $a^2\eta\otimes\eta$ because 
$(\nabla u)^T T_u=\Id_2$ and because: 
$$(\Gamma')^2+2\bar\Gamma' = 1,$$ 
while the third term in the first parentheses can be rewritten as $(\bar\Gamma')^2a^4
|T_u\eta|^2\eta\otimes\eta$. Finally, the first term in the fourth parentheses equals:
$$2\sym\nabla w + 2\sym\big((\nabla u)^T\big[(\partial_1 T_u)w, (\partial_2 T_u)w\big]\big).$$ 
The proof is done.
\end{proof}

\section{Propagation of normal vectors lemmas}\label{sec_normals}

In this section, we gather some lemmas on properties satisfied by all
the immersions inductively constructed along our convex integration algorithm. 
The first observation is standard:

\begin{lemma}\label{lem_det}
Let $u\in\mathcal{C}^1(\bar\omega,\R^4)$, defined on the closure of an
open, bounded set $\omega\subset\R^2$, satisfy:
\begin{equation}\label{immers_gamma}
\frac{1}{\gamma}\Id_2\leq (\nabla u)^T\nabla u \leq \gamma\Id_2
\quad\mbox{ in } \bar\omega,
\end{equation}
 for some $\gamma>1$. Then there holds:
$$\|\nabla u\|_0\leq (2\gamma)^{1/2} \quad\mbox{ and }\quad 
\frac{1}{\gamma^2}\leq\det((\nabla u)^T\nabla u)\leq 2\gamma^2
\mbox{ in } \bar\omega.$$
\end{lemma}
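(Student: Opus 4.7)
The plan is to reduce both assertions to the pointwise spectral analysis of the symmetric positive-definite matrix $A(x) = (\nabla u)^T \nabla u(x) \in \R^{2\times 2}_{\sym,>}$. At each $x \in \bar\omega$, the hypothesis \eqref{immers_gamma} means precisely that the two eigenvalues $\lambda_1(x),\lambda_2(x)$ of $A(x)$ lie in the interval $[1/\gamma,\gamma]$. All remaining work is then a one-line consequence of elementary identities for $2\times 2$ symmetric matrices.

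For the gradient bound, I would use the Frobenius identity $|\nabla u(x)|^2 = \tr\bigl((\nabla u)^T\nabla u\bigr) = \tr A(x) = \lambda_1(x) + \lambda_2(x)$, which in view of the eigenvalue confinement is at most $2\gamma$; taking supremum over $\bar\omega$ yields $\|\nabla u\|_0 \leq (2\gamma)^{1/2}$. For the determinant bounds, since $\det A(x) = \lambda_1(x)\lambda_2(x)$, the lower bound $\det A \geq 1/\gamma^2$ is immediate from $\lambda_i \geq 1/\gamma$, while the upper bound $\det A \leq \gamma^2 \leq 2\gamma^2$ follows from $\lambda_i \leq \gamma$ (the factor $2$ in the stated bound is simply a slight loss, which could equivalently be obtained from the AM--GM estimate $\det A \leq (\tr A/2)^2 \leq \gamma^2$).

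There is no genuine obstacle here: the lemma is purely linear-algebraic and the proof consists of noting the eigenvalue envelope and computing trace and determinant. The only point worth stating is the convention that $\|\nabla u\|_0$ refers to the $\mathcal{C}^0$-supremum of the Frobenius (or any equivalent) matrix norm, which is the one making $|\nabla u|^2 = \tr((\nabla u)^T\nabla u)$ an exact identity.
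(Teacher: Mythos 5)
Your proof is correct. It takes a cleaner route than the paper: you diagonalize and observe that \eqref{immers_gamma} pins the two eigenvalues $\lambda_1,\lambda_2$ of $A=(\nabla u)^T\nabla u$ into $[1/\gamma,\gamma]$, from which the trace bound $|\nabla u|^2=\tr A\leq 2\gamma$ and the determinant bounds $1/\gamma^2\leq\lambda_1\lambda_2\leq\gamma^2$ are immediate. The paper instead works with the raw matrix entries: it gets the gradient bound from $|\partial_i u|^2=\langle Ae_i,e_i\rangle\leq\gamma$ and the determinant upper bound from $\det A\leq 2\|\partial_1 u\|_0^2\|\partial_2 u\|_0^2$ (hence the cosmetic factor $2$), while for the lower bound it observes that the quadratic form $s\mapsto(\gamma A_{11}-1)s^2+2\gamma A_{12}s+(\gamma A_{22}-1)$ is nonnegative and so has nonpositive discriminant, which rearranges into $\gamma^2\det A\geq 1$. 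Both arguments are elementary; your spectral version is the more transparent of the two (and in fact gives the sharper upper bound $\det A\leq\gamma^2$), while the paper's discriminant calculation is a direct but slightly more laborious route to the same inequalities. Your parenthetical remark that $\|\nabla u\|_0$ is the sup of the Frobenius norm is exactly the convention the paper uses, so no mismatch there.
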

\begin{proof}
The first assertion holds because $|\partial_iu(x)|^2=\langle \nabla
u(x)^T(\nabla u(x)) e_i, e_i\rangle \leq \gamma$. Consequently, we get
the upper bound in the second assertion, as $\|\det ((\nabla u)^T\nabla
u)\|_0\leq 2\|\partial_1u\|_0^2\|\partial_2u\|_0^2$.

For the lower bound, fix $x\in\bar\omega$ and denote $A = \nabla
u(x)^T\nabla u(x) \in\R^{2\times 2}_\sym$. By assumption: $\langle A
(s,t)^T, (s,t)^T\rangle\geq \frac{1}{\gamma}(s^2+t^2) $ for every vector
$(s,t)\in\R^2$, or equivalently:
$$(\gamma A_{11}-1) s^2+2\gamma A_{12}st+
(\gamma A_{22}-1) t^2 \geq 0\quad\mbox{ for all } s,t\in\R.$$ 
Taking $t=0$ and $s=0$ implies that $\gamma A_{11}\geq 1$ and $\gamma
A_{22}\geq 1$, while setting $t=1$ the discriminant of the resulting
quadratic equation in $s$ must be nonpositive, namely:
\begin{equation*}
\begin{split}
& 0\geq 4\gamma^2A_{12}^2 -4 (\gamma A_{11}-1)(\gamma A_{22}-1) \\ & = 4
\gamma^2 (A_{12}^2-A_{11}A_{22}) +4(\gamma A_{11} +\gamma A_{22}-1)
\geq - 4\gamma^2 \det A + 4.
\end{split}
\end{equation*} 
This proves the claim.
\end{proof}

\medskip

The second lemma gathers bounds on the tangent frame of an
immersion:

\begin{lemma}\label{lem_Tu}
Let $u\in\mathcal{C}^{k+2}(\bar\omega,\R^4)$, defined on the closure of an
open, bounded set $\omega\subset\R^2$, satisfy (\ref{immers_gamma}) for
some $\gamma>1$. Assume additionally that:
$$\|\nabla^{(m)}\nabla^{(2)}u\|_0\leq \bar C \mu^{m+1}A \quad \mbox{
  for all } m=0\ldots k,$$
with some constants $\mu>1$, $A<1$ and $\bar C>1$. Then, the following
bounds are valid for the tangent field $T_u=(\nabla
u)\big((\nabla u)^T\nabla u\big)^{-1}\in\mathcal{C}^{k+1}(\bar\omega,\R^{4\times 2})$:
$$\|T_u\|\leq C \quad \mbox{ and }\quad \|\nabla^{(m)}T_u\|_0\leq C\mu^m A
\quad\mbox{ for all } m=1\ldots k+1,$$
where $C$ depends only on $\gamma$, $\bar C$ and $k$, but not on $\mu, A$.
\end{lemma}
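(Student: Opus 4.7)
The plan is to write $T_u = (\nabla u)\,M^{-1}$ with $M \doteq (\nabla u)^T\nabla u$, and then differentiate this product. The sup-norm bound is almost immediate: the hypothesis $(\nabla u)^T\nabla u\geq \frac{1}{\gamma}\Id_2$ inverts to $\|M^{-1}\|_0\leq\gamma$, and Lemma \ref{lem_det} gives $\|\nabla u\|_0\leq (2\gamma)^{1/2}$, yielding $\|T_u\|_0\leq (2\gamma)^{3/2}$.

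For the derivative bounds, the first intermediate step is to show that
$\|\nabla^{(m)}M\|_0\leq C\mu^m A$ for $1\leq m\leq k+1$.
This comes from Leibniz applied to $M=(\nabla u)^T\nabla u$: each term has the form $\nabla^{(j+1)}u\cdot\nabla^{(m-j+1)}u$ for $0\leq j\leq m$. The endpoint cases $j\in\{0,m\}$ pair $\nabla u$ (bounded by $(2\gamma)^{1/2}$) with $\nabla^{(m+1)}u$ (bounded by $\bar C\mu^m A$ via the hypothesis, since $m\leq k+1$ means $m+1\leq k+2$). The interior cases $1\leq j\leq m-1$ produce a product of two second-or-higher derivatives, giving $\bar C^2\mu^m A^2$, and the crucial observation $A<1$ lets me absorb the extra power of $A$ into a single factor $A$.

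Next, I pass to $M^{-1}$. Rather than invoking the explicit $2\times 2$ adjugate formula, I prefer the recursive identity $\partial_i(M^{-1}) = -M^{-1}(\partial_iM)M^{-1}$, iterated via Faà di Bruno. The result is that $\nabla^{(m)}M^{-1}$ is a finite sum of products, each containing $(m+1)$ factors of $M^{-1}$ (each bounded by $\gamma$) and factors $\nabla^{(p_1)}M,\ldots,\nabla^{(p_s)}M$ with $p_i\geq 1$, $s\geq 1$, $p_1+\cdots+p_s=m$. Using the previous step for each such factor and once more absorbing $A^s\leq A$, I obtain $\|\nabla^{(m)}M^{-1}\|_0\leq C\mu^m A$ for $1\leq m\leq k+1$. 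In particular $\|M^{-1}\|_0\leq \gamma$ is a constant, so both bounds on $M^{-1}$ are compatible in a final Leibniz application.

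That final step is to apply Leibniz to $T_u=(\nabla u)M^{-1}$ and split according to whether a derivative falls on $\nabla u$ or on $M^{-1}$. Each resulting term carries exactly one (or more) of the bounded factors $(2\gamma)^{1/2}$, $\gamma$, and at least one factor carrying $A$; the now-familiar $A^2\leq A$ trick handles the crossed terms. This yields $\|\nabla^{(m)}T_u\|_0\leq C\mu^m A$ for $1\leq m\leq k+1$, with $C=C(\gamma,\bar C, k)$. The only real bookkeeping obstacle is ensuring that every term in the multilinear expansion of $\nabla^{(m)}T_u$ has at least one factor of $A$ and not zero; this is guaranteed because $m\geq 1$ forces at least one derivative to land on either $M$ or $\nabla u$, producing a $\nabla^{(\geq 2)}u$ factor.
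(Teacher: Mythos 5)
Your proof is correct, and it takes a somewhat different route from the paper's. The paper writes $T_u=\frac{1}{\det M}(\nabla u)\,\mathrm{cof}(M)$ with $M=(\nabla u)^T\nabla u$, then uses Lemma~\ref{lem_det} to lower-bound $\det M$, a Leibniz estimate for $\nabla^{(m)}\det M$, and the scalar Fa\`a di Bruno formula applied to the reciprocal $1/\det M$. You instead work with $M^{-1}$ as a matrix object, deriving $\|M^{-1}\|_0\le\gamma$ directly from the spectral hypothesis $M\geq \gamma^{-1}\Id_2$ and iterating $\partial_i(M^{-1})=-M^{-1}(\partial_i M)M^{-1}$ to control $\nabla^{(m)}M^{-1}$. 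Both approaches hinge on the same observations — Leibniz plus the remark that every term in any expansion of $\nabla^{(m)}T_u$ with $m\geq 1$ contains at least one factor of $\nabla^{(\geq2)}u$, hence at least one power of $A$, and the extra powers can be dropped since $A<1$. What your route buys you: it avoids the $2\times2$-specific cofactor representation and the determinant bounds of Lemma~\ref{lem_det} (beyond $\|\nabla u\|_0\le(2\gamma)^{1/2}$), so the argument would carry over verbatim to immersions of $d$-dimensional manifolds, which the paper's derivation would not without rewriting the adjugate. One minor caveat: what you call ``Fa\`a di Bruno'' applied to $M^{-1}$ is really the iterated noncommutative product rule, also known as the expansion of derivatives of the inverse; the conclusion is correct but the terminology is slightly off, since Fa\`a di Bruno in its standard scalar form is exactly what the paper uses on $(\det M)^{-1}$.
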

\begin{proof}
Write $T_u=\frac{1}{\det((\nabla u)^T\nabla u)}(\nabla
u)\,\mbox{cof}\big((\nabla u)^T\nabla u\big)$, whereupon Lemma
\ref{lem_det} implies the first assertion, with $C$ depending on
$\gamma$. For the second assertion, estimate first:
\begin{equation*}
\begin{split} 
\|\nabla^{(m)} \det((\nabla u)^T\nabla u)\|_0 & \leq
C \hspace{-3mm} \sum_{p+q+t+s=m} \hspace{-3mm}\|\nabla^{(p+1)}u\|_0
\|\nabla^{(q+1)}u\|_0 \|\nabla^{(s+1)}u\|_0 \|\nabla^{(t+1)}u\|_0 
\\ & \leq C\mu^m A \quad\mbox{ for all } m=1\ldots k+1,
\end{split}
\end{equation*}
because at least one of the exponents $p,q,s,t$ must be positive, say
$p\geq 1$, so that $p+1\geq 2$ and we can use the assumption bound,
whereas other terms are estimated as in $\|\nabla^{(q+1)}u\|_0\leq
C\mu^{q}$ in view of Lemma \ref{lem_det} as $A<1$.
An application of Fa\`a di Bruno's formula yields:
\begin{equation*}
\begin{split}
& \big\|\nabla^{(m)}\Big(\frac{1}{\det((\nabla u)^T\nabla
  u))}\Big)\big\|_0 \\ & \leq 
C\hspace{-4mm} \sum_{p_1+2p_2+\ldots mp_m=m} \hspace{-4mm}
\big\|\det((\nabla u)^T\nabla u)^{-1-(p_1+\ldots+p_m)} 
\prod_{j=1}^m|\nabla^{(j)} \det((\nabla u)^T\nabla u)|^{p_j}\big\|_0
\\ & \leq C\mu^mA \quad\mbox{ for all } m=1\ldots k+1.
\end{split}
\end{equation*}
In conclusion, and by a similar argument, we get:
\begin{equation*}
\begin{split}
& \|T_u\|_0 \leq C \hspace{-3mm} \sum_{p+q+t+s=m} \hspace{-3mm} 
\big\|\nabla^{(p)}\Big(\frac{1}{\det((\nabla u)^T\nabla
  u))}\Big)\big\|_0 
\|\nabla^{(q+1)}u\|_0 \|\nabla^{(s+1)}u\|_0 \|\nabla^{(t+1)}u\|_0 
\\ & \leq C\mu^m A \quad\mbox{ for all } m=1\ldots k+1.
\end{split}
\end{equation*}
This completes the proof.
\end{proof}

\medskip

Concerning existence of the normal frame, the following has
been proved in \cite[Lemma 3.5]{CI_rig_flex}: 

\begin{lemma}\label{lem_normals}
Let $u\in\mathcal{C}^\infty(\bar\omega, \R^4)$, defined on the closure
of $\omega\subset\R^2$ diffeomorphic to $B_1$, satisfy
(\ref{immers_gamma}) with some $\gamma>1$. Fix $N\geq 1$. Then, there
exist a normal frame $E^1_u, E^2_u\in
\mathcal{C}^N(\bar\omega, \R^4)$:
\begin{equation}\label{norm_frame}
(\nabla u)^T E_u^1 = (\nabla u)^T E_u^2 = 0,\quad |E^1_u|=|E^2_u|=1,
\quad \langle E^1_u, E^2_u\rangle = 0 \quad\mbox{ in } \bar\omega,
\end{equation}
obeying the bounds:
\begin{equation}\label{stim_nor}
\|\nabla^{(m)}E_u^i\|_0\leq C\big(1+\|\nabla u\|_m\big) \quad\mbox{
  for all } m=1\ldots N, \; i=1,2,
\end{equation}
where $C$ depends only on $\omega, \gamma$ and $N$.
\end{lemma}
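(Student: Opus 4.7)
The key input is that $\bar\omega$ is diffeomorphic to $\bar B_1$, hence contractible, so the rank-two normal bundle
$$N = \bigcup_{x\in\bar\omega} \{v\in\R^4 \,:\, (\nabla u(x))^T v = 0\}$$
is a trivializable smooth subbundle of $\bar\omega\times\R^4$. The plan is to produce a smooth pair $(V^1,V^2)$ of sections of $N$ that are linearly independent at every point, and then apply Gram-Schmidt to obtain the orthonormal frame $(E^1_u, E^2_u)$.

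To construct $V^1, V^2$, I would work with the smooth rank-two normal projection
$$P_N = \Id_4 - (\nabla u)\bigl((\nabla u)^T\nabla u\bigr)^{-1}(\nabla u)^T,$$
which is uniformly controlled in terms of (\ref{immers_gamma}) via Lemmas \ref{lem_det} and \ref{lem_Tu}. By compactness of the Grassmannian $G(2,4)$, I would fix finitely many constant vectors $w_1,\ldots,w_M\in\R^4$ such that for every 2-plane $T\subset\R^4$ the images $\{P_{T^\perp}w_j\}_{j=1}^M$ span $T^\perp$; five vectors in general position suffice. Using a fixed diffeomorphism $\Phi:\bar B_1\to\bar\omega$, cover $\bar\omega$ by finitely many open sets $U_\alpha$ on each of which a pair of indices $(j(\alpha),j'(\alpha))$ can be selected with $(P_N w_{j(\alpha)}, P_N w_{j'(\alpha)})$ linearly independent. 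The local frames are then patched into a global smooth basis $(V^1,V^2)$ by extending over the cells of a triangulation of $\bar B_1$ and reorienting consistently across transitions via $GL(2,\R)$-valued matrices; this step is viable precisely because the classifying $GL(2,\R)$-cocycle is trivial over a contractible base.

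The orthonormal frame is then obtained by Gram-Schmidt:
$$E^1_u = \frac{V^1}{|V^1|},\qquad E^2_u = \frac{V^2-\langle V^2,E^1_u\rangle E^1_u}{\bigl|V^2-\langle V^2,E^1_u\rangle E^1_u\bigr|},$$
which satisfies (\ref{norm_frame}) by construction. The bounds (\ref{stim_nor}) follow by induction on $m$ upon differentiating the pointwise identities $(\nabla u)^T E^i_u = 0$, $|E^i_u|^2 = 1$, and $\langle E^1_u, E^2_u\rangle = 0$: at each order one solves a linear system whose coefficient matrix consists of $(\nabla u)^T$ augmented by the rows $(E^i_u)^T$ and is uniformly invertible by Lemma \ref{lem_det}. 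The inhomogeneous term involves $\nabla u$ up to order $m+1$ and lower-order derivatives of the frame already controlled by the inductive hypothesis, whence the stated estimate by the Leibniz rule.

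The main obstacle is the global patching step: a naive partition-of-unity average of the local sections can introduce zeros or collinearities at transition regions, so the construction genuinely requires the triviality of $N$. This is the sole place where the topological hypothesis $\omega\cong B_1$ enters essentially (for, say, a Möbius-like embedded strip the normal bundle would be nontrivial and no global frame could exist); once the frame is in place, the quantitative estimates are a routine consequence of the uniform non-degeneracy of $(\nabla u)^T\nabla u$.
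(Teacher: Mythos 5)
Your construction of the frame — project a finite set of constant vectors via $P_N$, choose a linearly independent pair locally, patch over a contractible base, and orthonormalize — is a reasonable approach, and in spirit close to the paper's (which is quoted from \cite{CI_rig_flex}: fix $\xi^1,\xi^2\perp\nabla u(0)$, set $\nu^i_u=P_N\xi^i$, Gram--Schmidt, and then extend quantitatively to larger balls by a partition-of-unity argument).

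The gap is in your derivation of the bounds~(\ref{stim_nor}). You propose to bound $\nabla^{(m)}E^i_u$ by differentiating the constraints $(\nabla u)^TE^i_u=0$, $|E^i_u|=1$, $\langle E^1_u,E^2_u\rangle=0$ and inverting the $4\times4$ matrix with rows $(\nabla u)^T,(E^1_u)^T,(E^2_u)^T$. This system is \emph{underdetermined}. Differentiating the constraints gives, for each $j$, only the equations
\begin{equation*}
(\nabla u)^T\partial_jE^1_u=-(\partial_j\nabla u)^TE^1_u,\qquad
\langle E^1_u,\partial_jE^1_u\rangle=0,\qquad
\langle E^2_u,\partial_jE^1_u\rangle=-\langle E^1_u,\partial_jE^2_u\rangle,
\end{equation*}
and the last right-hand side involves the unknown $\partial_jE^2_u$; symmetrically for $E^2_u$. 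Counting, one has $7$ scalar equations for the $8$ unknowns $(\partial_jE^1_u,\partial_jE^2_u)$, with the one-dimensional kernel corresponding exactly to the $SO(2)$-gauge freedom of the normal frame (the normal connection coefficient $\langle E^2_u,\partial_jE^1_u\rangle$ is not fixed by the constraints at all). So the identities alone cannot determine, let alone bound, the frame's derivatives: the estimate~(\ref{stim_nor}) is a statement about the \emph{particular} frame produced by the construction, and must be read off its explicit formula. Concretely, once you have controlled fields $V^i$ (via $P_N=\Id_4-T_u(\nabla u)^T$ and Lemma~\ref{lem_Tu}), the bounds follow by applying Leibniz and Fa\`a di Bruno to the Gram--Schmidt formulas — compare the bound derivation in Lemma~\ref{lem_propa}, which does exactly this for the propagated frame. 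You should also note that the patching step needs to be carried out so as to preserve uniform $\mathcal{C}^N$ control of $V^1,V^2$; invoking triviality of the $GL(2,\R)$-cocycle over a contractible base ensures a smooth global section exists but does not by itself furnish the quantitative bounds that the lemma asserts, which is precisely why the paper's proof emphasizes the uniform, $\gamma$-dependent radius in its extension step.
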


\noindent We now sketch the argument; it starts with a local construction on $\omega =
B_1$, by first fixing an orthonormal frame
$\xi^1, \xi^2\in\R^4$ to $\nabla u(0)$, then defining the smooth fields:
$$\nu_u^i(x) = \big(\Id_4 - (\nabla u(x)) (\nabla u(x)^T(\nabla
u(x)))^{-1}\nabla u(x)^T\big)\xi^i \quad\mbox{for } i=1,2,$$
which form a basis of the orthogonal complement of
$\mbox{span}\{\partial_1u(x), \partial_2u(x)\}$ and can be
Gramm-Schmidt orthonormalized to $E^1_u, E^2_u$ satisfying
(\ref{norm_frame}), (\ref{immers_gamma}), in a sufficiently small
neighbourhood $\bar B_r$ of $0$. Then, the key ingredient in the proof
is to show that such local frame can be extended on $\bar B_{r+\delta}$ and
obey the same bounds, with $\delta>0$ that depends only on
$\gamma, N$. The construction is explicit, via a partition of unity argument.\endproof

\medskip

The final lemma of this section provides the key construction and estimates
on the propagation of normal vectors from a given, to a nearby immersion:

\begin{lemma}\label{lem_propa}
Let $u\in\mathcal{C}^{k+2}(\bar\omega, \R^4)$ be defined on the closure of an
open, bounded set $\omega\subset\R^2$, and satisfy (\ref{immers_gamma}) for
some $\gamma>1$. Let $E_u^1, E_u^2\in\mathcal{C}^{k+1}(\bar\omega, \R^4)$
be a normal frame of $u$, satisfying conditions (\ref{norm_frame}) and
(\ref{stim_nor}). Fix $v\in\mathcal{C}^{k+2}(\bar\omega, \R^4)$. 
\begin{itemize}
\item[(i)] There exists $\rho\in (0,1)$ depending only on $\gamma$, such that
if $\|\nabla v-\nabla u\|_0\leq \rho$ then a normal frame $E_v^1,
E_v^2\in\mathcal{C}^{k+1}(\bar\omega, \R^4)$ to $v$, namely:
$$(\nabla v)^T E_v^1 = (\nabla v)^T E_v^2 = 0 \quad \mbox{and}\quad
|E^1_v|=|E^2_v|=1 \quad\mbox{and} \quad \langle E^1_v, E^2_v\rangle =
0\quad \mbox{in } \bar\omega,$$ 
can be defined via the following formulas:
\begin{equation}\label{propa_def} 
\begin{split}
& E^1_v = \frac{\nu^1_v}{|\nu^1_v|}, \qquad 
E^2_v = \frac{\nu^2_v - \langle \nu_v^2, E^1_v\rangle E^1_v}{\nu^2_v -
  \langle \nu_v^2, E^1_v\rangle E^1_v},\\
& \mbox{where } \nu_v^i= \big(\Id_4 - T_v(\nabla v - \nabla u)^T\big)E^i_u
\quad\mbox{for } i=1,2 \\ & \mbox{and} \quad T_v= (\nabla v)((\nabla v)^T\nabla v)^{-1}.
\end{split}
\end{equation}
\item[(ii)] If, in addition to $\|\nabla v-\nabla u\|_0\leq \rho$, there holds:
\begin{equation*}
\begin{split}
& \|\nabla^{(m)}(\nabla v-\nabla u)\|_0\leq \bar C \mu^mA \quad\mbox{ for } m=0\ldots k+1,
\\ & \|\nabla^{(m)}\nabla^{(2)}u\|_0\leq \bar C\mu^{m+1}, ~~ \;\;
\|\nabla^{(m+1)}E^i_u\|_0\leq \bar C\mu^{m+1}  \quad\mbox{for } m=0\ldots k,\; i=1,2,
\end{split}
\end{equation*}
with some constants $\mu>1, A<1$ and $\bar C>1$, then $E^1_v, E^2_v$
given in (\ref{propa_def}) satisfy:
$$\|\nabla^{(m)}(E^i_v-E^i_u)\|_0\leq C \mu^mA \quad\mbox{ for all }
m=0\ldots k+1,\; i=1,2,$$
where $C$ depends only on $\gamma, \bar C$ and $k$, but not on $\mu, A$.
\end{itemize}
\end{lemma}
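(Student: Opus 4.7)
For part (i), the plan is to verify directly that the fields $\nu_v^i$ defined in (\ref{propa_def}) are orthogonal to $\nabla v$. Using the identity $(\nabla v)^T T_v = \Id_2$, I compute
$$(\nabla v)^T \nu_v^i = (\nabla v)^T E_u^i - (\nabla v - \nabla u)^T E_u^i = (\nabla u)^T E_u^i = 0.$$
Next I need to check $|\nu_v^i| \geq 1/2$ (say) so that the normalizations in (\ref{propa_def}) make sense and produce $\mathcal{C}^{k+1}$ fields: since $\nu_v^i = E_u^i - T_v (\nabla v - \nabla u)^T E_u^i$ and $\|T_v\|_0 \leq C(\gamma)$ by Lemma \ref{lem_Tu} applied to $v$ (valid for $\rho$ small enough to keep $v$ in the immersion class with a slightly larger $\gamma$), I get $\big|\nu_v^i - E_u^i\big| \leq C \rho$, so choosing $\rho$ small enough depending only on $\gamma$ secures $|\nu_v^i|\geq 1/2$. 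Orthonormality of the pair $(E_v^1, E_v^2)$ is then enforced by the Gram-Schmidt step built into (\ref{propa_def}).

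For part (ii), I start from the key identity
$$\nu_v^i - E_u^i = - T_v (\nabla v - \nabla u)^T E_u^i,$$
and apply the Leibniz rule:
$$\nabla^{(m)}(\nu_v^i - E_u^i) = -\sum_{p+q+s = m} c_{p,q,s}\,\nabla^{(p)}T_v\,\nabla^{(q)}\big((\nabla v - \nabla u)^T\big)\,\nabla^{(s)}E_u^i.$$
I will invoke Lemma \ref{lem_Tu} for $v$ (whose second-derivative bounds follow from the hypotheses on $u$ and on $\nabla v - \nabla u$, as $A<1$) to obtain $\|\nabla^{(p)} T_v\|_0 \leq C\mu^p$ for $p\geq 0$, while the hypothesis bounds $\nabla^{(q+1)}(v-u)$ by $\bar C\mu^{q+1}A$ and bounds $\nabla^{(s)} E_u^i$ by $C\mu^{s}$. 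Since in every term of the Leibniz sum the factor $\nabla^{(q+1)}(v-u)$ brings one copy of $A$, the total estimate reads
$$\|\nabla^{(m)}(\nu_v^i - E_u^i)\|_0 \leq C\mu^m A,$$
uniformly in $i=1,2$ and $m=0,\dots,k+1$, with $C$ depending only on $\gamma, \bar C, k$.

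It remains to transfer these estimates through the normalization and Gram-Schmidt steps. For this I combine the pointwise lower bound $|\nu_v^i|\geq 1/2$ from part (i) with Fa\`a di Bruno to estimate derivatives of the functions $t\mapsto t^{-1/2}$ and $t\mapsto t$ evaluated at scalar products of the $\nu_v^i$'s. Writing
$$E_v^1 - E_u^1 = \frac{\nu_v^1 - E_u^1}{|\nu_v^1|} + \Big(\frac{1}{|\nu_v^1|} - 1\Big) E_u^1$$
and a similar identity for $E_v^2 - E_u^2$ (which adds one extra term coming from subtracting $\langle \nu_v^2, E_v^1\rangle E_v^1$), each piece is a smooth algebraic combination of $\nu_v^i - E_u^i$ and $E_u^i$ whose derivatives are controlled, by the chain rule, in terms of the derivatives of $\nu_v^i - E_u^i$ and of $E_u^i$. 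Since every such combination carries at least one factor of $(\nu_v^i - E_u^i)$ or $(|\nu_v^1|-1)$, it inherits the factor $A$, while the remaining derivatives produce only powers of $\mu$. This yields the claimed bound $\|\nabla^{(m)}(E_v^i - E_u^i)\|_0 \leq C\mu^m A$.

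The main obstacle, and the one requiring the most care, is the bookkeeping in the last paragraph: one must avoid spurious factors of $\mu^{m+1}$ (rather than $\mu^m$) that could appear if one is not careful about where the factor $A$ resides in each Leibniz term. The structural observation that $\nu_v^i - E_u^i$ and $|\nu_v^1|^2 - 1$ each produce one factor of $A$, and that all subsequent derivatives act either on these small quantities or on the bounded $E_u^i$ and $T_v$, is precisely what keeps the estimate in the desired form.
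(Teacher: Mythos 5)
Your overall strategy mirrors the paper's: in part (i) you verify that $\nu_v^i$ is normal to $\nabla v$ and bound $|\nu_v^i|$ from below; in part (ii) you estimate $\nu_v^i - E_u^i$ by Leibniz and then propagate the single factor $A$ through the normalization via Fa\`a di Bruno. Your algebraic identity $E_v^1 - E_u^1 = \frac{\nu_v^1 - E_u^1}{|\nu_v^1|} + \big(\frac{1}{|\nu_v^1|}-1\big)E_u^1$ is a workable substitute for the paper's path-integral representation of $f(\nu_v^1) - f(E_u^1)$ with $f(z)=z/|z|$; both routes ultimately invoke Fa\`a di Bruno on the reciprocal norm and track where the one factor of $A$ resides.

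There is, however, one genuine omission in part (i): you must check that the vector $\nu_v^2 - \langle\nu_v^2, E_v^1\rangle E_v^1$ appearing in the denominator of $E_v^2$ is bounded away from zero. This is \emph{not} automatic from $|\nu_v^2|\geq 1/2$ alone, since a priori $\nu_v^2$ could become nearly parallel to $E_v^1$. The paper discharges it by first showing $\|\langle\nu_v^1,\nu_v^2\rangle\|_0\leq 1/8$ for $\rho$ small (using $\langle E_u^1,E_u^2\rangle=0$ together with $|\nu_v^i-E_u^i|\leq C\rho$), which then gives $|\nu_v^2 - \langle\nu_v^2,E_v^1\rangle E_v^1|\geq 1/4$. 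The tools you already invoke suffice for this, but the phrase "Gram--Schmidt is built in" does not substitute for the estimate --- the non-degeneracy of the Gram--Schmidt step is precisely what has to be proved. A separate minor slip: you quote the hypothesis as bounding $\nabla^{(q+1)}(v-u)$ by $\bar C\mu^{q+1}A$; from $\|\nabla^{(m)}(\nabla v-\nabla u)\|_0\leq\bar C\mu^m A$ at $m=q$ the correct bound is $\bar C\mu^q A$, and your stated conclusion $C\mu^m A$ is consistent only with that corrected reading, so this appears to be a typo rather than an error in the estimate.
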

\begin{proof}
{\bf 1.} Observe that, by the first bound in Lemma \ref{lem_det}:
$$ \|(\nabla v)^T\nabla v - (\nabla u)^T\nabla u\|_0\leq \|\nabla
v-\nabla u\|_0(\|\nabla u\|_0+\|\nabla v\|_0)\leq \|\nabla
v-\nabla u\|_0 (2^{3/2}\gamma^{1/2}+1),$$
implying for small $\rho$ that:
\begin{equation}\label{a}
\frac{1}{2\gamma}\leq (\nabla v)^T\nabla v\leq 2\gamma \Id_2
\quad\mbox{in } \bar\omega.
\end{equation}
In particular $T_v\in \mathcal{C}^{k+1}(\bar\omega, \R^4)$ is well
defined and $\nu_v^i$ are normal to $\nabla v$, as $(\nabla
v)^TT_v=\Id_2$ and:
$$(\nabla v)^T\nu_v^i = \big((\nabla v)^T - (\nabla v-\nabla
u)^T\big)E^i_u=(\nabla u)^TE^i_u=0 \quad\mbox{ for } i=1,2.$$
Further, applying Lemma \ref{lem_Tu} to bound $\|T_v\|_0$ by a
constant that only depends on $\gamma$, we get:
\begin{equation}\label{c}
 \|\nu_v^i- E^i_u\|_0\leq  \|T_v\|_0\|\nabla v-\nabla u\|_0\leq C
\|\nabla v-\nabla u\|_0\leq \frac{1}{2}
\end{equation}
for $\rho$ sufficiently small, so in particular:
$$ |\nu_v^i(x)|\geq |E^i_u(x)|- \frac{1}{2} \geq \frac{1}{2}\quad 
\mbox{ for all } \; x\in\bar\omega, \;  i=1,2$$
implying that $E^1_v$ in (\ref{propa_def}) is well defined. Next:
$$\|\langle \nu_v^1, \nu_v^2\rangle\|_0\leq 2 \|T_v\|_0\|\nabla v-
\nabla u\|_0+ \|T_v\|_0^2\|\nabla v-
\nabla u\|_0\leq \frac{1}{8}, $$
for $\rho$ sufficiently small, implying the well definiteness of $E^2_v$ because:
$$ |\nu_v^2(x)- \langle \nu^2_v(x), E^1_v(x)\rangle E^1_v(x)|\geq
\frac{1}{2} - \frac{|\langle \nu^2_v(x), \nu^1_v(x)\rangle|}{|\nu_v^1(x)|}
\geq \frac{1}{2} -\frac{1}{4} = \frac{1}{4} \quad 
\mbox{ for all } \; x\in\bar\omega.$$
This justifies all the claims in (i), in view of the final property:
$$\langle E_v^1, E_v^2\rangle = \frac{\langle E_v^1, \nu_v^2\rangle -
  \langle \nu_v^2, E_v^1\rangle}{|\nu^2_v -
  \langle \nu_v^2, E^1_v\rangle E^1_v|}=0.$$

\smallskip

{\bf 2.} Recall that $v$ obeys (\ref{a}) and also, from the
assumptions we get: $\|\nabla^{(m)}\nabla^{(2)}u\|_0\leq 2\bar C\mu^{m+1}$ for
all $m=0\ldots k$. Hence, Lemma \ref{lem_Tu} implies:
\begin{equation}\label{e}
\begin{split}
\|\nabla^{(m)}T_v\|_0\leq C\mu^m \quad\mbox{ for all } m=0\ldots k+1.
\end{split}
\end{equation}
Consequently, for all $m=0\ldots k+1$:
\begin{equation}\label{b}
\|\nabla^{(m)}(\nu^i_v-E^i_u)\|_0 \leq
C\hspace{-2mm}\sum_{p+q+t=m}\hspace{-2mm}\|\nabla^{(p)}T_v\|_0 
\|\nabla^{(q)}(\nabla u-\nabla v)\|_0 \|\nabla^{(t)}E^i_u\|_0\leq C\mu^mA,
\end{equation}
and further:
\begin{equation}\label{d}
\|\nabla^{(m)}\nu^i_v\|_0 \leq C\mu^m \quad\mbox{ for all } m=0\ldots k+1.
\end{equation}
We write:
\begin{equation*}
\begin{split}
& E^1_v-E^1_u = f(\nu^1_v) - f(E^1_u) = \Big(\int_0^1\nabla
f(t\nu^1_v+(1+t)E^1_u)~\mbox{d}t\Big) (\nu^1_v-E^1_u) 
\\ & \mbox{where } f(z) = \frac{z}{|z|} \quad \mbox{with}\quad 
\partial_if^j(z) = \frac{\delta_{ij}|z|^2-z_iz_j}{|z|^3}\quad
 \mbox{ for all } z\in\R^4\setminus\{0\}, \; i,j=1\ldots 4.
\end{split}
\end{equation*}
For each fixed $t\in (0,1)$, we now bound
$\|\nabla^{(m)}_x\partial_if^j(t(\nu_v^1-E^1_u)+E^1_u)\|_0$. Firstly, by (\ref{b}):
$$\|\nabla^{(m)}_x (t(\nu_v^1-E^1_u)+E^1_u)\|_0\leq C\mu^m \quad\mbox{ for all } m=0\ldots k+1,$$
and also the same estimate is valid for the derivatives of $|t(\nu_v^1-E^1_u)+E^1_u|^2$.
Secondly, since $|t(\nu_v^1-E^1_u)+E^1_u|$ is lower bounded by $1/2$ by
(\ref{c}), the above yields, by Fa\`a di Bruno's formula:
\begin{equation*}
\begin{split}
& \big\|\nabla^{m}_x\big(\frac{1}{|t(\nu_v^1-E^1_u)+E^1_u|^3}\big)\big\|_0=
\big\|\nabla^{m}_x\big(|t(\nu_v^1-E^1_u)+E^1_u|^2\big)^{-3/2}\big\|_0
\\ & \leq C \hspace{-4mm}\sum_{p_1+2p_2+\ldots mp_m=m} \hspace{-4mm} 
\big\||t(\nu_v^1-E^1_u)+E^1_u|^{2(-3/2-(p_1+\ldots +p_m))}\prod_{j=1}^m
\big|\nabla_x^{(j)}|t(\nu_v^1-E^1_u)+E^1_u|^2\big|\big\|_0
\\ & \leq C\mu^m \quad\mbox{ for all } m=1\ldots k+1.
\end{split}
\end{equation*}
It hence follows that for all $m=0\ldots k+1$, independently of $t\in (0,1)$:
\begin{equation*}
\begin{split}
& \|\nabla_x^{(m)}\nabla f\big(t(\nu_v^1-E^1_u)+E^1_u\big)\|_0
\\ & \leq C\sum_{p+q=m}\|\nabla^{(p)}_x|t(\nu_v^1-E^1_u)+E^1_u|^2\|_0
\big\|\nabla^{q}_x\big(\frac{1}{|t(\nu_v^1-E^1_u)+E^1_u|^3}\big)\big\|_0\leq C\mu^m,
\end{split}
\end{equation*}
which implies, through (\ref{b}), as claimed in the lemma:
\begin{equation}\label{h}
\begin{split}
&\|\nabla^{(m)}(E^1_v-E^1_u)\|\leq C \sum_{p+q=m} \big\|\int_0^1
\nabla_x^{(p)}\nabla f\big(t(\nu_v^1-E^1_u)+E^1_u\big)
~\mbox{d}t\big\|_0\|\nabla^{(q)}(\nu_v^1-E^1_u)\|_0\\ & \leq C\mu^mA
\quad\mbox{ for all } m=0\ldots k+1. 
\end{split}
\end{equation}
In particular, the above yields:
\begin{equation}\label{g}
\|\nabla^{(m)}E^1_v\|\leq C\mu^m \quad\mbox{ for all } m=0\ldots k+1.
\end{equation}

\smallskip

{\bf 3.} It remains to prove the bound as in (\ref{h}) for $E_v^2-E_u^2$. To this end,
observe that:
\begin{equation*}
\begin{split}
\langle \nu_v^2, E_v^2\rangle & = \frac{1}{|\nu_v^1|}\big\langle E^2_u - T_v(\nabla
v-\nabla u)^TE_u^2, E^1_u - T_v(\nabla v-\nabla u)^TE_u^1\big\rangle
\\ & =
\frac{1}{|\nu_v^1|}\Big(-2\big\langle \sym(T_v(\nabla
v-\nabla u)^T) E_u^1, E^2_u \big\rangle + 
\big\langle T_v(\nabla v-\nabla u)^T E_u^1, T_v(\nabla v-\nabla u)^TE_u^2\big\rangle \Big).
\end{split}
\end{equation*}
Since $|\nu^1_v|$ is lower bounded by $1/2$ in view of (\ref{c}), the
estimate (\ref{d}) implies that:
$$\big\|\nabla^{(m)}\big(\frac{1}{|\nu_v^i|}\big)\big\|_0\leq C\mu^m
\quad\mbox{ for all } m=0\ldots k+1,$$
via the application of Fa\`a di Bruno's formula, as before.
The bounds on the derivatives of $T_v$ in (\ref{e}) and 
the assumed bounds on the derivatives of $\nabla v-\nabla u$ and
$E^i_u$ now result in:
\begin{equation}\label{f}
\|\nabla^{(m)}\langle \nu_v^2, E_v^2\rangle\|_0\leq C\mu^m \quad\mbox{ for all } m=0\ldots k+1,
\end{equation}
Denote:
\begin{equation}\label{fff}
\tilde \nu^2_v = \nu^2_v -\langle \nu^2_v, E^1_v\rangle E^1_v
\end{equation}
and observe that by (\ref{b}),  (\ref{g}), (\ref{f}):
\begin{equation*}
\begin{split}
& \|\nabla^{(m)}(\tilde \nu_v^2 - E_u^2)\|_0 \leq \|\nabla^{(m)}(\nu_v^2 - E_u^2)\|_0
+ \|\nabla^{(m)}\big(\langle \nu_v^2, E_v^2\rangle E_v^1\big)\|_0
\\ & \leq C\mu^m A + C\sum_{p+q=m} \|\nabla^{(p)} \langle \nu_v^2,
E_v^2\rangle\|_0 \|\nabla^{(q)} E_v^1\|_0 \leq C\mu^mA \quad\mbox{ for all } m=0\ldots k+1.
\end{split}
\end{equation*}
As in the previous step, we now write:
$$E_v^2-E_u^2 = f(\tilde\nu_v^2) - f(E^2_u)$$ 
and recall that by (\ref{fff}), (\ref{c}) and the next two bounds in step 1:
\begin{equation*}
\begin{split}
& |t(\tilde \nu_v^2(x)-E_u^2(x))+E_u^2(x)|\geq |E_u^2(x)| - \|\nu^2_v-E^2_u\|_0
- \|\langle \nu_v^2, E^1_v\rangle\|_0 \\ & \geq 1- \frac{1}{2} 
-\frac{1}{4} \geq\frac{1}{4} \quad\mbox{ for all } x\in\bar\omega.
\end{split}
\end{equation*}
The above lower bound allows for the same estimates leading to (\ref{h}), likewise yield:
\begin{equation*}
\begin{split}
&\|\nabla^{(m)}(E^2_v-E^2_u)\|\leq C \sum_{p+q=m} \big\|\int_0^1
\nabla_x^{(p)}\nabla f\big(t(\tilde\nu_v^2-E^2_u)+E^2_u\big)
~\mbox{d}t\big\|_0\|\nabla^{(q)}(\tilde \nu_v^2-E^2_u)\|_0\\ & \leq C\mu^mA
\quad\mbox{ for all } m=0\ldots k+1. 
\end{split}
\end{equation*}
This ends the proof.
\end{proof}

\section{The initial Stage in the Nash-Kuiper scheme}\label{sec_NK0}

In this section, we show how to reduce the given positive definite
defect $\mathcal{D}$ to $\tilde{\mathcal{D}}$ that is arbitrarily small, at the expense of
increasing the second derivatives of the modified immersion by a specific power
of the defect's decrease. This information was not needed in the
convex integration analysis of the Monge-Amp\`ere system in papers
\cite{lewpak_MA, lew_conv, lew_improved, lew_improved2, in_lew,
  in_lew2}, where only the decrease of $\mathcal{D}$ mattered in
the initial Stage. Similar statements were put forward in \cite[proof of
Theorem 1.1]{CHI}, \cite[Proposition 3.2]{Cao_Sze}. Here, we provide
a self-contained proof of the assertions that we use in our future
arguments, applying the Step construction in Lemma \ref{lem_step1}.

\begin{theorem}\label{lem_stage0} \textup{[INITIAL STAGE]}
Let $\underbar u\in\mathcal{C}^\infty(\bar\omega, \R^4)$ be an
immersion, defined on the closure of an open set $\omega\subset\R^2$ diffeomorphic to $B_1$,
together with a metric $g\in \mathcal{C}^{r,\beta}(\bar\omega,
\R^{2\times 2}_{\sym, >})$ where:
$$0<r+\beta\leq 2.$$ Assume that:
$$\mathcal{D}(g,\underbar u)\doteq g-(\nabla \underbar u)^T\nabla \underbar
u >0 \quad\mbox{ on } \bar\omega.$$ 
Then, there exist $\underline \gamma >1$, $\underline\delta \in (0,1)$ and
$\underline \tau\geq 1+\frac{1}{r+\beta}$, depending only on $\omega, \underbar
u$ and $g$, such that the following holds. For every
$\delta\in (0,\underline\delta)$ there exists $u\in\mathcal{C}^2(\bar\omega,\R^4)$ such that:
\begin{align*}
& \|u-\underbar u\|_0\leq C\delta, \quad \|\nabla (u-\underbar
u)\|_0\leq C, \quad \|\nabla^2(u-\underbar u)\|_0\leq \frac{C}{\delta^{\underline\tau}}, 
\tag*{(\theequation)$_1$}\refstepcounter{equation} \label{step0_1} \\
& \|\mathcal{D}(g-\delta H_0, u)\|_0\leq \frac{r_0}{4}\delta, 
\tag*{(\theequation)$_2$} \label{step0_2} \\
& \frac{1}{\underline \gamma} \Id_2\leq (\nabla u)^T\nabla u \leq
\underline\gamma\Id_2 \quad\mbox{ in } \bar\omega.
\tag*{(\theequation)$_3$} \label{step0_3} 
\end{align*}
with $r_0$ as in Lemma \ref{lem_dec_def} and with constants 
$C$ depending only on $\omega$, $\underbar u$ and $g$.
\end{theorem}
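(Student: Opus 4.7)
The plan is to reduce the strictly positive defect $\mathcal{D}(g,\underline u)$ to a controlled $O(\delta)$ defect by a finite cascade of $M$ Nash spirals (Lemma \ref{lem_step1}), which is admissible since $\R^4$ supplies the two required codimensions. For the setup, I mollify $g$ at scale $\ell = c\delta^{1/(r+\beta)}$ with $c$ small, so that Lemma \ref{lem_stima} gives $\|g-g_0\|_0 \leq \frac{r_0}{16}\delta$ and $\|\nabla^{(m)}g_0\|_0 \leq C\ell^{-m}$ for $g_0 = g\ast\phi_\ell$. For $\delta$ small enough, the smooth matrix field
\begin{equation*}
h_0 := g_0 - \delta H_0 - (\nabla \underline u)^T\nabla \underline u
\end{equation*}
takes values in a fixed compact set $\mathcal{K}\subset \R^{2\times 2}_{\sym,>}$, by strict positivity of $\mathcal{D}(g,\underline u)$ on the compact $\bar\omega$. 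Lemma \ref{lem_met_deco} applied on $\mathcal{K}$ then yields a finite decomposition $h_0 = \sum_{i=1}^{M} a_i^2\,\eta_i\otimes \eta_i$ with $M\leq N_0$ and smooth amplitudes $a_i = \varphi_i(h_0)$ inheriting from $g_0$ the bounds $\|a_i\|_0 \leq C$ and $\|\nabla^{(m)}a_i\|_0 \leq C\ell^{-m}$.

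Next I iteratively build $u_0 := \underline u, u_1, \ldots, u_M =: u$: at step $i$, I equip the smooth $u_{i-1}$ with an orthonormal normal frame $(E^1_{u_{i-1}}, E^2_{u_{i-1}})$ via Lemma \ref{lem_normals}, and apply Lemma \ref{lem_step1} with direction $\eta_i$, amplitude $a_i$ and frequency $\lambda_i := \sigma^{i-1}\lambda_1$, for $\sigma, \lambda_1$ to be chosen. Telescoping the identity $(\nabla u_i)^T\nabla u_i = (\nabla u_{i-1})^T\nabla u_{i-1} + a_i^2\eta_i\otimes\eta_i + \mathcal{R}_i$ and using the primitive decomposition produces
\begin{equation*}
\mathcal{D}(g_0-\delta H_0, u_M) = -\sum_{i=1}^M \mathcal{R}_i.
\end{equation*}
Bounding the terms of $\mathcal{R}_i$ supplied by Lemma \ref{lem_step1} with $\|\nabla u_{i-1}\|_0 \leq C$ and, from Lemma \ref{lem_normals}, $\|\nabla E^j_{u_{i-1}}\|_0 \leq C(1+\|\nabla u_{i-1}\|_1) \leq C\lambda_{i-1}$ (convention $\lambda_0 := 1$), one gets
\begin{equation*}
\|\mathcal{R}_i\|_0 \leq C\Bigl(\frac{\lambda_{i-1}}{\lambda_i} + \frac{1}{(\ell\lambda_i)^2}\Bigr) \leq C\Bigl(\frac{1}{\sigma} + \frac{1}{(\ell\lambda_1)^2}\Bigr).
\end{equation*}
Picking $\sigma \gtrsim M/\delta$ and $\lambda_1 \gtrsim \delta^{-(1/2 + 1/(r+\beta))}$ forces each $\|\mathcal{R}_i\|_0 \leq \frac{r_0}{16M}\delta$, which combined with $\|g-g_0\|_0 \leq \frac{r_0}{16}\delta$ yields \ref{step0_2}.

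The remaining bounds fall out from single-spiral estimates read off Lemma \ref{lem_step1}: $\|u_i - u_{i-1}\|_0 \leq C/\lambda_i$, $\|\nabla(u_i - u_{i-1})\|_0 \leq C$, and $\|\nabla^{(2)}(u_i - u_{i-1})\|_0 \leq C\lambda_i$. Summing over $i \leq M$ and enforcing $\lambda_1 \geq C/\delta$ (needed for the $C^0$ bound), while the largest frequency is $\lambda_M = \sigma^{M-1}\lambda_1$, I set $\underline\tau := \max\bigl(M - \tfrac{1}{2} + \tfrac{1}{r+\beta},\, 1 + \tfrac{1}{r+\beta}\bigr)$ so that $\|\nabla^{(2)}(u - \underline u)\|_0 \leq C\lambda_M \leq C/\delta^{\underline\tau}$, giving \ref{step0_1}. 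For \ref{step0_3}, the telescoped identity also reads $(\nabla u)^T\nabla u = g + (g_0-g) - \delta H_0 - \sum_i \mathcal{R}_i = g + O(\delta)$ in $\mathcal{C}^0$, so for $\delta \leq \underline\delta$ the matrix $(\nabla u)^T\nabla u$ stays within a fixed factor of $g$, and $\underline\gamma$ is determined by the two-sided bounds on $g$.

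The main obstacle is controlling the cross-step error: the dominant piece of $\mathcal{R}_i$, namely $\tfrac{2\Gamma(\lambda_i t)}{\lambda_i}\,a_i\,\sym((\nabla u_{i-1})^T\nabla E^j_{u_{i-1}})$, involves normal-frame derivatives that grow like the previous Nash frequency $\lambda_{i-1}$, because each spiral inflates $\|\nabla^{(2)}u\|_0$ and, via Lemma \ref{lem_propa} (or \ref{lem_normals}), $\|\nabla E\|_0$. Taming this propagation forces the geometric ratio $\sigma \gtrsim 1/\delta$, which iterated over $M$ steps inflates $\lambda_M$ by $\sigma^{M-1}$ and, together with the mollification scale $\ell$, fixes the derivative blow-up exponent $\underline\tau$ above the threshold $1+\tfrac{1}{r+\beta}$.
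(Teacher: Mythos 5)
Your proof is correct and follows essentially the same approach as the paper: mollify $g$ at scale $\ell\sim\delta^{1/(r+\beta)}$, decompose the mollified shifted defect via Lemma \ref{lem_met_deco} into finitely many rank-one pieces with smooth amplitudes, and iteratively apply Nash spirals (Lemma \ref{lem_step1}) with geometrically increasing frequencies, controlling the cross-step error through the frequency ratio. The only differences are cosmetic: the paper chooses $\lambda_i=(\epsilon l)^{-i}$ with $\epsilon\sim\delta$, $l\sim\delta^{1/(r+\beta)}$, yielding $\underline\tau=(1+\frac{1}{r+\beta})N$, whereas you take $\lambda_i=\sigma^{i-1}\lambda_1$ with $\sigma\sim 1/\delta$, $\lambda_1\sim\delta^{-(1/2+1/(r+\beta))}$, yielding a smaller $\underline\tau$; both are admissible since the theorem only needs some $\underline\tau\geq 1+\frac{1}{r+\beta}$ with the stated dependence. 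One small inaccuracy: Lemma \ref{lem_met_deco} gives that at each point \emph{at most} $N_0$ terms are nonzero, but the number $M$ of active indices over all of $\bar\omega$ is $|J(\mathcal{K})|$, which is finite and depends on $\omega,\underline u,g$ but need not satisfy $M\leq N_0$; this does not affect the argument since $M$ is still controlled by the data.
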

\begin{proof}
{\bf 1.} For a sufficiently small $l\in (0,1)$, in dependence of only
$\omega, \underline u, g$, we define the mollifications $g_l = g\ast
\phi_l\in\mathcal{C}^\infty(\bar\omega, \R^{2\times 2}_{\sym, >})$ as
in Lemma \ref{lem_stima}.  Fix 
$\delta\in (0, \underline \delta)$. We write:
$$\mathcal{D}(g_l-\delta H_0, \underline u) = \mathcal{D}(g,\underline
u) + (g_l-g) -\delta H_0.$$
Recall, from \ref{stima2}, \ref{stima1}, the estimate:
\begin{equation}\label{a1}
\|g_l-g\|_0\leq l^{r+\beta}\|g\|_{r,\beta}, \quad
\|\nabla^{(m)}g_l\|_0\leq C_m\frac{1}{l^m}\|g\|_0,
\end{equation}
where $C_m$ depends only on $m$ and $\omega$.
Taking $l$ and $\underline\delta$ sufficiently
small guarantees hence that:
\begin{equation}\label{b3}
\mathcal{D}(g_l - \delta H_0,\underline u)\geq
  \frac{1}{2}\mathcal{D}(g,\underline u)\quad\mbox{in } \bar\omega
  \quad\mbox{ and }\quad \|\mathcal{D}(g_l - \delta H_0,\underline u)\|_0\leq
  2\|\mathcal{D}(g,\underline u)\|_0
\end{equation}
and that the image of $\bar\omega$ through $\mathcal{D}(g_l - \delta
   H_0,\underline u)$ is contained in a compact region
   $\mathcal{K}\subset\R^{2\times 2}_{\sym,>}$ depending only on $\underline u, g$.
By Lemma \ref{lem_met_deco}, there exists a finite set of $N$
indices, depending only on $\mathcal{K}$, for which the decomposition
of $\mathcal{D}(g_l - \delta H_0, \underline u)$ into primitive rank-one metrics is active:
\begin{equation*}
\begin{split}
& \mathcal{D}(g_l - \delta H_0, \underline u) = \sum_{i=1}^N
a_i^2\eta_i\otimes\eta_i \quad\mbox{ on } \bar\omega,\\
& \mbox{where } \{a_i\in\mathcal{C}^\infty(\bar\omega,\R)\}_{i=1}^N
\mbox{ are given by } a_i(x) = \varphi_i\Big(\mathcal{D}(g_l-\delta
H_0, \underline u)(x)\Big) \mbox{ for all } x\in\bar\omega.
\end{split}
\end{equation*}
Above, the coefficient functions $\{\varphi_i\in\mathcal{C}^\infty_c(\R^{2\times
  2}_{\sym,>}, \R)\}_{i=1}^N$ and the unit vectors
$\{\eta_i\in\R^2\}_{i=1}^N$ are as in Lemma \ref{lem_met_deco}. By Fa\`a
di Bruno's formula and the second bound in (\ref{a1}), we obtain:
\begin{equation*}
\begin{split}
\|\nabla^{(m)} a_i\|_0& \leq C\hspace{-4mm}\sum_{p_1+2p_2+\ldots mp_m=m}
\hspace{-4mm} \|\nabla^{(p_1+\ldots +p_m)
}\varphi_i\|_0\prod_{j=1}^m\|\nabla^{(j)}\mathcal{D}(g_l - \delta H_0,\underline u) \|^{p_j}
\\ & \leq C \hspace{-4mm}\sum_{p_1+\ldots mp_m=m}
\prod_{j=1}^m\Big(\frac{\|g\|_{0}}{l^j} +1\Big)^{p_j}\leq \frac{C}{l^m},
\end{split}
\end{equation*}
valid for $m=1\ldots N-1$, with $C$ that depends only
on $N,\omega, \underline u$, hence on $\omega, \underline u, g$. In conclusion:
\begin{equation}\label{b1}
\begin{split}
\|\nabla^{(m)} a_i\|_0\leq \frac{C}{l^m} \quad \mbox{ for all }
m=0\ldots N-1,\; i=1\ldots N.
\end{split}
\end{equation}
We set $u_0=\underline u$ and  inductively define the $N$-tuple of
immersions $\{u_i\in\mathcal{C}^{N-i+2}(\bar\omega, \R^4)\}_{i=1}^N$
according to Lemma \ref{lem_step1}, namely:
$$u_{i+1}=u_i+\frac{a_{i+1}}{\lambda_{i+1}}\Gamma(\lambda_{i+1}t_{\eta_{i+1}})E^1_{u_i}
+ \frac{a_{i+1}}{\lambda_{i+1}}\bar\Gamma(\lambda_{i+1}t_{\eta_{i+1}})E^2_{u_i}
\quad\mbox{ for } i=0\ldots N-1,$$
where we denote $t_{\eta_{i+1}} = \langle x, \eta_{i+1}\rangle$ and
where the orthonormal normal vector fields $\{E^1_{u_i},
E^2_{u_i}\in\mathcal{C}^{N-i+1}(\bar\omega, \R^4)\}_{i=0}^{N-1}$ are defined through
Lemma \ref{lem_normals}. The frequencies $\lambda$ are defined as follows:
\begin{equation}\label{b2}
\lambda_i = \frac{1}{(\epsilon l)^i} \quad \mbox{ for all } i=0\ldots N, 
\end{equation}
where $\epsilon\in (0,1)$ is sufficiently small, in function of $\omega,
\underline u, g$.

\smallskip

{\bf 2.} Let $\gamma >1$ be such that:
$$\frac{1}{\gamma}\Id_2\leq (\nabla\underline u)^T\nabla \underline
u\leq \gamma\Id_2 \quad\mbox{ in } \bar\omega.$$
We will show that, provided $\epsilon$ in (\ref{b2}) is sufficiently small, there holds:
\begin{align*}
& \frac{1}{2\gamma}\Id_2\leq (\nabla u_i)^T\nabla 
u_i\leq \big(\gamma + 2\|\mathcal{D}(g,\underline u)\|_0 +1\big)\Id_2
\; \mbox{ in } \bar\omega\quad \mbox{ for  } i=0\ldots N,
\tag*{(\theequation)$_1$}\refstepcounter{equation} \label{za0} \\
& \|\nabla^{(m)}u_i\|_0\leq C\lambda_i^{m-1}\quad 
\quad \mbox{for } i=0\ldots N-1,\; m=1\ldots N-i+2, \tag*{(\theequation)$_2$} \label{za1}\\
& \|\nabla^{(m)}E_{u_i}^j\|_0\leq C\lambda_i^{m}\quad 
\quad \mbox{ for } i=0\ldots N-1,\; m=0\ldots N-i+1,\; j=1,2, \tag*{(\theequation)$_3$} \label{za2}\\
& \|\nabla^{(m)}(u_{i+1}-u_i)\|_0\leq C\lambda_{i+1}^{m-1}\qquad 
\mbox{ for } i=0\ldots N-1,\; m=0\ldots N-i+1. \tag*{(\theequation)$_4$} \label{za3}
\end{align*}
Firstly, \ref{za0} and \ref{za1} are trivially true at $i=0$ where
$\lambda_0=1$. Secondly, \ref{za0} and \ref{za1} directly imply
\ref{za2} by Lemma \ref{lem_normals}. Thirdly, \ref{za2} implies \ref{za3}, because:
\begin{equation*}
\begin{split}
\|\nabla^{(m)}(u_{i+1}-u_i)\|_0 &\leq C \hspace{-2mm}\sum_{p+q+t=m}
\hspace{-2mm} \lambda_{i+1}^{p-1}\|\nabla^{(q)}a_{i+1}\|_0
\big(\|\nabla^{(t)}E_{u_i}^1\|_0+\|\nabla^{(t)}E_{u_i}^2\|_0\big) 
\\ & \leq C \hspace{-2mm}\sum_{p+q+t=m}
\hspace{-2mm} \lambda_{i+1}^{p-1}\frac{\lambda_i^t}{l^q}
\leq  C\lambda_{i+1}^{m-1} \sum_{p+q+t=m}
\hspace{-2mm} \frac{1}{(l\lambda_{i+1})^{q}}\leq C\lambda_{i+1}^{m-1}, 
\end{split}
\end{equation*}
by (\ref{b1}) and as $l\lambda_{i+1}\geq l\lambda_1=\frac{1}{\epsilon}>1$.
Fourthly, \ref{za1}, \ref{za3} at $i$ counter imply \ref{za1} at $i+1$:
\begin{equation*}
\begin{split}
\|\nabla^{(m)} u_{i+1}\|_0 & \leq \|\nabla^{(m)}u_i\|_0 + \|\nabla^{(m)}(u_{i+1}-u_i)\|_0 \leq
C(\lambda_i^{m-1}+\lambda_{i+1}^{m-1}) \\ & \leq C \lambda_{i+1}^{m-1}
\quad \mbox{ for } m=1\ldots N-i+1.
\end{split}
\end{equation*}
We now fix $i=0\ldots N-1$ and show that the validity of \ref{za1},
\ref{za2} for all $j=0\ldots i$ implies \ref{za0} at $i+1$. To this
end, we recall the form of the error $\mathcal{R}$ in Lemma \ref{lem_step1} and estimate:
\begin{equation*}
\begin{split}
& \|(\nabla u_{j+1})^T \nabla u_{j+1} - (\nabla u_{j})^T \nabla u_{j} -
a_{j+1}^2\eta_{j+1}\otimes\eta_{j+1}\|_0 = \|\mathcal{R}\|_0
\\ & \leq C \Big(\frac{\|a_{j+1}\|_0}{\lambda_{j+1}} \|\nabla^{(2)}u_j\|_0
+ \frac{\|a_{j+1}\|^2_0}{\lambda_{j+1}} \|\nabla E_{u_j}\|_0
+ \frac{\|a_{j+1}\|^2_0}{\lambda_{j+1}^2} \|\nabla E_{u_j}\|_0^2 +
\frac{\|\nabla a_{j+1}\|^2_0}{\lambda_{j+1}^2}\Big)  \\ & 
\leq C\Big(\frac{\lambda_j}{\lambda_{j+1}}+ \frac{\lambda_j^2}{\lambda_{j+1}^2}+
\frac{1}{(\lambda_{j+1}l)^2}\Big) \leq C\big(\epsilon l +(\epsilon
l)^2 +\epsilon\big)
\leq C \epsilon \quad \mbox{ for all } j=0\ldots i.
\end{split}
\end{equation*}
Consequently:
\begin{equation}\label{b5} 
\|(\nabla u_{i+1})^T \nabla u_{i+1} - (\nabla u_0)^T \nabla u_0 -
\sum_{j=1}^{i+1}a_j^2\eta_j\otimes\eta_j \|_0\leq C\epsilon, 
\end{equation}
and we see that taking $\epsilon$ sufficiently small yields in $\bar\omega$:
\begin{equation*}
\begin{split} 
& (\nabla u_{i+1})^T \nabla u_{i+1} \geq (\nabla u_0)^T \nabla u_{0} +
\sum_{j=1}^{i+1}a_j^2\eta_j\otimes\eta_j  - C\epsilon\Id_2 \geq 
\frac{1}{\gamma}\Id_2 - C\epsilon\Id_2 \geq 
\frac{1}{2\gamma} \Id_2,
\\ &  (\nabla u_{i+1})^T \nabla u_{i+1} \leq (\nabla u_0)^T \nabla u_{0} +
\sum_{j=1}^{N}a_j^2\eta_j\otimes\eta_j  + C\epsilon\Id_2 \\ &
\qquad\qquad\qquad \;\; \; \leq
\gamma\Id_2+\|\mathcal{D}(g_l-\delta H_0, \underline u)\|_0\Id_2 + C\epsilon\Id_2 
\leq \big(\gamma + 2 \|\mathcal{D}(g, \underline u)\|_0 + 1\big)\Id_2,
\end{split}
\end{equation*}
by (\ref{b3}). This ends the proof of \ref{za0} and of all the
inductive estimates.

\smallskip

{\bf 3.} We declare $u=u_N$ and from \ref{za3} deduce that:
\begin{equation}\label{b4}
\|\nabla^{(m)}(u-\underline u)\|_0\leq \sum_{i=0}^{N-1}\|\nabla^{(m)}(u_{i+1}-u_i)\|_0
\leq C \sum_{i=0}^{N-1}\lambda_{i+1}^{m-1}\quad\mbox{ for } m=0,1,2. 
\end{equation}
On the other hand, from (\ref{a1}) and (\ref{b5}):
\begin{equation*}
\begin{split} 
& \|\mathcal{D}(g-\delta H_0, u)\|_0\leq \|\mathcal{D}(g_l-\delta H_0, u_N)\|_0
+ \|g-g_l\|_0 \\ & = \big\|\sum_{j=1}^Na_j^2\eta_j\otimes\eta_j
-\big((\nabla u_N)^T\nabla u_N - (\nabla u_0)^T\nabla u_0\big)\big\|_0
+ \|g-g_l\|_0 \leq \bar C(\epsilon +l^{r+\beta}),
\end{split}
\end{equation*}
where $\bar C>1$ depends only on $\omega, \underline u, g$. We hence take:
$$ \epsilon = \frac{r_0}{8\bar C}\delta , \qquad l^{r+\beta}=
\frac{r_0}{8\bar C}\delta$$
which is consistent with the requirements of smallness of $\epsilon$
and $l$ provided that $\underline\delta$ is small. This implies
\ref{step0_2}, whereas \ref{step0_3} follows from \ref{za0} with 
$$\underline\gamma = \max\{2\gamma, \gamma
+2\|\mathcal{D}(g,\underline u)\|_0+1\}.$$
Finally, (\ref{b4}) yields:
\begin{equation*}
\begin{split} 
& \|u-\underline u\|_0\leq \frac{C}{\lambda_1} \leq
C\epsilon \leq C\delta \quad \mbox{ and } \quad
\|\nabla(u-\underline u)\|_0\leq C,\\ & \|\nabla^{(2)}(u-\underline u)\|_0\leq C\lambda_N
= \frac{C}{(\epsilon l)^N}\leq C \delta^{-(1+\frac{1}{r+\beta})N}. 
\end{split}
\end{equation*}
This implies \ref{step0_1} with $\underline \tau =
(1+\frac{1}{r+\beta})N \geq 1+\frac{1}{r+\beta}$, as claimed. The
proof is done.
\end{proof}

\section{The oscillatory defect decomposition lemmas}\label{sec_IBP}

The three results in this section provide a
symmetric matrix field decomposition, complementary to that in Lemma
\ref{lem_dec_def}. It removes a further symmetric gradient from
the given oscillatory component of a defect at hand, and reduces it to
a defect of higher order in the frequency, plus another term that
agrees in the frequency yet has a lower dimensionality rank. 
The lemmas below are the explicit versions of the
argument called "integration by parts" in \cite{CHI2}. The precise
formulas on the decomposition coefficients (\ref{coef_ibp1}),
(\ref{coef_ibp3}), \ref{coef_ibp2}) are of crucial importance
in closing the estimates in Theorem \ref{thm_STA}.
First, we quote from \cite[Lemma 2.4, Corollary 2.5]{in_lew2}:

\begin{lemma}\label{lem_IBP1}
Given  $H\in\mathcal{C}^{k+1}(\R^2,\R^{2\times 2}_\sym)$, $\lambda>0$,
and $\Gamma_0\in\mathcal{C}(\R,\R)$, we have the decomposition:
\begin{equation}\label{ibp1}
\begin{split}
\frac{\Gamma_0(\lambda x_1)}{\lambda}H = & \; (-1)^{k+1}\frac{\Gamma_{k+1}(\lambda
  x_1)}{\lambda^{k+2}} \sym\nabla L_k^{\eta_1}  \\ & + \sym\nabla
\Big(\sum_{i=0}^k(-1)^i\frac{\Gamma_{i+1}(\lambda x_1)}{\lambda^{i+2}}L_i^{\eta_1}\Big) 
+ \Big( \sum_{i=0}^{k}(-1)^i\frac{\Gamma_i(\lambda
  x_1)}{\lambda^{i+1}} P_i^{\eta_1}\Big) e_2\otimes e_2
\end{split}
\end{equation}
where the functions $\Gamma_i\in \mathcal{C}^{i}(\R,\R)$ satisfy the recursive definition:
\begin{equation}\label{recu_ibp}
\Gamma_{i+1}' = \Gamma_{i}  \quad\mbox{ for all } \; i =0\ldots k,
\end{equation}
while $L_i^{\eta_1}\in\mathcal{C}^{k+1-i}(\R^2,\R^2)$ and
$P_i^{\eta_1}\in\mathcal{C}^{k+1-i}(\R^2,\R)$ are given in:
\begin{equation}\label{coef_ibp1}
\begin{split}
& \;\; \, L_0^{\eta_1} = (H_{11}, 2H_{12}), \qquad P_0^{\eta_1} = H_{22},\\
& \left.\begin{array}{l} L_i^{\eta_1}= (\partial_1^{(i)}H_{11}, 2\partial_1^{(i)}H_{12} +
i\partial_1^{(i-1)}\partial_2H_{11}), \vspace{2mm} 
\\  P_i^{\eta_1} = 2\partial_1^{(i-1)}\partial_2H_{12} +
(i-1)\partial_1^{(i-2)}\partial_2^{(2)}H_{11} \end{array}\right\}
\quad \mbox{ for all }\; i=1\ldots k.
\end{split}
\end{equation}
\end{lemma}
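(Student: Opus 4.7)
The plan is a direct induction on $k \geq 0$, whose sole content is a single product-rule identity iterated once per step. For the base case $k = 0$, I would verify
\begin{equation*}
\frac{\Gamma_0(\lambda x_1)}{\lambda} H = -\frac{\Gamma_1(\lambda x_1)}{\lambda^2} \sym \nabla L_0^{\eta_1} + \sym \nabla \Big(\frac{\Gamma_1(\lambda x_1)}{\lambda^2} L_0^{\eta_1}\Big) + \frac{\Gamma_0(\lambda x_1)}{\lambda} P_0^{\eta_1}\, e_2 \otimes e_2,
\end{equation*}
with $L_0^{\eta_1} = (H_{11}, 2H_{12})$ and $P_0^{\eta_1} = H_{22}$, by expanding the middle symmetric gradient via the product rule. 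Writing $\nabla(f L) = L \otimes \nabla f + f \nabla L$ with $f = \frac{\Gamma_1(\lambda x_1)}{\lambda^2}$ and using $\Gamma_1'=\Gamma_0$ so that $\nabla f = \frac{\Gamma_0}{\lambda} e_1$, a direct computation gives $\sym(L_0^{\eta_1} \otimes e_1) = H_{11} e_1 \otimes e_1 + H_{12}(e_1 \otimes e_2 + e_2 \otimes e_1) = H - H_{22}\, e_2 \otimes e_2$. The $(2,2)$ entry is not produced by a symmetric gradient of any field of the form $f(x_1)\cdot L$, so it must be supplied by the scalar $\frac{\Gamma_0}{\lambda} H_{22}\, e_2 \otimes e_2$; and the remaining $f \sym \nabla L_0^{\eta_1}$ contribution cancels against the first right-hand term, yielding the identity above.

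For the inductive step from $k-1$ to $k$, I would apply the base-case identity to the symmetric matrix field $\tilde H \doteq \sym \nabla L_{k-1}^{\eta_1}$, with the oscillatory prefactor $\frac{\Gamma_0(\lambda x_1)}{\lambda}$ replaced by $\frac{\Gamma_k(\lambda x_1)}{\lambda^{k+1}}$. The recursion (\ref{recu_ibp}) ensures that $\Gamma_{k+1}$ arises in exactly the same role as $\Gamma_1$ did in the base case, since it plays its role as the $x_1$-primitive of $\Gamma_k$. Substituting this one-step identity into the induction hypothesis --- whose final term is the remainder $(-1)^k \frac{\Gamma_k(\lambda x_1)}{\lambda^{k+1}} \sym \nabla L_{k-1}^{\eta_1}$ --- appends one new summand to each of the two telescoping sums with alternating sign $(-1)^k$, and leaves the new remainder $(-1)^{k+1} \frac{\Gamma_{k+1}(\lambda x_1)}{\lambda^{k+2}} \sym \nabla \tilde L_0$, where $\tilde L_0 \doteq (\tilde H_{11}, 2\tilde H_{12})$.

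It remains to verify algebraically that $\tilde L_0 = L_k^{\eta_1}$ and $\tilde H_{22} = P_k^{\eta_1}$, so that the recursive formulas (\ref{coef_ibp1}) reproduce themselves. Writing out $\tilde H = \sym \nabla L_{k-1}^{\eta_1}$ from $L_{k-1}^{\eta_1} = (\partial_1^{(k-1)} H_{11},\ 2 \partial_1^{(k-1)} H_{12} + (k-1) \partial_1^{(k-2)} \partial_2 H_{11})$ yields $\tilde H_{11} = \partial_1^{(k)} H_{11}$, $\tilde H_{22} = 2 \partial_1^{(k-1)} \partial_2 H_{12} + (k-1) \partial_1^{(k-2)} \partial_2^{(2)} H_{11}$, and $2 \tilde H_{12} = 2 \partial_1^{(k)} H_{12} + k\, \partial_1^{(k-1)} \partial_2 H_{11}$, where the mixed-term coefficient $k = (k-1) + 1$ arises from $\partial_1$ acting on the $(k-1)$-weighted piece plus $\partial_2$ acting on the leading term. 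Both identifications match (\ref{coef_ibp1}) on the nose, with the $i = 0$ case subsumed under the convention that any $\partial_1^{(-1)}$ term is absent. I expect no analytic obstacle; the only care required is bookkeeping of signs and of the linear growth of the mixed-term coefficient, both of which are forced by the single symmetrization performed at each step.
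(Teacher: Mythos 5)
Your proof is correct, and it follows exactly the strategy the paper uses for the analogous Lemma~\ref{lem_IBP2} (for the $\eta_2$-direction): verify the one-step product-rule identity as the base case $k=0$, then feed $\tilde H = \sym\nabla L_{k-1}^{\eta_1}$ back into that same identity to advance the induction, checking that $L_0^{\eta_1}(\tilde H)=L_k^{\eta_1}$ and $\tilde H_{22}=P_k^{\eta_1}$. The paper does not reprove Lemma~\ref{lem_IBP1} itself (it cites \cite[Lemma 2.4, Corollary 2.5]{in_lew2}), but your argument is the expected one and your algebra — in particular the coefficient $k = (k-1)+1$ in the mixed term $2\tilde H_{12}$ — is exact.
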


\begin{lemma}\label{lem_IBP3}
Let $H$, $\lambda$, $\Gamma_0$ be as in Lemma \ref{lem_IBP1} and
$\{\Gamma_i\in \mathcal{C}^{i}(\R,\R)\}_{i=1}^{k+1}$ as in (\ref{recu_ibp}).
Then:
\begin{equation}\label{ibp3}
\begin{split}
\frac{\Gamma_0(\lambda x_2)}{\lambda}H = & \; (-1)^{k+1}\frac{\Gamma_{k+1}(\lambda
  x_2)}{\lambda^{k+2}} \sym\nabla  L_k^{\eta_3}  \\ & + \sym\nabla
\Big(\sum_{i=0}^k(-1)^i\frac{\Gamma_{i+1}(\lambda
  x_2)}{\lambda^{i+2}}L_i^{\eta_3}\Big) 
+ \Big( \sum_{i=0}^{k}(-1)^i\frac{\Gamma_i(\lambda
  x_2)}{\lambda^{i+1}} P_i^{\eta_3}\Big) e_1\otimes e_1,
\end{split}
\end{equation}
with $L_i^{\eta_3}\in\mathcal{C}^{k+1-i}(\R^2,\R^2)$,
$P_i^{\eta_3}\in\mathcal{C}^{k+1-i}(\R^2,\R)$ given in:
\begin{equation}\label{coef_ibp3}
\begin{split}
& \;\; \;  L_0^{\eta_3} = (2H_{12}, H_{22}), \qquad P_0^{\eta_3} = H_{11},\\
& \left.\begin{array}{l} L_i^{\eta_3}= (2\partial_2^{(i)}H_{12} +
i\partial_1\partial_2^{(i-1)}H_{22}, \partial_2^{(i)}H_{22}), \vspace{2mm} 
\\  P_i^{\eta_3} = 2\partial_1\partial_2^{(i-1)}H_{12} +
(i-1)\partial_1^{(2)}\partial_2^{(i-2)}H_{22} \end{array}\right\}
\quad \mbox{ for all }\; i=1\ldots k.
\end{split}
\end{equation}
\end{lemma}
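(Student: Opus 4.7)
The plan is to obtain Lemma~\ref{lem_IBP3} as a direct consequence of Lemma~\ref{lem_IBP1} via the coordinate swap $x_1\leftrightarrow x_2$. Let $T\in\R^{2\times 2}$ denote the matrix representing this involution, so that $Te_1=e_2$, $Te_2=e_1$, and define $\tilde H(y)=T\,H(Ty)\,T^T$. The entries swap as $\tilde H_{11}(y)=H_{22}(Ty)$, $\tilde H_{22}(y)=H_{11}(Ty)$, $\tilde H_{12}(y)=H_{12}(Ty)$, and one has $\Gamma_0(\lambda y_1)=\Gamma_0(\lambda (Ty)_2)$. Applying Lemma~\ref{lem_IBP1} to the pair $(\tilde H,\Gamma_0)$ in the variable $y$, then conjugating the resulting identity by $T$ and substituting $y=Tx$---using the covariance $T\,\sym\nabla_y V(Ty)\,T^T=\sym\nabla_x(TV(Tx))$---produces precisely (\ref{ibp3}), with the residual direction $Te_2\otimes Te_2=e_1\otimes e_1$. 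Tracking the swap of indices $1\leftrightarrow 2$ through the formulas (\ref{coef_ibp1}) yields (\ref{coef_ibp3}).

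For transparency, and because the bookkeeping is not much longer than the pull-back argument, I would instead present the proof as a direct induction on $k$, mirroring that of Lemma~\ref{lem_IBP1}. The base case $k=0$ reduces to the algebraic identity $\sym\bigl((2H_{12},H_{22})\otimes e_2\bigr)=H-H_{11}\,e_1\otimes e_1$, combined with the Leibniz rule
\[
\sym\nabla\Bigl(\tfrac{\Gamma_1(\lambda x_2)}{\lambda^2}L_0^{\eta_3}\Bigr)
=\tfrac{\Gamma_0(\lambda x_2)}{\lambda}\sym\bigl(L_0^{\eta_3}\otimes e_2\bigr)
+\tfrac{\Gamma_1(\lambda x_2)}{\lambda^2}\sym\nabla L_0^{\eta_3},
\]
which rearranges to the $k=0$ form of (\ref{ibp3}). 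For the inductive step, I would apply the same identity to the leftover non-gradient term $(-1)^{k+1}\tfrac{\Gamma_{k+1}(\lambda x_2)}{\lambda^{k+2}}\sym\nabla L_k^{\eta_3}$ after expanding $\sym\nabla L_k^{\eta_3}$; this produces a new gradient contribution proportional to $L_{k+1}^{\eta_3}$, together with a new residual along $e_1\otimes e_1$ with coefficient $P_{k+1}^{\eta_3}$. The combinatorial factors $i$ and $i-1$ in (\ref{coef_ibp3}) arise from collecting the two off-diagonal terms of $\sym\nabla L_{i-1}^{\eta_3}$ that involve $\partial_1$, and the alternating signs $(-1)^i$ come from moving the minus in $-\tfrac{\Gamma_{i+1}}{\lambda^{i+2}}\sym\nabla(\cdots)$ inside at each iteration.

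There is no real analytical obstacle here: the statement is a cosmetic variant of Lemma~\ref{lem_IBP1}. The only point deserving attention is that the role of $H_{22}$ (residual in the $\eta_1$-case) is now played by $H_{11}$, and symmetrically the pair $(H_{11},2H_{12})$ becomes $(2H_{12},H_{22})$, because here we integrate in $x_2$ rather than in $x_1$. Once this symmetry is tracked through the recursion (\ref{recu_ibp}), the regularity claims $L_i^{\eta_3}\in\mathcal{C}^{k+1-i}(\R^2,\R^2)$ and $P_i^{\eta_3}\in\mathcal{C}^{k+1-i}(\R^2,\R)$ follow immediately from the explicit formulas, since both quantities involve exactly $i$ derivatives of $H\in\mathcal{C}^{k+1}$.
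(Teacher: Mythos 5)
Your proposal is correct. The paper states Lemma \ref{lem_IBP3} without proof, evidently treating it as the $x_1\leftrightarrow x_2$ symmetric counterpart of Lemma \ref{lem_IBP1} (quoted from \cite{in_lew2}); your second method — direct induction via the Leibniz rule $\sym\nabla(fV)=f\sym\nabla V+\sym(V\otimes\nabla f)$ with the base identity $\sym(L_0^{\eta_3}\otimes e_2)=H-H_{11}\,e_1\otimes e_1$ — mirrors precisely the paper's explicit proof of the analogous Lemma \ref{lem_IBP2}, and your first method (conjugation by the swap $T$, using $T\sym\nabla_yV(Ty)T^T=\sym\nabla_x(TV(Tx))$ and $T(e_2\otimes e_2)T^T=e_1\otimes e_1$) is a clean alternative that reduces the lemma to Lemma \ref{lem_IBP1} with no new computation.
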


\medskip

There likewise holds, with respect to the oscillations in the
$\eta_2=\frac{e_1+e_2}{\sqrt{2}}$ spatial direction and the
residue accumulating in the $e_2\otimes e_2$ component of the
matrix field, as in Lemma \ref{lem_IBP1}:

\begin{lemma}\label{lem_IBP2}
Let $H$, $\lambda$, $\Gamma_0$ be as in Lemma \ref{lem_IBP1} and
$\{\Gamma_i\in \mathcal{C}^{i}(\R,\R)\}_{i=1}^{k+1}$ as in (\ref{recu_ibp}).
Denote:
$$t=\langle x, \eta_2\rangle.$$
Then, we have:
\begin{equation}\label{ibp2}
\begin{split}
\frac{\Gamma_0(\lambda t)}{\lambda}H = & \; (-1)^{k+1}\frac{\Gamma_{k+1}(\lambda
  t)}{\lambda^{k+2}} \sym\nabla L_k^{\eta_2}  \\ & + \sym\nabla
\Big(\sum_{i=0}^k(-1)^i\frac{\Gamma_{i+1}(\lambda
  t)}{\lambda^{i+2}} L_i^{\eta_2}\Big) 
+ \Big( \sum_{i=0}^{k}(-1)^i\frac{\Gamma_i(\lambda t)}{\lambda^{i+1}} P_i^{\eta_2}\Big) e_2\otimes e_2,
\end{split}
\end{equation}
with $L_i^{\eta_2}\in\mathcal{C}^{k+1-i}(\R^2,\R^2)$,
$P_i^{\eta_2}\in\mathcal{C}^{k+1-i}(\R^2,\R)$ given in:
\begin{equation}\label{coef_ibp2}
\begin{split}
& \;\; \; L_0^{\eta_2} = \sqrt{2}(H_{11}, 2H_{12}-H_{11}), \qquad P_0^{\eta_2} = H_{22}-2H_{12}+H_{11},\\
& \left.\begin{array}{l} L_i^{\eta_2}= 2^{(i+1)/2}\big(\partial_1^{(i)}H_{11}, 
2\partial_1^{(i)}H_{12} + i \partial_1^{(i-1)}\partial_2H_{11}- (i+1)\partial_i^{(i)}H_{11}\big), \vspace{2mm} 
\\  P_i^{\eta_2} = {2}^{i/2}\big(2\partial_1^{(i-1)}\partial_2H_{12}-2\partial_1^{(i)}H_{12} +
(i-1)\partial_1^{(i-2)}\partial_2^{(2)}H_{11}\vspace{2mm} 
\\ \quad\qquad -2i\partial_1^{(i-1)}\partial_2H_{11}+
(i+1)\partial_i^{(i)}H_{11} \big) \end{array}\right\}
\mbox{ for }\; i=1\ldots k.
\end{split}
\end{equation}
\end{lemma}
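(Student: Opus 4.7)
The plan is to mimic the proof of Lemma \ref{lem_IBP1} (cited from \cite{in_lew2}), exploiting the fact that the only structural difference is that the oscillation direction is now the off-axis unit vector $\eta_2=(e_1+e_2)/\sqrt{2}$ in place of $\eta_1=e_1$, whereas the residue is still collected in the same $e_2\otimes e_2$ component. The proof thus splits into an algebraic base case $k=0$, an inductive iteration on $k$, and a combinatorial check that the recursion agrees with the closed-form expressions in (\ref{coef_ibp2}).

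First I would settle the base case $k=0$ by direct computation. The chain rule gives $\nabla\bigl(\Gamma_1(\lambda t)\bigr)=\lambda\Gamma_0(\lambda t)\,\eta_2$, so the product rule yields
\begin{equation*}
\sym\nabla\Big(\tfrac{\Gamma_1(\lambda t)}{\lambda^2}L_0^{\eta_2}\Big)=\tfrac{\Gamma_0(\lambda t)}{\lambda}\,\sym(L_0^{\eta_2}\otimes\eta_2)+\tfrac{\Gamma_1(\lambda t)}{\lambda^2}\,\sym\nabla L_0^{\eta_2}.
\end{equation*}
Consequently (\ref{ibp2}) at $k=0$ reduces to the pointwise algebraic identity $H=\sym(L_0^{\eta_2}\otimes\eta_2)+P_0^{\eta_2}\,e_2\otimes e_2$. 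Writing $L_0^{\eta_2}=(L_1,L_2)$ and matching the three independent entries of the $2\times 2$ symmetric matrices yields the unique solution $L_1=\sqrt{2}H_{11}$, $L_2=\sqrt{2}(2H_{12}-H_{11})$, and $P_0^{\eta_2}=H_{11}-2H_{12}+H_{22}$, which is exactly the $i=0$ row of (\ref{coef_ibp2}).

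For the inductive step, I would apply the base-case identity to the residue $(-1)^{k+1}\Gamma_{k+1}(\lambda t)\lambda^{-(k+2)}\sym\nabla L_k^{\eta_2}$ in (\ref{ibp2}), now treating $\tilde H:=\sym\nabla L_k^{\eta_2}$ as the new matrix field and $\Gamma_{k+1}$ as the new profile (so that the relation $\Gamma_{k+2}'=\Gamma_{k+1}$ in (\ref{recu_ibp}) plays the role of $\Gamma_1'=\Gamma_0$). Factoring out $(-1)^{k+1}/\lambda^{k+1}$ converts the level-$k$ decomposition into the level-$(k+1)$ one, provided we identify
\begin{equation*}
L_{k+1}^{\eta_2}=\sqrt{2}\bigl(\tilde H_{11},\,2\tilde H_{12}-\tilde H_{11}\bigr),\qquad P_{k+1}^{\eta_2}=\tilde H_{11}-2\tilde H_{12}+\tilde H_{22}.
\end{equation*}

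The only remaining task, which is the main obstacle, is a combinatorial verification that the closed-form expressions in (\ref{coef_ibp2}) satisfy this recursion. I would plug $L_k^{\eta_2}$ from (\ref{coef_ibp2}) into $\tilde H=\sym\nabla L_k^{\eta_2}$, compute its three entries by differentiating the first component in $x_1$, the second in $x_2$, and symmetrizing the cross derivative, and then substitute into the displayed formulas for $L_{k+1}^{\eta_2}$ and $P_{k+1}^{\eta_2}$ above. The resulting expressions match the $i=k+1$ row of (\ref{coef_ibp2}) once one telescopes the integer coefficients of $\partial_1^{(k+1)}H_{11}$ and $\partial_1^{(k)}\partial_2 H_{11}$ (the coefficient $(k+1)$ in the second slot of $L_k^{\eta_2}$ combines with a fresh $\partial_2$ to produce the $(k+2)$ in the next level), and once one tracks the power $2^{(k+2)/2}$ of $\sqrt{2}$ arising from the outer factor $\sqrt{2}$ in the recursion. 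No analytic difficulty appears beyond this bookkeeping, which runs exactly in parallel to the argument for Lemma \ref{lem_IBP1}.
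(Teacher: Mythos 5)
Your proposal follows the paper's proof essentially verbatim: the same base-case identity $H=\sym(L_0^{\eta_2}\otimes\eta_2)+P_0^{\eta_2}\,e_2\otimes e_2$ established via the product rule applied to $\Gamma_1(\lambda t)L_0^{\eta_2}/\lambda^2$, the same inductive step of feeding $\tilde H=\sym\nabla L_k^{\eta_2}$ back into the base case with $\Gamma_{k+1}$ in place of $\Gamma_0$, and the same combinatorial closure of (\ref{coef_ibp2}) under the recursion $L_{k+1}=\sqrt{2}(\tilde H_{11}, 2\tilde H_{12}-\tilde H_{11})$, $P_{k+1}=\tilde H_{11}-2\tilde H_{12}+\tilde H_{22}$. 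One small slip in your parenthetical bookkeeping: the coefficient $(k+1)$ multiplying $\partial_1^{(k)}H_{11}$ in the second slot of $L_k^{\eta_2}$ is promoted by a fresh $\partial_1$ (not $\partial_2$), and the jump to $(k+2)$ comes from adding the extra term $-\partial_1(L_k^{\eta_2})_1$ in the recursion; carrying out the computation as you describe would make this clear.
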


\begin{proof}
{\bf 1.} Observe that $\sym(L_0^{\eta_2}\otimes \eta_2)=
H-P_0^{\eta_2} e_2\otimes e_2$ and that: 
$$ \sym\nabla \Big(\frac{\Gamma(\lambda t)}{\lambda^2} L_0^{\eta_2}\Big)
= \frac{\Gamma(\lambda t)}{\lambda^2}\sym\nabla L_0^{\eta_2} +
\frac{\Gamma'(\lambda t)}{\lambda}\sym(L_0^{\eta_2}\otimes \eta_2).$$
Applying the above with $\Gamma=\Gamma_1$ so that $\Gamma'=\Gamma_0$,
we get (\ref{ibp2}) at $k=0$:
\begin{equation}\label{naua}
\frac{\Gamma_0(\lambda t)}{\lambda}H = -\frac{\Gamma_{1}(\lambda
  t)}{\lambda^{2}} \, \sym\nabla L_0^{\eta_2}  + \sym\nabla
\Big(\frac{\Gamma_{1}(\lambda t)}{\lambda^{2}} L_0^{\eta_2}\Big) 
+ \frac{\Gamma_0(\lambda t)}{\lambda} P_0^{\eta_2} e_2\otimes e_2.
\end{equation}

\smallskip

{\bf 2.} The proof of (\ref{ibp2}) is carried out by induction on $k$. 
Assume that it holds at some $k\geq 0$ and apply (\ref{naua}) to
$H=\sym\nabla L_k^{\eta_2}$ and $\Gamma_0$ replaced by $\Gamma_{k+1}$, to get:
\begin{equation}\label{naua1}
\begin{split}
\frac{\Gamma_{k+1}(\lambda t)}{\lambda}\,\sym\nabla L_k
 = & -\frac{\Gamma_{k+2}(\lambda
  t)}{\lambda^{2}} \, \sym\nabla L_{k+1}  \\ & + \sym\nabla
\Big(\frac{\Gamma_{k+2}(\lambda t)}{\lambda^{2}} L_{k+1}\Big) 
+ \frac{\Gamma_{k+1}(\lambda t)}{\lambda} P_{k+1} e_2\otimes e_2,
\end{split}
\end{equation}
where, in virtue of the definitions (\ref{coef_ibp2}):
\begin{equation*} 
\begin{split}
L_{k+1} & =  \sqrt{2}(\partial_1(L_k^{\eta_2})_1, \partial_1(L_k^{\eta_2})_2
 + \partial_2(L_k^{\eta_2})_1 - \partial_1(L_k^{\eta_2})_1 \\ &
= 2^{(k+2)/2}\big(\partial_1^{(k+1)}H_{11}, 
2\partial_1^{(k+1)}H_{12} + (k+1) \partial_1^{(k)}\partial_2H_{11}- (k+2)\partial_i^{(k+1)}H_{11})
= L_{k+1}^{\eta_2},\\
P_{k+1} & = \partial_2(L_k^{\eta_2})_2-\partial_1(L_k^{\eta_2})_2-\partial_2(L_k^{\eta_2})_1
+ \partial_1(L_k^{\eta_2})_1 \\ &
=  {2}^{(k+1)/2}\big(2\partial_1^{(k)}\partial_2H_{12}-2\partial_1^{(k+1)}H_{12} +
k\partial_1^{(k-1)}\partial_2^{(2)}H_{11} \\ & \qquad\qquad \quad -2(k+1)\partial_1^{(k)}\partial_2H_{11}+
(k+2)\partial_i^{(k+1)}H_{11} \big) = P_{k+1}^{\eta_2}.
\end{split}
\end{equation*}
Introduce now (\ref{naua1}) into (\ref{ibp2}) to obtain:
\begin{equation*} 
\begin{split}
\frac{\Gamma_0(\lambda t)}{\lambda}H & \; = 
\frac{(-1)^{k+1}}{\lambda^{k+1}} \Big(-\frac{\Gamma_{k+2}(\lambda
  t)}{\lambda^{2}} \, \sym\nabla L_{k+1}^{\eta_2}  \\ & \qquad\qquad
\qquad + \sym\nabla \Big(\frac{\Gamma_{k+2}(\lambda t)}{\lambda^{2}} L_{k+1}^{\eta_2}\Big) 
+ \frac{\Gamma_{k+1}(\lambda t)}{\lambda} P_{k+1}^{\eta_2} e_2\otimes e_2\Big)
\\ & \quad\, + \sym\nabla
\Big(\sum_{i=0}^k(-1)^i\frac{\Gamma_{i+1}(\lambda
  t)}{\lambda^{i+2}} L_i^{\eta_2}\Big) 
+ \Big( \sum_{i=0}^{k}(-1)^i\frac{\Gamma_i(\lambda t)}{\lambda^{i+1}} P_i^{\eta_2}\Big) e_2\otimes e_2,
\\ & =(-1)^{k+2}\frac{\Gamma_{k+2}(\lambda
  t)}{\lambda^{k+3}} \sym\nabla L_{k+1}^{\eta_2}  \\ & \quad + \sym\nabla
\Big(\sum_{i=0}^{k+1}(-1)^i\frac{\Gamma_{i+1}(\lambda t)}{\lambda^{i+2}} L_i^{\eta_2}\Big) 
+ \Big( \sum_{i=0}^{k+1}(-1)^i\frac{\Gamma_i(\lambda t)}{\lambda^{i+1}} P_i^{\eta_2}\Big) e_2\otimes e_2,
\end{split}
\end{equation*}
which is exactly (\ref{ibp2}) at $k+1$, as claimed.
\end{proof}

\section{The K\"all\'en iteration technique}\label{sec_kallen}

In this section, we carry out a version of K\"all\'en's
iteration, with the purpose of canceling the non-oscillatory  portion
of the defect term $\frac{1}{\lambda^2}\nabla a\otimes\nabla a$ in
Lemma \ref{lem_step2}. The remaining portion: 
$$\frac{\Gamma(\lambda t)^2
-1}{\lambda^2} S_1= -\frac{\cos(2\lambda t)}{\lambda^2}
\nabla a\otimes\nabla a,$$ 
where we note that $\cos(2\lambda t)$ has mean zero on its period,
will be canceled in the leading order via lemmas in section \ref{sec_IBP}. The matrix field $H$
in the statement below should be thought of as the scaled defect
$\mathcal{D}$. Similar result appeared in \cite[Proposition
3.1]{in_lew2}, with only one extra term of the type
$\frac{1}{\lambda^2}\nabla a\otimes\nabla a$, while \cite[Lemma
2.2]{CHI2} featured more absorbed terms, as below. 

\begin{lemma}\label{prop1}
Let $H\in \mathcal{C}^\infty(\bar\omega,\R^{2\times 2}_\sym)$ be
defined on the closure of an open, bounded set $\omega\subset\R^2$,
and let $M, N\geq 1$ be two integers. Assume that:
\begin{equation}\label{ass_H0}
\|H-H_0\|_0\leq \frac{r_0}{2} \quad\mbox{ and }\quad 
\|\nabla^{(m)}H\|_0\leq \bar C\mu^m \quad \mbox{for all } \; m=1\ldots M+N,
\end{equation}
 for some given $\mu, \bar C>1$ and with $r_0$ as in Lemma \ref{lem_dec_def}.
Then, there exists $\underline \sigma >2$ depending only on $M, N$
such that the following holds. Given the constants 
$\kappa>\lambda>\mu$ satisfying $\lambda/\mu\geq \underline\sigma$,
there exist $\{a_i\in \mathcal{C}^\infty(\bar\omega,\R)\}_{i=1}^3$
such that, writing:
$$H= \sum_{i=1}^3 a_i^2\eta_i\otimes\eta_i + \frac{1}{\lambda^2}\nabla
a_1\otimes\nabla a_1 + \frac{1}{\kappa^2}\nabla a_2\otimes\nabla a_2 + \mathcal{F},$$
there hold the estimates:
\begin{equation}\label{Ebounds}
\begin{split}
&  \frac{1}{2}\leq a_i^2\leq \frac{3}{2} \; \; \quad \qquad \mbox{ and }\quad 
\frac{1}{2}\leq a_i\leq \frac{3}{2} \quad\mbox{ in }\bar\omega \quad \mbox{
for } i=1\ldots 3,\\
& \|\nabla^{(m)} a_i^2\|_0\leq C\mu^m\quad 
\mbox{and } \quad \|\nabla^{(m)} a_i\|_0\leq C\mu^m \;\quad\mbox{ for }\;
 m=1\ldots M+1,\;  i=1\ldots 3,\\
& \|\nabla^{(m)}\mathcal{F}\|_0\leq
C\frac{\mu^m}{(\lambda/\mu)^{2N}}\quad\mbox{ for } \; m=0\ldots M.  
\end{split}
\end{equation}
The constant $C$ above depends only on $ M, N, \bar C$.
\end{lemma}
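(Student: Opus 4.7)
The plan is to construct $a_1, a_2, a_3$ through a K\"all\'en--style iterative scheme, each step of which absorbs more and more of the gradient self--product terms back into the primitive rank--one decomposition of Lemma \ref{lem_dec_def}. Set $H^{(0)}:=H$ and $(a_i^{(0)})^2:=\bar a_i(H^{(0)})$, well defined and lying in $[1/2,3/2]$ thanks to the hypothesis $\|H-H_0\|_0\leq r_0/2$. Inductively, for $n=1,\ldots,N$ I define
\begin{equation*}
H^{(n)}:= H - \frac{1}{\lambda^2}\nabla a_1^{(n-1)}\otimes\nabla a_1^{(n-1)}
- \frac{1}{\kappa^2}\nabla a_2^{(n-1)}\otimes\nabla a_2^{(n-1)},
\qquad (a_i^{(n)})^2:=\bar a_i(H^{(n)}),
\end{equation*}
and finally set $a_i:=a_i^{(N)}$. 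Since by construction $\sum_i a_i^2\,\eta_i\otimes\eta_i = H^{(N)}$, rearranging immediately gives the announced identity, with residual
\begin{equation*}
\mathcal{F}=\frac{1}{\lambda^2}\big(\nabla a_1^{(N-1)}\otimes\nabla a_1^{(N-1)}-\nabla a_1\otimes\nabla a_1\big)
+\frac{1}{\kappa^2}\big(\nabla a_2^{(N-1)}\otimes\nabla a_2^{(N-1)}-\nabla a_2\otimes\nabla a_2\big),
\end{equation*}
a telescoping difference of two consecutive iterates.

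The first technical task is to show that each $H^{(n)}$ remains inside the ball $B(H_0,r_0)$ of Lemma \ref{lem_dec_def}, so that the construction continues to make sense, and that the uniform estimate
\begin{equation*}
\|\nabla^{(m)}a_i^{(n)}\|_0\leq C\mu^m \quad\text{for all }\; m=0,\ldots,M+N-n,
\end{equation*}
holds with a constant $C$ independent of $n$. This is an induction on $n$ that exploits the linearity of $\bar a_i$ (so that all derivatives of $(a_i^{(n)})^2$ are at most those of $H^{(n)}$) together with Fa\`a di Bruno's formula applied to the smooth square--root map on a neighborhood of $1$. The inductive step requires controlling $\|\nabla^{(m)}(\nabla a_i^{(n-1)}\otimes\nabla a_i^{(n-1)})\|_0\leq C\mu^{m+2}$ and dividing by $\lambda^2\geq\underline\sigma^2\mu^2$; combined with \ref{ass_H0}, this shows the $C^0$-perturbation of $H^{(n)}$ away from $H$ is at most $C/\underline\sigma^2\leq r_0/2$, hence $(a_i^{(n)})^2\in[1/2,3/2]$, and the derivative bound is inherited. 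Taking positive square roots on a neighborhood of $1$ preserves the estimate.

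The decay of $\mathcal F$ then follows from a second induction giving the geometric contraction
\begin{equation*}
\|\nabla^{(m)}(a_i^{(n)}-a_i^{(n-1)})\|_0\leq C\mu^m\Big(\frac{\mu}{\lambda}\Big)^{2n}\quad\text{for }\; m=0,\ldots,M+N-n.
\end{equation*}
The base case $n=1$ reads $\|a_i^{(1)}-a_i^{(0)}\|_{C^m}\lesssim \|H^{(1)}-H^{(0)}\|_{C^m}\lesssim\mu^m(\mu/\lambda)^2$ by Lipschitz continuity of $\sqrt{\cdot}\circ\bar a_i$ on $[1/2,3/2]$. For the inductive step one writes
\begin{equation*}
H^{(n+1)}-H^{(n)}=-\frac{1}{\lambda^2}\,\sym\bigl(\nabla(a_1^{(n)}-a_1^{(n-1)})\otimes(\nabla a_1^{(n)}+\nabla a_1^{(n-1)})\bigr)-(\kappa\text{-analog}),
\end{equation*}
whose $m$-th derivatives are bounded by $C\mu^{m+2}(\mu/\lambda)^{2n}/\lambda^2=C\mu^m(\mu/\lambda)^{2(n+1)}$ by combining the uniform bound on $a_i^{(\cdot)}$ with the previous contraction estimate. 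Substituting into the telescoped expression for $\mathcal F$ and collecting powers produces $\|\nabla^{(m)}\mathcal F\|_0\leq C\mu^m(\mu/\lambda)^{2N}$ for $m\leq M$, which is the claimed bound.

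The main obstacle, which these steps silently resolve, is the careful derivative bookkeeping: the map $a_i^{(n-1)}\mapsto a_i^{(n)}$ costs one derivative (via $\nabla a_i^{(n-1)}$), so the hypothesis that $H$ is controlled up to order $M+N$ is exactly what is needed for the terminal iterate $a_i^{(N)}$ and the residue $\mathcal F$ to retain $M$ derivatives. Closing both inductions with a single constant $C$ independent of $n$ forces the choice of a threshold $\underline\sigma$, depending only on $M,N,\bar C$, large enough that the Fa\`a di Bruno multiplicative constants incurred at each iteration are dominated by the geometric factor $(\mu/\lambda)^2\leq\underline\sigma^{-2}$.
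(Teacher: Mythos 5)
Your construction is the same iteration the paper uses (K\"all\'en's absorption via repeated application of the linear decomposition $\bar a_i$ from Lemma \ref{lem_dec_def}, Fa\`a di Bruno for the square root, telescoping for $\mathcal{F}$, and a geometric contraction to close the estimates). The structure, the ingredients, and the chain of estimates all match.

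There is, however, an off-by-one in the stopping index. You begin the iteration with the nontrivial initialization $a_i^{(0)}=\sqrt{\bar a_i(H)}$ and run $N$ more absorption steps, each of which costs exactly one derivative (since forming $H^{(n)}$ from $a_i^{(n-1)}$ requires $\nabla a_i^{(n-1)}$). Thus, by your own count $\|\nabla^{(m)}a_i^{(n)}\|_0\leq C\mu^m$ only for $m\leq M+N-n$, so $a_i := a_i^{(N)}$ is controlled only up to order $M$, and $\mathcal{F}$ — which involves $\nabla\bigl(a_i^{(N)}-a_i^{(N-1)}\bigr)$ — only up to order $M-1$. The lemma requires control of $a_i$ up to order $M+1$ and of $\mathcal{F}$ up to order $M$. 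The paper avoids this by initializing with the trivial guess $a_{i,0}\equiv 0$, so its first step (from $a_{i,0}$ to $a_{i,1}^2=\bar a_i(H)$) costs no derivative, and after $N$ steps $a_{i,N}$ retains $M+1$ controlled derivatives. The fix to your argument is immediate: stop one step earlier, i.e.\ set $a_i := a_i^{(N-1)}$. Then $a_i$ retains $M+1$ derivatives, $\mathcal{F}$ retains $M$, and the contraction $\|\nabla^{(m)}(a_i^{(N-1)}-a_i^{(N-2)})\|_0\leq C\mu^m(\mu/\lambda)^{2(N-1)}$ still gives, after division by $\lambda^2$, the required decay $(\lambda/\mu)^{-2N}$ for $\mathcal{F}$.

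One small additional remark: your closing sentence attributes the choice of $\underline\sigma$ to the need to ``dominate the Fa\`a di Bruno constants.'' That is not quite the mechanism — the iteration runs at most $N$ (a fixed number of) times, so those multiplicative constants are automatically bounded in terms of $M,N,\bar C$. What $\underline\sigma$ actually controls, as you do correctly note elsewhere, is that $\|H^{(n)}-H\|_0\leq C\mu^2/\lambda^2\leq C/\underline\sigma^2$ stays small enough to keep $H^{(n)}\in B(H_0,r_0)$ throughout, so that $\bar a_i$ remains applicable.
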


\begin{proof}
{\bf 1. }  We will inductively define the triples of 
coefficients $\{a_{i,j}\in\mathcal{C}^\infty(\bar\omega,
\R)\}_{j=0}^N$, $i=1\ldots 3$, by setting $a_{i,0}\equiv 0$ and
further utilizing Lemma \ref{lem_dec_def} and the linear maps $\bar a_i$ in there:
\begin{equation}\label{def_aP}
\begin{split}
& a_{i,j}= \big(\bar a_i(H-\mathcal{E}_{j-1})\big)^{1/2} \quad \mbox{
  for } j=1\ldots N,\; i=1\ldots 3,\\ 
&\mbox{where } \quad \mathcal{E}_j = \frac{1}{\lambda^2}\nabla a_{1,j}\otimes\nabla a_{1,j}
+ \frac{1}{\kappa^2}\nabla a_{2,j}\otimes\nabla a_{2,j}
\quad \mbox{ for } j=1 \ldots N,
\end{split}
\end{equation}
This means that, with fixed unit vectors $\{\eta_i\}_{i=1}^3$:
$$\sum_{i=1}^3a_{i,j}^2\eta_i\otimes\eta_i = H-\mathcal{E}_{j-1} \quad \mbox{ for } j=1 \ldots N,$$
and we also denote:
$$\mathcal{F}_j = H- \sum_{i=1}^3a_{i,j}^2\eta_i\otimes\eta_i -
\frac{1}{\lambda^2}\nabla a_{1,j}\otimes\nabla a_{1,j} 
- \frac{1}{\kappa^2}\nabla a_{2,j}\otimes\nabla a_{2,j} = \mathcal{E}_{j-1}-\mathcal{E}_j.$$
We will show that the above decomposition is well posed and that it yields:
\begin{equation}\label{asm4}
a_i = a_{i, N} \quad\mbox{ for } i=1\ldots 3 \quad \mbox{ and }\quad
\mathcal{F}=\mathcal{F}_N
\end{equation}
with the desired properties (\ref{Ebounds}). To this end, we will inductively show for all $j=1\ldots N$:
\begin{align*}
& \; \, \frac{1}{2}\leq a_{i,j}^2\leq \frac{3}{2} \; \; \quad \qquad
\mbox{ and }\quad 
\frac{1}{2}\leq a_{i,j}\leq \frac{3}{2} \quad\mbox{ in }\bar\omega \quad \mbox{
for } i=1\ldots 3,
\tag*{(\theequation)$_1$}\refstepcounter{equation} \label{Ebound12}\\
& \begin{array}{ll} \displaystyle{\|\nabla^{(m)} a_{i,j}^2\|_0\leq C\mu^m\quad 
\mbox{and } \quad \|\nabla^{(m)} a_{i,j}\|_0\leq C\mu^m } \vspace{2mm }\\ 
\qquad \qquad\qquad\qquad \qquad\qquad 
\mbox{ for } m=1\ldots M+N-j+1,\;  i=1\ldots 3, \end{array}
\tag*{(\theequation)$_2$}\label{Ebound22} \\ 
& \; \, \|\nabla^{(m)}\mathcal{F}_j\|_0\leq
C\frac{\mu^m}{(\bar\lambda/\mu)^{2j}} \qquad\mbox{for } m=0\ldots M
+ N-j, \tag*{(\theequation)$_3$} \label{Ebound32}
\end{align*}
with $C$ that depends only on $M,N$.

\smallskip

{\bf 2. } We analyze the induction base at $j=1$. By the first
condition in (\ref{ass_H0}), all $a_{i,1}^2=\bar a_i(H)$ for
$i=1\ldots 3$ are well-defined and \ref{Ebound12} holds by Lemma
\ref{lem_dec_def}. Further:
$$\|\nabla^{(m)}a_{i,1}^2\|_0\leq C\|\nabla ^{(m)}H\|_0\leq
C\mu^m\quad\mbox{ for } m=1\ldots M+N,\; i=1\ldots 3,$$
by the second condition in (\ref{ass_H0}). Observe  that the
first conditions in \ref{Ebound12}, \ref{Ebound22} always imply
the respective second conditions in there, because:
\begin{equation*}
\begin{split}
& |a_{i,j} - 1|=\frac{|a_{i,j}^2 - 1|}{a_{i,j} + 1}\leq |a_{i,j}^2 -
1|\leq \frac{1}{2} \quad\mbox{in } \bar\omega,\;j=1\ldots N, \; i=1\ldots 3,\\
& \|\nabla^{(m)} a_{i,j}\|_0\leq C\sum_{p_1+2p_2+\ldots
  mp_m=m} \Big\|
a_{i,j}^{2(1/2-p_1-\ldots p_m)}\prod_{t=1}^m\big|\nabla^{(t)}a_{i,j}^2\big|^{p_t}\Big\|_0
\leq C\mu^m, \\ & 
\mbox{for } j=1\ldots N, \; m=1\ldots M+N-j+1,\; i=1\ldots 3,
\end{split}
\end{equation*}
by an application of Fa\'a di Bruno's formula. This proves, in
particular, \ref{Ebound12}, \ref{Ebound22} at $j=1$, whereas \ref{Ebound32}  holds by:
\begin{equation*}
\begin{split}
\|\nabla^{(m)}\mathcal{F}_1\|_0&\leq
\frac{C}{\bar\lambda^2}\sum_{p+q=m} \big(\|\nabla^{(p+1)}a_{1,1}\|_0 \|\nabla^{(q+1)}a_{1,1}\|_0
+ \|\nabla^{(p+1)}a_{2,1}\|_0 \|\nabla^{(q+1)}a_{2,1}\|_0\big)\\ & \leq
C\frac{\mu^m}{(\lambda/\mu)^2}.
\end{split}
\end{equation*}
This ends the justification of the induction base.
We additionally note that applying Fa\'a di Bruno's formula to the
inverse rather than the square root, \ref{Ebound22} yields: 
\begin{equation}\label{asm2}
\begin{split}
& \|\nabla^{(m)}\Big(\frac{1}{a_{i,j}+a_{i,j-1}}\Big)\|_0  \\ & \leq C\hspace{-3mm} \sum_{p_1+2p_2+\ldots
  mp_m=m}\hspace{-8mm}\big\|(a_{i,j}+a_{i,j-1})^{-1-p_1-\ldots p_m}
\prod_{t=1}^m|\nabla^{(t)}(a_{i,j}+a_{i,j-1})|^{p_t}\big\|_0\\ & \leq C\mu^m
\quad\mbox{for } j=1\ldots N, \; m=0\ldots M+N-j+1,\; i=1\ldots 3.
\end{split}
\end{equation}

\smallskip

{\bf 3.} We now exhibit the induction step. Assume that
\ref{Ebound12}-\ref{Ebound32} have been proved up to some counter $j=1\ldots
N-1$. To check that $\{a_{i,j+1}\}_{i=1}^3$ are well defined, we need
that $\|(H-\mathcal{E}_j)-H_0\|_0\leq r_0$, which indeed holds with
$\underline \sigma$ large, because:
\begin{equation*}
\begin{split}
\|(H-\mathcal{E}_j)-H_0\|_0 & \leq \|H-H_0\|_0 + \sum_{t=1}^j\|\mathcal{E}_t-\mathcal{E}_{t-1}\|_0
\leq\frac{r_0}{2} + \sum_{t=1}^j\|\mathcal{F}_t\|_0 \\ & \leq 
\frac{r_0}{2} + \sum_{t=1}^\infty\frac{1}{(\lambda/\mu)^{2t}}\leq 
\frac{r_0}{2} + \frac{C}{(\lambda/\mu)^{2}},
\end{split}
\end{equation*}
where we used that $1-\frac{1}{(\lambda/\mu)^2}\geq2$ valid from
$(\lambda/\mu)^2\geq {\underline \sigma}^2 \geq 2$.
This proves \ref{Ebound12} at the counter $j+1$. We similarly get \ref{Ebound22} from:
\begin{equation*}
\begin{split}
\|\nabla^{(m)}a_{i,j+1}^2\|_0 & \leq C
\big(\|\nabla^{(m)}H\|_0+\|\nabla^{(m)}\mathcal{E}_j\|_0\big)\leq
C\Big(\mu^m + \sum_{t=1}^j\|\nabla^{(m)}\mathcal{F}_t\|_0 \Big) \\ & \leq
C\Big(\mu_m + \frac{\mu^m}{(\lambda/\mu)^2)}\Big)\leq C\mu^m.
\end{split}
\end{equation*}
It remains to show \ref{Ebound32} at $j+1$. We write:
\begin{equation}\label{asm3}
\begin{split}
& \mathcal{F}_{j+1} =\mathcal{E}_j-\mathcal{E}_{j+1} \\ & = \frac{1}{\bar\lambda^2}\big(\nabla
a_{1,j}\otimes\nabla a_{1,j} - \nabla a_{1,j+1}\otimes\nabla a_{1,j+1}\big) 
+ \frac{1}{\bar\kappa^2}\big(\nabla
a_{2,j}\otimes\nabla a_{2,j} - \nabla a_{2,j+1}\otimes\nabla a_{2,j+1}\big) 
\\ & = \frac{1}{\bar\lambda^2}\big(\nabla( a_{1,j} -
a_{1,j+1})\otimes\nabla a_{1,j} + \nabla a_{1,j+1}\otimes \nabla (a_{1,j} - a_{1,j+1})\big) 
\\ & \qquad + \frac{1}{\bar\kappa^2}\big(\nabla( a_{2,j} -
a_{2,j+1})\otimes\nabla a_{2,j} + \nabla a_{2,j+1}\otimes \nabla (a_{2,j} - a_{2,j+1})\big) 
\end{split}
\end{equation}
and recall that $\mathcal{F}_j = (H-\mathcal{E}_j) - (H-\mathcal{E}_{j-1}) =
\sum_{i=1}^3\big(a_{i,j+1}^2-a_{i,j}^2\big) \eta_i\otimes \eta_i$. This yields:
\begin{equation*}
a_{i,j+1}^2-a_{i,j}^2=\bar a_i(\mathcal{F}_j) \quad \mbox{ for } i=1\ldots 3
\end{equation*}
and further, by \ref{Ebound32}:
\begin{equation*}
\begin{split}
& \|\nabla^{(m)}(a_{i,j+1}^2-a_{i,j}^2)\|_0\leq C
\|\nabla^{(m)}\mathcal{F}_j\|_0 \leq
C\frac{\mu^m}{(\lambda/\mu)^{2j}}\quad  \\ & \mbox{for all } m=0\ldots M+N-j,\; i=1\ldots 3.
\end{split}
\end{equation*}
In view of (\ref{asm2}), we thus obtain:
\begin{equation*}
\begin{split}
& \|\nabla^{(m)}(a_{i,j+1}-a_{i,j})\|_0 =
\Big\|\nabla^{(m)}\big(\frac{a_{i,j+1}^2-a_{i,j}^2}{a_{i,j+1}-a_{i,j}}\big)\Big\|_0
\\ & \leq C\sum_{p+q=m} \|\nabla^{(p)}(a_{i,j+1}^2-a_{i,j}^2)\|_0
\|\nabla^{(q)}\Big(\frac{1}{a_{i,j}+a_{i,j-1}}\Big)\|_0  
\leq C \sum_{p+q=m} \frac{\mu^p}{(\lambda/\mu)^{2j}} \mu^q \\ & \leq C
\frac{\mu^m}{(\lambda/\mu)^{2j}} \quad\mbox{ for all } m=0\ldots M+N-j,\;i=1\ldots 3.
\end{split}
\end{equation*}
In conclusion, by (\ref{asm3}):
\begin{equation*}
\begin{split}
& \|\nabla^{(m)}\mathcal{F}_{j+1}\|_0 \leq
C \sum_{p+q=m} \frac{1}{\lambda^2} \frac{\mu^{p+1}}{(\lambda/\mu)^{2j}} \mu^{q+1} \\ & \leq C
\frac{\mu^m}{(\lambda/\mu)^{2(j+1)}} \quad\mbox{ for all } m=0\ldots M+N-(j+1).
\end{split}
\end{equation*}
This ends the proof of all the inductive estimates \ref{Ebound12} -
\ref{Ebound32} and therefore of the desired bounds (\ref{Ebounds}) for (\ref{asm4}). 
The proof is done.
\end{proof}

\section{Stage construction: the proof of Theorem \ref{thm_STA}}

This section is devoted to our main construction, following its sketch
 in subsection \ref{stage_sketch}.

\bigskip

\noindent {\bf Proof of Theorem \ref{thm_STA}}

\smallskip

{\bf 1. (Mollification and initial bounds)} Fix $g, \underline\gamma,
N,K$ as in the statement of the theorem, and  
$\delta,\mu,\sigma, u$ satisfying \ref{Ass1}-\ref{Ass3}. All the
bounds in the course of the proof below will be valid under the assumption
that $\underline\delta$ is sufficiently small and $\underline\sigma$
sufficiently large, in function of $\underline\gamma, \omega, g,N,K$.
We define the following initial parameters and the mollified fields:
\begin{equation*}
\begin{split}
& \mu_0 =\mu, \quad \delta_0=\delta, \quad  l=\frac{1}{C\mu},\\
& u_0=u\ast\phi_l\in\mathcal{C}^\infty(\bar\omega,\R^4),\quad 
g_0=g\ast\phi_l\in\mathcal{C}^\infty(\bar\omega,\R^{2\times 2}_{\sym}),
\end{split}
\end{equation*}
where $C>1$ above is an independent constant, assuring that:
\begin{equation}\label{m1}
\|(\nabla u_0)^T\nabla u_0 - \big((\nabla u)^T\nabla u\big)\ast
\phi_l\|_0\leq C_0l^2\|\nabla^2u\|_0^2\leq
\frac{C_0}{C^2\mu^2}\delta\mu^2\leq \frac{r_0}{12}\delta_0,
\end{equation}
which follows from the second assumption in \ref{Ass3} and by applying \ref{stima4}
to $f=g=\nabla u$. Further application of Lemma \ref{lem_stima} yields:
\begin{align*}
& \|g_0-g\|_0\leq l^{r+\beta}\|g\|_{r,\beta}\leq \frac{\|g\|_{r,\beta}}{\mu_0^{r+\beta}},
\tag*{(\theequation)$_1$}\refstepcounter{equation} \label{m2} \\
& \|u_0-u\|_1\leq l\|u\|_2\leq \delta_0^{1/2},
\tag*{(\theequation)$_2$} \label{m3} \\
& \|\nabla^{(m)}\nabla^{(2)}u_0\|_0\leq \frac{C}{l^m}\|\nabla^2
u\|_0\leq C\mu_0^{m+1} \delta_0^{1/2} \quad\mbox{for all } m=0\ldots (3N+6)K.
\tag*{(\theequation)$_3$} \label{m4} 
\end{align*}
In particular, $u_0$ is an immersion, because by Lemma \ref{lem_det}:
\begin{equation*}
\begin{split}
&  \|(\nabla u_0)^T\nabla u_0 - (\nabla u)^T\nabla u\|_0\leq 
\|\nabla(u_0-u)\|_0\big(\|\nabla u_0\|_0 +\|\nabla u\|_0 \big)\\ & \leq
\|\nabla(u_0-u)\|_0\big(2\|\nabla u\|_0 +\|\nabla (u_0-u)\|_0 \big)\leq
\delta_0^{1/2}\big(2 (4\gamma)^{1/2} + \delta_0^{1/2}\big),
\end{split}
\end{equation*}
so when $\delta_0\leq \underline \delta$ is sufficiently small, we obtain:
\begin{equation}\label{m5}
\frac{1}{3\underline \gamma}\Id_2\leq (\nabla u_0)^T\nabla u_0\leq
3\underline\gamma\Id_2 \quad\mbox{ in }\bar\omega.
\end{equation}
Consequently, we may apply Lemma \ref{lem_Tu} to $u_0$ and $\mu_0$
with $A=\delta_0^{1/2}$ and obtain the following bound for the tangent
field $T_{u_0}=(\nabla u_0)\big((\nabla u_0)^T\nabla u_0\big)^{-1}$:
\begin{equation}\label{m6}
\|T_{u_0}\|_0\leq C, \qquad \|\nabla^{(m)}T_{u_0}\|_0\leq
C \delta_0^{1/2} \mu_0^m \quad\mbox{for all } m=1\ldots (3N+6)K -1.
\end{equation}
Also, Lemma \ref{lem_normals} implies existence of a normal frame
$E_{u_0}^1, E_{u_0}^2\in \mathcal{C}^{(3N+6)K+1}(\bar\omega,\R^4)$, namely:
$$(\nabla u_0)^TE_{u_0}^i=0,\quad |E_{u_0}^i|=1 \quad \mbox{ for }
i=1,2 \quad\mbox{ and } \quad \langle E_{u_0}^1,E_{u_0}^2\rangle =
0\quad\mbox{in }\bar\omega,$$
obeying the bounds:
\begin{equation}\label{m7}
\begin{split}
& \|\nabla^{(m)}E^i_{u_0}\|_0\leq C(1+\|\nabla u_0\|_m)\leq
C\delta_0^{1/2}\mu_0^m\quad\\ & \mbox{for all } m=1\ldots (3N+6)K+1, \; i=1,2,
\end{split}
\end{equation}
where we used the second condition in \ref{Ass1} and \ref{m4}. In particular, we get:
\begin{equation}\label{m8}
\begin{split}
& \|\nabla^{(m)}((\nabla u_0)^T\nabla E^i_{u_0})\|_0 \leq
C\sum_{p+q=m}\|\nabla^{(p+1)}u_0\|_0 \|\nabla^{(q+1)}E_{u_0}^i\|_0\\ &
\leq C \sum_{p+q=m} \mu_0^{p}\mu_0^{q+1}\delta_0^{1/2}
\leq C \delta_0^{1/2} \mu_0^{m+1}\quad\mbox{for all }m=0\ldots (3N+6)K,\; i=1.2.
\end{split}
\end{equation}
Finally, writing:
$$\mathcal{D}(g_0-\delta_0H_0,u_0) = \mathcal{D}(g-\delta_0H_0,u)\ast\phi_l
+ \Big((\nabla u_0)^T\nabla u_0 - \big((\nabla u)^T\nabla u\big)\ast
\phi_l\Big),$$
we get in view of (\ref{m1}), \ref{Ass3} and Lemma \ref{lem_stima}:
\begin{equation}\label{m9}
\begin{split}
& \|\mathcal{D}(g_0-\delta_0H_0,u_0)\|_0\leq \|\mathcal{D}(g-\delta_0H_0,u)\|_0
+\frac{r_0}{12}\delta_0\leq \frac{r_0}{4}\delta_0
+\frac{r_0}{12}\delta_0= \frac{r_0}{3}\delta_0,\\
& \|\nabla^{(m)}\mathcal{D}(g_0-\delta_0H_0,u_0)\|_0 \,\leq 
\frac{C}{l^m}\|\mathcal{D}(g-\delta_0H_0,u)\|_0 + \frac{C}{l^{m-2}}
\|\nabla^2u\|_0^2\\ & \qquad\qquad\qquad \qquad\qquad \quad
\leq C\delta_0 \mu_0^m\quad\mbox{for all } m=1\ldots (3N+6)K+1.
\end{split}
\end{equation}

\smallskip

{\bf 2. (Setting up the induction)} In the course of the proof, we will define the immersions
$\{u_k\in\mathcal{C}^{2+(3N+6)(K-k)}(\bar\omega,\R^4)\}_{k=1}^K$ and their
normal vectors $E_{u_k}^1, E_{u_k}^2\in \mathcal{C}^{1+(3N+6)(K-k)}
(\bar\omega,\R^4)$:
$$(\nabla u_k)^TE_{u_k}^i=0,\quad |E_{u_k}^i|=1 \quad \mbox{ for }
i=1,2 \quad\mbox{ and } \quad \langle E_{u_k}^1,E_{u_k}^2\rangle =
0\quad\mbox{in }\bar\omega,$$
with respect to the increasing progression of frequencies
$\{\mu_k\}_{k=1}^N$ and the decreasing progression of interpolating
defect magnitudes $\{\delta_k\}_{k=1}^K$ given in:
\begin{equation}\label{md_def}
\mu_k = \mu_0\sigma^{N+2}\sigma^{(\frac{N}{2}+2)(k-1)} =
\mu_0\sigma^{2k+ \frac{N}{2}(k+1)} \quad\mbox{ and } \quad \delta_{k}
= \frac{\delta_0}{\sigma^{kN}} \quad\mbox{for } k=1\ldots K,\\ 
\end{equation}

\begin{figure}[htbp]
\centering
\includegraphics[scale=0.6]{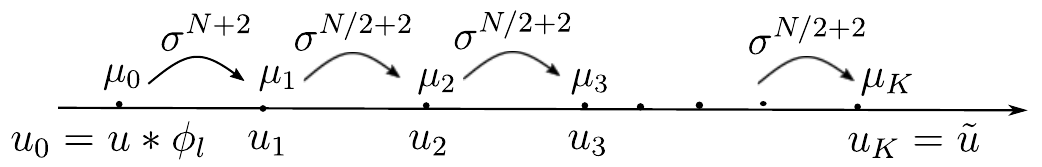}
\caption{{Progression of the principal frequencies in (\ref{md_def}).}}
\label{fig_freq}
\end{figure}

\noindent We note in passing that $\{\delta_k^{1/2}\mu_{k+1}\}_{k=0}^{K-1}$
is an increasing $K$-tuple of numbers bigger than $1$, as:
\begin{equation}\label{bas_incr}
\frac{\delta_k^{1/2}\mu_{k+1}}{\delta_{k-1}^{1/2}\mu_{k}} \geq 
\frac{\sigma^{\frac{N}{2}+2}}{\sigma^{\frac{N}{2}}} = 
\sigma^2\geq 1 \quad\mbox{ for } k=1\ldots K-1.
\end{equation}
We will inductively prove the satisfaction of the following properties, valid for all $k=1\ldots K$:
\begin{align*}
& \frac{1}{3\underline\gamma}\Id_2\leq (\nabla u_k)^T\nabla u_k\leq 3\underline\gamma\Id_2
\quad\mbox{in } \bar\omega\tag*{(\theequation)$_1$}\refstepcounter{equation} \label{P1} \\
& \|u_k-u_{k-1}\|_1\leq C \delta_{k-1}^{1/2},
\tag*{(\theequation)$_2$} \label{P2} \\
& \|\nabla^{(m+1)}(u_k- u_{k-1})\|_0\leq C\delta_{k-1}^{1/2}
\mu_k^m\quad\mbox{for all } m=0\ldots 1+(3N+6)(K-k),
\tag*{(\theequation)$_3$} \label{P3} \\
& \|\nabla^{(m)}(E^i_{u_k}- E^i_{u_{k-1}})\|_0\leq
C\delta_{k-1}^{1/2} \mu_k^m\quad\mbox{for all } m=0\ldots 1+(3N+6)(K-k),\; i=1,2,
\tag*{(\theequation)$_4$} \label{P4} \\
& \|\nabla^{(m)}\mathcal{D}(g_0-\delta_kH_0,u_k)\|_0\leq \frac{\delta_{k-1}}{\sigma^{N+1/2}}\mu_k^m
\quad\mbox{for all } m=0\ldots 1+(3N+6)(K-k),
\tag*{(\theequation)$_5$} \label{P5}
\end{align*}
together with the following specific bounds on the components of the second derivatives:
\begin{align*}
& \|\nabla^{(m)}\langle \partial_{ij}u_k, E^1_{u_k}\rangle\|_0 \leq
C\frac{\delta_{k-1}^{1/2}}{\sigma^{N/2}} \mu_k^{m+1}\; \quad\mbox{for }
m=0\ldots (3N+6)(K-k),\; i,j=1\ldots 2,
\tag*{(\theequation)$_1$}\refstepcounter{equation} \label{Q1} \\
& \|\nabla^{(m)}\langle \partial_{11}u_k, E^2_{u_k}\rangle\|_0 \leq
C\frac{\delta_{k-1}^{1/2}}{\sigma^{N}} \mu_k^{m+1}\; \quad\mbox{for }
m=0\ldots (3N+6)(K-k),
\tag*{(\theequation)$_2$} \label{Q2} \\
& \|\nabla^{(m)}\langle \partial_{12}u_k, E^2_{u_k}\rangle\|_0 \leq
C\frac{\delta_{k-1}^{1/2}}{\sigma^{N/2}} \mu_k^{m+1} \hspace{0.5mm}
\quad\mbox{for } m=0\ldots (3N+6)(K-k),
\tag*{(\theequation)$_3$} \label{Q3} \\
& \|\nabla^{(m)}\langle \partial_{22}u_k, E^2_{u_k}\rangle\|_0 \leq
C{\delta_{k-1}^{1/2}}\mu_k^{m+1}\quad\;\,\mbox{for } m=0\ldots (3N+6)(K-k).
\tag*{(\theequation)$_4$} \label{Q4}
\end{align*}
The above will yield the desired estimates by setting:
$$\tilde u = u_K\in\mathcal{C}^2(\bar\omega,\R^4)$$
Indeed, the bounds in \ref{Res1} follow from \ref{m3}, \ref{P2} and
then from \ref{m4}, \ref{P3}, (\ref{bas_incr}):
\begin{equation*}
\begin{split}
& \|u_K-u\|_1\leq \|u_0-u\|_1 + \sum_{k=0}^{K-1}\|u_{k+1}-u_k\|_1 \leq 
C \sum_{k=0}^{K-1}\delta_{k}^{1/2}\leq C\delta_0^{1/2},\\
& \|u_K\|_2\;\leq \|u_0\|_2 + \sum_{k=0}^{K-1}\|u_{k+1}-u_k\|_2\leq 
C\delta_0^{1/2} \mu_0 + C\sum_{k=0}^{K-1}\delta_k^{1/2}\mu_{k+1}\leq C
\delta_{K-1}^{1/2}\mu_{K} \\ & \qquad\quad 
= C\mu_0\delta_0^{1/2} \sigma^{2K+\frac{N}{2}(K+1)-\frac{N}{2}(K-1)}=
C\mu_0\delta_0^{1/2} \sigma^{2K+N},
\end{split}
\end{equation*}
in view of the definition of frequencies and defect measures in (\ref{md_def}),
whereas \ref{Res2} follows from \ref{m2} and \ref{P5}, by setting
$\sigma\geq \underline\sigma$ sufficiently large:
\begin{equation*}
\begin{split}
& \|\mathcal{D}(g-\delta_KH_0,u_K)\|_0\leq \|g_0-g\|_0 + \|\mathcal{D}(g_0-\delta_KH_0,u_K)\|_0
\\ & \leq \frac{\|g\|_{r+\beta}}{\mu_0^{r+\beta}} + \frac{\delta_{K-1}}{\sigma^{N+1/2}} = 
\frac{\|g\|_{r+\beta}}{\mu_0^{r+\beta}} + \frac{1}{\sigma^{N(K-1)}}
\frac{\delta_{0}}{\sigma^{N+1/2}}  = \frac{\|g\|_{r+\beta}}{\mu_0^{r+\beta}} 
+ \frac{r_0}{5} \frac{\delta_{0}}{\sigma^{NK}}.
\end{split}
\end{equation*}

\smallskip

{\bf 3. (Decomposition of the $k$-th defect)} We now fix the counter: 
$$k=0\ldots K-1$$ 
and carry out the construction of $u_{k+1}$. We either have $k=0$
and the estimates \ref{m3}-(\ref{m9}) in step 1, or we have
$k=1\ldots K-1$ and then we assume \ref{P1}-\ref{Q4}. We proceed by
applying the K\"allen decomposition in Theorem \ref{prop1} to the
following parameters:
$$H=\frac{\mathcal{D}(g_0-\delta_{k+1}H_0,u_k)}{\delta_k}, \quad
\mu=\mu_k, \quad \lambda = \mu_k\sigma,\quad \kappa=\mu_k\sigma^2.$$

\begin{figure}[htbp]
\centering
\includegraphics[scale=0.6]{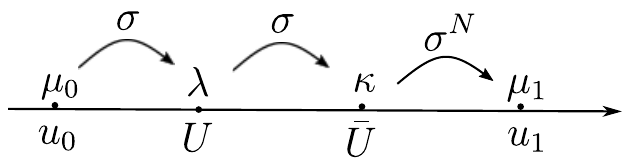} \hspace{7mm}
\includegraphics[scale=0.6]{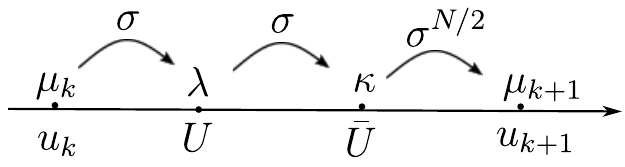}
\caption{{Progression of frequencies and the corresponding
    intermediate fields $U$, $\bar U$ in the
    three corrugation modification from $u_k$ to $u_{k+1}$. Note the
  distinction of cases $k=0$ and $k=1\ldots K-1$.}}
\label{fig_freq2}
\end{figure}

\noindent We need to verify the two assumptions in (\ref{ass_H0}). For the first
one, we write:
$$\|H-H_0\|_0 = \frac{\|\mathcal{D}(g_0-\delta_{k+1}H_0,u_k) - \delta_kH_0\|_0}{\delta_k}
\leq \frac{\|\mathcal{D}(g_0-\delta_{k}H_0,u_k) \|_0}{\delta_k} +
\frac{\delta_{k+1}}{\delta_k}|H_0|\leq \frac{r_0}{2},$$
which is valid provided that:
\begin{equation*}
\|\mathcal{D}(g_0-\delta_{k}H_0,u_k) \|_0\leq \frac{r_0}{4}\delta_k
\quad\mbox{ and }\quad \frac{\delta_{k+1}}{\delta_k}|H_0|\leq \frac{r_0}{4}.
\end{equation*}
It is clear that the second bound above holds by 
(\ref{md_def}) if we set $\sigma\geq \underline\sigma$ 
sufficiently large. The first bound holds at $k=0$ by (\ref{m9}), whereas for
$k=1\ldots K-1$ it holds by \ref{P5} for large $\underline\sigma$:
$$\|\mathcal{D}(g_0-\delta_k H_0, u_k)\|_0\leq
\frac{\delta_{k-1}}{\sigma^{N+1/2}} = \frac{\delta_k}{\sigma^{1/2}}\leq \frac{r_0}{4}\delta_k.$$
We now verify the second assumption in (\ref{ass_H0}):
$$\|\nabla^{(m)}H\|_0= \frac{\|\nabla^{(m)}\mathcal{D}(g_0-\delta_{k}H_0,u_k\|_0)}{\delta_k}
\leq C\mu_k^m \quad\mbox{ for all } m=1\ldots 1+(3N+6)(K-k),$$
following for $k=0$ from (\ref{m9}) and for $k=1\ldots K-1$ from \ref{P5}.
Assuring that $\underline\sigma$ is larger than $\underline\sigma$
required in Theorem \ref{prop1}, it yields the decomposition:
\begin{equation}\label{m10}
\mathcal{D}(g_0-\delta_{k+1}H_0,u_k) =
\sum_{i=1}^3a_i^2\eta_i\otimes\eta_i + \frac{1}{\lambda^2}\nabla a_1\otimes\nabla a_1
+ \frac{1}{\kappa^2}\nabla a_2\otimes\nabla a_2 + \mathcal{F},
\end{equation}
with the bounds resulting from (\ref{Ebounds}):
\begin{equation}\label{Ebounds2}
\begin{split}
&  \frac{1}{2}\delta_k\leq a_i^2\leq \frac{3}{2}\delta_k \; \; \quad
\quad \, \mbox{ and }\quad 
\frac{1}{2}\delta_k^{1/2}\leq a_i\leq \frac{3}{2} \delta_k^{1/2}\quad\mbox{ in }\bar\omega \quad \mbox{
for } i=1\ldots 3,\\
& \|\nabla^{(m)} a_i^2\|_0\leq C\delta_k\mu_k^m\quad 
\mbox{and } \quad \|\nabla^{(m)} a_i\|_0\leq C\delta_k^{1/2}\mu_k^m \\
&\qquad\qquad\qquad\qquad \qquad\quad \mbox{ for }\;
 m=1\ldots 2+(3N+6)(K-k)-N,\;  i=1\ldots 3,\\
& \|\nabla^{(m)}\mathcal{F}\|_0\leq
C\frac{\delta_k}{\sigma^{2N}}\mu_k^m\quad\,\mbox{ for } \; m=0\ldots 1+(3N+6)(K-k)-N.
\end{split}
\end{equation}
We end this step by gathering bounds including \ref{m4}, (\ref{m6}) and (\ref{m7}):
\begin{align*}
& \|\nabla u_k\|_0\leq (6\underline\gamma)^{1/2}
\tag*{(\theequation)$_1$}\refstepcounter{equation} \label{s1} \\
& \|\nabla^{(m)}\nabla^{(2)} u_ k\|_0\leq C\delta_{k-1}^{1/2}\mu_k^{m+1}=
C\delta_{k}^{1/2}\mu_k^{m+1}\sigma^{N/2} \quad\mbox{for all } m=0\ldots (3N+6)(K-k),
\tag*{(\theequation)$_2$} \label{s2} \\
& \|T_{u_k}\|_0\leq C \quad\mbox{ and }\quad
\|\nabla^{(m)}T_{u_k}\|_0\leq C\mu_k^m\delta_k^{1/2}\sigma^{N/2}\\
& \qquad \qquad\qquad \qquad \qquad\qquad \; \,\quad\mbox{for all } m=1\ldots 1+ (3N+6)(K-k), 
\tag*{(\theequation)$_3$} \label{s3} \\
& \|\nabla^{(m)}E^i_{u_k}\|_0\leq
C\mu_k^m\delta_k^{1/2}\sigma^{N/2}\quad\mbox{for all } m=1\ldots 1+ (3N+6)(K-k),\;i=1,2. 
\tag*{(\theequation)$_4$} \label{s4} 
\end{align*}
Indeed, \ref{s1} follows by Lemma \ref{lem_det} and (\ref{m5}) and \ref{P1}. Then \ref{s2} follows by
\ref{P3}, (\ref{bas_incr}) and (\ref{md_def}
when $k\geq 1$, whereas and its final bound follows directly by
\ref{m4} for $k=0$. Similarly, \ref{s3} follows from \ref{P4}.
Applying Lemma \ref{lem_Tu} with
$A=\delta_k^{1/2}\sigma^{N/2}\leq \delta_0^{1/2}\sigma^{N/2}\leq 1$ in
view of the last assumption in \ref{Ass1}, yields \ref{s3}. 

\smallskip

{\bf 4. (The first corrugation)} We define the intermediary field
$U\in\mathcal{C}^{1+(3N+6)(K-k)-2N}(\bar\omega, \R^4)$:
\begin{equation}\label{corru1}
U=u_k+\frac{\Gamma(\lambda x_1)}{\lambda}a_1E^1_{u_k} +
T_{u_k}\Big(\frac{\bar\Gamma(\lambda x_1)}{\lambda}a_1^2 e_1+ W\Big),
\end{equation}
in accordance with Lemma \ref{lem_step2}, where $\Gamma$, $\bar\Gamma$ are the oscillatory
profiles in there, namely with:
\begin{equation}\label{m11}
\Gamma(t) = \sqrt{2}\sin t ,\qquad\bar\Gamma(t) =
-\frac{1}{4}\sin(2t) \quad\mbox{ and } \quad\dbar\Gamma(t) \doteq\Gamma(t)^2-1= -\cos(2t).
\end{equation}
The oscillation direction in (\ref{corru1}) is set to $\eta=\eta_1=e_1$,  the frequency is:
$$\lambda = \mu_k\sigma,$$ 
and the tangential correction $W$ is given recalling the definition
(\ref{coef_ibp1}) and the recursion (\ref{recu_ibp}):
\begin{equation*}
\begin{split}
-2W = & \sum_{i=0}^N (-1)^i \frac{\dbar\Gamma_{i+1}(\lambda x_1)}{\lambda^{i+3}}L_i^{\eta_1}(S_1) 
+ \sum_{i=0}^N (-1)^i \frac{(\Gamma'\Gamma)_{i+1}(\lambda x_1)}{\lambda^{i+2}}L_i^{\eta_1}(S_2) 
\\ & + \sum_{i=0}^N (-1)^i \frac{\Gamma_{i+1}(\lambda x_1)}{\lambda^{i+2}}L_i^{\eta_1}(S_3) 
+ \sum_{i=0}^N (-1)^i \frac{\bar\Gamma_{i+1}(\lambda x_1)}{\lambda^{i+2}}L_i^{\eta_1}(S_4). 
\end{split}
\end{equation*}
Above, the fields $\{S_i\}_{i=1}^4$ are as in Lemma \ref{lem_step2}, namely:
\begin{equation}\label{S_first}
\begin{split}
& S_1= \nabla a_1\otimes\nabla a_1, \qquad\qquad\qquad \qquad\quad
S_2 = 2a_1\sym(\nabla a_1\otimes e_1),\\
& S_3 = 2a_1\sym\big((\nabla u_k)^T\nabla E^1_{u_k}\big), \qquad\qquad
S_4 =2\,\sym\big((\nabla u_k)^T\nabla (a_1^2 T_{u_k}e_1)\big).
\end{split}
\end{equation}
By (\ref{ibp1}) in Lemma \ref{lem_IBP1} we see that:
\begin{equation}\label{symW}
\begin{split}
& -2\,\sym\nabla W = \frac{\dbar\Gamma(\lambda x_1)}{\lambda^2} S_1 
+ \frac{(\Gamma'\Gamma)(\lambda x_1)}{\lambda} S_2 
+ \frac{\Gamma(\lambda x_1)}{\lambda} S_3+ \frac{\bar\Gamma(\lambda
  x_1)}{\lambda} S_4  - \mathcal{G} - G e_2\otimes e_2,
\end{split}
\end{equation}
with the following formulas:
\begin{equation*}
\begin{split}
& \mathcal{G} = (-1)^{N+1} \frac{\dbar\Gamma_{N+1}(\lambda
  x_1)}{\lambda^{N+3}}\sym \nabla L_N^{\eta_1}(S_1) 
+ (-1)^{N+1} \frac{(\Gamma'\Gamma)_{N+1}(\lambda
  x_1)}{\lambda^{N+2}}\sym\nabla L_N^{\eta_1}(S_2) 
\\ & \qquad + (-1)^{N+1} \frac{\Gamma_{N+1}(\lambda
  x_1)}{\lambda^{N+2}}\sym\nabla L_N^{\eta_1}(S_3) 
+ (-1)^{N+1} \frac{\bar\Gamma_{N+1}(\lambda
  x_1)}{\lambda^{N+2}}\sym\nabla L_N^{\eta_1}(S_4), \\
& G = \sum_{i=0}^N (-1)^i \frac{\dbar\Gamma_{i}(\lambda x_1)}{\lambda^{i+2}}P_i^{\eta_1}(S_1) 
+ \sum_{i=0}^N (-1)^i \frac{(\Gamma'\Gamma)_{i}(\lambda x_1)}{\lambda^{i+1}}P_i^{\eta_1}(S_2) 
\\ & \qquad + \sum_{i=0}^N (-1)^i \frac{\Gamma_{i}(\lambda x_1)}{\lambda^{i+1}}P_i^{\eta_1}(S_3) 
+ \sum_{i=0}^N (-1)^i \frac{\bar\Gamma_{i}(\lambda x_1)}{\lambda^{i+1}}P_i^{\eta_1}(S_4). 
\end{split}
\end{equation*}
By (\ref{m10}), (\ref{sisi})  and (\ref{symW}) we now obtain: 
\begin{equation}\label{m12}
\begin{split}
& \mathcal{D}(g_0-\delta_{k+1}H_0, U) = \mathcal{D}(g_0-\delta_{k+1}H_0,
u_k) -\Big((\nabla U)^T\nabla U - (\nabla u_k)^T\nabla u_k\Big) \\ & =
\sum_{i=2}^3a_i^2\eta_i\otimes\eta_i + \frac{1}{\kappa^2}\nabla
a_2\otimes\nabla a_2 + \mathcal{F} -\mathcal{R}_1  - \mathcal{R}_2
-\mathcal{G} -Ge_2\otimes e_2,
\end{split}
\end{equation}
where the primary and the $W$-related error terms
$\mathcal{R}_1$ and $\mathcal{R}_2$ are as in Lemma \ref{lem_step2}. 

\smallskip

{\bf 5. (Bounds on the errors in first corrugation defect)}
We now estimate all terms in the decomposition (\ref{m12}), together with their derivatives.
We first observe, by (\ref{Ebounds2}), \ref{s3}:
\begin{equation}\label{s5}
\begin{split}
\|\nabla^{(m+1)}(a_1^2T_{u_k}e_1)\|_0 &\leq
C\sum_{p+q=m+1}\delta_k\mu_k^p\mu_k^q=C\delta_k\mu_k^{m+1} \\ & \mbox{for all } 
m=0\ldots 1+(3N+6)(K-k)-N.
\end{split}
\end{equation}
Consequently, recalling additionally
\ref{s4}, we get that each term in $\mathcal{R}_1$ is bounded by:
\begin{equation}\label{m13}
\|\nabla^{(m)}\mathcal{R}_1\|_0\leq C\delta_k^{3/2}\lambda^m\sigma^{N/2}\leq
\frac{\delta_k}{\sigma^{N+1}}\lambda^m\quad \mbox{ for all } m=0\ldots 1+(3N+6)(K-k)-N.
\end{equation}
where the worst term, responsible for the first bound above is
$\frac{\Gamma^2}{\lambda^2}a_1^2(\nabla E^1_{u_k})^T \nabla
E^1_{u_k}$, and where the final inequality is due to 
$C\sigma^{3N/2+1}\delta_k^{1/2}\leq \sigma^{3(N+1)/2}\delta_0^{1/2} \leq 1$, 
from the last assumption in \ref{Ass1}.
Before treating $\mathcal{R}_2$, we need to find the bounds on $W$. To
this end, we estimate:
\begin{equation}\label{SUbound}
\begin{split}
& \|\nabla^{(m)}S_1\|_0\leq C\delta_k\mu_k^{m+2}, \\
& \|\nabla^{(m)}S_2\|_0\leq C\delta_k\mu_k^{m+1}, \\
& \|\nabla^{(m)}S_3\|_0\leq C\sum_{p+q=m}\|\nabla^{(p)}a_1\|_0\|\nabla^{(q)}((\nabla u_k)^T\nabla
E_{u_k}^1)\|_0\\ & \qquad \qquad\;\; \leq
C\sum_{p+q=m}\delta_k^{1/2}\mu_k^p\delta_k^{1/2}\mu_k^{q+1}
\leq C\delta_k\mu_k^{m+1}, \\
& \|\nabla^{(m)}S_4\|_0\leq C\delta_k\mu_k^{m+1} \quad\mbox{ for all } m=0\ldots 1+(3N+6)(K-k)-N,
\end{split}
\end{equation}
where the first two bounds result from (\ref{Ebounds2}), the third bound
from (\ref{m8}) and \ref{Q1}, and the fourth from \ref{s2} and
(\ref{s5}). We now read from (\ref{coef_ibp1}) that:
\begin{equation}\label{m16}
\|\nabla^{(m)}L_i^{\eta_1}(S)\|_0\leq
C_{m+i}\|\nabla^{(m+i)}S\|_0\quad\mbox{for all } m,i\geq 0,
\end{equation}
recall (\ref{S_first}), and observe that all  the primitives of the functions $\dbar\Gamma,
(\Gamma'\Gamma), \Gamma, \bar\Gamma$ used in the definition of $W$ are
bounded, being periodic with mean zero on the period. This yields that:
\begin{equation}\label{m14}
\begin{split}
\|\nabla^{(m)}W\|_0&\leq C\sum_{i=0}^N\sum_{p+q=m}\Big(\lambda^{p-i-3}\delta_k\mu_k^{q+i+2}
+ \lambda^{p-i-2}\delta_k\mu_k^{q+i+1}\Big) \\ & \leq C
\delta_k\lambda^{m-1} \quad\mbox{ for all } m=0\ldots 1+(3N+6)(K-k)-2N.
\end{split}
\end{equation}
We are now ready to bound the terms in $\mathcal{R}_2$. Observe first
that, by (\ref{Ebounds2}), \ref{s3}, \ref{s4}:
\begin{equation*}
\begin{split}
& \|\nabla^{(m+1)}(T_{u_k}W)\|_0\leq C\delta_k\lambda^m \qquad \quad\mbox{
  for } m=0\ldots (3N+6)(K-k)-2N,
\\ &\|\nabla^{(m+1)}(a_1E^1_{u_k})\|_0\leq C\delta_k^{1/2}\mu_k^{m+1}
\, \quad \mbox{ for } m=0\ldots 1+(3N+6)(K-k)-N,
\end{split}
\end{equation*}
resulting in all the terms in $\mathcal{R}_2$ obeying:
\begin{equation}\label{m15}
\|\nabla^{(m)}\mathcal{R}_2\|_0\leq C\delta_k^{3/2}\sigma^{N/2}\lambda^m \leq
\frac{\delta_k}{\sigma^{N+1}}\lambda^m\quad \mbox{ for all } 
m=0\ldots (3N+6)(K-k)-2N,
\end{equation}
where the worst term, responsible for the first bound above is
$2\sym((\nabla u_k)^T[(\partial_1T_{u_k})W, (\partial_2T_{u_k})W])$,
and the final inequality is due to having 
$C\sigma^{3N/2+1}\delta_k^{1/2}\leq \sigma^{3(N+1)/2}\delta_0^{1/2} \leq 1$ 
according to the last assumption in \ref{Ass1} and for
$\sigma\geq\underline\sigma$ large. For estimating $\mathcal{G}$,
we again use (\ref{m16}), (\ref{SUbound}):
\begin{equation}\label{m17}
\begin{split}
\|\nabla^{(m)}\mathcal{G}\|_0& \leq C\sum_{p+q=m}\Big(\lambda^{p-N-3}\delta_k\mu_k^{q+N+3}
+ \lambda^{p-N-2}\delta_k\mu_k^{q+N+2}\Big) \\ & \leq C
\frac{\delta_k}{(\lambda/\mu_k)^{N+2}}\lambda^m 
\leq \frac{\delta_k}{\sigma^{N+1}}\lambda^m
\quad \mbox{ for all } m=0\ldots (3N+6)(K-k)-2N.
\end{split}
\end{equation}
In conclusion, the bounds (\ref{Ebounds2}), (\ref{m13}), (\ref{m15}), (\ref{m17}) in (\ref{m12}) yield:
\begin{equation}\label{m18}
\begin{split}
& \mathcal{D}(g_0-\delta_{k+1}H_0, U) = a_2^2\eta_2\otimes\eta_2 +
\frac{1}{\kappa^2}\nabla a_2\otimes\nabla a_2 + (a_3^2-G)e_2\otimes e_2
+ \mathcal{E} \\
& \mbox{where } \|\nabla^{(m)}\mathcal{E}\|_0 =
\|\nabla^{(m)}(\mathcal{F} - \mathcal{R}_1 - \mathcal{R}_2- \mathcal{G})\|_0
\leq 4\frac{\delta_k}{\sigma^{N+1}}\lambda^m
\\ &  \mbox{for all } m=0\ldots (3N+6)(K-k)-2N.
\end{split}
\end{equation}
We conclude this step by estimating the derivatives of $G$. Reading from (\ref{coef_ibp1}) that:
\begin{equation*}
\|\nabla^{(m)}P_i^{\eta_1}(S)\|_0\leq
C_{m+i}\|\nabla^{(m+i)}S\|_0\quad\mbox{for all } m,i\geq 0,
\end{equation*}
we get, in view of (\ref{SUbound}) and utilizing, as usual, that $\lambda/\mu_k=\sigma$:
\begin{equation}\label{m20}
\begin{split}
\|\nabla^{(m)}G\|_0 & \leq C\sum_{i=0}^N\sum_{p+q=m}\Big(\lambda^{p-i-2}\delta_k\mu_k^{q+i+2}
+ \lambda^{p-i-1}\delta_k\mu_k^{q+i+1}\Big) \\ & \leq C\frac{\delta_k}{\sigma}\lambda^m
\quad \mbox{ for all } m=0\ldots 1+(3N+6)(K-k)-2N.
\end{split}
\end{equation}

\smallskip

{\bf 6. (Propagation of bounds in the first corrugation)} A rough
bound without specifying to components, and using only
(\ref{Ebounds2}), \ref{s3}, \ref{s4}, (\ref{m14}) yields the following: 
\begin{equation}\label{mi1}
\begin{split} 
& \|\nabla^{(m)} (U-u_k)\|_0\leq C\sum_{p+q+t=m}\lambda^{p-1}
\|\nabla^{(q)}a_1\|_0\|\nabla^{(t)}E^1_{u_k}\|_0  \\ & +  C\sum_{p+q+t=m}\lambda^{p-1}
\|\nabla^{(q)}a_1^2\|_0\|\nabla^{(t)}T_{u_k}\|_0 + C\sum_{p+q=m}
\|\nabla^{(p)}T_{u_k}\|_0 \|\nabla^{(q)}W\|_0 \leq C \delta_k^{1/2}\lambda^{m-1}.
\end{split}
\end{equation}
Now, we can use Lemma \ref{lem_propa} to $u=u_k$, $v=U$, $\mu=\lambda$,
$A=\delta_k^{1/2}$ because: 
$$\|\nabla
U-\nabla u_k\|_0\leq C\delta_k^{1/2}\leq C{\underline\delta}^{1/2}\leq \rho,$$
 in virtue of (\ref{mi1}) and upon taking $\underline\delta$ sufficiently small. Recalling
further \ref{P1}, \ref{s2}, \ref{s4}, we obtain existence of the normal frame $E^1_U,
E^2_U\in\mathcal{C}^{(3N+6)(K-k)-2N}(\bar\omega, \R^4)$ in:
$$(\nabla U)^TE^i_{U}=0,\quad |E^1_U|=1\quad\mbox{ for }\quad 
i=1,2\quad\mbox{ and } \quad \langle E^1_U, E^2_U\rangle=0\quad\mbox{in } \bar\omega,$$
satisfying the the second bound below, while the first bound is
included in (\ref{mi1}):
\begin{equation}\label{n1}
\begin{split}
& \|\nabla^{(m+1)}(U-u_k)\|_0\leq C\delta_k^{1/2}\lambda^{m}\quad 
\mbox{ and }\quad \|\nabla^{(m)}(E^i_U-E^i_{u_k})\|_0\leq C\delta_k^{1/2}\lambda^{m}
\\ & \mbox{for all } m=0\ldots (3N+6)(K-k)-2N,\; i=1,2.
\end{split}
\end{equation}
We also get, recalling \ref{m3} and the induction assumptions \ref{P2}:
\begin{equation}\label{m3UU}
\|U-u\|_1\leq \|U-u_k\|_1 + \|u_0-u\|_0 + \sum_{i=0}^{k-1}\|u_{i+1}-u_i\|_0 \leq C\delta_0^{1/2},
\end{equation}
leading, as in step 1 and the proof of (\ref{m5}), to:
\begin{equation}\label{n2}
\frac{1}{3\underline\gamma}\Id_2\leq (\nabla U)^T\nabla U\leq
3\underline\gamma\Id_2 \quad\mbox{ in } \bar\omega.
\end{equation}
We now gather estimates similar to those in \ref{s1} -
\ref{s4}. Indeed, by the above and Lemma \ref{lem_det}, we get the first 
estimate, while the second and the fourth follow from (\ref{n1}):
\begin{align*}
& \|\nabla U\|_0\leq (6\underline\gamma)^{1/2}
\tag*{(\theequation)$_1$}\refstepcounter{equation} \label{s1U} \\
& \|\nabla^{(m)}\nabla^{(2)} U\|_0\leq 
C\delta_{k}^{1/2}\lambda^{m+1}\sigma^{N/2} \quad\mbox{for all } m=0\ldots (3N+6)(K-k)-2N-1,
\tag*{(\theequation)$_2$} \label{s2U} \\
& \|T_{U}\|_0\leq C \quad\mbox{ and }\quad
\|\nabla^{(m)}T_{U}\|_0\leq C\delta_k^{1/2}\lambda^m\sigma^{N/2} \\
& \qquad\qquad\qquad \qquad\qquad\qquad \quad
\mbox{ for all } m=1\ldots (3N+6)(K-k)-2N,
\tag*{(\theequation)$_3$} \label{s3U} \\
& \|\nabla^{(m)}E^i_{U}\|_0\leq
C\delta_k^{1/2}\lambda^m\sigma^{N/2}\quad\, \mbox{for all } m=1\ldots (3N+6)(K-k)-2N,\;i=1,2,
\tag*{(\theequation)$_4$} \label{s4U} 
\end{align*}
and the estimates \ref{s3U} follow from Lemma \ref{lem_Tu}.
In the remaining part of this step, we refine the first bound in (\ref{n1}) to the components of
$\nabla^2U$. For any $i,j,s=1\ldots 2$ we write:
\begin{equation}\label{eqeq}
\langle \partial_{ij}U, E^s_{U}\rangle = \langle \partial_{ij}u_k, E^s_{u_k}\rangle
+ \langle\partial_{ij}(U-u_k), E^s_{u_k}\rangle + \langle\partial_{ij}U, (E^s_U-E^s_{u_k})\rangle. 
\end{equation}
The first term in the right hand side above obeys the bounds in
\ref{Q1}-\ref{Q2} when $k\geq 1$ and (\ref{m8}) when $k=0$. For the third term, we use
(\ref{n1}) and \ref{s2U}, \ref{s4U} and get:
\begin{equation*}
\begin{split}
&\|\nabla^{(m)}\langle\partial_{ij}U, (E^s_U-E^s_{u_k})\rangle\|_0 \leq
C\delta_k\lambda^{m+1}\sigma^{N/2 }
\\ & \mbox{for all } m=0\ldots (3N+6)(K-k)-2N-1,\; i,j,s=1\ldots 2.
\end{split}
\end{equation*}
We now estimate the second term in the right hand
side of (\ref{eqeq}), where by (\ref{n1}):
\begin{equation*}
\begin{split}
& \|\nabla^{(m)}\langle\partial_{ij}(U-u_k), E^s_{u_k}\rangle\|_0 
\leq  C\delta_k^{1/2}\lambda^{m+1} \\ & 
\mbox{for all } m=0\ldots (3N+6)(K-k)-2N-1,\; i,j,s=1\ldots 2.
\end{split}
\end{equation*}
When $s=2$, we use that $\langle E^1_{u_k}, E^2_{u_k}\rangle=0$ and write:
\begin{equation*}
\begin{split}
\langle\partial_{ij}(U-u_k), E^2_{u_k} \rangle = & ~\partial_i
\Big(\frac{\Gamma(\lambda x_1)}{\lambda} a_1\Big)\langle \partial_jE^1_{u_k},
E^2_{u_k}\rangle + \partial_j\Big(\frac{\Gamma(\lambda x_1)}{\lambda} a_1\Big)\langle \partial_iE^1_{u_k},
E^2_{u_k}\rangle \\ & + \frac{\Gamma(\lambda x_1)}{\lambda} a_1\langle \partial_{ij}E^1_{u_k},
E^2_{u_k}\rangle + \Big\langle \partial_{ij}\Big(T_{u_k}\Big(\frac{\bar\Gamma(\lambda
  x_1)}{\lambda}a_1^2e_1+W\Big)\Big), E^2_{u_k}\Big\rangle,
\end{split}
\end{equation*}
which implies, by (\ref{Ebounds2}), \ref{s3}, \ref{s4}, (\ref{m14}):
\begin{equation*}
\begin{split}
& \|\nabla^{(m)}\langle\partial_{ij}(U-u_k), E^2_{u_k}\rangle\|_0 
\leq  C\sum_{p+q+t=m}\big\|\nabla^{(p+1)}\Big (\frac{\Gamma(\lambda
  x_1)}{\lambda}a_1\Big)\big\|_0 \|\nabla^{(q+1)}E^1_{u_k}\|_0\|\nabla^{(t)}E^2_{u_k}\|_0
\\ & +\sum_{p+q+t} \big\|\nabla^{(p)}\Big (\frac{\Gamma(\lambda
  x_1)}{\lambda}a_1\Big)\big\|_0 \|\nabla^{(q+2)}E^1_{u_k}\|_0\|\nabla^{(t)}E^2_{u_k}\|_0
\\ & + C\sum_{p+q=m}\sum_{~t+s=p+2}\|\nabla^{(t)}T_{u_k}\|_0
\big\|\nabla^{(s)}\Big (\frac{\bar\Gamma(\lambda x_1)}{\lambda}a_1^2e_1 +
W\Big)\big\|_0\|\nabla^{(q)}E^2_{u_k}\|_0
\\ & \leq  C\delta_k\lambda^{m+1} \sigma^{N/2}\quad\mbox{for all }
m=0\ldots (3N+6)(K-k)-2N-1,\; i,j=1\ldots 2.
\end{split}
\end{equation*}
In conclusion, using $\delta_k^{1/2}\sigma^{N/2}\leq 1$ by
\ref{Ass1}, we obtain for all $m=0\ldots (3N+6)(K-k)-2N-1$:
\begin{align*}
&  \|\nabla^{(m)}\langle \partial_{ij}U, E^1_{U}\rangle\|_0  \leq
\|\nabla^{(m)}\langle \partial_{ij}u_k, E^1_{u_k}\rangle\|_0 +
C\delta_k^{1/2}\lambda^{m+1},
\tag*{(\theequation)$_1$}\refstepcounter{equation} \label{Q1U} \\
& \|\nabla^{(m)}\langle \partial_{ij}U, E^2_{U}\rangle\|_0 \leq
\|\nabla^{(m)}\langle \partial_{ij}u_k, E^2_{u_k}\rangle\|_0 + C\delta_k\lambda^{m+1}\sigma^{N/2}
\quad\mbox{ for } i,j=1\ldots 2.
\tag*{(\theequation)$_2$} \label{Q2U} 
\end{align*}
In particular, since \ref{Q1} and (\ref{m8}) yield
$\|\nabla^{(m)}\langle \partial_{ij}u_k, E^1_{u_k}\rangle\|_0 \leq
C\delta_k^{1/2}\lambda^{m+1}$,  we also get:
\begin{equation}\label{Q1Ugr}
\|\nabla^{(m)}((\nabla U)^T\nabla
E_{U}^1)\|_0\leq C\delta_k^{1/2}\lambda^{m+1} \quad\mbox{ for all } m=0\ldots (3N+6)(K-k)-2N-1.
\end{equation}

\smallskip

{\bf 7. (The second corrugation)} We define 
$\bar U\in\mathcal{C}^{(3N+6)(K-k)-3N-1}(\bar\omega, \R^4)$:
\begin{equation}\label{corru2}
\bar U=U+\frac{\Gamma(\kappa t)}{\kappa} a_2E^1_{U} +
T_{U}\Big(\frac{\bar\Gamma(\kappa t)}{\kappa}a_2^2 \eta_2+ \bar W\Big),
\end{equation}
that is the second intermediary field, in accordance with Lemma
\ref{lem_step2}. We use the oscillatory 
profiles $\Gamma$, $\bar\Gamma$, $\dbar\Gamma$ in (\ref{m11}), the oscillation direction 
$\eta=\eta_2$, where we denote:
$$t=\langle x,\eta_2\rangle, $$
and the oscillation frequency in:
$$\kappa = \lambda\sigma = \mu_k\sigma^2.$$ 
The tangential correction $\bar W$ is given recalling the definition
(\ref{coef_ibp2}) and the recursion (\ref{recu_ibp}):
\begin{equation*}
\begin{split}
-2\bar W = & \sum_{i=0}^N (-1)^i \frac{\dbar\Gamma_{i+1}(\kappa
  t)}{\kappa^{i+3}}L_i^{\eta_2}(\bar S_1) 
+ \sum_{i=0}^N (-1)^i \frac{(\Gamma'\Gamma)_{i+1}(\kappa t)}{\kappa^{i+2}}L_i^{\eta_2}(\bar S_2) 
\\ & + \sum_{i=0}^N (-1)^i \frac{\Gamma_{i+1}(\kappa t)}{\kappa^{i+2}}L_i^{\eta_2}(\bar S_3) 
+ \sum_{i=0}^N (-1)^i \frac{\bar\Gamma_{i+1}(\kappa t)}{\kappa^{i+2}}L_i^{\eta_2}(\bar S_4),
\end{split}
\end{equation*}
with the fields $\{\bar S_i\}_{i=1}^4$ as in Lemma \ref{lem_step2}, namely:
\begin{equation}\label{S_second}
\begin{split}
& \bar S_1= \nabla a_2\otimes\nabla a_2, \qquad\qquad\qquad \qquad\;\;
\bar S_2 = 2a_2\sym(\nabla a_2\otimes \eta_2),\\
& \bar S_3 = 2a_2\sym\big((\nabla U)^T\nabla E^1_{U}\big), \qquad\qquad
\bar S_4 =2\,\sym\big((\nabla U)^T\nabla (a_2^2 T_{U}\eta_2)\big).
\end{split}
\end{equation}
By (\ref{ibp2}) in Lemma \ref{lem_IBP2} we see that:
\begin{equation}\label{symWbar}
\begin{split}
& -2\,\sym\nabla \bar W = \frac{\dbar\Gamma(\kappa t)}{\kappa^2} \bar S_1 
+ \frac{(\Gamma'\Gamma)(\kappa t)}{\kappa} \bar S_2 
+ \frac{\Gamma(\kappa t)}{\kappa} \bar S_3+ \frac{\bar\Gamma(\kappa
  t)}{\kappa} \bar S_4  - \bar{\mathcal{G}} - \bar G e_2\otimes e_2, 
\end{split}
\end{equation}
with the following formulas:
\begin{equation*}
\begin{split}
& \bar{\mathcal{G}} = (-1)^{N+1} \frac{\dbar\Gamma_{N+1}(\kappa
  t)}{\kappa^{N+3}}\sym \nabla L_N^{\eta_2}(\bar S_1)  
+ (-1)^{N+1} \frac{(\Gamma'\Gamma)_{N+1}(\kappa
  t)}{\lambda^{N+2}}\sym\nabla L_N^{\eta_2}(\bar S_2) 
\\ & \qquad + (-1)^{N+1} \frac{\Gamma_{N+1}(\kappa
  t)}{\lambda^{N+2}}\sym\nabla L_N^{\eta_2}(\bar S_3) 
+ (-1)^{N+1} \frac{\bar\Gamma_{N+1}(\kappa
  t)}{\kappa^{N+2}}\sym\nabla L_N^{\eta_2}(\bar S_4), \\
& \bar G = \sum_{i=0}^N (-1)^i \frac{\dbar\Gamma_{i}(\kappa
  t)}{\kappa^{i+2}}P_i^{\eta_2}(\bar S_1) 
+ \sum_{i=0}^N (-1)^i \frac{(\Gamma'\Gamma)_{i}(\kappa
  t)}{\kappa^{i+1}}P_i^{\eta_2}(\bar S_2) 
\\ & \qquad + \sum_{i=0}^N (-1)^i \frac{\Gamma_{i}(\kappa
  t)}{\kappa^{i+1}}P_i^{\eta_2}(\bar S_3) 
+ \sum_{i=0}^N (-1)^i \frac{\bar\Gamma_{i}(\kappa
  t)}{\kappa^{i+1}}P_i^{\eta_2}(\bar S_4). 
\end{split}
\end{equation*}
By (\ref{m18}), (\ref{sisi})  and (\ref{symWbar}) we now obtain: 
\begin{equation}\label{m12bar}
\begin{split}
& \mathcal{D}(g_0-\delta_{k+1}H_0, \bar U) = \mathcal{D}(g_0-\delta_{k+1}H_0,
U) -\big((\nabla \bar U)^T\nabla \bar U - (\nabla U)^T\nabla U\big) \\ & =
\big(a_3^2 - G-\bar G\big) e_2\otimes e_2 +\mathcal{E}
- \bar{\mathcal{R}}_1  - \bar{\mathcal{R}}_2 -\bar{\mathcal{G}},
\end{split}
\end{equation}
where the primary and the $\bar W$-related error terms
$\bar{\mathcal{R}}_1$ and $\bar{\mathcal{R}}_2$ are as in Lemma \ref{lem_step2}. 

\smallskip

{\bf 8. (Bounds on the errors in second corrugation defect)}
We now estimate, as in step 5, all terms in the decomposition (\ref{m12bar}).
We first observe, by (\ref{Ebounds2}), \ref{s3U}:
\begin{equation}\label{s5bar}
\begin{split}
& \|\nabla^{(m+1)}(a_2^2T_{U}\eta_2)\|_0\leq
C\sum_{p+q=m+1}\delta_k\mu_k^p\lambda^q=C\delta_k\lambda^{m+1} 
\\ & \mbox{for all } m=0\ldots (3N+6)(K-k)-2N-1.
\end{split}
\end{equation}
Consequently, recalling additionally
\ref{s4U}, we get that each term in $\bar{\mathcal{R}}_1$ is bounded by:
\begin{equation}\label{m13bar}
\begin{split}
& \|\nabla^{(m)}\bar{\mathcal{R}}_1\|_0\leq C\delta_k^{3/2}\kappa^m\sigma^{N/2}\leq
\frac{\delta_k}{\sigma^{N+1}}\kappa^m
\\ & \mbox{for all } m=0\ldots (3N+6)(K-k)-2N-2,
\end{split}
\end{equation}
where the worst term, responsible for the first bound is
$\frac{\Gamma^2}{\kappa^2}a_2^2(\nabla E^1_{U})^T \nabla
E^1_{U}$, and where the final inequality follows by
$C\sigma^{3N/2+1}\delta_k^{1/2}\leq \sigma^{3(N+1)/2}\delta_0^{1/2} \leq 1$ 
according to the last assumption in \ref{Ass1} with
$\underline\sigma$ sufficiently large.
Before bounding $\bar{\mathcal{R}}_2$, we find the bounds on
$\bar W$. Firstly:
\begin{equation}\label{SUboundbar}
\begin{split}
& \|\nabla^{(m)}\bar S_1\|_0\leq C\delta_k\mu_k^{m+2} \leq C\delta_k\lambda^{m+2}, \\
& \|\nabla^{(m)}\bar S_2\|_0\leq C\delta_k\mu_k^{m+1} \leq C\delta_k\lambda^{m+1}, \\
& \|\nabla^{(m)}\bar S_3\|_0\leq
C\sum_{p+q=m}\|\nabla^{(p)}a_2\|_0\|\nabla^{(q)}((\nabla U)^T\nabla
E_{U}^1)\|_0\\ & \qquad \qquad\;\; \leq
C\sum_{p+q=m}\delta_k^{1/2}\mu_k^p\delta_k^{1/2}\lambda^{q+1}
\leq C\delta_k\lambda^{m+1}, \\
& \|\nabla^{(m)}\bar S_4\|_0\leq C\delta_k\lambda^{m+1} \quad\mbox{ for all } 
m=0\ldots (3N+6)(K-k)-2N-1,
\end{split}
\end{equation}
where the first two bounds result from (\ref{Ebounds2}), the third bound
from (\ref{Q1Ugr}), and the fourth from \ref{s1U}, \ref{s2U} and (\ref{s5bar}). 
We now read from (\ref{coef_ibp2}) that:
\begin{equation}\label{m16bar}
\|\nabla^{(m)}L_i^{\eta_2}(S)\|_0\leq
C_{m+i}\|\nabla^{(m+i)}S\|_0\quad\mbox{for all } m,i\geq 0.
\end{equation}
By (\ref{SUboundbar}), and recalling that all the primitives of $\dbar\Gamma,
(\Gamma'\Gamma), \Gamma, \bar\Gamma$ in the definition of $\bar W$ are
bounded, being periodic with mean zero on the period, we get:
\begin{equation}\label{m14bar}
\begin{split}
\|\nabla^{(m)}\bar W\|_0&\leq C\sum_{i=0}^N\sum_{p+q=m}\Big(\kappa^{p-i-3}\delta_k\lambda^{q+i+2}
+ \kappa^{p-i-2}\delta_k\lambda^{q+i+1}\Big) \\ & \leq C
\delta_k\kappa^{m-1} \quad\mbox{ for all } m=0\ldots (3N+6)(K-k)-3N-1.
\end{split}
\end{equation}
We are now ready to bound the terms in $\bar{\mathcal{R}}_2$. Observe first
that, by (\ref{Ebounds2}), \ref{s3U}, \ref{s4U}:
\begin{equation*}
\begin{split}
& \|\nabla^{(m+1)}(T_{U}\bar W)\|_0\leq C\delta_k\kappa^m
\quad\qquad \mbox{ for } m=0\ldots (3N+6)(K-k)-3N-2,\\
& \|\nabla^{(m+1)}(a_2E^1_{U})\|_0\leq C\delta_k^{1/2}\lambda^{m+1}
\,\quad \mbox{ for } m=0\ldots (3N+6)(K-k)-2N,
\end{split}
\end{equation*}
resulting in all the terms in $\bar{\mathcal{R}}_2$ being bounded by:
\begin{equation}\label{m15bar}
\|\nabla^{(m)}\bar{\mathcal{R}}_2\|_0\leq C\delta_k^{3/2}\kappa^m \sigma^{N/2}\leq
\frac{\delta_k}{\sigma^{N+1}}\kappa^m\quad \mbox{ for } m=0\ldots (3N+6)(K-k)-3N-2,
\end{equation}
where the worst term, responsible for the first bound above is
$2\sym((\nabla U)^T[(\partial_1T_{U})\bar W, (\partial_2T_{U})\bar W])$,
and the final inequality is due to having 
$C\sigma^{3N/2+1}\delta_k^{1/2}\leq \sigma^{3(N+1)/2}\delta_0^{1/2} \leq 1$ 
from the last assumption in \ref{Ass1} and for
$\sigma\geq\underline\sigma$ large. For estimating $\bar{\mathcal{G}}$,
we again use (\ref{m16bar}), (\ref{SUboundbar}):
\begin{equation}\label{m17bar}
\begin{split}
\|\nabla^{(m)}\bar{\mathcal{G}}\|_0& \leq C\sum_{p+q=m}\Big(\kappa^{p-N-3}\delta_k\lambda^{q+N+3}
+ \kappa^{p-N-2}\delta_k\lambda^{q+N+2}\Big) \leq C
\frac{\delta_k}{(\kappa/\lambda)^{N+2}}\kappa^m 
\\ & \leq \frac{\delta_k}{\sigma^{N+1}}\kappa^m
\quad \mbox{ for all } m=0\ldots (3N+6)(K-k)-3N-2.
\end{split}
\end{equation}
Consequently, the bounds (\ref{m18}), (\ref{m13bar}), 
(\ref{m15bar}), (\ref{m17bar}) in (\ref{m12bar}) yield:
\begin{equation}\label{m18bar}
\begin{split}
& \mathcal{D}(g_0-\delta_{k+1}H_0, \bar U) = (a_3^2-G - \bar G) e_2\otimes e_2
+ \bar{\mathcal{E}}\\
& \mbox{where } \|\nabla^{(m)}\bar{\mathcal{E}}\|_0 =
\|\nabla^{(m)}(\mathcal{E} - \bar{\mathcal{R}}_1 - \bar{\mathcal{R}}_2- \bar{\mathcal{G}})\|_0
\leq 7\frac{\delta_k}{\sigma^{N+1}}\lambda^m \\ &
\mbox{for all } m=0\ldots (3N+6)(K-k)-3N-2.
\end{split}
\end{equation}
We conclude this step by estimating the current leading order defect
$(a_3^2-G - \bar G) e_2\otimes e_2$. Reading from (\ref{coef_ibp2}) that:
\begin{equation*}
\|\nabla^{(m)}P_i^{\eta_2}(S)\|_0\leq
C_{m+i}\|\nabla^{(m+i)}S\|_0\quad\mbox{for all } m,i\geq 0,
\end{equation*}
we get, in view of (\ref{SUboundbar}) and utilizing, as before, that $\kappa/\lambda=\sigma$:
\begin{equation}\label{m20bar}
\begin{split}
& \|\nabla^{(m)}\bar G\|_0  \leq C\sum_{i=0}^N\sum_{p+q=m}\Big(\kappa^{p-i-2}\delta_k\lambda^{q+i+2}
+ \kappa^{p-i-1}\delta_k\lambda^{q+i+1}\Big) \leq
C\frac{\delta_k}{\sigma}\kappa^m\\ & \mbox{hence}\quad \|\nabla^{(m)}(G+\bar G)\|_0  \leq
C\frac{\delta_k}{\sigma}\kappa^m 
\quad \mbox{ for all } m=0\ldots (3N+6)(K-k)-3N-1.
\end{split}
\end{equation}
By (\ref{Ebounds2}) it results that taking $\sigma\geq \underline\sigma $ sufficiently large, 
the new positive profile function
$b\in\mathcal{C}^\infty(\bar\omega,\R)$ is well defined, through:
$$b^2 = a_3^2-G - \bar G.$$
We then obtain:
\begin{equation}\label{Ebounds2b}
\begin{split}
& \frac{1}{4}\delta_k\leq b^2\leq \frac{7}{4}\delta_k
\qquad\quad \, \mbox{ and } \quad \frac{1}{4}\delta_k^{1/2}\leq b\leq \frac{7}{4}\delta_k^{1/2}
\quad \mbox{in }\bar\omega, \\
& \|\nabla^{(m)} b^2\|_0\leq C\frac{\delta_k}{\sigma}\kappa^{m} \quad\mbox{ and } \quad 
\|\nabla^{(m)} b\|_0\leq C\frac{\delta_k^{1/2}}{\sigma}\kappa^{m}
\\ &   \qquad\qquad\qquad\qquad \qquad\mbox{ for all } m=1\ldots (3N+6)(K-k)-3N-1,
\end{split}
\end{equation}
where the bound on the non-zero derivatives of $b^2$ is a direct consequence of
(\ref{Ebounds2}) and (\ref{m20bar}), while the following bound on the
derivatives of $b$ is justified by an application of Fa\`a di Bruno's
formula, as in step 2 of the proof of Theorem \ref{prop1}.

\smallskip

{\bf 9. (Propagation of bounds in the second corrugation)} A rough
bound without specifying to components, and using only
(\ref{Ebounds2}), \ref{s3U}, \ref{s4U}, (\ref{m14bar}) yields the following: 
\begin{equation}\label{mi2}
\begin{split} 
& \|\nabla^{(m)} (\bar U-U)\|_0\leq C\sum_{p+q+t=m}\kappa^{p-1}
\|\nabla^{(q)}a_2\|_0\|\nabla^{(t)}E^1_{U}\|_0  \\ & +  C\sum_{p+q+t=m}\kappa^{p-1}
\|\nabla^{(q)}a_2^2\|_0\|\nabla^{(t)}T_{U}\|_0 + C\sum_{p+q=m}
\|\nabla^{(p)}T_{U}\|_0 \|\nabla^{(q)}\bar W\|_0 \leq C \delta_k^{1/2}\kappa^{m-1}.
\end{split}
\end{equation}
We may now apply Lemma \ref{lem_propa} with $u=U$, $v=\bar U$, $\mu=\kappa$,
$A=\delta_k^{1/2}$ because $\|\nabla
\bar U-\nabla U\|_0\leq C\delta_k^{1/2}\leq C{\underline\delta}^{1/2}\leq
\rho$ for $\underline\delta$ sufficiently small, in view of (\ref{mi2}). Recalling
further (\ref{n2}), \ref{s2U}, \ref{s4U}, we obtain existence of the
unit normal frame $E^1_{\bar U},
E^2_{\bar U}\in\mathcal{C}^{(3N+6)(K-k)-3N-2}(\bar\omega, \R^4)$ in:
$$(\nabla \bar U)^TE^i_{\bar U}=0,\quad |E^1_{\bar U}|=1\quad\mbox{ for }\quad 
i=1,2\quad\mbox{ and } \quad \langle E^1_{\bar U}, E^2_{\bar U}=0\quad\mbox{in } \bar\omega,$$
satisfying the the second bound below, with the first bound included in (\ref{mi2}):
\begin{equation}\label{n1dbar}
\begin{split}
& \|\nabla^{(m+1)}(\bar U-U)\|_0\leq C\delta_k^{1/2}\kappa^{m}
\quad \mbox{ and }\quad \|\nabla^{(m)}(E^i_{\bar U}-E^i_{U})\|_0\leq C\delta_k^{1/2}\kappa^{m}
\\ & \mbox{for all } m=0\ldots (3N+6)(K-k)-3N-2,\; i=1,2.
\end{split}
\end{equation}
We also note, recalling (\ref{m3UU}):
\begin{equation}\label{m3UUd}
\|\bar U-u\|_1\leq \|\bar U-U \|_1 + \|U-u\|_0 \leq C\delta_0^{1/2},
\end{equation}
which leads, as in step 1 and the proof of (\ref{m5}), to:
\begin{equation}\label{n2d}
\frac{1}{3\underline\gamma}\Id_2\leq (\nabla \bar U)^T\nabla \bar U\leq
3\underline\gamma\Id_2 \quad\mbox{ in } \bar\omega.
\end{equation}
We proceed to gather estimates similar to those in \ref{s1U}-\ref{s4U}. 
The first bound below follows from (\ref{n2d}) and Lemma
\ref{lem_det}, while the second and the fourth bounds follow from (\ref{n1dbar}):
\begin{align*}
& \|\nabla \bar U\|_0\leq (6\underline\gamma)^{1/2}
\tag*{(\theequation)$_1$}\refstepcounter{equation} \label{s1Ud} \\
& \|\nabla^{(m)}\nabla^{(2)} \bar U\|_0\leq 
C\delta_{k}^{1/2}\kappa^{m+1}\sigma^{N/2} \quad\mbox{for all } m=0\ldots (3N+6)(K-k)-3N-3,
\tag*{(\theequation)$_2$} \label{s2Ud} \\
& \|T_{\bar U}\|_0\leq C \quad\mbox{ and }\quad
\|\nabla^{(m)}T_{\bar U}\|_0\leq C\delta_k^{1/2}\kappa^m\sigma^{N/2}
\\ & \qquad\qquad \qquad\qquad\qquad \qquad \hspace{1mm}
\quad\mbox{for all } m=1\ldots (3N+6)(K-k)-3N-2,
\tag*{(\theequation)$_3$} \label{s3Ud} \\
& \|\nabla^{(m)}E^i_{\bar  U}\|_0\leq
C\delta_k^{1/2}\kappa^m\sigma^{N/2}\quad\mbox{for all } m=1\ldots (3N+6)(K-k)-3N-2,\;i=1,2. 
\tag*{(\theequation)$_4$} \label{s4Ud} 
\end{align*}
The estimate \ref{s3Ud} results from Lemma \ref{lem_Tu}.
Below,  we refine the first bound in \ref{s2Ud} to the components of
$\nabla^2\bar U$. For any $i,j,s=1\ldots 2$ we write:
\begin{equation}\label{eqeqdbar}
\langle \partial_{ij}\bar U, E^s_{\bar U}\rangle = \langle \partial_{ij}U, E^s_{U}\rangle
+ \langle\partial_{ij}(\bar U-U), E^s_{U}\rangle +
\langle\partial_{ij}\bar U, (E^s_{\bar U}-E^s_{U})\rangle. 
\end{equation}
The first term in the right hand side above obeys the bounds in
\ref{Q1U}-\ref{Q2U}, while for the third term, we use
(\ref{n1dbar}) and \ref{s2Ud}, \ref{s4Ud} and get:
\begin{equation*}
\begin{split}
& \|\nabla^{(m)}\langle\partial_{ij}\bar U, (E^s_{\bar U}-E^s_{U})\rangle\|_0 \leq
C\delta_k\kappa^{m+1}\sigma^{N/2}
\\ & \mbox{for }m=0\ldots (3N+6)(K-k)-3N-3,\; i,j,s=1\ldots 2.
\end{split}
\end{equation*}
We now estimate the second term in the right hand
side of (\ref{eqeqdbar}), where by (\ref{n1dbar}), \ref{s3U}:
\begin{equation*}
\begin{split}
& \|\nabla^{(m)}\langle\partial_{ij}(\bar U-U), E^s_{U}\rangle\|_0 
\leq  C\delta_k^{1/2}\kappa^{m+1} \\ & \mbox{for } m=0\ldots (3N+6)(K-k)-3N-3,\; i,j,s=1\ldots 2.
\end{split}
\end{equation*}
In the specific case of $s=2$, we use that $\langle E^1_{U}, E^2_{U}\rangle=0$ and write:
\begin{equation*}
\begin{split}
\langle\partial_{ij}(\bar U-U), E^2_{U} \rangle = & ~\partial_i
\Big(\frac{\Gamma(\kappa t)}{\kappa } a_2\Big)\langle \partial_jE^1_{U},
E^2_{U}\rangle + \partial_j\Big(\frac{\Gamma(\kappa t)}{\kappa} a_2\Big)\langle \partial_iE^1_{U},
E^2_{U}\rangle \\ & + \frac{\Gamma(\kappa t)}{\kappa} a_2\langle \partial_{ij}E^1_{U},
E^2_{U}\rangle + \Big\langle \partial_{ij}\Big(T_{U}\Big(\frac{\bar\Gamma(\kappa 
 t)}{\kappa}a_2^2\eta_2+\bar W\Big)\Big), E^2_{U}\Big\rangle,
\end{split}
\end{equation*}
which implies, by (\ref{Ebounds2}), \ref{s3U}, \ref{s4U}, (\ref{m14bar}):
\begin{equation*}
\begin{split}
& \|\nabla^{(m)}\langle\partial_{ij}(\bar U-U), E^2_{U}\rangle\|_0 
\leq  C\sum_{p+q+t=m}\big\|\nabla^{(p+1)}\Big (\frac{\Gamma(\kappa
  t)}{\kappa} a_2\Big)\big\|_0 \|\nabla^{(q+1)}E^1_{U}\|_0\|\nabla^{(t)}E^2_{U}\|_0
\\ & +\sum_{p+q+t} \big\|\nabla^{(p)}\Big (\frac{\Gamma(\kappa
  t)}{\kappa} a_2\Big)\big\|_0 \|\nabla^{(q+2)}E^1_{U}\|_0\|\nabla^{(t)}E^2_{U}\|_0
\\ & + C\sum_{p+q=m}\sum_{~t+s=p+2}\|\nabla^{(t)}T_{U}\|_0
\big\|\nabla^{(s)}\Big (\frac{\bar\Gamma(\kappa t)}{\kappa} a_2^2\eta_2 +
\bar W\Big)\big\|_0\|\nabla^{(q)}E^2_{U}\|_0
\\ & \leq  C\delta_k\kappa^{m+1} \sigma^{N/2}\quad\mbox{for all }
m=0\ldots (3N+6)(K-k)-3N-3, \; i,j=1\ldots 2.
\end{split}
\end{equation*}
Using \ref{Q1U}-\ref{Q2U} and recalling that
$\delta_k^{1/2}\sigma^{N/2}\leq \delta^{1/2}\sigma^{N/2}\leq 1$ by
\ref{Ass1}, we obtain for $m=0\ldots (3N+6)(K-k)-3N-3$:
\begin{align*}
&  \|\nabla^{(m)}\langle \partial_{ij}\bar U, E^1_{\bar U}\rangle\|_0  \leq
\|\nabla^{(m)}\langle \partial_{ij}u_k, E^1_{u_k}\rangle\|_0 +
C\delta_k^{1/2}\kappa^{m+1} \quad\mbox{ for } i,j=1\ldots 2,
\tag*{(\theequation)$_1$}\refstepcounter{equation} \label{Q1Ud} \\
& \|\nabla^{(m)}\langle \partial_{ij}\bar U, E^2_{\bar U}\rangle\|_0 \leq
\|\nabla^{(m)}\langle \partial_{ij}u_k, E^2_{u_k}\rangle\|_0 + C\delta_k \kappa^{m+1}\sigma^{N/2}.
\tag*{(\theequation)$_2$} \label{Q2Ud} 
\end{align*}
In particular, \ref{Q2}-\ref{Q4}  and (\ref{m8}) yield:
$\|\nabla^{(m)}\langle \partial_{ij}u_k, E^2_{u_k}\rangle\|_0 \leq
C\delta_k^{1/2}\mu_k^{m+1}\sigma^{N/2}$, hence:
\begin{equation}\label{Q1Udgr}
\begin{split}
& \|\nabla^{(m)}((\nabla \bar U)^T\nabla E_{\bar U}^2)\|_0
 \leq \|\nabla^{(m)}\big((\nabla u_k)^T\nabla E^2_{u_k}\big)\|_0 + C\delta_k\kappa^{m+1}\sigma^{N/2}
\\ & \leq C\delta_k^{1/2}\mu_k^{m+1}\sigma^{N/2} \leq C \delta_k^{1/2}\kappa^{m+1}\sigma^{N/2-2}
\\ & \mbox{for all } m=0\ldots (3N+6)(K-k)-3N-3,
\end{split}
\end{equation}
in view of \ref{Ass1} and provided that $N\geq 4$.

\smallskip

{\bf 10. (The third and final corrugation)}
We define $u_{k+1}\in\mathcal{C}^{(3N+6)(K-k)-3N-4}(\bar\omega,
\R^4)$, whose regularity is as stipulated in the induction set-up, as
$(3N+6)(K-k)-3N-4 = 2+ (3N+6)(K-(k+1))$. This is the final field in
our triple-corrugation Stage:
\begin{equation}\label{corru3}
u_{k+1}=\bar U+\frac{\Gamma(\mu_{k+1} x_2)}{\mu_{k+1}} bE^2_{\bar U} +
T_{\bar U}\Big(\frac{\bar\Gamma(\mu_{k+1} x_2)}{\mu_{k+1}}b^2 e_2+ \dbar W\Big),
\end{equation}
in accordance with Lemma \ref{lem_step2}, where $\Gamma$, $\bar\Gamma$ are the oscillatory
profiles in (\ref{m11}), the oscillation direction is set to
$\eta=\eta_3=e_2$, the oscillation frequency is $\mu_{k+1}$ given in
(\ref{md_def}) and the amplitude function  $b$ obeys the bounds in
(\ref{Ebounds2b}).
The tangential correction $\dbar W$ is:
\begin{equation*}
\begin{split}
-2\dbar W =   & \sum_{i=0}^1 (-1)^i \frac{(\Gamma'\Gamma)_{i+1}(\mu_{k+1} x_2
 )}{\mu_{k+1} ^{i+2}}L_i^{\eta_3}(\dbar S_2)  
\\ & + \sum_{i=0}^1 (-1)^i \frac{\Gamma_{i+1}(\mu_{k+1} x_2)}{\mu_{k+1}^{i+2}}L_i^{\eta_3}(\dbar S_3) 
+ \sum_{i=0}^1 (-1)^i \frac{\bar\Gamma_{i+1}(\mu_{k+1} x_2)}{\mu_{k+1}^{i+2}}L_i^{\eta_3}(\dbar S_4),
\end{split}
\end{equation*}
where we recall (\ref{coef_ibp3}), (\ref{recu_ibp})
and  where the fields $\{\dbar S_i\}_{i=2}^4$ are as in Lemma \ref{lem_step2}, namely:
\begin{equation}\label{S_third}
\begin{split}
& \dbar S_2 = 2b\,\sym(\nabla b \otimes e_2),\\
& \dbar S_3 = 2b\,\sym\big((\nabla \bar U)^T\nabla E^2_{\bar U}\big), \qquad\qquad
\dbar S_4 =2\,\sym\big((\nabla \bar U)^T\nabla (b^2 T_{\bar U}e_2)\big).
\end{split}
\end{equation}
By (\ref{ibp3}) in Lemma \ref{lem_IBP3} we see that:
\begin{equation}\label{symWdbar}
\begin{split}
& -2\,\sym\nabla \dbar W = \frac{(\Gamma'\Gamma)(\mu_{k+1}x_2)}{\mu_{k+1}} \dbar S_2 
+ \frac{\Gamma(\mu_{k+1} x_2)}{\mu_{k+1}} \dbar S_3+ \frac{\bar\Gamma(\mu_{k+1}
  x_2)}{\mu_{k+1}} \dbar S_4  - \dbar{\mathcal{G}} - \dbar G e_1\otimes e_1, 
\end{split}
\end{equation}
with the following formulas:
\begin{equation*}
\begin{split}
& \dbar{\mathcal{G}} = \frac{(\Gamma'\Gamma)_{2}(\mu_{k+1}x_2
  )}{\mu_{k+1}^{3}}\sym\nabla L_1^{\eta_3}(\dbar S_2) 
+ \frac{\Gamma_{2}(\mu_{k+1} x_2
  )}{\mu_{k+1}^{3}}\sym\nabla L_1^{\eta_3}(\dbar S_3) 
+ \frac{\bar\Gamma_{2}(\mu_{k+1} x_2
  )}{\mu_{k+1}^{3}}\sym\nabla L_1^{\eta_3}(\dbar S_4), \\
& \dbar G = \sum_{i=0}^1 (-1)^i \frac{(\Gamma'\Gamma)_{i}(\mu_{k+1}x_2
  )}{\mu_{k+1}^{i+1}}P_i^{\eta_3}(\dbar S_2) 
+ \sum_{i=0}^1 (-1)^i \frac{\Gamma_{i}(\mu_{k+1}x_2
  )}{\mu_{k+1}^{i+1}}P_i^{\eta_3}(\dbar S_3) 
\\ & \qquad + \sum_{i=0}^1 (-1)^i \frac{\bar\Gamma_{i}(\mu_{k+1} x_2
  )}{\mu_{k+1}^{i+1}}P_i^{\eta_3}(\dbar S_4). 
\end{split}
\end{equation*}
By (\ref{m18bar}), (\ref{sisi})  and (\ref{symWdbar}) we now obtain: 
\begin{equation}\label{m12dbar}
\begin{split}
& \mathcal{D}(g_0-\delta_{k+1}H_0, u_{k+1}) = \mathcal{D}(g_0-\delta_{k+1}H_0,
\bar U) -\big((\nabla u_{k+1})^T\nabla u_{k+1} - (\nabla \bar U)^T\nabla \bar U\big) \\ & =
\bar{\mathcal{E}} - 
\frac{1+\dbar\Gamma(\mu_{k+1}x_2)}{\mu_{k+1}^2}\nabla b\otimes\nabla
b - \dbar{\mathcal{R}}_1  - \dbar{\mathcal{R}}_2 -\dbar{\mathcal{G}}
-\dbar G e_1\otimes e_1
\end{split}
\end{equation}
where the primary and the $\dbar W$-related error terms
$\dbar{\mathcal{R}}_1$ and $\dbar{\mathcal{R}}_2$ are as in Lemma \ref{lem_step2}. 

\smallskip

{\bf 11. (Bounds on the errors in third corrugation defect)}
We now estimate, as in steps 5 and 8, all terms in the final
decomposition (\ref{m12dbar}). Firstly, by (\ref{Ebounds2b}):
\begin{equation}\label{newterm}
\begin{split}
& \big\|\nabla^{(m)}\Big(\frac{1+\dbar\Gamma(\mu_{k+1}x_2)}{\mu_{k+1}^2}\nabla
b\otimes\nabla b\Big)\big\|_0
\leq C\sum_{p+q=m}\mu_{k+1}^{p-2}\frac{\delta_k}{\sigma^2}\kappa^{q+2}
\\ & \leq C\frac{\delta_k}{(\mu_{k+1}/\kappa)^2\sigma^2}\mu_{k+1}^m \leq 
\frac{\delta_k}{\sigma^{N+1}}\mu_{k+1}^m \quad \mbox{ for all } m=0\ldots (3N+6)(K-k)-3N-2,
\end{split}
\end{equation}
if only $\sigma\geq\underline\sigma$ is sufficiently large. 
Secondly, from (\ref{Ebounds2b}), \ref{s3Ud}:
\begin{equation}\label{s5dbar}
\begin{split}
& \|\nabla^{(m+1)}(b^2T_{\bar U}e_2)\|_0\leq 
C\sum_{p+q=m+1}\delta_k\kappa^p\kappa^q=C\delta_k\kappa^{m+1} 
\\ & \mbox{for all } m=0\ldots (3N+6)(K-k)-3N-3.
\end{split}
\end{equation}
Consequently, recalling additionally
\ref{s4Ud}, we get that each term in $\dbar{\mathcal{R}}_1$ is bounded by:
\begin{equation}\label{m13dbar}
\begin{split}
& \|\nabla^{(m)}\dbar{\mathcal{R}}_1\|_0\leq C\delta_k^{3/2}\mu_{k+1}^m\sigma^{N/2}\leq
\frac{\delta_k}{\sigma^{N+1}}\mu_{k+1}^m\\ & \mbox{for all } m=0\ldots (3N+6)(K-k)-3N-2,
\end{split}
\end{equation}
where the worst term, responsible for the first bound is
$\frac{\Gamma(\mu_{k+1}x_2)^2}{\mu_{k+1}^2}b^2(\nabla E^2_{\bar U})^T \nabla
E^2_{\bar U}$, and where the final inequality follows by
$C\sigma^{3N/2+1}\delta_k^{1/2}\leq \sigma^{3(N+1)/2}\delta_0^{1/2} \leq 1$ 
from the last assumption in \ref{Ass1}, for $\underline\sigma$ sufficiently large.
Before bounding $\bar{\mathcal{R}}_2$, we find the bounds on
$\dbar W$. We get:
\begin{equation}\label{SUbounddbar}
\begin{split}
& \|\nabla^{(m)}\dbar S_2\|_0\leq C\delta_k\kappa^{m+1},\\
& \|\nabla^{(m)}\dbar S_3\|_0\leq
C\sum_{p+q=m}\|\nabla^{(p)}b\|_0\|\nabla^{(q)}((\nabla \bar U)^T\nabla
E_{\bar U}^2)\|_0\\ & \qquad \qquad\;\; \leq
C\sum_{p+q=m}\delta_k^{1/2}\kappa^p \delta_k^{1/2}\kappa^{q+1}\sigma^{N/2-2}
\leq C\delta_k\kappa^{m+1}\sigma^{N/2-2}, \\
& \|\nabla^{(m)}\dbar S_4\|_0\leq C\delta_k\kappa^{m+1} \quad\mbox{ for all } 
m=0\ldots (3N+6)(K-k)-3N-3,
\end{split}
\end{equation}
where the first bound results from (\ref{Ebounds2b}), the second bound
from (\ref{Q1Udgr}), and the third from \ref{s1Ud}, \ref{s2Ud} and (\ref{s5dbar}). 
We now read from (\ref{coef_ibp3}) that:
\begin{equation}\label{m16dbar}
\|\nabla^{(m)}L_i^{\eta_3}(S)\|_0\leq
C_{m+i}\|\nabla^{(m+i)}S\|_0\quad\mbox{for all } m,i\geq 0,
\end{equation}
which implies by recalling from (\ref{SUbounddbar}) that
$\|\nabla^{(m)}\dbar S_i\|_0 \leq C\delta_k\mu_{k+1}^{m+1}$:
\begin{equation}\label{m14dbar}
\begin{split}
& \|\nabla^{(m)}\dbar W\|_0  \leq C\sum_{i=0}^1
\sum_{p+q=m} \mu_{k+1}^{p-i-2}\delta_{k}\mu_{k+1}^{q+i+1} \leq C
\delta_k\mu_{k+1}^{m-1} \\ & \mbox{for all } m=0\ldots (3N+6)(K-k)-3N-4.
\end{split}
\end{equation}
We are now ready to bound the terms in $\dbar{\mathcal{R}}_2$. Observe first
that, by (\ref{Ebounds2b}), \ref{s3Ud}, \ref{s4Ud}:
\begin{equation*}
\begin{split}
&\|\nabla^{(m+1)}(T_{\bar U}\dbar W)\|_0\leq C\delta_k\mu_{k+1}^m
\quad \;\;\mbox{ for } m=0\ldots (3N+6)(K-k)-3N-5,
\\ & \|\nabla^{(m+1)}(bE^2_{\bar U})\|_0\leq C\delta_k^{1/2}\kappa^{m+1}
\quad \mbox{ for } m=0\ldots (3N+6)(K-k)-3N-3,
\end{split}
\end{equation*}
resulting in all the terms in $\dbar{\mathcal{R}}_2$ being bounded by:
\begin{equation}\label{m15dbar}
\begin{split}
& \|\nabla^{(m)}\dbar{\mathcal{R}}_2\|_0\leq
C\delta_k^{3/2}\mu_{k+1}^{m} \sigma^{N/2}
\leq \frac{\delta_k}{\sigma^{N+1}}\mu_{k+1}^m\\ & \mbox{for all } m=0\ldots (3N+6)(K-k)-3N-5.
\end{split}
\end{equation}
The worst term, responsible for the first bound above is
$2\sym((\nabla \bar U)^T[(\partial_1T_{\bar U})\dbar W, (\partial_2T_{\bar U})\dbar W])$,
and the final inequality is due to having 
$C\sigma^{3N/2+1}\delta_k^{1/2}\leq 1$ 
according to the last assumption in \ref{Ass1} for
$\sigma\geq\underline\sigma$ large. To estimate $\dbar{\mathcal{G}}$,
we use (\ref{m16dbar}) with $i=1$, and (\ref{SUbounddbar}):
\begin{equation}\label{m17dbar}
\begin{split}
& \|\nabla^{(m)}\dbar{\mathcal{G}}\|_0 \leq
C \sum_{i=2}^4\sum_{p+q=m} \mu_{k+1}^{p-3} \|\nabla^{(q+2)} \dbar S_i\|_0
\leq C \sum_{p+q=m} \mu_{k+1}^{p-3}\delta_k\kappa^{q+3}\sigma^{N/2-2}
\\ & = C\frac{\delta_k}{(\mu_{k+1}/\kappa)^3}\mu_{k+1}^m\sigma^{N/2-2}
\leq C \frac{\delta_k}{\sigma^{3N/2}}\mu_{k+1}^m\sigma^{N/2-2}\leq 
\frac{\delta_k}{\sigma^{N+1}}\mu_{k+1}^m
\\ & \mbox{for all } m=0\ldots (3N+6)(K-k)-3N-5,
\end{split}
\end{equation}
when $\sigma\geq \underline\sigma$ large.
In conclusion, the bounds (\ref{m18bar}), (\ref{m13dbar}), 
(\ref{m15dbar}), (\ref{m17dbar})  in (\ref{m12dbar}) yield:
\begin{equation}\label{m18dbar}
\begin{split}
& \mathcal{D}(g_0-\delta_{k+1}H_0, u_{k+1}) = - \dbar G e_1\otimes e_1 + \dbar{\mathcal{E}} \\
&\mbox{where } \|\nabla^{(m)}\dbar{\mathcal{E}}\|_0  =
\big\|\nabla^{(m)}\Big(\bar{\mathcal{E}} -
\frac{1-\dbar\Gamma(\mu_{k+1}x_2)}{\mu_{k+1}^2}\nabla b\otimes\nabla b
- \dbar{\mathcal{R}}_1 - \dbar{\mathcal{R}}_2-\dbar{\mathcal{G}}\Big)\big\|_0
\\ & \qquad\quad \leq 11\frac{\delta_k}{\sigma^{N+1}}\mu_{k+1}^m 
\quad \mbox{ for all } m=0\ldots (3N+6)(K-k)-3N-5.
\end{split}
\end{equation}

\smallskip

{\bf 12. (Estimating the error term $\dbar G$)}
In order to estimate the derivatives of $\dbar G$, we write:
$$\dbar G = \dbar G_2 + \dbar G_3 + \dbar G_4,$$
and analyze the following three terms: 
\begin{equation*}
\begin{split}
& \dbar G_2 = \sum_{i=0}^1 (-1)^i \frac{(\Gamma'\Gamma)_{i}(\mu_{k+1}x_2
  )}{\mu_{k+1}^{i+1}}P_i^{\eta_3}(\dbar S_2), \qquad
\dbar G_3 =\sum_{i=0}^1 (-1)^i \frac{\Gamma_{i}(\mu_{k+1}x_2
  )}{\mu_{k+1}^{i+1}}P_i^{\eta_3}(\dbar S_3), 
\\ & \dbar G_4 =\sum_{i=0}^1 (-1)^i \frac{\bar\Gamma_{i}(\mu_{k+1} x_2
  )}{\mu_{k+1}^{i+1}}P_i^{\eta_3}(\dbar S_4). 
\end{split}
\end{equation*}
We will use the formulas in (\ref{coef_ibp3}) at $i=0,1$, namely:
\begin{equation}\label{fora}
P_0^{\eta_3}(S) = S_{11}, \qquad P_1^{\eta_3}(S)=2\partial_1S_{12},
\end{equation}
separately for $S$ equal to $\dbar S_2, \dbar S_3$ and $\dbar S_4$.
In case of $\dbar S_2$, there holds:
$$P_0^{\eta_3}(\dbar S_2) = 0, \qquad P_1^{\eta_3}(\dbar S_2) = 4\partial_1(b\partial_1b),$$
so, we estimate directly from (\ref{Ebounds2b}):
\begin{equation}\label{dbG2}
\begin{split}
& \|\nabla^{(m)}\dbar G_2\|_0  \leq C \sum_{p+q=m}
\mu_{k+1}^{p-2}\|\nabla^{(q+1)}(b\partial_1b)\|_0 \leq 
C \sum_{p+q=m} \mu_{k+1}^{p-2}\frac{\delta_k}{\sigma}\kappa^{q+2}
\\ & \leq C \frac{\delta_k}{\sigma^{N+1}}\mu_{k+1}^m \quad  \mbox{ for all } 
m=0\ldots (3N+6)(K-k)-3N-4.
\end{split}
\end{equation}

\smallskip

\noindent In case of $\dbar S_4$, formulas (\ref{fora}) become:
$$P_0^{\eta_3}(\dbar S_4) = 2\langle\partial_1\bar U, \partial_1(b^2
T_{\bar U} e_2)\rangle,  \quad P_1^{\eta_3}(\dbar S_4) = 
2 \Big(\partial_1\langle\partial_1\bar U, \partial_2(b^2 T_{\bar U} e_2)\rangle 
+ \partial_1\langle\partial_2\bar U, \partial_1(b^2 T_{\bar U} e_2)\rangle\Big).$$
We note in passing that $\langle \partial_i\bar U, T_{\bar
  U}e_j\rangle = \langle e_i, e_j\rangle = \delta_{ij}$, which yields that:
\begin{equation*}
\begin{split}
 \big\langle \partial_i \bar U, \partial_j(b^2 T_{\bar U} e_2)\big\rangle
& = \partial_j(b^2) \big\langle \partial_i \bar U, T_{\bar U} e_2\big\rangle
+ b^2 \big\langle \partial_i \bar U, \partial_j(T_{\bar U} e_2)\big\rangle
\\ & = \partial_j(b^2) \delta_{i2} 
- b^2 \big\langle \partial_{ij} \bar U, T_{\bar U} e_2\big\rangle
\quad\mbox{ for all } i,j=1\ldots 2.
\end{split}
\end{equation*}
Consequently, we get from (\ref{Ebounds2b}), \ref{s2Ud}, \ref{s3Ud}:
\begin{equation*}
\begin{split}
&\|\nabla^{(m)}P_0^{\eta_3}(\dbar S_4)\|_0 = 2\big\|\nabla^{(m)}\big(b^2
\big\langle \partial_{11} \bar U, T_{\bar U}
e_2\big\rangle\big)\big\|_0 \\ & \qquad\qquad\qquad \quad \leq 
C\hspace{-2mm} \sum_{p+q+t=m} \vspace{-3mm}
\delta_k\kappa^p\delta_{k}^{1/2}\kappa^{q+1}\sigma^{N/2}\kappa^t\leq 
C \delta_k^{3/2}\kappa^{m+1}\sigma^{N/2},
\\ & \|\nabla^{(m)}P_1^{\eta_3}(\dbar S_4)\|_0\leq
C\big( \|\nabla^{(m+2)} b^2\|_0+
\|\nabla^{(m+1)}\big(b^2\big\langle \partial_{12} \bar U, T_{\bar U}
e_2\big\rangle\big)\big\|_0 \big) \\ &\qquad\qquad\qquad\quad
\leq C\frac{\delta_k}{\sigma}\kappa^{m+2} + C \delta_k^{3/2}\kappa^{m+2}\sigma^{N/2}
\leq C\frac{\delta_k}{\sigma}\kappa^{m+2},
\\ & \mbox{for all } m=0\ldots (3N+6)(K-k)-3N-4,
\end{split}
\end{equation*}
since $\delta_k^{1/2}\sigma^{N/2+1}\leq 1$ by the fourth assumption in \ref{Ass1}.
We are ready to conclude that:
\begin{equation}\label{dbG4}
\begin{split}
& \|\nabla^{(m)}\dbar G_4\|_0 \leq C \sum_{p+q=m}
\mu_{k+1}^{p-1}\delta_k^{3/2}\sigma^{N/2}\kappa^{q+1} + C \sum_{p+q=m}
\mu_{k+1}^{p-2}\frac{\delta_k}{\sigma}\kappa^{q+2} 
 \\ & \leq  C \delta_k^{3/2}\mu_{k+1}^m  + C\frac{\delta_k}{\sigma^{N+1}}\mu_{k+1}^m\leq
C \frac{\delta_k}{\sigma^{N+1}}\mu_{k+1}^m  \\ & \mbox{for all }m=0\ldots (3N+6)(K-k)-3N-4,
\end{split}
\end{equation}
valid provided that $\delta_k^{1/2}\sigma^{N+1}\leq 1$, as usual from  \ref{Ass1}. 

\medskip

\noindent We now deal with the term $\dbar G_3$, where the induction assumptions
\ref{Q2}-\ref{Q4} will be used for the first time in a tight manner. Given
$\dbar S_3$, from (\ref{fora}) and (\ref{S_third}) we read:
$$P_0^{\eta_3}(\dbar S_3) = 2b \langle \partial_{11}\bar U, E^2_{\bar U}\rangle, 
\qquad P_1^{\eta_3}(\dbar S_2) =
4\partial_1\big(b\langle\partial_{12}\bar U, E^2_{\bar U}\rangle\big).$$
Observe, from \ref{Q2} and \ref{Q3}, that:
\begin{equation*}
\begin{split}
& \|\nabla^{(m)}\langle \partial_{11}u_k, E^2_{u_k}\rangle \|_0\leq
C\frac{\delta_{k-1}^{1/2}}{\sigma^N}\mu_k^{m+1} \leq 
C\frac{\delta_{k}^{1/2}}{\sigma^{N/2+2}}\kappa^{m+1}, \\ 
& \|\nabla^{(m)}\langle \partial_{12}u_k, E^2_{u_k}\rangle \|_0\leq
C\frac{\delta_{k-1}^{1/2}}{\sigma^{N/2}}\mu_k^{m+1} \leq 
C\frac{\delta_{k}^{1/2}}{\sigma^{2}}\kappa^{m+1} \\
& \mbox{for all } \quad k=1\ldots K-1, \quad m=0\ldots (3N+6)(K-k).
\end{split}
\end{equation*}
In view of \ref{Q2Ud}, this implies:
\begin{equation*}
\begin{split}
& \|\nabla^{(m)}\langle \partial_{11}\bar U, E^2_{\bar U}\rangle\|_0\leq 
C\frac{\delta_{k}^{1/2}}{\sigma^{N/2+2}}\kappa^{m+1} +
C\delta_k\kappa^{m+1}\sigma^{N/2}\leq C\frac{\delta_{k}^{1/2}}{\sigma^{N/2+2}}\kappa^{m+1},\\ 
& \|\nabla^{(m)}\langle \partial_{12}\bar U, E^2_{\bar U}\rangle\|_0\leq 
C\frac{\delta_{k}^{1/2}}{\sigma^{2}}\kappa^{m+1} +
C\delta_k\kappa^{m+1}\sigma^{N/2}\leq C\frac{\delta_{k}^{1/2}}{\sigma^{2}}\kappa^{m+1}
\\ & \mbox{for all }\quad k=1\ldots K-1, \quad m=0\ldots (3N+6)(K-k)-3N-3,
\end{split}
\end{equation*}
where we have used that $\sigma_k^{1/2}\sigma^{N+2}\leq 1$ by \ref{Ass1}.
Consequently, recalling (\ref{Ebounds2b}), we see that:
\begin{equation*}
\begin{split}
& \|\nabla^{(m)}P_0^{\eta_3}(\dbar S_3)\|_0 \leq C
\|\nabla^{(m)}\big(b \langle \partial_{11}U, E^2_{U}\rangle\big)\|_0\leq
C\frac{\delta_{k}}{\sigma^{N/2+2}}\kappa^{m+1},
\\ & \|\nabla^{(m)}P_1^{\eta_3}(\dbar S_3)\|_0 \leq C
\|\nabla^{(m+1)}\big(b \langle \partial_{12}U, E^2_{U}\rangle\big)\|_0\leq
 C\frac{\delta_{k}}{\sigma^2}\kappa^{m+2} \\ & \mbox{for all }
 m=0\ldots (3N+6)(K-k)-3N-4,\;k=1\ldots K-1.
\end{split}
\end{equation*}
This leads to:
\begin{equation}\label{dbG3uno}
\begin{split}
& \|\nabla^{(m)}\dbar G_3\|_0 \leq
C\sum_{p+q=m}\mu_{k+1}^{p-1}\frac{\delta_{k}}{\sigma^{N/2+2}}\kappa^{q+1}
+ C \sum_{p+q=m}\mu_{k+1}^{p-2}\frac{\delta_{k}}{\sigma^{2}}\kappa^{q+2}
\\ & \leq C \frac{\delta_k}{\sigma^{N+2}}\mu_{k+1}^m  \quad\mbox{ for
  all } m=0\ldots (3N+6)(K-k)-3N-4,\;k=1\ldots K-1.
\end{split}
\end{equation}
Now, at $k=0$ we get by (\ref{m8}) and \ref{Q2Ud}:
\begin{equation*}
\begin{split}
& \|\nabla^{(m)}\langle \partial_{ij}\bar U,
E^2_{\bar U}\rangle\|_0\leq\|\nabla^{(m)} ((\nabla u_0)^T\nabla E^2_{u_0})
\|_0 + C\delta_k\kappa^{m+1}\sigma^{N/2} \\ & \leq  
C\delta_0^{1/2}\mu_0^{m+1} +C\delta_k\kappa^{m+1}\sigma^{N/2} \leq 
C\frac{\delta_{k}^{1/2}}{\sigma^{2}}\kappa^{m+1}   \\ & \mbox{for
  all } m=0\ldots (3N+6)(K-k)-3N-3,\; i,j=1\ldots 2,\; k=0,
\end{split}
\end{equation*}
so we directly note the same bound as in (\ref{dbG3uno}):
\begin{equation}\label{dbG3due}
\begin{split}
& \|\nabla^{(m)}\dbar G_3\|_0 \leq
C\sum_{p+q=m}\mu_{k+1}^{p-1}\frac{\delta_{k}}{\sigma^{2}}\kappa^{q+1}
+ C \sum_{p+q=m}\mu_{k+1}^{p-2}\frac{\delta_{k}}{\sigma^{2}}\kappa^{q+2}
\leq C \frac{\delta_k}{\sigma^{N+2}}\mu_{k+1}^m  \\ & 
\mbox{for all } m=0\ldots (3N+6)(K-k)-3N-4.
\end{split}
\end{equation}
In summary, by (\ref{dbG2}), (\ref{dbG4}), (\ref{dbG3uno}), (\ref{dbG3due}) we get:
\begin{equation}\label{m20dbar}
\|\nabla^{(m)}\dbar G\|_0  \leq  C \frac{\delta_k}{\sigma^{N+1}}\mu_{k+1}^m  
\quad \mbox{ for all } m=0\ldots (3N+6)(K-k)-3N-4,
\end{equation}
which yields, upon recalling  (\ref{m18dbar}) and for $\sigma$ is sufficiently large:
\begin{equation}\label{m18dbar_final}
\begin{split}
& \|\nabla^{(m)}\mathcal{D}(g_0-\delta_{k+1}H_0, u_{k+1})\|_0 
\leq C\frac{\delta_k}{\sigma^{N+1}}\mu_{k+1}^m  \leq
\frac{\delta_k}{\sigma^{N+1/2}}\mu_{k+1}^m \\ & \mbox{for all } 
m=0\ldots (3N+6)(K-k)-3N-5.
\end{split}
\end{equation}
We note that the regularity exponent is consistent with the induction
statement, as $(3N+6)(K-k)-3N-5= 1+(3N+6)(K-(k+1))$.

\smallskip

{\bf 13. (Propagation of bounds in the third corrugation and 
closing the bounds \ref{Q1}, \ref{Q4})}
In the last two steps we will validate all inductive
bounds \ref{P1}-\ref{Q4} at:
$$k+1=1\ldots K.$$
Note that \ref{P5} has already been shown in (\ref{m18dbar_final}).
Using (\ref{Ebounds2}), \ref{s3Ud}, \ref{s4Ud},
(\ref{m14dbar}) yields:
\begin{equation}\label{fafa}
\begin{split} 
& \|\nabla^{(m)} (u_{k+1}-\bar U)\|_0\leq C\sum_{p+q+t=m}\mu_{k+1}^{p-1}
\|\nabla^{(q)}b\|_0\|\nabla^{(t)}E^2_{\bar U}\|_0  \\ & \qquad +  C\sum_{p+q+t=m}\mu_{k+1}^{p-1}
\|\nabla^{(q)}b^2\|_0\|\nabla^{(t)}T_{\bar U}\|_0 + C\sum_{p+q=m}
\|\nabla^{(p)}T_{\bar U}\|_0 \|\nabla^{(q)}\dbar W\|_0 \leq C \delta_k^{1/2}\mu_{k+1}^{m-1}.
\end{split}
\end{equation}
Hence, \ref{P2} at $k+1$ follows directly
from the above and (\ref{mi1}), (\ref{mi2}).
Apply now Lemma \ref{lem_propa} with $u=\bar U$, $v=u_{k+1}$, $\mu=\mu_{k+1}$,
$A=\delta_k^{1/2}$. Indeed, in virtue of the bound displayed above,
there holds $\|\nabla
u_{k+1}-\nabla \bar U\|_0\leq C\delta_k^{1/2}\leq C{\underline\delta}^{1/2}\leq
\rho$ provided that $\underline\delta$ is sufficiently small. Recalling
further (\ref{n2d}), \ref{s2Ud}, \ref{s4Ud}, we obtain existence of
the orthonormal frame $E^1_{u_{k+1}},
E^2_{u_{k+1}}\in\mathcal{C}^{(3N+6)(K-k)-3N-5}(\bar\omega, \R^4)$, namely:
$$(\nabla u_{k+1})^TE^i_{u_{k+1}}=0,\quad |E^1_{u_{k+1}}|=1\quad\mbox{ for }\quad 
i=1,2\quad\mbox{ and } \quad \langle E^1_{u_{k+1}},
E^2_{u_{k+1}}\rangle=0\quad\mbox{in } \bar\omega,$$ 
satisfying the the second bound below, while the first bound is
included in (\ref{fafa}):
\begin{equation}\label{n1dbarf}
\begin{split}
& \|\nabla^{(m+1)}(u_{k+1}-\bar U)\|_0\leq C\delta_k^{1/2}\mu_{k+1}^{m}
\\ & \mbox{and}\quad \|\nabla^{(m)}(E^i_{u_{k+1}}-E^i_{\bar U})\|_0\leq C\delta_k^{1/2}\mu_{k+1}^{m}
\quad\mbox{ for } m=0\ldots (3N+6)(K-k)-3N-5.
\end{split}
\end{equation}
Together with (\ref{n1}) and (\ref{n1dbarf}), we immediately conclude
\ref{P3} and \ref{P4} at the counter $k+1$.
To show \ref{P1} at $k+1$, recall (\ref{m3UUd}) and write:
\begin{equation}\label{m3UUdf}
\|u_{k+1}-u\|_1\leq \|u_{k+1} - \bar U\|_1 + \|U-u\|_0 \leq C\delta_0^{1/2},
\end{equation}
which indeed leads, as in step 1 and the proof of (\ref{m5}), to:
\begin{equation}\label{n2d_fin}
\frac{1}{3\underline\gamma}\Id_2\leq (\nabla u_{k+1})^T\nabla u_{k+1}\leq
3\underline\gamma\Id_2 \quad\mbox{ in } \bar\omega.
\end{equation}
It remains to show the coordinate-specific bounds \ref{Q1}-\ref{Q4}.
Using Lemma \ref{lem_det} and (\ref{n2d_fin}), together with (\ref{n1dbarf}) in
view of \ref{s2Ud}, \ref{s4Ud}, we observe that:
\begin{align*}
& \|\nabla u_{k+1}\|_0\leq (6\underline\gamma)^{1/2}
\tag*{(\theequation)$_1$}\refstepcounter{equation} \label{s1Udf} \\
& \|\nabla^{(m)}\nabla^{(2)} u_{k+1}\|_0\leq 
C\delta_{k}^{1/2}\mu_{k+1}^{m+1} \quad\,\;\mbox{for }m=0\ldots (3N+6)(K-k)-3N-6,
\tag*{(\theequation)$_2$} \label{s2Udf} \\
& \|\nabla^{(m)}E^i_{u_{k+1}}\|_0\leq
C\delta_k^{1/2}\mu_{k+1}^m\quad \mbox{for } m=1\ldots (3N+6)(K-k)-3N-5,\;i=1,2. 
\tag*{(\theequation)$_3$} \label{s4Udf} 
\end{align*}
At this point, the estimate \ref{Q4} at the counter $k+1$ already
follows, directly from the above:
\begin{equation*}
\begin{split}
& \|\nabla^{(m)}\big((\nabla u_{k+1})^T \nabla E^1_{u_{k+1}}\big)\|_0 \leq
C\sum_{p+q=m} \mu_{k+1}^p\delta_{k}^{1/2}\mu_{k+1}^{q+1}\leq C \delta_{k}^{1/2}\mu_{k+1}^{m+1}
\\ & \mbox{for all } m=0\ldots (3N+6)(K-k)-3N-6.
\end{split}
\end{equation*}
To demonstrate the finer bounds \ref{Q1}-\ref{Q3}, we decompose, for any $i,j,s=1\ldots 2$:
\begin{equation}\label{eqeq_fin}
\langle \partial_{ij} u_{k+1}, E^s_{u_{k+1}}\rangle =
\langle \partial_{ij} \bar U, E^s_{\bar U}\rangle
+ \langle\partial_{ij}(u_{k+1}-\bar U), E^s_{\bar U}\rangle +
\langle\partial_{ij}u_{k+1}, (E^s_{u_{k+1}}-E^s_{\bar U})\rangle. 
\end{equation}
For the third term above, we recall (\ref{n1dbarf}) and \ref{s2Udf} and get:
\begin{equation}\label{1_term_fin}
\begin{split}
& \|\nabla^{(m)}\langle\partial_{ij}u_{k+1}, (E^s_{u_{k+1}}-E^s_{\bar U})\rangle\|_0 \leq
C\delta_k\mu_{k+1}^{m+1} \\ & \mbox{for } m=0\ldots (3N+6)(K-k)-3N-6,\; i,j,s=1\ldots 2.
\end{split}
\end{equation}
For the second term in the right hand side of (\ref{eqeq_fin}) and in the
specific case of $s=1$, we write: 
\begin{equation*}
\begin{split}
\langle\partial_{ij}(u_{k+1}-\bar U), E^1_{\bar U} \rangle = & ~\partial_i
\Big(\frac{\Gamma(\mu_{k+1}x_2)}{\mu_{k+1}}
b\Big)\langle \partial_jE^2_{\bar U},
E^1_{\bar U}\rangle + \partial_j\Big(\frac{\Gamma(\mu_{k+1}
  x_2)}{\mu_{k+1}} b\Big)\langle \partial_iE^2_{\bar U},
E^1_{\bar U}\rangle \\ & + \frac{\Gamma(\mu_{k+1}x_2)}{\mu_{k+1}} b
\langle \partial_{ij}E^2_{\bar U},
E^1_{\bar U}\rangle + \Big\langle \partial_{ij}\Big(T_{\bar U}\Big(\frac{\bar\Gamma(\mu_{k+1}
 x_2)}{\mu_{k+1}}b^2e_2+\dbar W\Big)\Big), E^1_{\bar U}\Big\rangle,
\end{split}
\end{equation*}
using that $\langle E^1_{\bar U}, E^2_{\bar U}\rangle=0$. 
Consequently, by (\ref{Ebounds2}), \ref{s3Ud}, \ref{s4Ud}, (\ref{m14dbar}):
\begin{equation}\label{kalb0}
\begin{split}
& \|\nabla^{(m)}\langle\partial_{ij}(u_{k+1} - \bar U), E^1_{\bar U}\rangle\|_0 
\\ & \leq  C\sum_{p+q+t=m}\big\|\nabla^{(p+1)}\Big (\frac{\Gamma(\mu_{k+1}
  x_2)}{\mu_{k+1}} b\Big)\big\|_0 \|\nabla^{(q+1)}E^2_{\bar U}\|_0\|\nabla^{(t)}E^1_{\bar U}\|_0
\\ & \quad +\sum_{p+q+t= m} \big\|\nabla^{(p)}\Big (\frac{\Gamma(\mu_{k+1}
  x_2)}{\mu_{k+1}} b\Big)\big\|_0 \|\nabla^{(q+2)}E^2_{\bar  U}\|_0\|\nabla^{(t)}E^1_{\bar U}\|_0
\\ & \quad + C\sum_{p+q=m}\sum_{~t+s=p+2}\|\nabla^{(t)}T_{\bar U}\|_0
\big\|\nabla^{(s)}\Big (\frac{\bar\Gamma(\mu_{k+1} x_2)}{\mu_{k+1}} b^2e_2 +
\dbar W\Big)\big\|_0\|\nabla^{(q)}E^1_{\bar U}\|_0
\\ & \leq  C\delta_k\mu_{k+1}^{m+1} 
\quad\mbox{for all } m=0\ldots (3N+6)(K-k)-3N-6, \; i,j=1\ldots 2.
\end{split}
\end{equation}
In conclusion, (\ref{eqeq_fin}), (\ref{1_term_fin}) and \ref{Q1Ud} imply:
\begin{equation}\label{21_ter_fin}
\begin{split}
\|\nabla^{(m)}\langle \partial_{ij} u_{k+1}, E^1_{u_{k+1}}\rangle \|_0
&  \leq \|\nabla^{(m)}\langle \partial_{ij} \bar U, E^1_{\bar U}\rangle\|_0+
C\delta_k\mu_{k+1}^{m+1} \\ & \leq  
\|\nabla^{(m)}\langle \partial_{ij} u_k, E^1_{u_k}\rangle\|_0+
C \delta_k^{1/2}\kappa^{m+1}+C\delta_k\mu_{k+1}^{m+1} 
\\ & \leq \|\nabla^{(m)}\langle \partial_{ij} u_k, E^1_{u_k}\rangle\|_0 +
C \frac{\delta_k^{1/2}}{\sigma^{N/2}}\mu_{k+1}^{m+1}.
\end{split}
\end{equation}
From \ref{Q1} we recall that:
\begin{equation*}
\begin{split}
& \|\nabla^{(m)}\langle \partial_{ij} u_k, E^1_{u_k}\rangle\|_0\leq
C\frac{\delta_{k-1}^{1/2}}{\sigma^{N/2}}\mu_k^{m+1}\leq C
\frac{\delta_k^{1/2}}{\sigma^{N/2}}\mu_{k+1}^{m+1}
\\ & \mbox{for all } m=0\ldots (3N+6)(K-k)-3N-6, \; i,j=1\ldots 2,\; k=1\ldots K-1,
\end{split}
\end{equation*}
whereas at $k=0$ we likewise get by (\ref{m8}):
\begin{equation}\label{kalb12}
\begin{split}
& \|\nabla^{(m)}\langle \partial_{ij} u_k, E^s_{u_k}\rangle\|_0\leq
C\delta_0^{1/2}\mu_0^{m+1}\leq C
\frac{\delta_k^{1/2}}{\sigma^{N}}\mu_{k+1}^{m+1}
\\ & \mbox{for all } m=0\ldots (3N+6)K, \; i,j,s=1\ldots 2,\; k=0.
\end{split}
\end{equation}
Combining with (\ref{21_ter_fin}), in either case we obtain the
validity of \ref{Q1} at the $k+1$ counter.

\smallskip

{\bf 14. (Closing the inductive bounds \ref{Q2}, \ref{Q3})}
Finally, for the second term in the right hand side of (\ref{eqeq_fin}) and in the
specific case of $s=2$, we write:
\begin{equation}\label{kalb1}
\begin{split}
\langle\partial_{ij}(u_{k+1}-\bar U), E^2_{\bar U} \rangle = & ~\partial_{ij}
\Big(\frac{\Gamma(\mu_{k+1}x_2)}{\mu_{k+1}}
b\Big) - \frac{\Gamma(\mu_{k+1}x_2)}{\mu_{k+1}} b
\langle \partial_{j}E^2_{\bar U},
\partial_j E^1_{\bar U}\rangle \\ & + \Big\langle \partial_{ij}\Big(T_{\bar U}\Big(\frac{\bar\Gamma(\mu_{k+1}
 x_2)}{\mu_{k+1}}b^2e_2+\dbar W\Big)\Big), E^2_{\bar U}\Big\rangle,
\end{split}
\end{equation}
where we used that $\langle E^2_{\bar U}, E^2_{\bar U} \rangle = 1$.
Derivatives of the last two terms in the right hand side of (\ref{kalb1}) may be
bounded as in (\ref{kalb0}),  by (\ref{Ebounds2b}), \ref{s3Ud},
\ref{s4Ud}, (\ref{m14dbar}):
\begin{equation}\label{kalb2}
\begin{split}
& \big\|\nabla^{(m)}\Big(\frac{\Gamma(\mu_{k+1}x_2)}{\mu_{k+1}} b
\langle \partial_{j}E^2_{\bar U},
\partial_j E^1_{\bar U}\rangle\Big)\big\|_0 \\ & +
\big\|\nabla^{(m)}\Big(\Big\langle \partial_{ij}\Big(T_{\bar
  U}\Big(\frac{\bar\Gamma(\mu_{k+1} 
 x_2)}{\mu_{k+1}}b^2e_2+\dbar W\Big)\Big), E^2_{\bar
 U}\Big\rangle\Big)\big\|_0 \leq C\delta_k\mu_{k+1}^{m+1}.
\end{split}
\end{equation}
We analyze the first term in the right hand side of (\ref{kalb1}):
\begin{equation}\label{kalb3}
\begin{split}
& \big\|\nabla^{(m)}\partial_{11} \Big(\frac{\Gamma(\mu_{k+1}x_2)}{\mu_{k+1}}
b\Big)\big\|_0\leq C\sum_{p+q=m}\mu_{k+1}^{p-1}\|\nabla^{(q+2)}b\|_0
\\ & \qquad\qquad\qquad \qquad\qquad\quad \; \leq C \frac{\delta_k^{1/2}}
{(\mu_{k+1}/\kappa)^2}\mu_{k+1}^{m+1}\leq C \frac{\delta_k^{1/2}}{\sigma^{N}}\mu_{k+1}^{m+1},
\\ & \big\|\nabla^{(m)}\partial_{12} \Big(\frac{\Gamma(\mu_{k+1}x_2)}{\mu_{k+1}}
b\Big)\big\|_0 \leq C\sum_{p+q=m}\mu_{k+1}^{p-1}\|\nabla^{(q+2)}b\|_0
+ C\sum_{p+q=m}\mu_{k+1}^{p}\|\nabla^{(q+1)}b\|_0
\\ & \qquad\qquad\qquad \qquad\qquad\quad \; 
\leq C \frac{\delta_k^{1/2}} {(\mu_{k+1}/\kappa)}\mu_{k+1}^{m+1}\leq C
\frac{\delta_k^{1/2}}{\sigma^{N/2}}\mu_{k+1}^{m+1}.
\end{split}
\end{equation}
In conclusion, (\ref{eqeq_fin}), (\ref{1_term_fin}), (\ref{kalb1}),
(\ref{kalb2}), (\ref{kalb3}) and having $\delta_k^{1/2}\sigma^{N}\leq 1$, imply:
\begin{equation}\label{22_ter_fin}
\begin{split}
& \|\nabla^{(m)}\langle \partial_{11} u_{k+1}, E^2_{u_{k+1}}\rangle \|_0
 \leq \|\nabla^{(m)}\langle \partial_{11} u_k, E^2_{u_k}\rangle\|_0+
C \frac{\delta_k^{1/2}}{\sigma^{N}}\mu_{k+1}^{m+1}, \\
 & \|\nabla^{(m)}\langle \partial_{12} u_{k+1}, E^2_{u_{k+1}}\rangle \|_0
 \leq \|\nabla^{(m)}\langle \partial_{12} u_k, E^2_{u_k}\rangle\|_0 + 
C \frac{\delta_k^{1/2}}{\sigma^{N/2}}\mu_{k+1}^{m+1} 
\\ & \mbox{for all } m=0\ldots (3N+6)(K-k)-3N-6.
\end{split}
\end{equation}
We now claim that both first terms in the right hand sides above are
bounded by the respective second terms. Indeed, for $k\geq 1$
this follows from \ref{Q2}, \ref{Q3}, namely:
\begin{equation*}
\begin{split}
& \|\nabla^{(m)}\langle \partial_{11} u_k, E^2_{u_k}\rangle\|_0\leq 
C \frac{\delta_{k-1}^{1/2}}{\sigma^{N}}\mu_{k}^{m+1}
\leq C \frac{\delta_k^{1/2}}{\sigma^{N}}\mu_{k+1}^{m+1}, \\
 & \|\nabla^{(m)}\langle \partial_{12} u_k, E^2_{u_k}\rangle\|_0 \leq
C \frac{\delta_{k-1}^{1/2}}{\sigma^{N/2}}\mu_{k}^{m+1}
\leq C \frac{\delta_k^{1/2}}{\sigma^{N/2}}\mu_{k+1}^{m+1},
\\ & \mbox{for all } m=0\ldots (3N+6)(K-k),\; k=1\ldots K-1, 
\end{split}
\end{equation*}
whereas at $k=0$ the same bounds follow by (\ref{kalb12}).
Therefore, (\ref{22_ter_fin}) become:
\begin{equation*}
\begin{split}
& \|\nabla^{(m)}\langle \partial_{11} u_{k+1}, E^2_{u_{k+1}}\rangle \|_0
 \leq C \frac{\delta_k^{1/2}}{\sigma^{N}}\mu_{k+1}^{m+1}, \\
 & \|\nabla^{(m)}\langle \partial_{12} u_{k+1}, E^2_{u_{k+1}}\rangle \|_0
 \leq C \frac{\delta_k^{1/2}}{\sigma^{N/2}}\mu_{k+1}^{m+1} 
\quad\mbox{ for all } m=0\ldots (3N+6)(K-k)-3N-6,
\end{split}
\end{equation*}
which is exactly \ref{Q2} and \ref{Q3} at the counter $k+1$. The proof
of Theorem \ref{thm_STA} is complete.
\endproof

\section{Nash-Kuiper's scheme: the proof of Theorem \ref{thm_NK}} \label{section_NK}

In this section we exhibit the details of the Nash-Kuiper iteration
scheme. The proof uses the double exponential ansatz on the progression of
frequencies and defect measures that appeared in \cite{DIS1/5},
and was similarly applied in \cite[proof of Theorem 1.1]{CHI2}.

\medskip

\noindent {\bf Proof of Theorem \ref{thm_NK}}

\smallskip

{\bf 1.} Fix $\alpha$ as in (\ref{ran_NK}) and $\epsilon \in (0,1)$. 
We will obtain $\bar u$ as the limit of 
$\{u_n\in\mathcal{C}^{2}(\bar\omega,\R^4)\}_{n=1}^\infty$ converging in
$\mathcal{C}^{1,\alpha}(\bar\omega,\R^4)$. These immersions will be constructed iteratively,
starting from $u_0$ obtained by Theorem
\ref{lem_stage0}, and then by successive application of (\ref{STA}) with:
$$u=u_{n},\qquad \delta =\delta_n, \qquad\mu=\mu_n, \qquad\sigma=\sigma_{n+1},$$
producing $\tilde u = u_{n+1}$. The progression of the above
parameters is as follows. Fix $\theta$ and $\tau$ in:
\begin{equation}\label{th_t}
\alpha<\theta<\min\Big\{\frac{r+\beta}{2}, \frac{1}{1+
  2J/S}\Big\},\qquad \tau = \underline\tau + 1,
\end{equation}
where $\underline\tau$ is as in Theorem \ref{lem_stage0}. In
particular, there holds:
\begin{equation}\label{ko1}
\frac{J}{S}<\frac{1}{2\theta} - \frac{1}{2} \quad \mbox{ and }\quad \tau>2+\frac{1}{r+\beta}.
\end{equation}
We define for all $n\geq 0$:
\begin{equation}\label{ko2}
\delta_n =\frac{1}{a^{b^n}},\quad \mu_n = a^{\tau+\frac{b^n-1}{2
    \theta}},\quad \sigma_{n+1} =
\Big(\frac{\delta_n}{\delta_{n+1}}\Big)^{1/S} \quad\mbox{so that }\,
\delta_{n+1} = \frac{\delta_n}{\sigma_{n+1}^S},
\end{equation}
for some  $a,b>1$, whose magnitudes are specified by the
requirements in the course of the proof. In general, $b-1>0$ will be 
sufficiently small and $a$ sufficiently large (in that order).

\smallskip

{\bf 2.} From (\ref{ko2}) we read the initial parameters:
$$\delta_0 = \frac{1}{a},\qquad \mu_0 = a^\tau.$$
For $a$ large so that $\delta_0<\underline\delta$, we apply Theorem \ref{lem_stage0} 
obtaining an immersion $u_0\in\mathcal{C}^2(\bar\omega,\R^4)$ in:
\begin{equation}\label{ko5}
\begin{split}
& \frac{1}{\underline\gamma}\Id_2\leq (\nabla u_0)^T\nabla u_0\leq
\underline\gamma\Id_2 \quad\mbox{ in }\bar\omega, \\
& \|\mathcal{D}(g-\delta_0H_0, u_0)\|_0\leq \frac{r_0}{4}\delta_0, \\
&\|u_0-\underline u\|_0\leq \frac{r_0}{4}\delta_0\leq\frac{\epsilon}{2}\\
& \|u_0\|_2\leq \|\underline u\|_0+ \Big(C\delta_0 + C +
\frac{C}{\delta_0^{\underline\tau}}\Big) \leq
\frac{C}{\delta_0^{\underline\tau}}\leq \delta_0^{1/2}\mu_0,
\end{split}
\end{equation}
where the third bound again follows for $a$ large enough (in function
of $\epsilon$) and the fourth bound in view of (\ref{th_t}) because:
$$\delta_0^{1/2+\underline\tau}\mu_0=a^{\tau-1/2- \underline\tau} = a^{1/2}\geq C,$$
likewise for $a$ sufficiently large (in function of the constant $C$
above that depends only on $\omega, \underline u, g$).
As a byproduct, we get: $\delta_0^{1/2}\mu_0\geq 1$.

\smallskip

{\bf 3.} We will show that if $b-1>0$ is
sufficiently small and $a$ is sufficiently large (in that order) then 
one can proceed with the induction on $n$. We first show that for all $n\geq 0$:
\begin{align*}
& \sigma_{n+1}\geq\underline\sigma,\qquad \sigma_{n+1}^p\delta_n^{1/2}\leq 1,
\qquad \delta_{n+1}\leq\underline\delta,\qquad \mu_{n+1}\delta_{n+1}^{1/2}\geq 1,
\tag*{(\theequation)$_1$}\refstepcounter{equation} \label{ko61} \\
& \frac{r_0}{5}\frac{\delta_n}{\sigma_{n+1}^S} +
\frac{\|g\|_{r,\beta}}{\mu_n^{r+\beta}}\leq \frac{r_0}{4}\delta_{n+1},
\tag*{(\theequation)$_2$} \label{ko62}\\
&  C\mu_n\delta_n^{1/2}\sigma_{n+1}^J\leq \delta_{n+1}^{1/2}\mu_{n+1},
\tag*{(\theequation)$_3$} \label{ko63}
\end{align*}
where $C$ is the constant appearing in (\ref{STA}).
In order to show the four bounds in \ref{ko61}, we use the definition
(\ref{ko2}). The first of the claimed bounds follows when $a$ is
sufficiently large (after possibly choosing $b-1$ sufficiently small):
$$\sigma_{n+1}=a^{(b^{n+1}-b^n)/S} = a^{b^n(b-1)/S}\geq a^{(b-1)/S}\geq\underline\sigma.$$
For the second bound, we have:
$$\sigma_{n+1}^p\delta_n^{1/2}= a^{b^n((b-1)p/S - 1/2)}\leq 1,$$
if only $(b-1)p/S\leq 1/2$, which is assured by taking $b-1$ small. The
third bound is clear since $\delta_{n+1}\leq \delta_0\leq
\underline\sigma$, while the fourth bound is due to:
$$ \mu_{n+1}\delta_{n+1}^{1/2} = a^{\tau+b^{n+1}((\frac{1}{2\theta} -
  \frac{1}{2}) - \frac{1}{2\theta}}\geq  a^{\tau+b((\frac{1}{2\theta} -
  \frac{1}{2}) - \frac{1}{2\theta}} = a^{(\tau-\frac{1}{2}) +
  (b-1)(\frac{1}{2\theta}-\frac{1}{2})}\geq 1$$
since $\frac{1}{2\theta}-\frac{1}{2}\geq 0$, so that, for $b-1$
sufficiently small, the positive term $\tau-\frac{1}{2}$ in last
exponent above prevails. This ends the proof of \ref{ko61}.
To show \ref{ko62}, we observe that, by definition:
$$\frac{r_0}{5}\frac{\delta_n}{\sigma_{n+1}^S} =\frac{r_0}{5}\delta_{n+1},$$ 
and thus it suffices to justify the smallness of
$\frac{\|g\|_{r,\beta}}{\mu_n^{r+\beta}}\frac{1}{\delta_{n+1}}$,
equivalent to the largeness of $\delta_{n+1}\mu_n^{r+\beta}$ (in
function of $\omega, \underline u, g, \theta$). Observe that $\frac{r+\beta}{2\theta}-1>0$
by the upper bound on $\theta$ in (\ref{th_t}). Therefore:
\begin{equation*} 
\begin{split}
\delta_{n+1}\mu_n^{r+\beta} & =a^{(\tau
  +\frac{b^n-1}{2\theta})(r+\beta)-b^{n+1} } = a^{b^n(\frac{r+\beta}{2\theta}-b) +
  (\tau-\frac{1}{2\theta}(r+\beta)} \\ & \geq
a^{(\frac{r+\beta}{2\theta}-b) +  (\tau-\frac{1}{2\theta}(r+\beta)}=
a^{\tau(r+\beta)-b}= a^{\tau(r+\beta) -1 -(b-1)},
\end{split}
\end{equation*}
for $b-1$ small. The above right hand side is as large as one wants, again by taking
$b-1$ small and $a$ large (in that order), because $\tau(r+\beta)-1>0$
by the second inequality in (\ref{ko1}). This ends the proof of
\ref{ko62}. Finally, \ref{ko63} is equivalent to the largeness of:
\begin{equation*} 
\frac{\mu_{n+1}}{\mu_n}\big(\frac{\delta_{n+1}}{\delta_n}\big)^{1/2}\frac{1}{\sigma_{n+1}^J}
= a^{b^n\frac{b-1}{2\theta} - b^n(b-1)/2-b^n(b-1)J/S} =
a^{b^n(b-1)(\frac{1}{2\theta}-\frac{1}{2}-\frac{J}{S}) }\geq
a^{(b-1)(\frac{1}{2\theta}-\frac{1}{2}-\frac{J}{S})},
\end{equation*}
where we used the first inequality in (\ref{ko1}). Clearly, for $a$
large (after $b-1>0$ has been priorly fixed), the right hand side
above is as large as desired. This ends the proof of \ref{ko63}. 

\smallskip

{\bf 4.} We thus see that as long as:
\begin{equation}\label{ko3}
\frac{1}{2\underline\gamma}\leq (\nabla u_n)^T\nabla u_n\leq
2\underline\gamma\Id_2\quad\mbox{ in } \;\bar\omega,
\end{equation}
one can define $u_{n+1}$ by the indicated application of (\ref{STA}). 
Noting the bound $b^i\geq 1+(\ln b)i$ valid for all $i\geq 0$ and
$b\geq 1$, we then get: 
\begin{equation}\label{ko7} 
\begin{split}
\|u_{n+1}-u_0\|_1& \leq C\sum_{i=1}^{n+1}\delta_i^{1/2}= C\sum_{i=1}^{n+1}
\frac{1}{a^{b^i/2}}\leq C\sum_{i=1}^{n+1}
\Big(\frac{1}{a}\Big)^{1/2+(\ln b)i/2} \\ & \leq \frac{C}{a^{1/2}}\sum_{i=1}^\infty
\Big(\frac{1}{a}\Big)^{(\ln b)i/2} = \frac{C}{a^{1/2} (a^{(\ln b)/2}-1)}\leq\frac{\epsilon}{2},
\end{split}
\end{equation}
where the last inequality is valid for $a$ sufficiently large. The
above in particular implies:
\begin{equation*}
\begin{split}
& \|(\nabla u_n)^T\nabla u_n - (\nabla u_0)^T\nabla u_0\|_0\leq \|\nabla
(u_n- u_0)\|_0\big(\|\nabla u_n\|_0 + \|\nabla u_0\|\big) \\ & \leq
\|\nabla (u_n- u_0)\|_0\big(2\|\nabla u_0\|_0 + \|\nabla (u_n- u_0)\|\big)
\leq \|u_{n+1}-u_n\|_1 (C+1)\leq C\epsilon,
\end{split}
\end{equation*}
which means that for $\epsilon$ small enough (or, more generally, for a
sufficiently large $a$), the immersion estimate (\ref{ko3}) is valid for $u_{n+1}$ in
virtue of the first condition in (\ref{ko5}). Consequently, the infinite sequence
$\{u_n\in\mathcal{C}^2(\bar\omega,\R^4)\}_{n=0}^\infty$ is well
defined and it satisfies, for all $n\geq 0$:
\begin{equation}\label{ko8}
\begin{split}
& \|u_{n+1}-u_n\|_1\leq C\delta_{n}^{1/2}, \qquad \|u_{n}\|_2\leq \delta_{n}^{1/2}\mu_{n},\\
& \|\mathcal{D}(g-\delta_{n}H_0, u_{n})\|_0\leq \frac{r_0}{4}\delta_{n}.
\end{split}
\end{equation}

\smallskip

{\bf 5.} We are now ready to conclude the proof. Firstly, we show that
$\{u_n\}_{n=0}^\infty$ is Cauchy, hence converges in 
$\mathcal{C}^{1,\alpha}(\bar\omega, \R^4)$. To this end, we use the
interpolation inequality in H\"older spaces:
$$\|u_{n+1}-u_n\|_{1,\alpha}\leq C \|u_{n+1}-u_n\|_2^\alpha\|u_{n+1}-u_n\|_1^{1-\alpha}.$$
Since $\{\delta_n^{1/2}\mu_n\}_{n=1}^\infty$ is an increasing sequence,
the above and (\ref{ko8}) imply for all $n\geq 0$:
\begin{equation}\label{ko9}
\begin{split}
\|u_{n+1}-u_n\|_{1,\alpha} & \leq C(\delta_{n+1}^{1/2}
\mu_{n+1})^{\alpha}\delta_n^{(1-\alpha)/2} \\ & \leq C \big(a^{\tau +
  \frac{b^{n+1}-1}{2\theta}- b^{n+1}/2}\big)^\alpha a^{-b^n(1-\alpha)/2}\leq
C a^{(\tau-\frac{1}{2\theta})\alpha} a^{-b^n q}, 
\end{split}
\end{equation}
where $q>0$ is a positive constant, independent of $n$, given in:
$$q= \frac{1-\alpha}{2} - b\alpha\big(\frac{1}{2\theta}-\frac{1}{2}\big)
=\frac{1}{2}\big(1-\frac{\alpha}{\theta}\big)
- (b-1)\alpha\big(\frac{1}{2\theta}-\frac{1}{2}\big)>0,$$
if only $b-1$ is sufficiently small,
since  $1-\alpha/\theta>0$  in virtue of the lower bound in (\ref{th_t}). 
Consequently, (\ref{ko9}) implies the aforementioned Cauchy property in:
\begin{equation*}
\begin{split}
\|u_{n+1}-u_n\|_{1,\alpha} \leq C a^{(\tau-\frac{1}{2\theta})\alpha} a^{-q (1+(\ln b)n)} 
= C a^{(\tau-\frac{1}{2\theta})\alpha-q} \big(a^{\ln b}\big)^{-n}, 
\end{split}
\end{equation*}
where the terms in the right hand side above constitutes a converging
power series. In conclusion, $\bar
u\in\mathcal{C}^{1,\alpha}(\bar\omega,\R^4)$ may be defined as the limit: 
$$u_n\to \bar u \qquad\mbox{as }\; n\to\infty \quad \mbox{in }\;
\mathcal{C}^{1,\alpha}(\bar\omega,\R^4).$$
By (\ref{ko9}) combined with the third statement in (\ref{ko5}) there holds:
$$\|\bar u - \underline u\|_0\leq \|\bar u - u_0\|_0 + \|u_0 - \underline u\|_0
\leq \frac{\epsilon}{2} + \frac{\epsilon}{2} = \epsilon,$$
while by the last statement in (\ref{ko8}) we obtain:
$$\|\mathcal{D}(g, u_{n})\|_0 \leq \|\mathcal{D}(g- \delta_{n} H_0,
u_{n})\|_0 + \delta_n|H_0|\leq \Big(\frac{r_0}{4}+|H_0|\Big)\delta_n
\to 0 \quad\mbox{as }\; n\to\infty.$$
Since the left hand side above clearly converges to $\|\mathcal{D}(g,
\bar u)\|_0$ as $n\to\infty$, there follows:
$$\mathcal{D}(g, \bar u) = 0 \quad\mbox{ in }\; \bar\omega.$$
This ends the proof of Theorem \ref{thm_NK}.
\endproof

\section{Appendix: Poznyak's theorem}\label{sec_appendix}

For completeness, we present Poznyak's theorem and sketch its proof.

\begin{theorem}\label{thm_poz}\cite[Theorem 4]{Poz} 
Let $\omega\subset\R^2$ be an open, bounded and simply connected set,
and let $g\in \mathcal{C}^\infty (\bar\omega, \R^{2\times 2}_{\sym, >})$. 
Then, there exists $u\in\mathcal{C}^\infty(\bar\omega, \R^4)$ which
solves (\ref{II_in}) on $\bar\omega$. 
\end{theorem}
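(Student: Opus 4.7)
The plan is to reduce the problem, via isothermal coordinates, to a first-order PDE system for an orthonormal frame in $\R^4$, and then to solve this underdetermined system by exploiting the two-dimensional freedom provided by the normal plane to the immersion.

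First, I would invoke the existence of smooth isothermal coordinates (smooth Korn-Lichtenstein theorem, or uniformization applied in the smooth category): since $\omega$ is diffeomorphic to $B_1$ and $g\in\mathcal{C}^\infty(\bar\omega,\R^{2\times 2}_{\sym,>})$, there exist a smooth diffeomorphism $\Psi:\bar B_1\to\bar\omega$ and $\varphi\in\mathcal{C}^\infty(\bar B_1,\R)$ satisfying $\Psi^*g = e^{2\varphi}(dx^2+dy^2)$ on $\bar B_1$. It then suffices to produce $\tilde u\in\mathcal{C}^\infty(\bar B_1,\R^4)$ with $(\nabla\tilde u)^T\nabla\tilde u = e^{2\varphi}\mathrm{Id}_2$; the map $u = \tilde u\circ\Psi^{-1}$ solves the original problem on $\bar\omega$.

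Second, I would recast the isometric equation as a closedness condition on a vector-valued one-form. The aim is to produce a smooth orthonormal pair $E_1,E_2\in\mathcal{C}^\infty(\bar B_1,\R^4)$ such that the $\R^4$-valued $1$-form $\Omega := e^\varphi(E_1\,dx+E_2\,dy)$ is closed; then simple-connectedness of $\bar B_1$ permits integration of $\Omega$ to some $\tilde u\in\mathcal{C}^\infty(\bar B_1,\R^4)$ with $d\tilde u=\Omega$, automatically satisfying $(\nabla\tilde u)^T\nabla\tilde u = e^{2\varphi}\mathrm{Id}_2$. The closedness $\partial_y(e^\varphi E_1)=\partial_x(e^\varphi E_2)$, projected onto $\mathrm{span}(E_1,E_2)$, gives the tangential constraints $\langle\partial_x E_2,E_1\rangle = \varphi_y$ and $\langle\partial_y E_1,E_2\rangle = \varphi_x$; projected onto the $2$-dimensional orthogonal complement $\mathrm{span}(E_1,E_2)^\perp\subset\R^4$, it gives the vector equation $\partial_y E_1^\perp = \partial_x E_2^\perp$.

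Third, I would construct $(E_1,E_2)$ by evolution in $y$. At $y=0$, fix constant orthonormal vectors $f_1,f_2\in\R^4$ and set $E_1(x,0) = -\sin\theta(x)\,f_1+\cos\theta(x)\,f_2$, $E_2(x,0) = \cos\theta(x)\,f_1+\sin\theta(x)\,f_2$, where $\theta(x)=\int_0^x\varphi_y(t,0)\,dt$; this enforces the first tangential constraint on $\{y=0\}$. For $y>0$, evolve according to
\begin{equation*}
\partial_y E_1 = -\varphi_y E_1 + \varphi_x E_2 + \partial_x E_2,\qquad \partial_y E_2 = -\varphi_x E_1 + \eta,
\end{equation*}
where $\eta(\cdot,\cdot)\in\mathrm{span}(E_1,E_2)^\perp$ is a normal correction chosen so that the flow preserves both orthonormality and the tangential identities. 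The equations for $|E_1|^2$, $|E_2|^2$ and $\langle E_1,E_2\rangle$ are automatically consistent, whereas preservation of $\langle\partial_x E_2,E_1\rangle=\varphi_y$ amounts, after a direct differentiation, to the scalar condition $|\partial_x E_2^\perp|^2 = \Delta\varphi - \langle\partial_x\eta,E_1\rangle$, solvable within the two-parameter freedom of $\eta$ inside $\mathrm{span}(E_1,E_2)^\perp\simeq\R^2$.

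The main obstacle is precisely the design of $\eta$: the naive choice $\eta\equiv 0$ yields the rigid constraint $|\partial_x E_2^\perp|^2 = \Delta\varphi$, which fails pointwise whenever the Gauss curvature $K=-e^{-2\varphi}\Delta\varphi$ is positive, and the availability of a genuine two-dimensional normal plane in $\R^4$ (rather than only a one-dimensional normal line in $\R^3$) is what allows $\eta$ to cancel this discrepancy by supplying a negative contribution $\langle\partial_x\eta,E_1\rangle$. This is the analytic counterpart of the fact, cited in the introduction, that Poznyak's statement fails in $\R^3$ for metrics of positive curvature. Smoothness of the resulting $(E_1,E_2)$, hence of $\tilde u$, follows from propagation of $\mathcal{C}^\infty$ regularity for first-order semilinear systems with smooth coefficients on the compact $\bar B_1$; simple-connectedness is invoked exactly once, at the very end, to integrate the closed form $\Omega$ and recover $\tilde u$.
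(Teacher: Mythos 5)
Your reduction to isothermal coordinates and the recasting of (\ref{II_in}) as closedness of the frame one-form $\Omega=e^\varphi(E_1\,dx+E_2\,dy)$ are both sound, and the route differs genuinely from the paper, which instead posits the Clifford-torus ansatz $u=\epsilon e^w\bigl(\cos\frac{v^1}{\epsilon},\sin\frac{v^1}{\epsilon},\cos\frac{v^2}{\epsilon},\sin\frac{v^2}{\epsilon}\bigr)$, reduces (\ref{II_in}) to requiring that $\tilde g=e^{-2w}g-2\epsilon^2\nabla w\otimes\nabla w$ be a flat metric, and solves that by conformal equivalence at $\epsilon=0$ together with the implicit function theorem for small $\epsilon>0$. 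However, your evolution construction has a genuine gap at the propagation constraint. Since $\eta\perp E_1$ gives $\langle\partial_x\eta,E_1\rangle=-\langle\eta,(\partial_x E_1)^\perp\rangle$, your scalar condition for preserving $\langle\partial_x E_2,E_1\rangle=\varphi_y$ is exactly the Gauss equation
\begin{equation*}
\bigl\langle\eta,(\partial_x E_1)^\perp\bigr\rangle \;=\; \bigl|(\partial_x E_2)^\perp\bigr|^2-\Delta\varphi,
\end{equation*}
a single linear equation in $\eta$ whose coefficient vector is $(\partial_x E_1)^\perp$. This is solvable only where $(\partial_x E_1)^\perp\neq 0$; the ``two-parameter freedom'' of $\eta$ is irrelevant at a point where $(\partial_x E_1)^\perp=0$ and the right-hand side is nonzero. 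And precisely this degeneracy occurs for your initial data: on $\{y=0\}$ one computes $\partial_x E_1=-\theta'E_2$ and $\partial_x E_2=\theta'E_1$, both purely tangential, so $(\partial_x E_1)^\perp=(\partial_x E_2)^\perp=0$ and the constraint collapses to $\Delta\varphi(x,0)=0$ for all $x$, which a generic metric violates. Even with different initial data, you would still need $(\partial_x E_1)^\perp$ to stay bounded away from zero along the entire evolution and $\eta$ not to blow up, and this is not addressed.

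There is a second gap in the claim that smoothness of $(E_1,E_2)$ follows from ``propagation of $\mathcal{C}^\infty$ regularity for first-order semilinear systems''. As written, the $y$-evolution is not a standard hyperbolic Cauchy problem: the principal symbol on the pair $(E_1,E_2)$ is the Jordan block $\left(\begin{smallmatrix}\xi_y & -\xi_x\\ 0 & \xi_y\end{smallmatrix}\right)$, i.e.\ a weakly hyperbolic (double-characteristic, nondiagonalizable) operator, and after substituting the constraint-solving $\eta$ the second equation in fact depends quasilinearly on $\partial_x E_1$ and $\partial_x E_2$. Neither local existence nor propagation of smoothness for this Cauchy problem follows from the generic theory of symmetric or strictly hyperbolic systems, so it would have to be argued from scratch. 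The paper's implicit-function-theorem argument never poses an initial-value evolution and thereby sidesteps both of these obstructions.
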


\noindent In the proof, the smooth immersion $u$ satisfying (\ref{II_in}) is sought to be of the
following form, for some $\epsilon>0$ and $w\in\mathcal{C}^\infty(\bar\omega, \R)$, 
$v\in\mathcal{C}^\infty(\bar\omega, \R^2)$:
$$u(x) = \epsilon e^{w(x)} \Big(\cos \frac{v^1(x)}{\epsilon}, \sin \frac{v^1(x)}{\epsilon}, 
\cos \frac{v^2(x)}{\epsilon}, \sin \frac{v^2(x)}{\epsilon}\Big).$$
By a direct calculation one easily obtains:
$$(\nabla u)^T\nabla u = e^{2w} (\nabla v)^T\nabla v + 2\epsilon^2
e^{2w}\nabla w\otimes \nabla w.$$
Consequently, (\ref{II_in}) is equivalent to:
$$(\nabla v)^T\nabla v = \tilde g \qquad \mbox{ where }\; 
\tilde g\doteq e^{-2w}g - 2\epsilon^2\nabla
w\otimes \nabla w.$$
It now suffices to check that there exists $\epsilon>0$ and
$w\in\mathcal{C}^\infty(\bar\omega, \R)$, such that the derived
matrix field $\tilde g\in\mathcal{C}^\infty(\bar\omega, \R^{2\times 2}_\sym)$ is
itself a Riemannian metric with zero Gaussian curvature:
\begin{equation}\label{req_poz}
\tilde g>0 \quad \mbox{ and } \quad \kappa (\tilde g) = 0 \qquad\mbox{in }\; \bar\omega.
\end{equation}
For $\epsilon =0$, existence of a smooth $w_0$ such that
$e^{-2w_0}g$ satisfies the second condition (since the first one holds
trivially) in (\ref{req_poz}), is a consequence of the conformal equivalence
of $g$ with the Euclidean metric. In particular, one can take $w_0$ to
be the solution of the Dirichlet problem:
$$\Delta_gw_0 = -\kappa(g) \quad\mbox{ in }\; \omega, \quad \qquad w_0 = 0
\quad\mbox{ on }\; \partial\omega.$$
By applying the implicit function theorem, and in view of the
formula for $\kappa(\tilde g)$ in terms of $\kappa(g), \epsilon, w$,
there follows in fact the existence of a one-parameter family $\epsilon\mapsto w(\epsilon)$
resulting in the validity of (\ref{req_poz}) for all $\epsilon>0$ sufficiently small.
This achieves the result in Theorem \ref{thm_poz}.\endproof

\medskip

\noindent It would be interesting to see whether the main result of
this paper could be recovered by applying convex integration
directly to find $w$ that solves $\kappa(\tilde g)=0$.

\end{document}